\numberwithin{paragraph}{section}
\setlist[enumerate]{label=\it{(\roman*)},
	ref=\it{(\roman*)}}
\setlist[enumerate]{label=\it{(\roman*)},
	ref=\it{(\roman*)}}
\newlist{enumerate1}{enumerate}{1}
\setlist[enumerate1]{resume,leftmargin=*,label=\it(\arabic*),ref=\it{(\arabic*)}}
\newlist{enumeratea}{enumerate}{1}
\setlist[enumeratea]{resume,leftmargin=*,label=\it(\alph*),ref=\it{(\alph*)}}
\newcommand{\Hom}[2]{\text{Hom}(#1,#2)}
\newcommand{\Gal}[2]{\text{Gal}(#1/#2)}
\newcommand{\C}{\mathbb{C}}
\newcommand{\R}{\mathbb{R}}
\newcommand{\N}{\mathbb{N}}
\newcommand{\Z}{\mathbb{Z}}
\newcommand{\Q}{\mathbb{Q}}
\newcommand{\E}{\mathcal{E}}
\DeclareMathOperator{\Div}{Div}
\newcommand{\Frac}{\mathrm{Frac}}
\newcommand{\metr}{{\|\hspace{1ex}\|}}
\newcommand{\val}{{|\hspace{1ex}|}}
\newcommand{\Spec}{\mathrm{Spec}}
\newcommand{\arnef}{\mathrm{ar\text{-}nef}}
\newcommand{\arint}{\mathrm{ar\text{-}int}}
\newcommand{\relint}{\mathrm{rel\text{-}int}}
\newcommand{\relsnef}{\mathrm{rel\text{-}snef}}
\newcommand{\relnef}{\mathrm{rel\text{-}nef}}
\newcommand{\arsnef}{\mathrm{ar\text{-}snef}}
\newcommand{\nef}{\mathrm{nef}}
\newcommand{\snef}{\mathrm{snef}}
\newcommand{\num}{\mathrm{num}}
\newcommand{\tFS}{\mathrm{tFS}}
\newcommand{\vol}{\mathrm{vol}}
\newcommand{\YZ}{\mathrm{YZ}}
\newcommand{\CM}{\mathrm{CM}}
\newcommand{\gm}{\mathrm{gm}}
\newcommand{\mo}{\mathrm{mo}}
\newcommand{\cpt}{\mathrm{cpt}}
\newcommand{\pur}{\mathrm{pur}}
\newcommand{\OO}{\mathcal{O}}
\renewcommand{\dim}{\operatorname{dim}\nolimits}
\newcommand{\rk}{\mathrm{rk}}
\newcommand{\an}{\mathrm{an}}
\newcommand\tint{\mathop{\mathpalette\tb@int{t}}\!\int}
\newcommand\bint{\mathop{\mathpalette\tb@int{b}}\!\int}
\newcommand\tb@int[2]{%
	\sbox\z@{$\m@th#1\int$}%
	\if#2t%
	\rlap{\hbox to\wd\z@{%
			\hfil
			\vrule width .45em height \dimexpr\ht\z@+1.4pt\relax depth -\dimexpr\ht\z@+1pt\relax
			\kern.05em 
	}}
	\else
	\rlap{\hbox to\wd\z@{%
			\vrule width .45em height -\dimexpr\dp\z@+1pt\relax depth \dimexpr\dp\z@+1.4pt\relax
			\hfil
	}}
	\fi
}
\newtheorem{theorem}{Theorem}[subsection]
\newtheorem{corollary}[theorem]{Corollary}
\newtheorem{lemma}[theorem]{Lemma}
\newtheorem{lemma*}{Lemma}
\newtheorem{proposition}[theorem]{Proposition}
\newtheorem{prop}[theorem]{Proposition}
\theoremstyle{definition}
\newtheorem{definition}[theorem]{Definition}
\newtheorem{proposition&definition}[theorem]{Proposition\&Definition}
\newtheorem{lemma&definition}[theorem]{Lemma\&Definition}
\newtheorem{theorem&definition}[theorem]{Theorem\&Definition}
\newtheorem{example}[theorem]{Example}
\newtheorem{example*}{Example}
\newtheorem{remark}[theorem]{Remark}
\newtheorem{question*}{Question}
\newtheorem{art}[theorem]{}
\newtheorem{mainthm}{Theorem}
\begin{document}

\title[Concave transforms of compactified $S$-metrized divisors]{Concave transforms of compactified $S$-metrized divisors}

\author[D.~Biswas]{Debam Biswas}
\address{D. Biswas, Mathematik, Universit{\"a}t 
	Regensburg, 93040 Regensburg, Germany}
\email{debambiswas@gmail.com}

\author[Y.~Cai]{Yulin Cai}
\address{Y. Cai, Hangzhou International Innovation Institute, Beihang University, Hangzhou 311115, China}
\email{ylcai5388339@gmail.com}

\begin{abstract}
We associate a concave transform to any compactified $S$-metrized divisor on a quasi-projective variety over an adelic curve. 
Then we show a Hilbert-Samuel type formula for relatively nef compactified $S$-metrized $\YZ$-divisors. 
\end{abstract}

\keywords{concave transform, singular metrics, arithmetic $\chi$-volume} 
\subjclass{{Primary 14G40; Secondary 11G50}}

\maketitle
\setcounter{tocdepth}{1}
\tableofcontents


\section{Introduction}
\subsection{Okounkov bodies in algebraic geometry}
The notion of Okounkov bodies is introduced by Okounkov \cite{okounkov1996brunn} to study the volume of any ample line bundle on a projective varietiy which is generalized by Lazarsfeld-Musta\c{t}\u{a} \cite{lazarsfeld2009convex} to arbitrary  \emph{big} line bundles. Let $X$ be a $d$-dimension integral projective variety over a field $K$, $L$ a big line bundle on $X$ and $V_\bullet=\{V_m\}_{m\in\N}$ a \emph{graded linear series} of $L$ i.e. a graded sub-algebra of $\{H^0(X,mL)\}_{m\in\N_{\geq 1}}$. If $V_\bullet$ \emph{contains an ample series} (see \cref{def:ampleser}), we can associate a concave set $\Delta(V_\bullet)\subset\R^d$ to $V_\bullet$. The crucial property is that
\begin{equation} \label{eq:introduction, geometric okounkov}
\lim\limits_{m\to\infty}\frac{\dim_K(V_m)}{m^d/d!}={d!}\cdot{\vol_{\mathbb{R}^d}(\Delta(V_\bullet)).}
\end{equation}
where $\vol_{\R^d}(\Delta)$ denotes the Lebesgue measure of any measurable subset $\Delta\subset \R^d$.
The left-hand side of \eqref{eq:introduction, geometric okounkov} is denoted by $\vol(V_\bullet)$. If $V_m=H^0(X,L^{\otimes m})$, we set $\vol(L)\coloneq\vol(\{H^0(X,mL)\}_{m\in\N})$, called the \emph{volume} of $L$. We have the classical formula
\begin{equation}\label{eq:classical Hilbert-samuel}
\vol(L)= L^d
\end{equation}
which relates volume with the auto-intersection number of $L$.
\subsection{Concave transforms in local case for projective varieties}

Let $K$ be a valued field complete with respect to a norm $\val_v$, $X$ a projective variety over $K$ and $L$ a big line bundle on $X$. We denote by $X^\an$ the Berkovich analytification of $X$, see \cite[\S 3.4, \S 3.5]{berkovich1990spectral}. Let $\metr_\varphi$ be a continuous metric on $L$. Then for any $m\in\N$, $\metr_{m\varphi}\coloneq \metr_\varphi^{\otimes m}$ is a metric on $L^{\otimes m}$ and we have the \emph{sup-norm} $\metr_{m\varphi,\sup}$ on $H^0(X,L^{\otimes m})$:
\begin{equation} \label{introduction: sup-norm}
\metr_{m\varphi,\sup}\coloneq\sup\limits_{x\in X^\an}\{\|s\|_{m\varphi}(x)\}.
\end{equation}
When $K=\C$,  for any $m\in\N$, we set \[\mathcal{B}(m\varphi)\coloneq\{s\in H^0(X,L^{\otimes m})\mid \|s\|_{m\varphi,\sup}\leq 1\} \ \text{ and } \ d_m\coloneq\dim_\C(H^0(X,L^{\otimes m})).\] Nystr\"om \cite{nystrom2014transforming} associated a convex function $c[\varphi]\colon \Delta(L)^\circ\to \R$ to $\varphi$. Given another metric $\psi$ of $L$, he was able to show that
\begin{equation}\label{eq:introduction archimedean okounkov}
\lim\limits_{m\to\infty}\frac{1}{m^{d+1}/(d+1)!}\log\left(\frac{\vol_{\R^{d_m}}(\mathcal{B}^\infty(m\psi))}{\vol_{\R^{d_m}}(\mathcal{B}^\infty(m\varphi))}\right) = 2(d+1)!\int_{\Delta(L)^\circ}(c[\varphi](\lambda)-c[\psi](\lambda))\, d\lambda
\end{equation}

For any arbitrary complete valued field $K$, using the method developed in \cite{boucksom2011okounkov}, by considering a superaddtive function associated to $(L,\varphi)$ in \cite[\S 4.2]{chen2015distribution}, Chen-Maclean constructed a concave transform $G_{(L,\varphi)}\colon\Delta(L)^\circ\to \R$ in \cite[Remark~4.4]{chen2015distribution}. Given another metric $\psi$ of $L$, by \cite[Theorem~4.3]{chen2015distribution}, we have that
\begin{align}\label{eq:introduction nonarchimedean okounkov}
\lim\limits_{m\to\infty}\frac{1}{m^{d+1}/(d+1)!}\log\left({\metr_{m\psi,\det}}/{\metr_{m\varphi,\det}}\right) = (d+1)!\int_{\Delta(L)^\circ}(G_{(L,\varphi)}(\lambda)-G_{(L,\psi)}(\lambda))\, d\lambda,
\end{align}
see also \cite[\S 2.4]{sedillot2023differentiability}. We denote by $\vol(L,\varphi,\psi)$ the value of the left-hand side of \eqref{eq:introduction nonarchimedean okounkov}. If the metrics $\varphi, \psi$ are \emph{semi-positive} in the sense of \cite{zhang1995small} when $v$ is non-trivial, and are uniform limits of \emph{Fubini-Study metrics} when $v$ is trivial, then from the Bedford–Taylor theory \cite{bedford1976dirchlet} in the archimedean case, and from \cite[\S 5.6]{chambert2012formes}  (or \cite[Proposition~2.7]{chambert2006mesures} when $v$ is non-trivial and $K$ has a countable dense subfield) in the non-Archimedean case, for any $0\leq j\leq d$ we can associate a mixed Monge-Amp\'ere measure
\[c_1(L,\varphi)^{j}\wedge c_1(L,\psi)^{d-j}\]
on $X^\an$. The \emph{relative Monge-Ampère energy} is defined as
\[E(\varphi,\psi)\coloneq \frac{1}{d+1}\sum\limits_{j=0}^d\int_{X^\an}\log\left({\metr_\psi/\metr_\varphi}\right)c_1(L,\varphi)^{j}\wedge c_1(L,\psi)^{d-j}.\] 
Then we have the equality
\begin{equation}\label{eq: Hilbert-samuel local}
\vol(L,\varphi,\psi) = (d+1)\cdot E(\varphi,\psi)
\end{equation}
which was proved in \cite[Theorem~A]{berman2010growth} for the Archimedean case (notice that if $X$ is not smooth, we can pass to its resolution of singularity) and in \cite[Theorem~1.1]{boucksom2021non} for the non-Archimedean case (or \cite[Theorem~9.15]{boucksom2021spaces} if $L$ is semi-ample). Notice that the above results in \cite{boucksom2021non} can be applied in the trivially valued case.  The formulas \eqref{eq:introduction archimedean okounkov}, \eqref{eq:introduction nonarchimedean okounkov} are local analogues of \eqref{eq:introduction, geometric okounkov},  and \eqref{eq: Hilbert-samuel local} is a local analogue of \eqref{eq:classical Hilbert-samuel}.

\subsection{Concave transforms in Arakelov theory for projective varieties}
The framework of the construction of concave transforms was given by Boucksom-Chen \cite{boucksom2011okounkov} which is also used in the local case above. Let $X$ be a $d$-dimensional projective variety over a field $K$ and $L$ be a big line bundle on $X$. Let $V_\bullet=\{V_m\}_{m\in\N}$ be a graded sub-algebra of $\{H^0(X,mL)\}_{m\in\N}$ \emph{containing an ample series} (see \cite[Definition~1.1]{boucksom2011okounkov}) and $\mathcal{F}$ be a \emph{multiplicative} filtration (see \cite[Definition~1.3]{boucksom2011okounkov}) of $V_\bullet$. Boucksom-Chen \cite{boucksom2011okounkov} associated a concave transform $G_{\mathcal{F}}\colon \Delta(V_\bullet)^\circ\to\R$ satisfying certain measure convergence property if $\mathcal{F}$ is furthermore \emph{pointwise bounded below} and \emph{linearly bounded above} in the sense of \cite[Definition~1.3]{boucksom2011okounkov}. When $K$ is a number field with the ring of ad\'eles $\mathbb{A}_K$ and $V_\bullet= \{V_m\}_{m\in\N}$ is equipped with a multiplicative norm $\metr_m$ on each $V_m$ such that $\overline{V_m}\coloneq(V_m,\metr_m)$ is an \emph{adelically normed space} in the sense of \cite[2.1]{boucksom2011okounkov}. Set 
\begin{align*} 
	\widehat{V}_m\coloneq \{s\in V_m\mid \|s\|_{m,\omega}\leq 1 \text{ for any $\omega\in\Omega$}\},
\end{align*}
\begin{align}\label{eq:introduction small sections}
	\widehat{\dim}_K(\widehat{V}_m)\coloneq \frac{1}{[K\colon \Q]}\log\#\widehat{V}_m,
\end{align}
\begin{align*}
	\chi(\overline{V_m})\coloneq \frac{1}{[K:\Q]}\log\vol(\widehat{V}_m),
\end{align*}
where $\vol(\cdot)$ is the Haar measure of $V_{m,\mathbb{A}_K}\coloneq V_m\otimes_K\mathbb{A}_K$ normalized by $\vol(V_{m,\mathbb{A}_K}/ V_m)=1$ and  $\widehat{V}_m$ is viewed as a subset of $V_{m,\mathbb{A}_K}$.
Notice that the norms $\metr_m$ give a multiplicative filtration of $V_\bullet$, namely the \emph{filtration by minima} $\mathcal{F}_{\min}$ (see \cite[2.5]{boucksom2011okounkov}). Hence we can associate a concave transform  $G_{\mathcal{F}_{\min}}\colon \Delta(V_\bullet)^\circ\to \R$ to $\overline{V_\bullet}=\{(V_m,\metr_m)\}_{m\in\N}$. If $\mathcal{F}_{\min}$ is linearly bounded above, then the measure convergence property implies that
\begin{equation}\label{eq:introduction volume okounkov}
\lim\limits_{m\to\infty}\frac{\widehat{\dim}_K(\widehat{V}_m)}{m^{d+1}/(d+1)!}=(d+1)!\int_{\Delta(V_\bullet)^\circ}\max\{0,G_{\mathcal{F}_{\min}}(\lambda)\} \, d\lambda.
\end{equation}
If $\mathcal{F}_{\min}$ is furthermore \emph{linearly bounded below} (in particular, $\inf\limits_{\lambda\in \Delta(V_\bullet)^\circ}G_{\mathcal{F}_{\min}}(\lambda)>-\infty$), then 
\begin{equation}\label{eq:introduction chi volume okounkov}
\lim\limits_{m\to\infty}\frac{\chi(\widehat{V}_m)}{m^{d+1}/(d+1)!}=(d+1)!\int_{\Delta(V_\bullet)^\circ}G_{\mathcal{F}_{\min}}(\lambda) \, d\lambda.
\end{equation}
These two formulas \eqref{eq:introduction volume okounkov}, \eqref{eq:introduction chi volume okounkov} are the arithmetic version of \eqref{eq:introduction, geometric okounkov},\eqref{eq:introduction archimedean okounkov}, respectively. In particular, if $L$ is endowed with a \emph{model metric} $\metr_\varphi=(\metr_{\varphi,\omega})_{\omega\in M_K}$ ($M_K$ is the set of places of $K$) i.e. $\metr_\varphi$ induced by a \emph{metrized} \emph{line bundle} on a projective $\OO_K$-model of $X$ (see \cite{zhang1995small}) 
and $V_m=H^0(X,L^{\otimes m})$ with $\metr_m$ given by the sup-norms defined in \eqref{introduction: sup-norm}, then the left-hand sides of \eqref{eq:introduction volume okounkov}, \eqref{eq:introduction chi volume okounkov}, denoted by $\widehat{\vol}(\overline{L})$ and $\widehat{\vol}_\chi(\overline{L})$, are called the \emph{arithmetic volume} and the \emph{arithmetic $\chi$-volume} of $\overline{L}\coloneq (L,\metr_\varphi)$, respectively. Moreover, if $\metr_{\varphi,\omega}$ is induced by a \emph{semi-positive} metrized line bundle on some projective $\OO_K$-model of $X$ with underlying line bundle ample (in this case, the corresponding filtration $\mathcal{F}_{\min}$ satisfies all properties above), we have the arithmetic Hilbert-Samuel formula (see \cite[Th\'eor\`em Principal]{Abbes1995arithmetic} or \cite[Theorem~5.36]{moriwaki2014arakelov}):
\begin{equation}\label{eq:introduction hilber arithmetic}
\widehat{\vol}_\chi(\overline{L})=\overline{L}^{d+1},
\end{equation}
where $\overline{L}^{d+1}$ is the arithmetic intersection number given by \cite{gillet1990arithmetic}.

In \cite{chen2020arakelov}, Chen-Moriwaki proposed a new framework of Arakelov theory. They consider projective varieties over an \emph{adelic curve} which is much more general than the classical cases of number fields or function fields. An \emph{adelic curve} consisted of a field $F$ and absolute values on $F$ which are parameterized by a measure space $(\Omega,\mathcal{A},\nu)$ similar to the notion of $M$-fields considered by Gubler in \cite{MfieldGubler}. Throughout this paper, the adelic curve is assumed to be \emph{proper} which means
that the following product formula holds: for any $a\in F^\times$,
\begin{equation*} 
	\int_\Omega \log|a|_\omega\, \nu(d\omega)=0.
\end{equation*}
More examples of adelic curves are given in \cite[\S 3.2.4-6]{chen2020arakelov}. If $K$ is endowed with a structure of adelic curve $S=(K,\Omega,\mathcal{A},\nu)$ with $\Omega$ is discrete or $K$ countable, let $\metr_\varphi=(\metr_{\varphi,\omega})_{\omega\in\Omega}$ be a family of metrics on $L$ which is \emph{dominated} and \emph{measurable} in the sense of \cite{chen2020arakelov}, then for any $m\in\N$, $H^0(X,L^{\otimes m})$ together with the sup-norms $\metr_{m\varphi,\sup}$ is an \emph{adelic vector bundle} over $S$, so the \emph{Arakelov degree} $\widehat{\deg}(H^0(X,L^{\otimes m}),\metr_{m\varphi,\sup})$ and the \emph{positive Arakelov degree} $\widehat{\deg}_+(H^0(X,L^{\otimes m}),\metr_{m\varphi,\sup})$ are well-defined \cite[Definition~4.3.7,  \S 4.3.4]{chen2020arakelov}. Write $\overline{L}\coloneq (L,\metr_\varphi)$. After generalizing \cite[Theorem~A]{boucksom2011okounkov} to the superadditive filtrations (see \cite[Theorem~6.3.20]{chen2020arakelov}), Chen-Moriwaki \cite[Theorem~6.4.6]{chen2020arakelov} associated a concave transform $G_{\overline{L}}\colon\Delta(L)^\circ\to\R$ satisfying
\begin{equation}\label{eq:introduction volume adelic}
	\lim\limits_{m\to\infty}\frac{\widehat{\deg}_+(H^0(X,L^{\otimes m}), \metr_{m\varphi,\sup})}{m^{d+1}/(d+1)!}=(d+1)!\int_{\Delta(L)^\circ}\max\{0,G_{\overline{L}}(\lambda)\} \, d\lambda.
\end{equation}
If $\inf\limits_{\lambda\in \Delta*(L)}G_{(L,\varphi)}(\lambda)<\infty$, then 
\begin{equation}\label{eq:introduction chi volume adelic}
	\lim\limits_{m\to\infty}\frac{\widehat{\deg}(H^0(X,L^{\otimes m}), \metr_{m\varphi,\sup})}{m^{d+1}/(d+1)!}=(d+1)!\int_{\Delta(L)^\circ}G_{\overline{L}}(\lambda) \, d\lambda.
\end{equation}
As the classical case, the left-hand sides of \eqref{eq:introduction volume adelic}, \eqref{eq:introduction chi volume adelic} are denoted by $\widehat{\vol}(\overline{L})$, $\widehat{\vol}_\chi(\overline{L})$, called the \emph{arithmetic volume}, the \emph{arithmetic $\chi$-volume} of $\overline{L}$, respectively. In their subsequent works \cite{chen2021arithmetic} \cite{chen2022hilbert}, if $\overline{L}$ is \emph{relatively ample} in the sense of \cite[Definition~6.3.2]{chen2022hilbert}, they defined the arithmetic intersection number $(\overline{L}^{d+1}\mid X)_S$ as integrals of local intersection numbers \cite[Definition~4.4.3]{chen2021arithmetic} and prove a Hilbert-Samuel type formula \cite[Theorem~B]{chen2022hilbert}:
\begin{equation}\label{eq:introduction hilber adelic}
	\widehat{\vol}_\chi(\overline{L})=(\overline{L}^{d+1}\mid X)_S
\end{equation}
generalizing \eqref{eq:introduction hilber arithmetic} to the adelic curve case.

\subsection{Arakelov theory for line bundles with singular metrics}
The formulas we presented in the local and global case are for continuous metrics of line bundles over projective varieties. In arithmetic geometry, there are singular metrics appearing naturally, e.g. the Petersson metric of the Hodge bundle on a compactification of the moduli space of principally polarized abelian varieties of dimension $g\geq 1$. In \cite{burgos2007cohomological}, Burgos-Kramer-K\"uhn developed a systematic approach to arithmetic intersection theory for singular metrics with log-log singularities along the boundary. 

Another approach was given recently by Yuan-Zhang \cite{yuan2021adelic}. Let $K$ be a number field or the function field of a curve over a field, $U$ a quasi-projective variety. Let $X$ be a projective compactification of $U$ such that $B=X \setminus U$ is a Cartier divisor. We take a positive \emph{arithmetic model divisor} $\overline{\mathcal{B}}$ for $B$, then $\overline{\mathcal{B}}$ induces a \emph{model metrized divisor} $\overline{B}\coloneq(B, g_B)$, called a \emph{boundary divisor} of $U$. A \emph{compactified metrized line bundle} $\overline{L}$ is given by a line bundle $L_U$ on $U$ and a sequence of line bundles $L_j$ on projective compactifications $X_j$ equipped with model metrics $\metr_j$ satisfying the following conditions: 
\begin{itemize}
	\item $L_j|_U=L_U$ for any $j\in\N_{\geq 1}$;
	\item let $s$ be a rational section of $L_U$, and $s_j$  the extension of $s$ to $L_j$, then for any $\varepsilon\in \Q_{>0}$, there is $j_0\in \N_{\geq 1}$ such that for all $i,j \geq j_0$,
	$$-\varepsilon B \leq {\rm div}(s_i) - {\rm div}(s_j) \leq \varepsilon B \quad \text{and} \quad -\varepsilon g_B \leq \log\|s_j\|_j - \log\|s_i\|_i \leq \varepsilon g_B,$$
	where the inequalities take place after pulling back to a joint projective compactification of $U$.
\end{itemize}
Yuan-Zhang then defined the arithmetic intersection number for \emph{arithmetically nef} (called \emph{nef} in \cite{yuan2021adelic}) compactified line bundles. Using the energy approach, Burgos-Kramer \cite{burgos2024on} could extend the arithmetic intersection number to \emph{relatively nef} compactified line bundles which allowed them to define the arithmetic intersection number of the line bundle of Siegel–Jacobi forms with canonical metrics, and their extension covers the arithmetic intersection number defined in \cite{burgos2007cohomological}.  
In \cite{cai2024abstract}, Gubler and the second author generalized the arithmetic intersection numbers of compactified metrized line bundles in case where $K$ is any field endowed with a structure of adelic curve. They also used the energy approach of Burgos and Kramer to extend the arithmetic intersection number to more singular metrics.

Yuan-Zhang worked on the geometric case in \cite{yuan2021adelic} at the same time, and defined similarly the \emph{compactified line bundles} and corresponding intersection theory on a quasi-projective variety over an arbitrary field. Let $L$ be a compactified line bundle on $U$. We can define the space of global sections $H^0(U,L)$ and the volume $\vol(L)$ of $L$:
\begin{align*}
	\vol(L)\coloneq \limsup\limits_{m\to\infty}\frac{\dim_K(H^0(U,L^{\otimes m}))}{m^d/d!}
\end{align*} 
which is in fact a limit, see \cite[Theorem~5.2.1~(1)]{yuan2021adelic}. Using the continuity of $\vol(\cdot)$ \cite[Theorem~5.2.1~(2)]{yuan2021adelic}, if $L$ is \emph{nef} in the sense of \cite{yuan2021adelic}, then we have a Hilbert-Samuel type formula \cite[Theorem~5.2.2~(1)]{yuan2021adelic}:
\[\vol(L)= L^d,\]
where $L^d$ is the auto-intersection number of $L$ given by \cite[Proposition~4.1.1]{yuan2021adelic}. 

In \cite{nijeribanra}, the first author followed the method in \cite{lazarsfeld2009convex} and defined the Okounkov body $\Delta(L)$ associated to $L$ with the property
\[\vol(L)=d!\cdot \vol_{\R^d}(\Delta(L)).\]

For the arithmetic case, let $\overline{L}$ be a compactified metrized line bundle on $U$ with underlying compactified line bundle $L$. As the projective case, using \eqref{eq:introduction small sections}, we can define the \emph{arithmetic volume} $\widehat{\vol}(\overline{L})$ of $\overline{L}$, see \cite[Definition~5.1.3]{yuan2021adelic}. Using the continuity of $\widehat{\vol}(\cdot)$ in \cite[Theorem~5.2.1~(2)]{yuan2021adelic}, if $L$ is arithmetically nef, then we have the following 
arithmetic Hilbert-Samuel formula \cite[Theorem~5.2.2~(1)]{yuan2021adelic}:
\[\widehat{\vol}(\overline{L})=\overline{L}^{d+1},\]
where $\overline{L}^{d+1}$ is the arithmetic intersection number given by \cite[Proposition~4.1.1]{yuan2021adelic}.

In fact, before Yuan-Zhang's theory, Berman-Freixas \cite[Theorem~1.1]{bermansingular} established an arithmetic Hilbert Samuel formula for semi-positive metrics with log-log singularity. Instead of considering sup-norms, they consider the $L^2$-norms at archimedean places.  
\subsection{Main results}
The goal of this paper is to use the theory of concave transforms along the lines of \cite{boucksom2011okounkov} and \cite[Chapter 6]{chen2020arakelov} to study the arithmetic volumes and arithmetic $\chi$-volumes of compactified metrized divisors on quasi-projective varieties. 

Let $S=(K,\Omega,\mathcal{A},\nu)$ be a proper adelic curve such that $\mathcal{A}$ is discrete or that $K$ is countable, and let $U$ be a $d$-dimensional normal quasi-projective variety over $K$. {We further assume that $S$ satisfies the tensorial minimal slope property for some $C\in\R_{\geq 0}$ in \cref{def:tensorialproperty}. }
Let $\overline{D}=(D,g)$ be a \emph{compactified $S$-metrized divisor} of $U$ in the sense of \cref{def:CM divisors on non-proper}. In \cref{section:arithmetic volume and concave transforms}, we associate a concave transform $G_{\overline{D}}\colon \Delta(D)^\circ\to \R$ to $\overline{D}$ if $D$ is big, where $\Delta(D)$ be the Okounkov body of $D$. The first main result is given as follows:
\begin{mainthm}[\cref{theorem:concavemain}]
	Let $\overline{V_\bullet}=\{\overline{V_m}\}_{m\in\N}$ be the graded $K$-algebra of adelic vector bundles associated to $\overline{D}$ defined in \cref{def:volume}. If $D$ is big, then 
	\begin{align} \label{eq:introduction main 1}
\lim_{m\to\infty}\frac{\widehat{\deg}_+(\overline{V_m})}{m^{d+1}/(d+1)!}={(d+1)!} \int_{\Delta(D)^{\circ}} \max\{G_{\overline{D}}(\lambda),0\} \, d\lambda,
	\end{align}
and
	\begin{align} \label{eq:introduction main 2}
		\limsup\limits_{m\to\infty}\frac{\widehat{\deg}(\overline{V_m})}{m^{d+1}/(d+1)!}\leq(d+1)!\int_{\Delta(D)^{\circ}} G_{\overline{D}}(\lambda) \, d\lambda
	\end{align}
with equality if $\widehat{\mu}_{\min}^{\mathrm{asy}}(\overline{D})>-\infty$, where $d\lambda$ is the standard Lebesgue measure on $\Delta(D)\subseteq \R^d$. 
\end{mainthm}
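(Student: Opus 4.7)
The plan is to reduce the statement to the abstract concave transform machinery for superadditive filtrations on graded linear series of adelic vector bundles developed by Boucksom--Chen \cite{boucksom2011okounkov} and extended by Chen--Moriwaki \cite[Theorem~6.3.20]{chen2020arakelov}. The concave transform $G_{\overline{D}}$ constructed in \cref{section:arithmetic volume and concave transforms} is obtained by applying that framework to the filtration by minima $\mathcal{F}_{\min}$ on $\overline{V_\bullet}$ coming from the sup-norms $\metr_{mg,\sup,\omega}$ at each place $\omega\in\Omega$. The output of the abstract theorem is a weak convergence of the normalized discrete ``jump measures''
\[
\nu_m\coloneq \frac{1}{\dim_K(V_m)}\sum_{\lambda\in \R}\dim_K\bigl(\mathcal{F}^{m\lambda}_{\min}V_m/\mathcal{F}^{m\lambda+}_{\min}V_m\bigr)\,\delta_\lambda
\]
to the pushforward $(G_{\overline{D}})_\ast\mu_{\Delta(D)}$, where $\mu_{\Delta(D)}$ is the normalized Lebesgue measure on $\Delta(D)^\circ$. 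The superadditivity of $\mathcal{F}_{\min}$ modulo a linear error is precisely what the tensorial minimal slope property of $S$ guarantees, which is why that hypothesis was imposed.

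Once the weak convergence is available, both formulas reduce to computing the limits of integrals of explicit functions against $\nu_m$. Indeed, by the very definition of Arakelov degree and positive Arakelov degree of an adelic vector bundle in terms of its Harder--Narasimhan filtration,
\[
\widehat{\deg}(\overline{V_m}) = \dim_K(V_m)\cdot m\int_\R t\,\nu_m(dt),\qquad \widehat{\deg}_+(\overline{V_m}) = \dim_K(V_m)\cdot m\int_\R \max\{t,0\}\,\nu_m(dt),
\]
and one has the standard asymptotic $\dim_K(V_m)\sim m^d\vol_{\R^d}(\Delta(D))$ coming from the Okounkov construction on $U$ due to the first author \cite{nijeribanra}. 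Dividing by $m^{d+1}/(d+1)!$ absorbs the factor $d!\,\vol(\Delta(D))$ into the change-of-variable $\mu_{\Delta(D)}\to d\lambda$ on the right-hand side.

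For \eqref{eq:introduction main 1}, the integrand $\max\{t,0\}$ is non-negative, continuous, and compactly supported above: indeed, the supports of the $\nu_m$ are bounded from above by $\widehat{\mu}^{\asy}_{\max}(\overline{D})+o(1)$, which is finite as a consequence of $D$ being big together with the existence of a dominating projective model. Hence weak convergence gives the limit directly. For \eqref{eq:introduction main 2}, the same upper bound and Fatou's lemma applied to $C-t$ yield the $\limsup$ inequality; equality holds precisely under the additional linear lower-bound hypothesis $\widehat{\mu}^{\asy}_{\min}(\overline{D})>-\infty$, which allows for dominated convergence on the negative tail.

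The main obstacle is the verification that $\mathcal{F}_{\min}$ is genuinely a superadditive filtration modulo a linear error and that $\widehat{\mu}^{\asy}_{\max}(\overline{D})<\infty$ in the quasi-projective compactified setting. Both issues disappear in the projective case treated by Chen--Moriwaki, but here a section $s\in V_m=H^0(U,mD)$ may in principle behave badly near the boundary. The first issue is resolved by the tensorial minimal slope property of $S$, which precisely controls $\|s\cdot s'\|_{(m+n)g,\sup,\omega}$ in terms of $\|s\|_{mg,\sup,\omega}\cdot\|s'\|_{ng,\sup,\omega}$ up to a place-wise error integrable over $(\Omega,\mathcal{A},\nu)$. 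The second is handled by approximating $\overline{D}$ from above by a compactified $S$-metrized divisor $\overline{D}+\varepsilon\overline{B}$ that admits a projective model with an ample model metric (via the boundary $\overline{B}$ built into the compactified structure), applying the projective Chen--Moriwaki bound \eqref{eq:introduction chi volume adelic}, and letting $\varepsilon\to 0$.
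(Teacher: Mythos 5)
The high-level scheme you outline — weak convergence of jump measures $\nu_m$ to the pushforward of Lebesgue measure under $G_{\overline{D}}$, then integrate $\max\{t,0\}$ and $t$ against them — matches the paper's strategy. But there is one genuine gap and two subsidiary imprecisions that would prevent the argument from going through over a general adelic curve.

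The genuine gap: you build $\nu_m$ and the concave transform from the \emph{filtration by minima} $\mathcal{F}_{\min}$, but the paper uses the \emph{Harder--Narasimhan filtration} $\mathcal{H}_\bullet$ of \cref{def:harder filtration} (see \cref{construction of concave transform}). Over a number field these two filtrations differ by a bounded amount and produce the same concave transform, but that comparison relies on reduction theory / the Minkowski property, which the paper deliberately does not assume here — the hypothesis is only the tensorial minimal slope property of \cref{def:tensorialproperty}. That property is a statement about $\widehat{\mu}_{\min}$ of tensor products, i.e. precisely what makes the Harder--Narasimhan filtration $\delta$-superadditive after base change to $\overline{K}$ (via \cite[Proposition~6.3.25]{chen2020arakelov}); it does not control $\mathcal{F}_{\min}$. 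Moreover, $\widehat{\deg}$ and $\widehat{\deg}_+$ are defined over an adelic curve through slopes, not through counting or measuring small sections, so your displayed identities
\[
\widehat{\deg}(\overline{V_m}) = \dim_K(V_m)\cdot m\int_\R t\,\nu_m(dt),\qquad
\widehat{\deg}_+(\overline{V_m}) = \dim_K(V_m)\cdot m\int_\R \max\{t,0\}\,\nu_m(dt)
\]
are not exact even for the HN version of $\nu_m$: \cite[Propositions~4.3.50--4.3.51]{chen2020arakelov} give $\sum_i\widehat{\mu}_i(\overline{V_m})\le\widehat{\deg}(\overline{V_m})\le\sum_i\widehat{\mu}_i(\overline{V_m})+\delta(\overline{V_m})$, and the paper has to show $\delta(\overline{V_m})/(m\dim_K V_m)\to 0$, which uses $\nu(\Omega_\infty)<\infty$. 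That error-control step is where the ``almost an equality'' gets made rigorous, and your proposal does not mention it. With the minima filtration there is no such identity at all in this generality.

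Two smaller issues: (i) the admissible flag lives on $U_{\overline{K}}$, so the concave transform is really built from $\mathcal{H}_{\overline{K},\bullet}$, and this is \emph{not} the HN filtration of $V_{\overline{K},\bullet}$; the paper points out that it is nevertheless $\delta$-superadditive, which is the hypothesis required to invoke \cite[Theorem~6.3.20]{chen2020arakelov}. (ii) You write $\dim_K(V_m)\sim m^d\vol_{\R^d}(\Delta(D))$, but $V_m=H^0_+(U,m\overline{D})$ is a priori a proper subspace of $H^0(U,mD)$; the identification $\vol_+(\overline{D})=\vol(D)$ and $\Delta_+(\overline{D})^\circ=\Delta(D)^\circ$ is the content of \cref{lemma:gooddef} and needs to be established before the $m^d$-asymptotics can be used. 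Your Fatou/truncation argument for \eqref{eq:introduction main 2}, and the truncation-above argument for \eqref{eq:introduction main 1} via finiteness of $\widehat{\mu}^{\mathrm{asy}}_{\max}$ (established exactly as in \cref{lemma:slopebounded}), are correct and agree with the paper once the filtration and the $\delta$ error are fixed.
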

The left-hand sides of \eqref{eq:introduction main 1}, \eqref{eq:introduction main 2} are denoted by  $\widehat{\vol}(\overline{D})$, $\widehat{\vol}_\chi(\overline{D})$, called the \emph{arithmetic volume}, the \emph{arithmetic $\chi$-volume} of $\overline{D}$, respectively (we will show in \cref{theorem:consistentvolumedef} that the notion arithmetic volume coincide with the one defined in \cite[Definition~5.1.3]{yuan2021adelic} if $S$ is given by a number field). We denote $\widehat{\vol}_\chi^\num(\overline{D})\coloneq(d+1)!\int_{\Delta(D)^{\circ}} G_{\overline{D}}(\lambda) \, d\lambda$ which plays an important role in this paper. We also give applications of the concave transform $G_{\overline{D}}$, for example, we show following fundamental inequality in \cref{thm:essential minimum}:
\begin{align*}
\sup\limits_{\text{$V\subset U$ open}}\inf\limits_{x\in V(\overline{K})}h_{\overline{D}}(x) \geq \frac{\widehat{\vol}_\chi^{\mathrm{num}}(\overline{D})}{(d+1)\cdot\vol(D)},
\end{align*}
and generalize the height inequality \cite[Theorem~5.3.5~(3)]{yuan2021adelic} to the adelic curve case in \cref{theorem:heightinequality}.

In \cref{section: arithmetic hilbert} and \cref{section: equidistribtion}, we assume that $S$ is given by a number field or the function field of a curve. We consider the set $\widehat{\Div}_{S,\Q}(U)_{\relnef}^{\arnef,\YZ}$ of \emph{relatively nef compactified $\YZ$-divisors} $\overline{E}$ (see \cref{definition of compactified YZ-divisors}) such that there is \emph{arithmetically nef compactified $\YZ$-divisor} (see \cref{definition of compactified YZ-divisors}) whose underlying compactified divisor is the same as the one of $\overline{E}$. The arithmetic intersection number $(\overline{E}^{d+1}\mid U)_S$ of an element $\overline{E}$ in $\widehat{\Div}_{S,\Q}(U)_{\relnef}^{\arnef,\YZ}$ is well-defined, see \cite[Theorem~11.3]{cai2024abstract}. We then obtain the following Hilbert-Samuel type formula. 

\begin{mainthm}[\cref{thm:Hilbert-Samuel formula}]
	\label{thm:introduction hilbert}
	If $\overline{D}\in \widehat{\Div}_{S,\Q}({U})_{\relnef}^{\arnef,\YZ}$ and $D$ is big. Then
	\[\widehat{\vol}^{\num}_\chi(\overline{D})=(\overline{D}^{d+1}\mid U)_S.\]
	If moreover, 
	$\widehat{\mu}_{\min}^{\mathrm{asy}}(\overline{D})>-\infty$, then 
	\[\widehat{\vol}_\chi(\overline{D})=(\overline{D}^{d+1}\mid U)_S.\]
\end{mainthm}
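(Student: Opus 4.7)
The plan is to reduce the desired identity to the case of projective compactifications endowed with model metrics, where the Chen--Moriwaki arithmetic Hilbert--Samuel formula \eqref{eq:introduction hilber adelic} is available, and then to pass to the YZ-limit. On the volume side, the limit is controlled by the concave-transform description of $\widehat{\vol}_\chi^\num$ provided by Theorem~A, while on the intersection-number side it is controlled by the continuity of the arithmetic intersection number established in \cite[Theorem~11.3]{cai2024abstract}.

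More concretely, I would proceed in three steps. First, since $\overline{D}\in \widehat{\Div}_{S,\Q}(U)_{\relnef}^{\arnef,\YZ}$, by definition there exists an arithmetically nef compactified $\YZ$-divisor $\overline{D}_0$ sharing the same underlying compactified divisor as $\overline{D}$. Realizing $\overline{D}_0$ as a Cauchy limit of arithmetically nef model compactified divisors $\overline{\mathcal{D}}_{0,j}$ on projective compactifications $X_j$ of $U$, I would apply Chen--Moriwaki's formula to each level to obtain $\widehat{\vol}_\chi(\overline{\mathcal{D}}_{0,j})=(\overline{\mathcal{D}}_{0,j}^{d+1}\mid X_j)_S$. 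Passing to the YZ-limit, the right-hand side converges to $(\overline{D}_0^{d+1}\mid U)_S$ by \cite[Theorem~11.3]{cai2024abstract}; on the left, Theorem~A rewrites each $\widehat{\vol}_\chi(\overline{\mathcal{D}}_{0,j})$ as the integral of $G_{\overline{\mathcal{D}}_{0,j}}$ over $\Delta(D)^\circ$ (equality of $\widehat{\vol}_\chi$ and $\widehat{\vol}_\chi^\num$ being valid since arithmetic nefness implies $\widehat{\mu}_{\min}^{\rm asy}>-\infty$), and the YZ-Cauchy condition controls exactly the sup-norm differences entering the superadditive function defining the concave transform, giving uniform convergence $G_{\overline{\mathcal{D}}_{0,j}}\to G_{\overline{D}_0}$ up to sets of small Lebesgue measure. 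This establishes the formula for $\overline{D}_0$.

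Second, to pass from $\overline{D}_0$ to the relatively nef $\overline{D}$, I would analyze the difference. Since $\overline{D}$ and $\overline{D}_0$ share the underlying compactified divisor, $\overline{D}-\overline{D}_0$ is a purely vertical correction by a family of Green-function differences. Both sides of the identity vary additively under such perturbations: on the volume side the variation equals $(d+1)!\int_{\Delta(D)^\circ}(G_{\overline{D}}-G_{\overline{D}_0})(\lambda)\,d\lambda$, which by the local Hilbert--Samuel/energy formula \eqref{eq: Hilbert-samuel local} at each place equals the integral over $\Omega$ of the relative Monge--Ampère energy of the Green-function difference against $c_1(\overline{D})^d$; on the intersection-number side this is, by the very construction of $(\overline{E}^{d+1}\mid U)_S$ for relatively nef $\YZ$-divisors in \cite[Theorem~11.3]{cai2024abstract}, precisely the variation of the arithmetic intersection number under such a perturbation. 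Matching the two variations and combining with the previous step proves the first equality.

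The main obstacle will be twofold: first, establishing the convergence of concave transforms from the YZ-Cauchy data, where the tensorial minimal slope property imposed on $S$ is used to guarantee that the superadditive filtrations are uniformly well-controlled in $m$, so that limits commute with the $(d+1)!$-scaled integrals; second, integrating the local Hilbert--Samuel variation over the measure space $(\Omega,\mathcal{A},\nu)$, which requires Berman--Boucksom-type estimates at archimedean places and Boucksom--Favre--Jonsson-type estimates at non-archimedean ones, together with the dominance and measurability conditions built into the notion of compactified $S$-metrized divisor. Finally, the second assertion is immediate from Theorem~A: the hypothesis $\widehat{\mu}_{\min}^{\rm asy}(\overline{D})>-\infty$ upgrades the inequality \eqref{eq:introduction main 2} to an equality, so $\widehat{\vol}_\chi(\overline{D})=\widehat{\vol}_\chi^\num(\overline{D})=(\overline{D}^{d+1}\mid U)_S$.
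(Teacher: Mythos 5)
Your Step 1 (the arithmetically nef case) is in line with the paper, with one caveat: the paper does not assert $\widehat{\mu}_{\min}^{\mathrm{asy}}(\overline{D}_0)>-\infty$ for quasi-projective arithmetically nef $\overline{D}_0$. Instead, it proves $G_{\overline{D}_0}\geq 0$ on $\Delta(D)^\circ$ (\cref{prop:the volume and concave of arith nef}~\ref{prop:the concave of arith nef is non-negative}), whence $\widehat{\vol}^{\num}_\chi(\overline{D}_0)=\widehat{\vol}(\overline{D}_0)$, and then invokes the continuity of the arithmetic volume $\widehat{\vol}$ (\cref{theorem:continuityofvolumes}) rather than convergence of $\chi$-volumes. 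Your route through $\widehat{\mu}_{\min}^{\mathrm{asy}}>-\infty$ is not justified and is in fact unnecessary.

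Your Step 2 contains a genuine gap. You claim the variation of $\widehat{\vol}^\num_\chi$ under the purely vertical perturbation $g\leadsto g_0$ equals $\int_\Omega E(\mathbf{g}_{0,\omega},\mathbf{g}_\omega)\,\nu(d\omega)$, the adelic Monge--Amp\`ere energy, and that this follows place-by-place from the local Hilbert--Samuel formula \eqref{eq: Hilbert-samuel local}. But the concave transform $G_{\overline{D}}$ is a genuinely global object: it is constructed from the Harder--Narasimhan filtrations of the adelic vector bundles $(H^0_+(U,m\overline{D}),\metr_{\sup,m})$, which are defined through the global slopes, not from a family of local filtrations indexed by $\Omega$. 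There is no a priori decomposition $G_{\overline{D}}-G_{\overline{D}_0}=\int_\Omega\bigl(G_{\omega,\overline{D}}-G_{\omega,\overline{D}_0}\bigr)\,\nu(d\omega)$, so the local formula cannot simply be integrated over $\Omega$. Establishing such a local-to-global identity for the concave transform is essentially equivalent to the theorem you are trying to prove, so the argument is circular at this point.

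The paper avoids exactly this difficulty by a truncation trick: writing $\overline{D'}=(D,g')\in \widehat{\Div}_{S,\Q}(U)_\arnef$ with $g'\geq g$ and setting $\overline{D_m}=(D,\max\{g,g'-c_m\})$ for an increasing sequence $(c_m)\subset\mathscr{L}^1(\Omega,\mathcal{A},\nu)$ with $\overline{D_m}(c_m)\in\widehat{\Div}_{S,\Q}(U)_\arnef$. This produces a decreasing sequence of strongly relatively nef $\YZ$-divisors with fixed underlying compactified divisor $D$, so the formula holds for each $\overline{D_m}$ by the twisted arithmetically nef case (Step 2 of the paper). One then passes to the limit separately on the two sides: continuity of the extended intersection number under such monotone approximations (\cite[Theorem~10.9~(v)]{cai2024abstract}), and continuity of $\widehat{\vol}^{\num}_\chi$ along decreasing sequences with fixed $D$ (\cref{proposition:limits property for chi volume}~\ref{decreasing for limits property for chi volume}, which rests on the pointwise decreasing convergence $G_{\overline{D_m}}\downarrow G_{\overline{D}}$ of \cref{thm:convergence of concave transforms} and the monotone convergence theorem). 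You correctly identify the two obstacles, but your plan does not resolve them; the paper resolves them by never invoking a derivative-of-volume formula and instead matching two independent continuity statements along a carefully chosen monotone approximation.

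Your final paragraph on the second assertion (upgrading from $\widehat{\vol}^{\num}_\chi$ to $\widehat{\vol}_\chi$ when $\widehat{\mu}_{\min}^{\mathrm{asy}}(\overline{D})>-\infty$) is correct and matches the paper: it is an immediate consequence of \cref{theorem:concavemain}.
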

An obvious consequence is that $(\overline{D}^{d+1}\mid U)_S>0$ implies that $\overline{D}$ is \emph{big} in the sense of \cref{def:volume}, see \cref{cor:big if intersection number>0}.
Another application of concave transforms is the following equidistribution theorem generalizing \cite[Theorem~5.4.3]{yuan2021adelic} and \cite[Theroem~(C)]{nijerdiff}. 

\begin{mainthm}[\cref{theorem:equidsitributionforfinitenergy}]
	Let $\overline{D}=(D,g)\in \widehat{\Div}_{S,\Q}(U)^{\YZ}_{\cpt}$ with $D$ big. Let $(x_m)_{m\in I}\subset U(\overline{K})$ be a generic net of points which is small with respect to $\overline{D}$. Then for any $c\in\mathscr{L}^1(\Omega,\mathcal{A},\nu)$ such that $\overline{D}(c)$ is big, for any $v\in\Omega$ and for any $f_v\in C_c(U_v^\an)$, we have that
	\begin{align}
		\lim_{m\in I} \frac{1}{\#O(x_m)}\sum_{\sigma\in\Gal{\overline{K}}{K}}f_v(\sigma(x_m))=\frac{\langle \overline{D}(c)^d\rangle\cdot (0,f_v)}{\vol(D)}.
	\end{align}
(here $O(x_m)$ is the Galois orbits of $x_m$ and $\langle\overline{D}(c)^d\rangle\cdot (0,f_v)$ is the positive intersection number defined in \cite{nijerdiff}, see also \cref{positive intersection number}.)
\end{mainthm}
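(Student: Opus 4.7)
The plan is to implement the variational equidistribution strategy of Szpiro-Ullmo-Zhang and Yuan, adapted to the $\YZ$-divisor framework over an adelic curve. Fix $c\in\mathscr{L}^1(\Omega,\mathcal{A},\nu)$ with $\overline{D}(c)$ big, $v\in\Omega$, and $f_v\in C_c(U_v^\an)$. For $\epsilon\in\R$ of sufficiently small absolute value, consider the perturbation
\[
\overline{D}_\epsilon := \overline{D}(c) + \epsilon\cdot(0,f_v),
\]
where $(0,f_v)$ is the trivial divisor equipped with the Green function $f_v$ at $v$ and zero elsewhere. Since $f_v$ is continuous with compact support and $\overline{D}(c)$ has finite energy with $D$ big, $\overline{D}_\epsilon$ remains in $\widehat{\Div}_{S,\Q}(U)^{\YZ}_{\cpt}$ with the same (big) underlying divisor $D$ for $|\epsilon|$ small. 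By (an extension to the $\cpt$ class of) \cref{thm:Hilbert-Samuel formula}, both $\widehat{\vol}^{\num}_\chi(\overline{D}_\epsilon)$ and the arithmetic intersection number $(\overline{D}_\epsilon^{d+1}\mid U)_S$ are well-defined and agree.

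The key analytic input is the first-order expansion
\[
\widehat{\vol}^{\num}_\chi(\overline{D}_\epsilon) = \widehat{\vol}^{\num}_\chi(\overline{D}(c)) + (d+1)\,\epsilon\cdot \langle \overline{D}(c)^d\rangle\cdot(0,f_v) + o(\epsilon),
\]
obtained from the multilinearity of the arithmetic intersection pairing together with the continuity of the positive intersection product \cref{positive intersection number} in its continuous argument. Combined with the fundamental inequality \cref{thm:essential minimum} applied to $\overline{D}_\epsilon$ and with the genericity of the net $(x_m)$ (which depends only on the underlying variety and is therefore preserved by the perturbation), this yields
\[
\liminf_m h_{\overline{D}_\epsilon}(x_m) \geq \sup_{V\subset U\text{ open}}\inf_{x\in V(\overline{K})} h_{\overline{D}_\epsilon}(x) \geq \frac{\widehat{\vol}^{\num}_\chi(\overline{D}_\epsilon)}{(d+1)\vol(D)}.
\]

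Unfolding the height decomposition
\[
h_{\overline{D}_\epsilon}(x_m) = h_{\overline{D}(c)}(x_m) + \epsilon\cdot\frac{1}{\#O(x_m)}\sum_{\sigma\in\Gal{\overline{K}}{K}}f_v(\sigma(x_m))
\]
and using smallness of $(x_m)$ with respect to $\overline{D}$ (equivalently, with respect to $\overline{D}(c)$, since the two heights differ by the additive constant $\int_\Omega c\,d\nu$), which forces $\lim_m h_{\overline{D}(c)}(x_m) = \widehat{\vol}^{\num}_\chi(\overline{D}(c))/((d+1)\vol(D))$, we deduce for $\epsilon>0$ that
\[
\epsilon\cdot\liminf_m \frac{1}{\#O(x_m)}\sum_{\sigma}f_v(\sigma(x_m)) \geq \epsilon\cdot\frac{\langle \overline{D}(c)^d\rangle\cdot(0,f_v)}{\vol(D)} + o(\epsilon).
\]
Dividing by $\epsilon>0$ and letting $\epsilon\to 0^+$ proves the lower bound. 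Replacing $f_v$ by $-f_v$ (equivalently, taking $\epsilon<0$ and using $\limsup$) yields the matching upper bound, so the limit exists and equals the claimed value.

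The main obstacle I anticipate is justifying the first-order expansion together with the Hilbert-Samuel identity on the full finite-energy class: \cref{thm:Hilbert-Samuel formula} is stated only for the relatively nef/arithmetically nef subclass $\widehat{\Div}_{S,\Q}(U)_{\relnef}^{\arnef,\YZ}$, while $\overline{D}(c)$ and the perturbations $\overline{D}_\epsilon$ are merely in the larger finite-energy class $\widehat{\Div}_{S,\Q}(U)^{\YZ}_{\cpt}$. Bridging this gap will require either a direct extension of Hilbert-Samuel to the $\cpt$ class (following the energy formalism already developed for the arithmetic intersection product) or a monotone approximation of $f_v$ by model Green data, controlled by the continuity of both $\widehat{\vol}^{\num}_\chi$ and the positive intersection product along such approximations. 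A secondary, purely bookkeeping point is checking that the height shift induced by $c$ is exactly the scalar $\int_\Omega c\,d\nu$, so that smallness of $(x_m)$ transfers from $\overline{D}$ to $\overline{D}(c)$ without loss.
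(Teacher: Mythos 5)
Your proposal correctly identifies the overall variational framework (perturbing in the direction $(0,f_v)$, applying the fundamental inequality from \cref{thm:essential minimum}, and combining the lower bound with differentiability to conclude via a Berman--Boucksom type argument) and you even correctly flag the central difficulty: you cannot directly invoke \cref{thm:Hilbert-Samuel formula} for $\overline{D}(c)$ because it is only in $\widehat{\Div}_{S,\Q}(U)^{\YZ}_{\cpt}$, not in the relatively nef class. However, the resolution you propose for this gap -- extending the Hilbert--Samuel identity to the whole $\cpt$ class, or a monotone approximation controlling both $\widehat{\vol}^{\num}_\chi$ and the positive intersection product -- is not how the paper resolves it, and without the paper's key observation your plan does not close.

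The missing idea is this: the smallness hypothesis itself implies, via the chain of inequalities in \cref{remark:equidistributioncriterion}, that the concave transform $G_{\overline{D}}$ is identically equal to the constant $\widehat{\mu}_{\max}^{\mathrm{asy}}(\overline{D})>-\infty$ on $\Delta(D)^\circ$. Consequently $G_{\overline{D}(c)}$ is a strictly positive constant on $\Delta(D)^\circ$ (since $\overline{D}(c)$ is big). This positivity is precisely what allows you to replace $\widehat{\vol}^{\num}_\chi(\overline{D}(c)+t(0,f_v))$ by $\widehat{\vol}(\overline{D}(c)+t(0,f_v))$ (up to an additive constant independent of $t$) for $|t|$ small, by \cref{corol:volsame}. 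Differentiability of the \emph{arithmetic volume} $\widehat{\vol}$ in model directions, with derivative given by the positive intersection product, is then available from the cited Theorem~3.13 of the second author's paper [Bis23b]; no Hilbert--Samuel formula in the $\cpt$ class is needed or used. Your proposed "first-order expansion by multilinearity of the intersection pairing" is not justified because $\overline{D}(c)$ need not carry any intersection pairing -- it is a general finite-energy class -- and the expansion has to go through $\widehat{\vol}$, not through $\widehat{\vol}^{\num}_\chi$ directly. Finally, you omit the density step (model Green functions are uniformly dense in $C(X_v^\an)$) required to pass from model data to arbitrary $f_v\in C_c(U_v^\an)$.
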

 Notice that we don't require $\overline{D}$ to be arithmetically integrable which is a rather strong condition, see \cite[Example~7.13]{cai2024abstract}. 

\subsection*{Notation}

For any scheme $X$, we denote by $\Div(X)$ the group of Cartier divisors on $X$. An algebraic variety $X$ over a field $k$ is defined as a geometrically integral separated scheme of finite type over $k$, and we denote by $k(X)$ its function field.

Let $S=(K,\Omega,\mathcal{A},\nu)$ be an adelic curve, see \cref{def:adelic curve}. We denote by $\mathscr{L}^1(\Omega,\mathcal{A},\nu)$ the space of integrable functions $\phi:\Omega\to\R$. For any $\omega\in K$, we denote $\val_\omega$ the absolute value corresponding to $\omega$ and $K_\omega$ the completion of $K$ with respect to $\val_\omega$.

For any vector space $V$ over $K$,  we denote $V_\omega\coloneq V\otimes_KK_\omega$. Similarly, for any algebraic variety $X$ over $K$ and any ($\Q$-)Cartier divisor $D$ on $X$, we denote by $X_\omega, D_\omega$ the base change of $X, D$ respectively. 
For a measurable subset $\Delta$ of $\R^d$,  $\Delta^\circ$ denotes the topological interior of $\Delta$ in $\R^d$, and $\vol_{\R^d}(\Delta)$ denotes the Lebesgue measure of $\Delta$.

\subsection*{Acknowledgements}

\section{Adelic curves and adelic vector bundles}
\subsection{Adelic Curves}
We begin by introducing the notion of \emph{adelic curves} in this subsection. 
Our main reference is \cite{chen2020arakelov}.
\begin{definition}[\cite{chen2020arakelov}~\S~3.1] 	\label{def:adelic curve}
	Let $K$ be a field and $M_K$ the set of all absolute values on $K$. An \emph{adelic structure on $K$} is a measure space $(\Omega,\mathcal{A},\nu)$ equipped with a map $\phi: \Omega\rightarrow M_K, \ \ \omega\mapsto |\cdot|_\omega$ such that for any $a\in K^\times$, the function $\log|a|_{\omega}: \Omega\rightarrow \R$ is measurable. The data $(K, (\Omega, \mathcal{A},\nu),\phi)$ (or simply $(K,\Omega, \mathcal{A}, \nu)$) is called an \emph{adelic curve}.
	Furthermore, an adelic curve $(K, (\Omega, \mathcal{A},\nu),\phi)$ is said to be \emph{proper} if the \emph{product formula} holds: for any $a\in K^\times$,
	\begin{equation*} 		\label{product formula}
		\int_\Omega \log|a|_\omega\, \nu(d\omega)=0.
	\end{equation*}
\end{definition}
\begin{remark}\label{notation for adelic curves}
	Given an adelic curve $(K, (\Omega, \mathcal{A},\nu),\phi)$, we fix the following notation throughout the paper:	\[\Omega_\infty:=\{\omega\in \Omega\mid \text{$\phi(\omega)$  archimedean}\},\]
		\[\Omega_\mathrm{fin}:=\{\omega\in \Omega\mid \text{$\phi(\omega)$ non-archimedean}\},\]
		\[\Omega_0:=\{\omega\in \Omega\mid \text{$\phi(\omega)$ trivial}\}\subset \Omega_\mathrm{fin}.\]
\end{remark}
\subsection{Adelic vector bundles}
Throughout the rest of this section, we fix a proper adelic curve $S=(K,\Omega,\mathcal{A},\nu)$.
Recall that given a function $\psi\colon \Omega\to \R$, we define its \emph{upper integral} as
\[\tint_{\Omega} \psi(\omega)\,\nu(d\omega)\coloneq \inf\left\{\int_{\Omega} \phi(\omega)\,\nu(d\omega)\mid \phi\in {\mathscr{L}^1(\Omega,\mathcal{A},\nu)},\ \phi\ge \psi \text{ almost everywhere on } \Omega\right\}\]
and its \emph{lower integral} as
\[\bint_{\Omega}\psi(\omega)\,\nu(d\omega)\coloneq \sup\left\{\int_{\Omega} \phi(\omega)\,\nu(d\omega)\mid \phi\in {\mathscr{L}^1(\Omega,\mathcal{A},\nu)},\ \phi\le \psi \text{ almost everywhere on } \Omega\right\}.\]
We say such a function $\psi$ is \emph{upper dominated} (resp. \emph{lower dominated}) if $\psi$ has a finite upper integral (resp. lower integral).

\begin{art}     \label{def:adelicvectorbundle}
	By an \emph{$S$-normed vector space} over $K$, we mean a finite dimensional vector space $V$ over $K$ equipped with a family of norms $\metr=(\metr_\omega)_{\omega\in\Omega}$ denoted by $\overline{V}\coloneq (V,\metr)=(V,(\metr_\omega)_{\omega\in\Omega})$ where $\metr_\omega$ is a norm on $V_\omega\coloneq V\otimes_K K_\omega$. Recall that given an $S$-normed vector space $\overline{V}$ over $K$, we consider the dual vector space $V^{\vee}$ along with dual norms $\metr_\omega^{\vee}$ of $\metr_\omega$ at each $\omega\in\Omega$, see \cite[\S 1.1.5]{chen2020arakelov}. This gives us an $S$-normed vector space which we call the \emph{dual $S$-normed vector space} of $\overline{V}$ over $K$ and denote it by $\overline{V}^{\vee}\coloneq(V^\vee,\metr^\vee)$.
	
	Let $\overline{V}$ be an $S$-normed vector space over $K$. We say that $\overline{V}$ is an \emph{adelic vector bundle} on $S$ if the following conditions hold:
	\begin{enumerate}
		\item it is \emph{measurable}: for any $s\in V$, the function
		\[\omega{(\in \Omega)}\mapsto \|s\|_\omega\]
		is measurable.
		\item it is \emph{dominated}: for any $s\in V$ and $t\in V^{\vee}$ both the functions 
		\[\omega{(\in \Omega)}\mapsto \log\|s\|_\omega \ \text{ and } \ \omega{(\in \Omega)}\mapsto \log \|t\|_\omega^{\vee}\]
		are upper dominated.
	\end{enumerate}
\end{art}

\begin{art} \label{distance of norms}
	Let $V$ be a vector space over $K$ and $\metr=(\metr_\omega)_{\omega\in\Omega}$, $\metr'=(\metr_\omega')_{\omega\in\Omega}$ be families of norms on $V$. For any $\omega\in\Omega$, set 
	\[d_\omega(\metr,\metr')\coloneq\sup\limits_{s\in V_\omega\setminus\{0\}}\left|\log\|s\|_\omega-\log\|s\|_{\mathbf{e},\omega}\right|\]
	called the \emph{local distance} of $\metr$ and $\metr'$ at $\omega$. The \emph{global distance} of $\metr, \metr'$ is defined as 
	\[d(\metr, \metr')\coloneq \tint_{\Omega} d_\omega(\metr,\metr')\,\nu(d\omega).\]
\end{art}
\begin{art}
	\label{basic invariant of adelic vector bundle}
	Let $\overline{V}=(V,\metr)$ be an adelic vector bundle on $S$. We can construct the following spaces.
	\begin{itemize}
		\item The \emph{determinant} $\det(\overline{V})$ of $\overline{V}$ is given by fixing the determinant $\det(V)$ of $V$ to be the underlying vector space and the norm to be the determinant norm at each $\omega\in\Omega$ (see \cite[Definition~1.1.65]{chen2020arakelov}). It is proven in \cite[Proposition~4.1.32]{chen2020arakelov} that $\det(\overline{V})$ is an adelic vector bundle of dimension $1$ over $K$.
		\item For a vector subspace $W$ of $V$, we can restrict the norms of $\overline{V}$ at each $\omega\in\Omega$ to obtain a normed family with underlying vector space $W$ which is called the \emph{restricted norm}. The subspace $W$ with the restricted norm is an adelic vector bundle by \cite[Proposition~4.1.32~(2)]{chen2020arakelov}. 
		\item For a quotient vector space of $G$ of $V$, we can consider $G$ along with the quotient norms at every $\omega\in\Omega$, see \cite[\S 1.1.3]{chen2020arakelov}, to construct a family of norms, called the \emph{quotient norm}, whose underlying space is $G$. The quotient space with the quotient norm is an adelic vector bundle by \cite[Proposition~4.1.32~(2)]{chen2020arakelov}.
	\end{itemize} 
	Using the notation above, we are able to  define the following invariants of $\overline{V}$.
	\begin{enumerate1}
		\item The \emph{Arakelov degree} of $\overline{V}$ is defined as \[\widehat{\deg}(\overline{V})=\widehat{\deg}(\det(\overline{V}))\coloneq \int_{\Omega}-\log \|s\|_{\det,\omega}\, \nu(d\omega),\]
		where $s$ is any non-zero section of $\det(V)$ and it is well-defined since $S$ is proper and $\dim_K\det(V)=1$.
		\item The \emph{positive Arakelov degree} of $\overline{V}$ is defined as
		\[\widehat{\text{deg}}_+(\overline{V})\coloneq \sup_{\overline{W}}\{\widehat{\text{deg}}(\overline{W})\},\]
		where $\overline{W}$ varies over all subspaces of $\overline{V}$ equipped with restriction norms.
		\item The \emph{slope} of $\overline{V}$ is defined as
		\[\widehat{\mu}(\overline{V})\coloneq \frac{\widehat{\deg}(\overline{V})}{\dim_K(V)}.\]
		\item \label{max and minimal slope} The \emph{maximal slope} and \emph{minimal slope} of $\overline{V}$ are defined as 
		\[\widehat{\mu}_{\max}(\overline{V})\coloneq \sup_{\overline{W}}\{\widehat{\mu}(\overline{W})\}\ \text{and}\ \widehat{\mu}_{\min}(\overline{V})\coloneq \inf_{\overline{G}}\{\widehat{\mu}(\overline{G})\},\]
		respectively,
		where $\overline{W}$ varies over all subspaces of $\overline{V}$ with the restricted norms and $\overline{G}$ varies over all quotient spaces of $\overline{V}$ with quotient norms.
	\end{enumerate1}
	Let $\overline{M}=(M,\metr_M)$, $\overline{N}=(N,\metr_N)$ be two adelic vector bundles, and $f\colon M\to N$ a $K$-linear map. For any $\omega\in\Omega$, there is an induced map of normed vector spaces \[f_\omega\colon (M_\omega,\metr_{{M},\omega})\to (N_\omega,\metr_{{N},\omega})\] and we can consider its \emph{operator norm} $\|f\|_\omega\coloneq \|f_\omega\|$. Then we define the \emph{height} of $f$ as
	\[h(f)\coloneq \bint_{\Omega} \log \|f\|_\omega\,\nu(d\omega).\]
\end{art}
\subsection{Purification}
For our later application in the case of the classical global fields, we consider pure norms in order to relate our arithmetic volumes with the classical ones.
\begin{art} \label{purification} 
	Let $\overline{V}=(V,\metr)$ be an $S$-normed vector space. For any $\omega\in\Omega\setminus\Omega_0$, we denote 
	\[\widehat{V}_\omega\coloneq \{s\in V_\omega\mid \|s\|_\omega\leq 1\}.\]
	The \emph{purification} of $\overline{V}$, denoted by $\overline{V}_\pur$, is the same underlying vector space $V$ equipped with a family of norms $\metr_{\mathrm{pur}}\coloneq \{\metr_{\pur,\omega}\}_{\omega\in\Omega}$ defined as follows: 
	\begin{itemize}
		\item for any $\omega\in\Omega\setminus(\Omega_0\cup \Omega_{\infty})$ and $0
\neq s\in V_\omega$, 
		\[\|s\|_{\pur,\omega}\coloneq \inf\{|\alpha|\mid \alpha\in K_\omega^{\times},\ s\in \alpha\widehat{V}_\omega\};\]
            \item for any $\omega\in\Omega_{\infty}$ and $0\neq s\in V_{\omega}$, we set 
            \[\|s\|_{\pur,\omega}=\|s\|_{\omega};\]
		\item for any $\omega\in\Omega_0$ and $0\not=s\in V_\omega$, $\|s\|_\omega=1$.
	\end{itemize}
	Notice that $\metr_\omega\leq\metr_{\pur,\omega}$ for any $\omega\in\Omega\setminus\Omega_{\infty}$. Moreover it is not always true that $\metr_{\omega}=\metr_{\pur,\omega}$ in the case when $\omega\in\Omega_{\mathrm{fin}}$ which leads to the following definition of purity. 
	\begin{definition}
             \label{def:purity}
             For $\omega\in\Omega$, we say that $\overline{V}$ is \emph{pure at $\omega$} if $\metr_\omega=\metr_{\pur,\omega}$. We say that $\overline{V}$ is \emph{pure} if it is pure at each $\omega\in\Omega$.
	\end{definition}  
\end{art}
\begin{remark} \label{rmk: pure at non-archimedean}
	Let $V=(V,\metr)$ be an $S$-normed vector space such that $\metr_\omega$ is ultrametric for any $\omega\in\Omega_{\mathrm{fin}}$. By \cite[Proposition~1.1.30~(3), Proposition~1.1.32~(2)]{chen2020arakelov}, $\overline{V}$ is pure at $\omega\in\Omega$ if and only if $(V_\omega,\metr_\omega)$ is pure in the sense of \cite[Definition~1.1.29]{chen2020arakelov}. In particular, if $\omega\in\Omega_{\mathrm{fin}}$ gives a discrete absolute value $\val_\omega$ on $K$, then the following statements are equivalent: 
	\begin{itemize}
		\item $\overline{V}$ is pure at $\omega$;
		\item $\|V\|_\omega=|K|_\omega$, see also \cite[Proposition~2.9]{burgos2016arithmetic};
		\item $\metr_\omega$ is induced by some \emph{lattice} $\mathcal{V}_\omega$ of $V_\omega$, see \cite[Definition~1.1.29, Proposition~1.1.30~(2)]{chen2020arakelov}.
	\end{itemize}
	Here, by a lattice of $V_\omega$, we mean a sub-$K_\omega^\circ$-module $\mathcal{V}_\omega$ of $V_\omega$ generating $V_\omega$ as a vector space over $K_\omega$ with $\mathcal{V}_\omega$ bounded in $V_\omega$ for some norm of $V_\omega$, see \cite[Definition~1.1.23]{chen2020arakelov}.
\end{remark}
For further application, we give the following lemma. 

\begin{lemma}
	\label{lemma:puredomination}
Let $\overline{V}$ be an $S$-normed vector bundle. If $\overline{V}$ is dominated, then for any $s'\in V^{\vee}$, the function $\omega (\in \Omega)\mapsto \log\|s'\|^\vee_{\pur,\omega}$ is upper dominated where $(V^\vee, \metr_\pur^\vee)$ is the dual of $\overline{V}_\pur$.
\end{lemma}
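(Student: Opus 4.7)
The plan is to split $\Omega$ into the three pieces $\Omega_\infty$, $\Omega_{\mathrm{fin}}\setminus\Omega_0$, and $\Omega_0$, bound $\log\|s'\|^\vee_{\pur,\omega}$ on each piece, and then patch the local dominations. The dominated hypothesis on $\overline{V}$ furnishes a function $\phi\in\mathscr{L}^1(\Omega,\mathcal{A},\nu)$ with $\log\|s'\|^\vee_\omega\leq \phi(\omega)$ almost everywhere, and the goal is to show that (a truncated restriction of) $\phi$ still upper-dominates the purified dual norm.

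On $\Omega_\infty$, by definition $\metr_{\pur,\omega}=\metr_\omega$, so the dual norms agree and $\phi$ dominates $\log\|s'\|^\vee_{\pur,\omega}$ directly. On $\Omega_{\mathrm{fin}}\setminus\Omega_0$, the infimum defining $\|s\|_{\pur,\omega}$ ranges over $\alpha\in K_\omega^\times$ with $|\alpha|_\omega\geq \|s\|_\omega$, giving the pointwise comparison $\metr_\omega\leq \metr_{\pur,\omega}$ on $V_\omega$; passing to duals reverses this and yields $\|s'\|^\vee_{\pur,\omega}\leq \|s'\|^\vee_\omega$, so again $\phi$ dominates. On $\Omega_0$, however, the naive inequality $\metr_\omega\leq \metr_{\pur,\omega}$ can fail because the original norm at a trivial place is not constrained to equal the standard trivial norm, so this piece must be handled directly. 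By construction $\|\cdot\|_{\pur,\omega}$ is the standard trivial norm (equal to $1$ on every nonzero vector of $V_\omega$), and since $|\cdot|_\omega$ is itself trivial on $K_\omega$, a one-line computation
\[
\|s'\|^\vee_{\pur,\omega}=\sup_{v\in V_\omega\setminus\{0\}}\frac{|s'(v)|_\omega}{\|v\|_{\pur,\omega}}=\sup_{v\in V_\omega\setminus\{0\}}|s'(v)|_\omega=1
\]
gives $\log\|s'\|^\vee_{\pur,\omega}\equiv 0$ on $\Omega_0$ whenever $s'\neq 0$ (the case $s'=0$ being trivial).

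To patch, set $\psi(\omega)\coloneq \max(\phi(\omega),0)\cdot \mathbbm{1}_{\Omega\setminus \Omega_0}(\omega)$. Under the standing hypothesis on $S$, the set $\Omega_0=\bigcap_{a\in K^\times}\{\omega\in\Omega:|a|_\omega=1\}$ is measurable (as a countable intersection of measurable sets when $K$ is countable, and automatically when $\mathcal{A}$ is discrete), hence $\psi$ is measurable. Since $|\psi|\leq |\phi|$ pointwise, $\psi\in\mathscr{L}^1(\Omega,\mathcal{A},\nu)$, and the three case analyses combine to $\log\|s'\|^\vee_{\pur,\omega}\leq \psi(\omega)$ almost everywhere, which is the required upper domination. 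The only subtle step is the trivial-place computation, since the monotonicity-under-purification that handles the non-archimedean case breaks there and must be replaced by an explicit evaluation of the dual of the trivial norm.
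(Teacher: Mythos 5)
Your proof is correct, and it is in fact more careful than the paper's own argument at one point. The paper's proof is a one-liner: it invokes the inequality $\metr_\omega\le\metr_{\pur,\omega}$ (asserted just after the definition of purification for all $\omega\in\Omega\setminus\Omega_\infty$, and an equality on $\Omega_\infty$), notes the identity map $(V_\omega,\metr_{\pur,\omega})\to(V_\omega,\metr_\omega)$ is then norm-contractive, dualizes to get $\|s'\|^\vee_{\pur,\omega}\le\|s'\|^\vee_\omega$, and transfers upper domination from the right side (given by the hypothesis that $\overline V$ is dominated) to the left. You reach the same inequality on $\Omega\setminus\Omega_0$, but you correctly observe that the asserted inequality $\metr_\omega\le\metr_{\pur,\omega}$ has no reason to hold at trivial places: there $\metr_{\pur,\omega}$ is prescribed to be the constant trivial norm while $\metr_\omega$ is unconstrained, so $\metr_\omega$ may exceed it. Your fix --- computing $\|s'\|^\vee_{\pur,\omega}\equiv 1$ directly for nonzero $s'$ on $\Omega_0$ --- is exactly what is needed, and it buys a genuinely complete argument at trivial places (irrelevant for the paper's later applications, where $\Omega_0$ is empty, but the lemma is stated without that restriction).

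One simplification worth noting: the detour through measurability of $\Omega_0$ is unnecessary, and in fact the hypothesis ``$\mathcal{A}$ discrete or $K$ countable'' is not in force in the section where this lemma is stated. Instead of truncating by $\mathbbm{1}_{\Omega\setminus\Omega_0}$, simply take $\max\{\phi,0\}$ on all of $\Omega$: it is measurable and integrable because $\phi$ is, it satisfies $\max\{\phi,0\}\ge\phi\ge\log\|s'\|^\vee_\omega\ge\log\|s'\|^\vee_{\pur,\omega}$ off $\Omega_0$, and it satisfies $\max\{\phi,0\}\ge 0=\log\|s'\|^\vee_{\pur,\omega}$ on $\Omega_0$. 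No measurability of $\Omega_0$ is needed.
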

\begin{proof}
	Since $\metr_\omega\leq \metr_{\pur,\omega}$, the identity on $V$ induces a morphism of normed vector spaces 
	\[f_\omega\colon(V_\omega,\metr_{\pur,\omega})\rightarrow (V_\omega,\metr_\omega)\]
	for any $\omega\in\Omega$ and clearly the operator norm $\|f\|_\omega\le 1$. Then passing to the dual and using \cite[Proposition~1.1.22]{chen2020arakelov},  for any $s'\in V^{\vee}$ and $\omega\in\Omega$
	\[\|s'\|^{\vee}_{\pur,\omega}\le \|s'\|^{\vee}_\omega\]
	Since $\overline{V}$ is dominated, we have that $\log\|s'\|_\omega^\vee$ is upper dominated, then so is $\log\|s'\|^{\vee}_{\pur,\omega}$ by the above inequality using \cite[Proposition~A.4.2~(2)]{chen2020arakelov}.
\end{proof}
\subsection{Harder-Narasimhan Filtration}

\begin{art}\label{def:harder filtration}
	Let $\overline{V}$ be an adelic vector bundle on $S$. The \emph{Harder-Narasimhan filtration} of $V$ is an $\R$-indexed non-increasing filtration $\mathcal{H}$ on $V$ defined as 
	\[\mathcal{H}^t(V)\coloneq \mathrm{Vect}_K(\{W\subseteq V\mid \widehat{\mu}_{\min}(\overline{W})\ge t\}) \]
	for $t\in\R$, where $\mathrm{Vect}_K(\cdot)$ denotes the vector space generated over $K$ by the subset, see also \cite[Proposition~4.3.46]{chen2020arakelov}.
	The \emph{jumping numbers} of the Harder-Narasimhan filtration is defined as usual by
	\[\widehat{\mu}_i(\overline{V})\coloneq \sup\{t\in \R\mid \text{dim}_K(\mathcal{H}^t(V))\ge i\}\]
	for every integer $i\in \N$. By \cite[Proposition~4.3.46]{chen2020arakelov}, the last slope of $\overline{V}$ is exactly the minimal slope $\widehat{\mu}_{\min}(\overline{V})$ defined in \cref{basic invariant of adelic vector bundle}~\ref{max and minimal slope}. See \cite[Proposition~4.3.50, Proposition~4.3.51, Corollary~4.3.52]{chen2020arakelov} for the relation between the jumping numbers and the Arakelov degree of $\overline{V}$.
\end{art}
\subsection{Graded algebra of adelic vector bundles}
In this subsection, we introduce graded algebras where each of the graded pieces are adelic vector bundles. Later on we will introduce arithmetic volumes as the rate of asymptotic growth of positive degrees of such algebras. In order to have a satisfying theory of filtration, we need to introduce a technical condition on our base adelic curve. Note that given two adelic vector bundles $\overline{E}=(E,\metr_E)$ and $\overline{F}=(F,\metr_F)$ over an adelic curve $S$, we can define their \emph{$(\varepsilon,\pi)$-tensor product} by setting the norm at every $\omega\in\Omega$ to be the $(\varepsilon,\pi)$-tensor product $\metr_{{E},\omega}\otimes_{\varepsilon,\pi} \metr_{{F},\omega}$ (see \cite[Chapter 4]{chen2020arakelov} for more details). We denote this adelic vector bundle by $\overline{E}\otimes_{\varepsilon,\pi} \overline{F}$. 
\begin{art}[\cite{chen2020arakelov}~Definition~4.3.73, Definition~6.3.23]
	\label{def:tensorialproperty}
	Let $C\in \R_{\geq 0}$. We say that $S$ has 
	\begin{itemize}
		\item the \emph{tensorial minimal slope property of level $\ge C_0$} if for any two adelic vector bundles $\overline{E}$ and $\overline{F}$ on $S$, we have
		\[\widehat{\mu}_{\min}(\overline{E}\otimes_{\varepsilon,\pi} \overline{F})\ge \widehat{\mu}_{\min}(\overline{E})+\widehat{\mu}_{\min}(\overline{F})-C_0\log(\dim_K(E)\dim_K(F));\]
		\item the \emph{strong tensorial minimal slope property of level $\ge C_0$} if for any $n\in\N_{\geq 2}$ and any family $\{\overline{E_i}\}_{1\leq i\leq n}$ of adelic vector bundles on $S$, we have
		\[\widehat{\mu}_{\min}(\overline{E_1}\otimes_{\varepsilon,\pi}\cdots\otimes_{\varepsilon,\pi} \overline{E_n})\geq\sum\limits_{i=1}^n\widehat{\mu}_{\min}(\overline{E_i})-C_0\log(\dim_K(E_i));\]
		\item the \emph{Minkowski property of level $\ge C_0$} if for any adelic vector bundles $\overline{E}$ on $S$, we have
		\[{\nu}_{1}(\overline{E})\geq\widehat{\mu}_{\max}(\overline{E})-C_0\log(\dim_K(E)),\]
		where \[\nu_1(\overline{E})\coloneq \sup\left\{t\in\R\mid \exists 0\not=s\in E \text{ such that } \int_{\Omega}-\log\|s\|_\omega\, \nu(d\omega) \geq t\right\}\]
		is the \emph{first} (\emph{logarithmic}) \emph{minimal} of $\overline{E}$,
		see \cite[Definition~4.3.69]{chen2020arakelov} for the more general definition.
	\end{itemize}
		Note that Chen-Moriwaki showed in \cite[Corollary~5.6.2]{chen2020arakelov} that $S$ has the tensorial minimal slope property of level $\ge \frac{3}{2}\nu(\Omega_{\infty})$ when the underlying field $K$ is perfect. In particular number fields and function fields of characteristic $0$ are included in this formalism. When $S$ is given by a regular projective curve over some field $k$, then $S$ has the Minkowski property of level $\geq C_0$ for some $C_0\in\R_{\geq 0}$, see \cite[Remark~4.3.74]{chen2020arakelov}, which implies that $S$ has the tensorial minimal slope property of level $\geq C_0$ by \cite[Corollary~4.3.76]{chen2020arakelov}.
\end{art}

\begin{definition}[\cite{chen2020arakelov}~Definition~6.3.24]
	\label{def:gradedalgerba}
	Let $R\coloneq 
	\Frac(\overline{K}[[z_1,\ldots z_d]])$ be the field of formal power series. We call a \emph{graded $K$-algebra of $S$-normed vector spaces} (\emph{with respect to $R$}) to be any family $\overline{E_{\bullet}}\coloneq \{\overline{E_m}\}_{m\in\N}=\{(E_m,\metr_m)\}_{m\in\N}$ of $S$-normed vector spaces satisfying
	\begin{enumerate}
		\item \label{gradedalgebra i} 
		$\bigoplus\limits_{m=0}^\infty E_{\overline{K},m}T^m$ is contained in a $\overline{K}$-subalgebra of finite type inside $R[T]$, where $E_{\overline{K},m}\coloneq E_m\otimes_K\overline{K}$;
		\item \label{gradedalgebra ii} for any $m\in \N$, $\metr_{m,\omega}$ is ultrametric for any $\omega\in \Omega_{\mathrm{fin}}$;
		\item \label{gradedalgebra iii} for any $m_1,m_2\in \N_{\geq 1}$, $s_1\in E_m$, $s_2\in E_n$ and $\omega\in \Omega$, we have
		\[\|s_1\cdot s_2\|_{m_1+m_2,\omega}\le \|s_1\|_{m_1,\omega}\cdot \|s_2\|_{m_2,\omega}\]
	\end{enumerate}
	A \emph{graded $K$-algebra of adelic vector bundles} is a graded $K$-algebra of $S$-normed vector spaces $\overline{E_\bullet}$ such that each $\overline{E_m}$ is an adelic bundle bundle.
\end{definition}
\begin{remark}
Our definition here is slightly more general than \cite[Definition~6.3.24]{chen2020arakelov}, with this definition, we are able to remove the assumption that $X$ has a rational regular point $p\in X(K)$ in \cite[Theorem~6.4.6]{chen2020arakelov} as pointed out in \cite[Remark~6.3.29]{chen2020arakelov}. In this paper, we will prove a similar result for quasi-projective varieties, but without this assumption, see \cref{theorem:concavemain}.
\end{remark}
\begin{remark}\label{remark:kodaira dimension}
	Recall from \cite[Proposition~6.3.18]{chen2020arakelov} that for any graded $K$-algebra $E_{\bullet}=\{E_m\}_{m\in\N}$ such that $\bigoplus\limits_{m=0}^\infty E_{\overline{K},m}T^m$ is  $\Frac(\overline{K}[[z_1,\ldots z_d]])$ where $E_{\overline{K},m}\coloneq E_m\otimes_K\overline{K}$, there is a minimal non-negative integer $0\leq \kappa\leq d$ such that $\dim_K(E_m)=\dim_K(E_{m,\overline{K}})=O(m^\kappa)$,
	i.e. there is $C\in\R_{>0}$ such that $|\dim_K(E_m)/m^\kappa|<C$ for any $m\in\N$,  see \cite[Definition~6.3.9, Proposition~6.3.18]{chen2020arakelov}, 
	and we call this integer the \emph{Kodaira dimension} of $E_{\bullet}$. 
\end{remark}



We next introduce the \emph{arithmetic volume} of a graded $K$-algebra of adelic vector bundles. 
\begin{definition}
	\label{def:arithvol}
	Let $\overline{E_{\bullet}}$ be a graded $K$-algebra of adelic vector bundles with respect to $R\coloneq\Frac(\overline{K}[[z_1,\ldots z_d]])$. We define the \emph{arithmetic volume} and \emph{arithmetic $\chi$-volume} of $\overline{E_\bullet}$ as
	\[\widehat{\vol}(\overline{E_{\bullet}})\coloneq\limsup_{m\to\infty} \frac{\widehat{\deg}_+(\overline{E_m})}{m^{d+1}/(d+1)!}\]
	and
	\[\widehat{\vol}_\chi(\overline{E_{\bullet}})\coloneq\limsup_{m\to\infty} \frac{\widehat{\deg}(\overline{E_m})}{m^{d+1}/(d+1)!},\]
	respectively. 
\end{definition}

\begin{definition}
	\label{def:asymptoticslopes}
	Let $\overline{E_{\bullet}}$ be a graded $K$-algebra of adelic vector bundles. 
	We define the \emph{asymptotic maximal slope} and \emph{asymptotic minimal slope} of $\overline{E_{\bullet}}$ as
	\[\widehat{\mu}_{\max}^{\mathrm{asy}}(\overline{E_{\bullet}})\coloneq \limsup_{m\to\infty}\frac{\widehat{\mu}_{\max}(\overline{E_m})}{m}\]
	and
	\[\widehat{\mu}_{\min}^{\mathrm{asy}}(\overline{E_{\bullet}})\coloneq {\liminf\limits_{m\to\infty}}\frac{\widehat{\mu}_{\min}(\overline{E_m})}{m},\]
	respectively.
\end{definition}
\begin{remark}
	Note that these quantities are not necessarily bounded. However we will show that for compactified divisors the asymptotic maximal slope is always finite and it turns out that under suitable positivity assumptions on the geometric divisor, the asymptotic minimal slope is also bounded.
\end{remark}

\subsection{Over global fields}
\label{over global field}
We close this section by considering the adelic vector bundles over a global field.
\begin{art} \label{adelic curve defined by number field and projective curves}
	Let $C$ be the spectrum of the ring of integers of a number field $K$ or a smooth projective curve over some field $k$. Let $K$ be the function field of $C$. We can define an adelic curve, denoted by $S$, as follows. Set $\Omega_{\mathrm{fin}}$ the set of closed points on $C$ and
	\begin{align*}
		\Omega_\infty\coloneq \begin{cases}
			\Hom{K}{\C}, & \text{ if $K$ is a number field;}\\
			\emptyset& \text{ if $K$ is the function field of a smooth curve over $k$.}
		\end{cases}
	\end{align*}  We equip $\Omega\coloneq \Omega_\infty\coprod \Omega_{\mathrm{fin}}$ with the discrete $\sigma$-algebra $\mathcal{A}$. Then every $\omega\in\Omega$ determines an absolute value on $K$ in an obvious way. Let $\nu$ be the measure on $(\Omega,\mathcal{A})$ such that for any $\omega\in\Omega$,
	\[\nu(\omega)=\nu(\{\omega\}) \coloneq\begin{cases}
		\frac{[K_\omega\colon\Q_\omega]}{[K\colon\Q]}& \text{ if $K$ is a number field;}\\
		[k(\omega)\colon k]& \text{ if $K$ is the function field of a smooth curve over $k$.}\end{cases}\] 
	Then $S$ is proper. 
\end{art}

In the following, we fix $C$, the spectrum of the ring of integers of a number field $K$ or a smooth projective curve over some field $k$ with function field $K$, and let $S=(K,\Omega,\mathcal{A},\nu)$ be the adelic curve associated to $C$ given in \cref{adelic curve defined by number field and projective curves}.

Classically, the \emph{coherent} $S$-normed vector spaces over $K$ as defined in \cref{def:adelicvectorspaces} are frequently studied. For a graded $K$-algebra of coherent adelic vector bundles $\overline{E_\bullet}$, the arithmetic volume $\widehat{\vol}(\overline{E_\bullet})$ can be defined in two ways, either using positive degree as we did in \cref{def:arithvol} or using small sections in \cref{def:classical arithvol}. We will show these two definitions of arithmetic volume coincide if $\overline{E_\bullet}$ is \emph{asymptotically pure}. 

Recall from \cite[\S 1.2.3]{chen2020arakelov} that for an $S$-normed space $\overline{V}=(V,\metr)$, a basis $\mathbf{e}=\{e_1,\cdots, e_r\}$ of $V$ is a \emph{orthonormal basis} of $V_\omega$ if
\begin{itemize}
	\item $\|e_i\|_\omega=1$ for any $i=1,\cdots, r$;
	\item $\|\lambda_1e_1+\cdots+\lambda_re_r\|_\omega\geq \max\limits_{1\leq i\leq r}\{|\lambda_i|_\omega\}$ for any $(\lambda_1,\cdots, \lambda_r)\in K^r$.
\end{itemize}
In particular, if $\omega\in\Omega_{\mathrm{fin}}$ and $\metr_\omega$ is ultrametric, 
then the second condition is equivalent to that $\|\lambda_1e_1+\cdots+\lambda_re_r\|_\omega= \max\limits_{1\leq i\leq r}\{|\lambda_i|_\omega\}$ for any $(\lambda_1,\cdots, \lambda_r)\in K^r$.
\begin{example}[\cite{chen2020arakelov}~Example~4.1.5]
	\label{example:basischoice}
	Let $V$ be a vector space over $K$ and $\mathbf{e}=\{e_1, \cdots, e_r\}$ a basis of $V$. We can associate $\mathbf{e}$ an $S$-norm $\metr_{\mathbf{e}}=\{\metr_{\mathbf{e},\omega}\}_{\omega\in\Omega}$ as follows: for any $s=\sum\limits_{i=1}^r \lambda_ie_i\in V_{\omega}=V\otimes_{K}K_\omega$, we set
	\begin{align*}
		\|s\|_{\mathbf{e},\omega}\coloneq\begin{cases}
			\sum\limits_{i=1}^r|\lambda_i|_\omega& \text{ if $\omega\in\Omega_\infty$;}\\
			\max\limits_{1\leq i\leq r}\{|\lambda_i|_\omega\}& \text{ if $\omega\in\Omega_{\mathrm{fin}}$.}
		\end{cases}
	\end{align*}
	Then by \cite[Example~4.1.5]{chen2020arakelov}, the basis $\mathbf{e}$ as above gives a structure of adelic vector bundle which we denote by $\overline{V}_{\mathbf{e}}$. Moreover, $\overline{V}_{\mathbf{e}}$ is pure by \cite[Proposition~2.9]{burgos2016arithmetic}.
\end{example}

\begin{definition}
	\label{def:adelicvectorspaces}
	Let $\overline{V}=(V,\metr)$ be an $S$-normed vector space such that $\metr_\omega$ is ultrametric for any $\omega\in\Omega_{\mathrm{fin}}$. We say that $\overline{V}$
	\begin{enumerate1}
		\item (\cite[Definition~2.10]{burgos2016arithmetic}) is \emph{generically trivial} if there is a basis $\mathbf{e}\coloneq \{e_1,\cdots, e_r\}$ of $V$ such that $\mathbf{e}$ forms an orthonormal basis of $V_\omega$ over $K_\omega$ for all {but finitely many} $\omega\in\Omega$;
		\item (\cite[Definition~4.4.4]{burgos2016arithmetic}) is \emph{coherent} if for any $s\in V$, we have that $\|s\|_\omega\leq 1$ for all but finitely many $\omega\in\Omega$.
	\end{enumerate1}
\end{definition}
The generically trivial $S$-normed vector spaces and adelic vector bundles are closely related.

\begin{lemma}
	\label{lemma:adelicvectorspaceisvectorbundle}
	Let $\overline{V}=(V,\metr)$ be an $S$-normed vector space such that $\metr_\omega$ is ultrametric for any $\omega\in\Omega_{\mathrm{fin}}$.
	\begin{enumerate1}
		\item \label{generically trivial is coherent adelic}We have that $\overline{V}$ is generically trivial if and only if there is a basis $\mathbf{e}$ of $V$ such that $\metr_\omega=\metr_{\mathbf{e},\omega}$ for all but finitely many $\omega\in\Omega$. In particular, if $\overline{V}$ is a generically trivial, then it is a coherent adelic vector bundle.
		\item \label{equivalence of adelic and generically trivial} If $\overline{V}$ is pure at all but finitely many $\omega\in\Omega$, then $\overline{V}$ is an adelic vector bundle if and only if $\overline{V}$ is generically trivial. In particular, in this case, $\overline{V}$ is coherent by \ref{generically trivial is coherent adelic}.
		\item \label{purification is adelic} If $\overline{V}$ is a coherent adelic vector bundle, then the purification $\overline{V}_{\pur}$ is a pure generically trivial adelic vector bundle.
	\end{enumerate1}
\end{lemma}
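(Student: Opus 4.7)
The plan is to leverage the special structure of the adelic curve coming from a global field: $\Omega_\infty$ is finite, the $\sigma$-algebra $\mathcal{A}$ is discrete (so measurability is automatic), and the product formula holds. I will handle all three parts using the common strategy of comparing $\metr$ with the reference norm $\metr_\mathbf{e}$ from \cref{example:basischoice} for a well-chosen basis $\mathbf{e}=\{e_1,\dots,e_r\}$ of $V$.

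For part \ref{generically trivial is coherent adelic}, the key observation is that at $\omega\in\Omega_{\mathrm{fin}}$ the norm $\metr_\omega$ is ultrametric and $\metr_{\mathbf{e},\omega}(\sum\lambda_ie_i)=\max_i|\lambda_i|_\omega$. Orthonormality of $\mathbf{e}$ at such $\omega$ forces $\|e_i\|_\omega=1$ and $\|\sum\lambda_ie_i\|_\omega\ge\max_i|\lambda_i|_\omega$; the ultrametric inequality supplies the reverse, so orthonormality is equivalent to $\metr_\omega=\metr_{\mathbf{e},\omega}$. Since $\Omega_\infty$ is finite, this gives the claimed equivalence. The consequence then follows by noting that $\overline{V}_{\mathbf{e}}$ is adelic (by \cref{example:basischoice}) and altering the norms at only finitely many places preserves both measurability and upper dominance. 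Coherence reduces to the observation that $|\lambda_i|_\omega\le 1$ at almost all $\omega$ for each $\lambda_i\in K$.

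For part \ref{equivalence of adelic and generically trivial}, one direction is immediate from part \ref{generically trivial is coherent adelic}. For the converse I would fix any basis $\mathbf{e}$ of $V$ and argue that dominance of $\overline{V}$ combined with \cref{example:basischoice} forces the local distance function $\omega\mapsto d_\omega(\metr,\metr_\mathbf{e})$ to be upper dominated; since $\mathcal{A}$ is discrete, only finitely many places can contribute nontrivially. At a place where purity holds and this distance is zero, the lattice $\widehat{V}_\omega$ supplied by \cref{rmk: pure at non-archimedean} coincides with $\bigoplus_i\mathcal{O}_{K_\omega}e_i$, i.e.\ $\mathbf{e}$ is orthonormal at $\omega$. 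Applying the same argument to $\overline{V}^\vee$ handles both sides of the local distance simultaneously. This is the step I expect to be the most technically delicate, since controlling the transition matrix between the two lattices at almost every place depends on the product formula and the properness of $S$.

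For part \ref{purification is adelic}, coherence of $\overline{V}$ together with the dominance of $\overline{V}^\vee$ allows one to pick a basis $\mathbf{e}$ with $\|e_i\|_\omega\le 1$ and $\|e_i^\vee\|_\omega^\vee\le 1$ at almost all $\omega\in\Omega_{\mathrm{fin}}$. For such $\omega$, ultrametricity gives $\|\sum\lambda_ie_i\|_\omega\le\max_i|\lambda_i|_\omega$, while the dual bound yields $|\lambda_i|_\omega=|e_i^\vee(\sum\lambda_je_j)|_\omega\le\|\sum\lambda_je_j\|_\omega$, so $\metr_\omega=\metr_{\mathbf{e},\omega}$ and $\metr_\omega$ is already pure. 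Hence $\metr_{\pur,\omega}=\metr_\omega$ at almost all $\omega$, making $\overline{V}_\pur$ generically trivial with witness $\mathbf{e}$ and therefore, by part \ref{generically trivial is coherent adelic}, a coherent adelic vector bundle. Purity at the remaining finitely many places is built into the construction: the unit ball of $\metr_{\pur,\omega}$ is exactly the bounded $\mathcal{O}_{K_\omega}$-submodule $\widehat{V}_\omega$ of $V_\omega$, a lattice, so $\metr_{\pur,\omega}$ is pure by \cref{rmk: pure at non-archimedean}.
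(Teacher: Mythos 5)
Your part \ref{generically trivial is coherent adelic} is essentially the paper's argument and is fine: unwinding the definition of orthonormality at a non-archimedean place gives the equivalence with $\metr_\omega=\metr_{\mathbf{e},\omega}$, and the ``in particular'' statement follows from the fact that $\overline{V}_{\mathbf{e}}$ is a coherent adelic vector bundle together with the observation that changing finitely many norms (to finite norms) preserves measurability and dominance.

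Parts \ref{equivalence of adelic and generically trivial} and \ref{purification is adelic}, however, contain genuine gaps. In \ref{equivalence of adelic and generically trivial}, the step ``since $\mathcal{A}$ is discrete, only finitely many places can contribute nontrivially'' is false: a convergent series of positive terms on a discrete measure space need not have only finitely many nonzero terms. The finiteness of $d(\metr,\metr_{\mathbf{e}})$ (which you correctly extract from dominance via \cite[Corollary~4.1.8]{chen2020arakelov}) only forces the tail to decay. What actually closes the argument, and what the paper uses, is that at almost every $\omega$ both $\metr_\omega$ (by hypothesis) and $\metr_{\mathbf{e},\omega}$ are pure, and pure norms over a discretely valued $K_\omega$ have discrete value groups \cite[Proposition~2.9]{burgos2016arithmetic}; combined with the lower bound $\nu(\omega)\geq 1/[K:\Q]$ (or $\geq 1$ in the function-field case), the local distance $d_\omega$ is either $0$ or bounded below away from $0$ after weighting, and convergence of the sum then does force $d_\omega=0$ for all but finitely many $\omega$. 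Your allusion to the product formula and properness of $S$ is not the relevant mechanism. In \ref{purification is adelic}, the claim that coherence of $\overline{V}$ together with dominance of $\overline{V}^\vee$ lets you pick a basis $\mathbf{e}$ with $\|e_i^\vee\|^\vee_\omega\le 1$ at almost all $\omega$ does not follow: coherence controls only the primal norms of the $e_i$, and dominance of the dual is an integrated bound, not a pointwise one (indeed $\|e_i\|_\omega<1$ tends to force $\|e_i^\vee\|^\vee_\omega>1$, the wrong direction). The paper instead shows directly that $\overline{V}_\pur$ is an adelic vector bundle: coherence gives $\log\|s\|_{\pur,\omega}\le 0$ at almost all $\omega$ for each $s\in V$ (so the primal norms are upper dominated), \cref{lemma:puredomination} handles the dual, measurability is automatic on a discrete $\sigma$-algebra, and purity of $\overline{V}_\pur$ is built into the construction; generic triviality then follows by invoking part \ref{equivalence of adelic and generically trivial}.
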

\begin{proof}
	\ref{generically trivial is coherent adelic} The first statement is from the definition. If $\overline{V}$ is a generically trivial, by \cite[Corollary~4.1.10]{chen2020arakelov}, then $\overline{V}$ is an adelic vector bundle. On the other hand, notice that $\overline{V}_{\mathbf{e}}$ is coherent for any basis $\mathbf{e}$, so $\overline{V}$ is coherent by the first statement.
	
	\ref{equivalence of adelic and generically trivial} If $\overline{V}$ is generically trivial, then it is an adelic vector bundle by \ref{generically trivial is coherent adelic}. For the converse statement, assume that $\overline{V}$ is an adelic vector bundle. Let $\mathbf{e}$ be a basis of $V$, and $\overline{V}_{\mathbf{e}}=(V,\metr_{\mathbf{e}})$ the associated adelic vector bundle. Since $\overline{V}$ and $\overline{V}_{\mathbf{e}}$ are adelic vector bundles, by \cite[Corollary~4.1.8]{chen2020arakelov} we have that
	\[d(\metr,\metr_{\mathbf{e}})= \sum\limits_{\omega\in\Omega}\sup\limits_{s\in V_\omega\setminus\{0\}}\left|\log\|s\|_\omega-\log\|s\|_{\mathbf{e},\omega}\right|<\infty\] 
	Now observing that all pure norms have discrete value groups in the non-archimedean places by \cite[Proposition 2.9]{burgos2016arithmetic}, we deduce that $\metr_\omega$ and $\metr_{\mathbf{e},\omega}$ must agree at all but finitely many $\omega\in\Omega$. This implies that $\overline{V}$ is generically trivial by \ref{generically trivial is coherent adelic}.
	
	\ref{purification is adelic} Let $s\in V$. By the definition of the purification, for $\omega\in\Omega$, if $\|s\|_\omega\le 1$, then $\|s\|_{\pur,\omega}\le 1$. Since $\overline{V}$ is coherent, we have that $\|s\|_\omega\le 1$ for almost all $\omega\in\Omega$. So $\|s\|_{\pur,\omega}\le 1$ for almost all $\omega\in\Omega$. Hence
	\[\tint_\Omega \log \|s\|_{\pur,\omega}\, \nu(d\omega)<\infty.\] Now noting that the $\sigma$-algebra $\mathcal{A}$ is discrete, by \cref{lemma:puredomination} we can easily conclude that $\overline{V}_{\pur}$ is an adelic vector bundle. Moreover, $\overline{V}_{\pur}$ is generically trivial by \ref{equivalence of adelic and generically trivial}.
\end{proof}
\begin{definition}
	\label{def:inpurity}
	Let $\overline{V}=(V,\metr)$ be an $S$-norm vector space such that $\metr_\omega$ is ultrametric for any $\omega\in\Omega_{\mathrm{fin}}$. For any $\omega\in\Omega$, notice that $\metr_\omega\leq\metr_{\pur,\omega}$. We set
	\[\sigma_\omega(\overline{V})\coloneq d_\omega(\metr,\metr_\pur)=\sup\limits_{s\in V_\omega\setminus\{0\}}(\log\|s\|_\omega-\log\|s\|_{\pur,\omega})\] 
	The \emph{impurity} of $\overline{V}$ is defined as 
	\[{\sigma}(\overline{V})\coloneq d(\metr,\metr_\pur)=\sum\limits_{\omega\in\Omega}\sigma_\omega(\overline{V})\in [0,+\infty].\]
	From the definition, $\overline{V}$ is pure if and only $\sigma(\overline{V})=0$.
	By \cref{lemma:adelicvectorspaceisvectorbundle}~\ref{purification is adelic} and \cite[Corollary~4.1.8]{chen2020arakelov}, if $\overline{V}$ is a coherent adelic vector bundle, then ${\sigma}(\overline{V})<+\infty$.
\end{definition}
\begin{remark}
	Keep the notation in \cref{def:inpurity}. If $K$ is a number field and $\overline{V}$ is coherent, then the impurity $\sigma(\overline{V})$ is exactly the one defined in \cite[\S 4.4.3]{chen2020arakelov}. We explain the reason as follows. Set $$\mathcal{V}\coloneq\{s\in V\mid \|s\|_\omega\leq 1 \text{ for all $\omega\in\Omega_{\mathrm{fin}}$}\}.$$ Since $\overline{V}$ is coherent, by \cite[Proposition~4.4.2~(2), Remark~4.4.1~(2)]{chen2020arakelov}, for any $\omega\in\Omega_{\mathrm{fin}}$, we have that $$\mathcal{V}\otimes_{\OO_K}K_\omega^\circ=\{x\in V_\omega\mid \|s\|_\omega\leq 1\}.$$
	By definition of purification, the norm $\metr_{\pur,\omega}$ is exactly the one arising from $\mathcal{V}\otimes_{\OO_K}K_\omega^\circ$ in \cite[Definition~1.1.23]{chen2020arakelov}. This implies our claim by definitions of impurity.
\end{remark}

As \cite[Definition~2.14, Definition~2.18]{burgos2016arithmetic}, we have the following definition.

\begin{definition}
	Let $\overline{V}=(V,\metr)$ be a coherent $S$-normed vector space. We define the space of its \emph{small sections} as 
	\[\widehat{V}\coloneq \{s\in V\mid \|s\|_\omega\le 1\ \text{for all}\ \omega\in\Omega\}\]
	and define 
	\[\widehat{h}^0(\overline{V})\coloneq \begin{cases}
		\log (\#\widehat{V})& \text{if $K$ is a number field}\\
		\dim_k(\widehat{V})& \text{if $K$ is the function field of a curve of a field $k$}
	\end{cases} \]
	
	We further assume that $\overline{V}$ is generically trivial. 
	\begin{itemize}
		\item Assume that $K$ is a function field of curve over a field $k$. Let $\mathbf{b}$ be a basis of $V$ over $K$. For any $\omega\in\Omega$, we choose a basis $\mathbf{b}_\omega$ of $\widehat{V}_\omega\coloneq \{s\in V_\omega\mid \|s\|_\omega\leq 1\}$ over $K_\omega^\circ$. The \emph{Euler characteristic} of $\overline{V}$ is defined as
		\begin{align} \label{chi for function field}
			\chi(\overline{V})\coloneq \sum\limits_{\omega\in\Omega}\nu(\omega)\log|\det(\mathbf{b}_\omega/\mathbf{b})|_\omega,
		\end{align}
		where $\mathbf{b}_\omega/\mathbf{b}$ denotes the matrix of $\mathbf{b}_{\omega}$ with respect to the basis $\mathbf{b}$. This quantity does not depend on the choice of bases.
		\item Assume that $K$ is a number field. Let $\mathbb{A}_K$ be its ring of adeles and $V_{\mathbb{A}_K}\coloneq V\otimes_K \mathbb{A}_K$. The \emph{Euler characteristic} of $\overline{V}$ is defined as 
		\begin{align} \label{chi for number field}
			\chi(\overline{V})\coloneq \frac{1}{[K:\Q]}\log \vol(\widehat{V}),
		\end{align}
		where $\widehat{V}\subseteq V_{\mathbb{A}_K}$ is viewed as the adelic unit ball with respect to the norms $\metr_\omega$ and $\vol(\cdot)$ denotes the Haar measure on $V_{\mathbb{A}_K}$ normalized by 
		$\vol(V_{\mathbb{A}_K}/V)=1$.
	\end{itemize}
	
\end{definition}
\begin{remark} \label{rmk:relation of deg and chi}
	Let $\overline{V}$ be a generically trivial adelic vector bundle. If $K$ is a function field, by \cite[Proposition~4.3.18]{chen2020arakelov}, we have that $\widehat{\deg}(\overline{V})=\chi(\overline{V})$.
	If $K$ is a number field, the expression \eqref{chi for number field} for $\chi(\overline{V})$ can have a similar formula as \eqref{chi for function field}, see \cite[Remark~2.20]{burgos2016arithmetic}. Unlike the function field case, $\widehat{\deg}(\overline{V})\not=\chi(\overline{V})$ in general. However, we have that \[\widehat{\deg}(\overline{V}) = \chi(\overline{V})+O(\dim_K(V)\log(\dim_K(V)))\] see \cite[(3.5)]{boucksom2011okounkov}.
\end{remark}

\begin{remark}\label{rmk: small section of purification}
	Let $\overline{V}=(V,\metr)$ be a coherent adelic vector bundle. By the definition of purification, for any $s\in V$ and $\omega\in\Omega$, we have that $\|s\|_\omega\leq 1$ if and only if $\|s\|_{\pur,\omega}\leq 1$. Hence 
	\[\widehat{h}^0(\overline{V}) = \widehat{h}^0(\overline{V}_\pur).\]
	If furthermore, $\overline{V}$ is generically trivial, by \cite[Proposition~2.23]{burgos2016arithmetic},  then
	\[\chi(\overline{V})=\chi(\overline{V}_\pur).\]
\end{remark}

Next, we consider the classical arithmetic volume of a graded $K$-algebra of coherent $S$-normed vector spaces. 

\begin{definition} \label{def:classical arithvol}
	Let $\overline{E_\bullet}$ be a graded $K$-algebra of coherent $S$-normed vector spaces (i.e. $\overline{E_m}$ is coherent for each $m\in\N$).
	We define the \emph{classical arithmetic volume}
	\[\widehat{\vol}^\YZ(\overline{E_\bullet})\coloneq \limsup_{m\to\infty} \frac{\widehat{h}^0(\overline{E_m})}{m^{d+1}/(d+1)!},\]
	where $d$ is the Kodaira dimension of the graded $K$-algebra $E_{\bullet}$. 
	
	We further assume that $\overline{E_m}$ is generically trivial for any $m\in\N$. We define the \emph{classical arithmetic $\chi$-volume}
	\[\widehat{\vol}_\chi^\YZ(\overline{E_\bullet})\coloneq \limsup_{m\to\infty} \frac{\chi(\overline{E_m})}{m^{d+1}/(d+1)!}.\]
\end{definition}

\begin{definition}[\cite{chen2020arakelov}~Definition~7.5.1]
	Let $\overline{E_\bullet}$ be a graded $K$-algebra of coherent $S$-normed vector spaces. We say that $\overline{E_\bullet}$ is \emph{asymptotically pure} if
	\[\limsup\limits_{m\to\infty}\frac{\sigma(\overline{E_m})}{m}=0.\]
\end{definition}

\begin{prop} \label{arith volume and classical arith volume}
	Let $\overline{E_\bullet}$ be a graded $K$-algebra of coherent (resp. generically trivial) adelic vector bundles. One has the following inequality:
	\[\widehat{\vol}^\YZ(\overline{E_\bullet}) \leq \widehat{\vol}(\overline{E_\bullet}) \text{ (resp. $\widehat{\vol}_\chi^\YZ(\overline{E_\bullet}) \leq \widehat{\vol}_\chi(\overline{E_\bullet})$)},\]
	the equality holds if $\overline{E_\bullet}$ is asymptotically pure.
\end{prop}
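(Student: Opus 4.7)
The plan is to reduce to the purification $\overline{E_{\bullet,\pur}}$, invoke the classical global-field comparisons between $\widehat{h}^0$, $\chi$ and the (positive) Arakelov degrees for \emph{pure} generically trivial adelic vector bundles, and then absorb the purification defect by the asymptotic purity hypothesis.

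First, by \cref{rmk: small section of purification}, we have $\widehat{h}^0(\overline{E_m})=\widehat{h}^0(\overline{E_{m,\pur}})$ for every $m$, and $\chi(\overline{E_m})=\chi(\overline{E_{m,\pur}})$ in the generically trivial case. Hence $\widehat{\vol}^\YZ(\overline{E_\bullet})=\widehat{\vol}^\YZ(\overline{E_{\bullet,\pur}})$ and the analogous identity for $\widehat{\vol}_\chi^\YZ$. By \cref{lemma:adelicvectorspaceisvectorbundle}~\ref{purification is adelic} each $\overline{E_{m,\pur}}$ is pure generically trivial, and for such $\overline{V}$ standard Minkowski-type estimates (at archimedean places in the number-field case, and the tautological comparison using pure lattices at finite places), combined with \cref{rmk:relation of deg and chi}, yield
\[\bigl|\widehat{h}^0(\overline{V})-\widehat{\deg}_+(\overline{V})\bigr|\le C\dim_K(V)\log\dim_K(V), \quad \bigl|\chi(\overline{V})-\widehat{\deg}(\overline{V})\bigr|\le C\dim_K(V)\log\dim_K(V)\]
for a constant $C$ depending only on $S$. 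Since $\dim_K(E_m)=O(m^d)$ by \cref{remark:kodaira dimension}, the error term contributes $O((\log m)/m)$ once divided by $m^{d+1}/(d+1)!$, so $\widehat{\vol}^\YZ(\overline{E_{\bullet,\pur}})=\widehat{\vol}(\overline{E_{\bullet,\pur}})$ and the analogous identity for the $\chi$-volumes.

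Next, from $\metr_{m,\omega}\le \metr_{m,\pur,\omega}$ (with equality at archimedean and trivial places) I pass to determinant and restriction norms to obtain $\widehat{\deg}_+(\overline{E_m})\ge\widehat{\deg}_+(\overline{E_{m,\pur}})$ and $\widehat{\deg}(\overline{E_m})\ge\widehat{\deg}(\overline{E_{m,\pur}})$. This yields $\widehat{\vol}(\overline{E_\bullet})\ge\widehat{\vol}(\overline{E_{\bullet,\pur}})$ and the $\chi$-analogue, and combining with the first paragraph proves the two stated inequalities. For the equality under asymptotic purity I would control the reverse inequality via a quantitative distortion bound at each non-archimedean place: by an orthogonal-basis computation the local impurity propagates through the determinant with multiplicative distortion at most $\exp(\dim_K(E_m)\,\sigma_\omega(\overline{E_m}))$, hence
\[0\le\widehat{\deg}_+(\overline{E_m})-\widehat{\deg}_+(\overline{E_{m,\pur}})\le\dim_K(E_m)\cdot\sigma(\overline{E_m})=O(m^d)\cdot o(m)=o(m^{d+1}),\]
and the same bound for $\widehat{\deg}$. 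Dividing by $m^{d+1}$ then forces $\widehat{\vol}(\overline{E_\bullet})=\widehat{\vol}(\overline{E_{\bullet,\pur}})$ (and the $\chi$ counterpart), yielding the asserted equality when combined with the second paragraph.

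The hard step will be the quantitative estimate $0\le\widehat{\deg}_+(\overline{E_m})-\widehat{\deg}_+(\overline{E_{m,\pur}})\le\dim_K(E_m)\,\sigma(\overline{E_m})$. It requires tracing how each local impurity $\sigma_\omega$ propagates through both the determinant operation and the supremum over subspaces defining the positive degree, which one handles by exhibiting a simultaneous orthogonal basis for $\metr_{m,\omega}$ and $\metr_{m,\pur,\omega}$ at each finite place (or, equivalently, using the lattice description of a pure ultrametric norm from \cref{rmk: pure at non-archimedean}). Everything else is bookkeeping once the classical Minkowski-type comparisons over global fields have been invoked.
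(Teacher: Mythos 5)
Your proposal is correct and follows essentially the same route as the paper: reduce to the purification, invoke the classical comparisons between $\widehat{h}^0$, $\chi$ and (positive) Arakelov degree for pure generically trivial bundles, use $\metr_{m,\omega}\le\metr_{\pur,m,\omega}$ for the one-sided inequality, and control the reverse direction via the impurity $\sigma(\overline{E_m})$. The quantitative estimate $0\le\widehat{\deg}_+(\overline{E_m})-\widehat{\deg}_+(\overline{E_{\pur,m}})\le\dim_K(E_m)\,\sigma(\overline{E_m})$ that you flag as the hard step is, in the paper, obtained directly by sandwiching $e^{-\sigma_\omega(\overline{E_m})}\metr_{\pur,m,\omega}\leq \metr_{m,\omega}\leq \metr_{\pur,m,\omega}$ and invoking \cite[Proposition~4.3.21]{chen2020arakelov}, so no fresh orthogonal-basis computation is needed.
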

\begin{proof}
	We first consider the arithmetic volume. 	
	Let $\overline{E_{\pur, m}}=(E_m, \metr_{\pur,m})$ be the purification of $\overline{E_m}=(E_m, \metr_m)$. Since each $\overline{E_m}$ is a coherent adelic vector bundle, by \cref{lemma:adelicvectorspaceisvectorbundle}~\ref{purification is adelic}, $\overline{E_{\pur,\bullet}}\coloneq\bigoplus\limits_{m=0}^\infty\overline{E_{\pur, m}}$ is a graded $K$-algebra of pure generically trivial adelic vector spaces. By \cref{rmk:relation of deg and chi}, $\widehat{h}^0(\overline{E_{\pur,m}}) = \widehat{h}^0(\overline{E_m})$ and hence $\widehat{\vol}^\YZ(\overline{E_{\pur,\bullet}})=\widehat{\vol}^\YZ(\overline{E_\bullet})$. On the other hand, by \cite[Proposition~2.13]{burgos2016arithmetic}, $\overline{E_{\pur,m}}$ is the adelic vector bundle associated to some Hermitian vector bundle $\overline{\E_m}$ on $C$ if $K$ is a number field or to some vector bundle $\E_m$ on $C$ if $K$ is the function field of a curve. By \cite[Proposition~4.3.23, Proposition~4.3.24]{chen2020arakelov}, we have that
	\begin{align}\label{comparision of positive degree and small section}
		|\widehat{h}^0(\overline{E_{\pur,m}})-\widehat{\deg}_+(\overline{E_{\pur,m}})|\le C_1\cdot \dim_K(E_m)\log(\dim_K(E_m))
	\end{align}
	for some constant $C_1>0$ dependent only on $K$. Combine \eqref{comparision of positive degree and small section} and the fact that $\frac{\dim_K(E_m)}{m^d}$ is bounded when $m\to\infty$, we have that $\widehat{\vol}(\overline{E_{\pur,\bullet}}) = \widehat{\vol}^\YZ(\overline{E_{\pur,\bullet}})$. It remains to compare  $\widehat{\vol}(\overline{E_{\pur,\bullet}})$ and $\widehat{\vol}(\overline{E_{\bullet}})$. Notice that \begin{equation}\label{relation between purification and self}
		e^{-\sigma_\omega(\overline{E_m})}\metr_{\pur,m,\omega}\leq \metr_{m,\omega}\leq \metr_{\pur,m,\omega}
	\end{equation} for any $\omega\in\Omega$. By \cite[Proposition~4.3.21]{chen2020arakelov}, we have that
	\begin{align*}
		\widehat{\deg}_+(\overline{E_{\pur,m}})\leq \widehat{\deg}_+(\overline{E_m})&\leq \widehat{\deg}_+(E_m,(e^{-\sigma_\omega(\overline{E_m})}\metr_{\pur,m,\omega})_{\omega\in\Omega})\\
		&\leq \widehat{\deg}_+(\overline{E_{\pur,m}})+\dim_K(E_m)\sigma(\overline{E_m}).
	\end{align*}
	Hence $\widehat{\vol}(\overline{E_{\pur,\bullet}})\leq \widehat{\vol}(\overline{E_{\bullet}})$ and 
	\begin{align*}
		\widehat{\vol}(\overline{E_{\bullet}})&\leq \limsup_{m\to\infty} \frac{\widehat{\deg}_+(\overline{E_{\pur,m}})+\dim_K(E_m)\sigma(\overline{E_m})}{m^{d+1}/(d+1)!}\\
		&\leq \widehat{\vol}(\overline{E_{\pur,\bullet}})+ (d+1)!\limsup_{m\to\infty} \frac{\dim_K(E_m)\sigma(\overline{E_m})}{m^{d+1}}.
	\end{align*}
	Again, notice that $\frac{\dim_K(E_m)}{m^d}$ is bounded when $m\to\infty$, if $\overline{E_\bullet}$ is asymptotically pure, i.e. $\limsup\limits_{m\to\infty}\frac{\sigma(\overline{E_m})}{m}=0$, then $\widehat{\vol}(\overline{E_{\bullet}})\leq \widehat{\vol}(\overline{E_{\pur,\bullet}})$. This completes the proof for arithmetic volume.
	
	For the arithmetic $\chi$-volume, the proof is similar.  Assume that each $\overline{E_m}$ is generically trivial, we keep the notation as before. By \cite[Proposition~2.23]{burgos2016arithmetic}, we have that $\chi(\overline{E_{\pur,m}}) = \chi(\overline{E_m})$ and hence $\widehat{\vol}_\chi^\YZ(\overline{E_{\pur,\bullet}})=\widehat{\vol}_\chi^\YZ(\overline{E_\bullet})$. By \cref{rmk:relation of deg and chi}, we have that
	\begin{align}\label{comparision of degree and chi}
		|\chi(\overline{E_{\pur,m}})-\widehat{\deg}(\overline{E_{\pur,m}})|\leq C_2\cdot\dim_K(E_m)\log(\dim_K(E_m))
	\end{align}
	for some constant $C_2>0$ dependent only on $K$. This implies that $\vol_\chi(E_{\pur,\bullet})=\vol_\chi^\YZ(E_{\pur,\bullet})$. It remains to compare  $\widehat{\vol}_\chi(\overline{E_{\pur,\bullet}})$ and $\widehat{\vol}_\chi(\overline{E_{\bullet}})$. By \eqref{relation between purification and self}, we have
	\[\widehat{\deg}(\overline{E_{\pur,m}})\leq \widehat{\deg}(\overline{E_m})\leq \widehat{\deg}(\overline{E_{\pur,m}})+\dim_K(E_m)\sigma(\overline{E_m}).\]
	As above, this implies that $\widehat{\vol}_\chi(\overline{E_{\pur,\bullet}}) \leq \widehat{\vol}_\chi(\overline{E_\bullet})$ and the equality holds if $\overline{E_\bullet}$ is asymptotically pure, which completes the proof.
\end{proof}

\section{Review of compactified $S$-metrized divisors}

\label{review of compactified divisors}

In this section, we recall the notion of compactified $S$-metrized divisors on quasi-projective varieties over adelic curves defined in \cite{cai2024abstract}. Recall that given a $\Q$-vector space $M$, and a subset $N$, the the closure $\overline{N}$ of $N$ with respect to the \emph{finite subspace topology} is given by $\overline{N}=\varinjlim\limits_{E}\overline{(N\cap E)}$, where $E$ runs through all finite dimensional subspaces of $M$ and $\overline{(N\cap E)}$ is the closure of $N\cap E$ in $E$ with respect to the canonical euclidean topology.

\subsection{Compactified (geometric) divisors}

\begin{art} \label{geometric intersection number}
	Let $X$ be a projective variety over a field $K$. We set $N_{\gm}(X)$ the set of nef divisors in $\Div(X)$, and $N_{\gm,\Q}(X)$ the cone in $\Div_\Q(X)\coloneq \Div(X)\otimes_\Z\Q$ generated by $N_{\gm}(X)$. For a quasi-projective variety $U$ over $K$, a (\emph{geometric}) \emph{boundary divisor} of $U$ is a pair $({X}_0,B)$ consisting of a projective $K$-model $U\hookrightarrow X_0$ and an effective divisor $B\in {\Div}_\Q({X}_0)$ such that $|B|={X}_0\setminus U$. We set
	\begin{equation}\label{eq:geometric model divisors1}
		\Div_{\Q}(U)_{\mo}\coloneq\varinjlim_X \Div_\Q(X),  \ \ N_{\gm,\Q}(U)\coloneq\varinjlim_XN_{\gm,\Q}(X),
	\end{equation}
	where $X$ ranges over all projective $K$-models of $U$. Let $(X_0,B)$ be a boundary divisor of $U$. The (\emph{$B$-})\emph{boundary topology} on  $\Div_{\Q}(U)_{\mo}$ is defined such that a basis of neighborhoods of a divisor $D$ is given by
	\begin{align*}
		B(r,D)\coloneq\{E\in {\Div}_\Q(U)\mid -rB\leq {E}-{D}\leq rB\},
		\ \ r\in \Q_{>0}.
	\end{align*}
	Notice that the boundary topology is independent of the choice of $(X_0,B)$. We write $\widetilde{\Div}_\Q(U)_\cpt$ (resp. $\widetilde{\Div}_\Q(U)_{\snef}$) the completion of  $\Div_{\Q}(U)_{\mo}$ (resp. ${N}_{\gm,\Q}(U)$) with respect to the boundary topology and $\widetilde{\Div}_\Q(U)_{\mathrm{int}}\coloneq \widetilde{\Div}_\Q(U)_{\snef}-\widetilde{\Div}_\Q(U)_{\snef}$. Moreover, write $\widetilde{\Div}_\Q(U)_{\nef}$ for the closure of $\widetilde{\Div}_\Q(U)_{\snef}$ in $\widetilde{\Div}_\Q(U)_{\mathrm{int}}$ with respect to the finite subspace topology. An element in  $\widetilde{\Div}_\Q(U)_\cpt$ is called a \emph{compactified} (\emph{geometric}) \emph{divisor}, we have a natural map $$\widetilde{\Div}_\Q(U)_\cpt\to \Div_\Q(U), \ D\mapsto D|_U.$$ For $D\in \widetilde{\Div}_\Q(U)_\cpt$, we say $D$ is \emph{strongly nef} (resp. \emph{nef}, \emph{integrable}) if $D\in \widetilde{\Div}_\Q(U)_{\snef}$ (resp. $\widetilde{\Div}_\Q(U)_{\nef}$, $\widetilde{\Div}_\Q(U)_{\mathrm{int}}$).
	We have a symmetric multilinear map 
\[\underbrace{\widetilde{\Div}_{\Q}({U})_{\mathrm{int}}\times\cdots\times\widetilde{\Div}_{\Q}({U})_{\mathrm{int}}}_{d\text{-times}}\to \R, \ \ ({D_1},\cdots, {D_d}) \mapsto {D_1}\cdots {D_d},\]
see \cite[Proposition~4.1.1]{yuan2021adelic}. For simplicity, for $D\in \widetilde{\Div}_{\Q}({U})_{\mathrm{int}}$  we denote $\deg_D(U)\coloneq D^d$.
\end{art}
\begin{remark}
	In \cite[Definition~3.4]{cai2024abstract}, the space $\Div_{\Q}(U)_{\mo}$ defined to be $\varinjlim_X \Div_\Q(X)$, where $X$ ranges over all proper $K$-models of $U$ (although $U$ is quasi-projective), so the completion $\widetilde{\Div}_{\Q}(U)_\cpt$ in \cite{cai2024abstract} may be larger than the one defined above. 
	In the local and global cases below, the same situation happens. The reason why we only consider projective $K$-models is that the ampleness condition in \cref{def:ampleser} is crucial in our paper and that the results in \cite{yuan2021adelic} are induced from the ones in projective case. 
\end{remark}

\begin{art} \label{geometric volume}
	Let $U$ be a $d$-dimensional normal quasi-projective variety over a field $K$ and ${D}\in\widetilde{\Div}_{\Q}(U)_\cpt$. Notice that an element $f$ in the function field $K(U)$ of $U$ can determine a compactified divisor denoted by $\mathrm{div}(f)$. We can introduce the space of \emph{global sections} of $\overline{D}$ as     
	\[H^0(U,{D})\coloneq \{f\in K(U)^{\times}\mid \mathrm{div}(f)+{D}\ge 0\}\cup\{0\}\]
	which is a $K$-vector space since $U$ is normal by \cite[Proposition~2.4.13~(6)]{chen2020arakelov} (on projective $K$-models of $U$). The set $H^0(U,{D})$ is not necessarily stable under addition if $U$ is not normal, see \cite[Example~2.4.15]{chen2020arakelov}.
	We have a natural injective map $H^0(U,D)\hookrightarrow H^0(U,D|_U)$. If $D$ is induced by a divisor on some projective model $X$ of $U$ which still denoted by $D$, then $H^0(X,D)\subset H^0(U,D)$, the equality holds if $X$ is integrally closed in $U$ by \cite[Lemma~2.3.7]{yuan2021adelic}. We have the following properties: 
	\begin{enumerate}
		\item (\cite[Lemma~5.1.6, Theorem~5.2.1~(1)]{yuan2021adelic}) \label{definition of volume of divisors} The $K$-vector space $H^0(U,{D})$ has finite dimension and the limit \[\vol(D)\coloneq \lim\limits_{m\to\infty}\frac{\dim_K(H^0(U,m{D}))}{m^d/d!}\] exists;
		\item (\cite[Theorem~5.2.1~(2), Theorem~5.2.9]{yuan2021adelic}) \label{continuity of geometric volume} If $D_n$ converges to $D$ in $\widetilde{\Div}_\Q(U)_\cpt$ with respect to the boundary topology, then $\lim\limits_{n\to\infty}\vol(D_n)=\vol(D)$.
	\end{enumerate}
	We call $\vol(D)$ the \emph{volume} of $D$. We say that ${D}\in\widetilde{\Div}_{\Q}(U)_\cpt$ is \emph{big} if ${\vol}(D)>0$. We refer to \cite[\S 5.1--5.2]{yuan2021adelic} for details and various properties of volumes. 
\end{art}

\subsection{Local theory: mixed Monge-Amp\`ere measures}
In this subsection, we fix a field $K$ with an absolute value $\val_v$ and a $d$-dimensional quasi-projective variety $U$ over $K$. We denote by $U^\an=U_v^\an$ the Berkovich analytification of $U$ with respect to the place $v$, see \cite[\S 3.4--3.5]{berkovich1990spectral}. 
\begin{art} \label{Green functions} 
	For a $\Q$-Cartier divisor $D\in \Div_\Q(U)\coloneq \Div(U)\otimes_\Z\Q$,  $D$ can be written by local equations $f \in \OO_U(V)^\times \otimes_\Z \Q$.  Then we get a well-defined continuous real function $\log|f|$ on $V^\an$ for such open subsets $V$.  A \emph{Green function} for $D$ is a continuous real function $g_D$ such that on $(U \setminus |D|)^\an$ such that for any local equation $f$ of $D$ on an open subset $V$ of $U$, we have that  $g_D+\log|f|$ is a continuous function on $V^\an$.
	
	We denote by $\widehat{\Div}(U)$ (resp. $\widehat{\Div}_\Q(U)$) the group of pairs $(D,g)$ with $D\in \Div(U)$ (resp. $D\in\Div_\Q(U)$) and $g$ a Green function for $D$. Obviously, we have that $\widehat{\Div}_\Q(U)\simeq\widehat{\Div}(U)\otimes_\Z\Q$.
\end{art}

\begin{art} \label{local model divisors}
	For a projective variety $X$ over $K$, we denote $\widehat{\Div}_{\Q}(X)_\mo$ the group of $\Q$-Cartier divisors with \emph{model} Green functions, $N_{\mo,\Q}(X)$ (resp. $N_{\tFS,\Q}(X)$) the cone in $\widehat{\Div}_{\Q}(X)$ generated by the Cartier divisors with \emph{semi-positive} model Green functions (resp. \emph{$\tFS$}-Green functions) and $\widehat{\Div}_{\Q}(X)_\tFS \coloneq N_{\tFS,\Q}(X)-N_{\tFS,\Q}(X)$.
\end{art}
\begin{art} \label{local nef divisor}
	Let $U$ be a quasi-projective variety  over $K$.  Assume that $v$ is non-trivial. A \emph{boundary divisor} of $U$ is a pair $({X}_0,\overline{B})$ consisting of a projective $K$-model $U\hookrightarrow X_0$ and an effective divisor $\overline{B}=(B,g_B)\in \widehat{\Div}_\Q({X}_0)_\mo$ such that $|B|={X}_0\setminus U$. We set
	\begin{equation}\label{eq:model divisors}
		\widehat{\Div}_\Q(U)_\mo\coloneq\varinjlim_X \widehat{\Div}_{\Q}(X)_\mo,\ \  N_{\mo,\Q}(U)\coloneq\varinjlim_XN_{\mo,\Q}(X)
	\end{equation}
	where $X$ ranges over all projective $K$-models of $U$. Then a boundary divisor $(X_0,B)$ gives a \emph{boundary topology} on $\widehat{\Div}_\Q(U)_\mo$ as \cref{geometric intersection number} which is independent of the choice of $(X_0,B)$. We denote by $\widehat{\Div}_\Q(U)_\cpt$ (resp.  $\widehat{\Div}_{\Q}(U)_\snef$) the completion of $\widehat{\Div}_\Q(U)_\mo$ (resp. $N_{\mo,\Q}(U)$) with respect to the boundary topology and $\widehat{\Div}_{\Q}(U)_{\mathrm{int}}\coloneq \widehat{\Div}_{\Q}(U)_\snef-\widehat{\Div}_{\Q}(U)_\snef$. Moreover, we denote by $\widehat{\Div}_\Q(U)_{\nef}$ the closure of $\widehat{\Div}_{\Q}(U)_\snef$ in $\widehat{\Div}_{\Q}(U)_{\mathrm{int}}$ with respect to the finite subspace topology. 
    When $v$ is trivial, we can do a similar procedure on $(\widehat{\Div}_{\tFS,\Q},N_{\tFS})$, the corresponding spaces are also denoted similarly. An element $\overline{D}$ in $\widehat{\Div}_{\Q}(U)$ is called a \emph{compactified metrized divisor} on $U$, it is called \emph{strongly nef} (resp. \emph{nef}) if $\overline{D}\in \widehat{\Div}_\Q(U)_\snef$ (resp. $\widehat{\Div}_\Q(U)_\nef$). We have a homomorphism
	 \[\widehat{\Div}_\Q(U)_{\cpt}\rightarrow \widetilde{\Div}_\Q(U)_\cpt\] 
	and an injective homomorphism
	\[\widehat{\Div}_\Q(U)_{\cpt}\hookrightarrow\widehat{\Div}_\Q(U),\]
	see \cite[4.20]{cai2024abstract}. So an element in $\widehat{\Div}_{\Q}(U)_\cpt$ can be uniquely written as $(D,g)$ with $D\in\widetilde{\Div}_\Q(U)_\cpt$ and $g$ a Green function for $D|_U$.
	
	There is a unique way to associate to any $d$-dimension quasi-projective $U$ over $K$ and to a family $\overline{D_1},\dots, \overline{D_d}\in\widehat{\Div}_\Q(U)_\nef$ a positive Radon measure $c_1(\overline{D_1})\wedge\cdots\wedge c_1(\overline{D_d})$ on $U^\an$ satisfying multilinearity, symmetry, continuity (respect to boundary topology) and other properties, see \cite[Proposition~4.45]{cai2024abstract}.
\end{art}

\begin{remark} \label{nef become strongly nef after shrinking U}
	If we have finitely many nef compactified metrized divisors $\overline{D_j}=(D_j,g_j)$ for $j=0,\dots, k$, then there is an open subset $U'\subset U$ such that the restriction of $\overline{D_j}$ on $U'$ is strongly nef, see \cite[Remark~4.34]{cai2024abstract}.
\end{remark}
\subsection{Global theory: Compactified $S$-metrized divisors}

\label{subsection:Global theory: Compactified (model) divisors}

In the rest of this section, we fix an adelic curve $S=(K,\Omega,\mathcal{A},\nu)$ satisfying the following conditions: 
\begin{enumeratea}
	\item \label{finite measure for archimedean} $\nu(\Omega_\infty) < \infty$;
	\item \label{exists finite measure for subset} the set $\nu(\mathcal{A}) \not\subset \{0,+\infty\}$;
	\item \label{K countable or A discrete} either the $\sigma$-algebra $\mathcal{A}$ is discrete, or that $K$ is countable.
\end{enumeratea}
We also fix a $d$-dimensional quasi-projective variety $U$ over $K$ and set $U_{\overline{K}}\coloneq U\times_{\Spec(K)}\Spec(\overline{K})$.
\begin{art} \label{adelic divisors}
An \emph{$S$-Green function} for a divisor $D\in \Div_\Q(U)$ is a family of Green functions $g_{D,\omega}$ for the base change $D_{\omega}$ of $D$ to $U_\omega$ with $\omega$ running over $\Omega$. We denote $\widehat{\Div}_{S,\Q}(U)$ the group of pairs $(D,g_D)$ with $D$ a $\Q$-Cartier divisor and $g_D$ an \emph{$S$-measurable}, \emph{locally $S$-bounded} $S$-Green function for $D$, see \cite[Definition~6.3, Definition~6.9]{cai2024abstract}.
Given $\overline{D}\coloneq (D,g_D)\in\widehat{\Div}_{S,\Q}(U)$ and $x\in U(\overline{K})$, we denote its \emph{height function} as 
\[h_{\overline{D}}(x)\coloneq \int_{\Omega_{\overline{K}}} g_{\omega}(x)\ \nu_{\overline{K}}(d\omega),\]
where $S_{\overline{K}}=(\overline{K}, \Omega_{\overline{K}},\mathcal{A}_{\overline{K}}, \nu_{\overline{K}})$ denotes the canonical extension of the adelic curve structure on $K$ to the algebraic closure $\overline{K}$ as explained in \cite[\S 3.4.2]{chen2020arakelov}.
	
	If $U=X$ is projective, a pair $(D,g_D)\in \widehat{\Div}_{S,\Q}(X)$ if and only if $(D,g_D)$ is an \emph{adelic $\Q$-divisor} in the sense of Chen and Moriwaki, \cite[\S 6.2.3]{chen2020arakelov}, see \cite[Remark~6.21]{cai2024abstract}. Note that in this case, it also ocincides with the definition of height function given in \cite[Definition 6.2.1]{chen2020arakelov}.
\end{art}

\begin{art} \label{global adelic Green functions for proper varieties}
	Let $X$ be a $d$-dimensional projective variety over $K$.
	We consider the submonoid
	$N_{S,\Q}(X)$  of $\widehat{\Div}_{S,\Q}(X)$ consisting of $(D,g_D)\in \widehat{\Div}_{S,\Q}(X)$ with $g_{D,\omega}$ a uniform limit of semipositive model (resp.~tFS-)Green functions for $D_\omega$ on $X_\omega^\an$ if $\omega \in \Omega \setminus \Omega_0$ (resp.~$\omega \in \Omega_0$). Write $M_{S,\Q}(X)\coloneq N_{S,\Q}(X)-N_{S,\Q}(X)$ which is an ordered $\Q$-vector space: $(D,g_D)\geq 0$ if and only if $D\geq 0$ and $g_{D,\omega}\geq 0$ for any $\omega\in\Omega$. We have a symmetric multilinear map
	\begin{align}\label{symmetric multilinear map}
		M_{S,\Q}(X)^{d+1}\to \R, \ \ (D_0,\cdots, D_d)\mapsto (D_0\cdots D_d\mid X)_S.
	\end{align}
	For any $\overline{D}\in M_{S,\Q}(X)$ and integral closed subscheme $Z$ of $X$, set $h_{\overline{D}}(Z)\coloneq (\overline{D}^{\dim(Z)+1}\mid Z)_S$ (the arithmetic intersection number is well-defined without assuming that $X$ is geometrically integral, see \cite{chen2021arithmetic}). Notice that the restriction of $\overline{D}$ on $Z$ may not be defined, but the restriction of the corresponding \emph{$S$-metrized} (\emph{$\Q$-})\emph{line bundle} is defined (see \cite[\S 7.2]{cai2024abstract}), so $h_{\overline{D}}(Z)$ is well-defined.
	
	We say that $\overline{D}\in N_{S,\Q}(X)$ is \emph{$S$-ample} if $D$ is ample and if
	there is $\varepsilon > 0$ such that for every integral closed subscheme $Z$ of $X$, we have
	\[h_{\overline{D}}(Z)\geq \varepsilon\deg_D(Z)(\dim(Z)+1).\]
	The $S$-ample divisors form a cone $N^+_{S,\Q}(X)$ of $M_{S,\Q}(X)$. We say that $\overline{D}\in N_{S,\Q}(X)$ is \emph{$S$-nef} if $\overline{D}$ is in the closure of $N^+_{S,\Q}(X)$ in $M_{S,\Q}(X)$ with respect to the finite subspace topology. The $S$-nef divisors form a subcone $N'_{S,\Q}(X)$ of $N_{S,\Q}(X)$ which is preserved under pull-backs and tensor products. 
\end{art}

\begin{art} \label{def:CM divisors on non-proper}
	Set \[\widehat{\Div}_{S,\Q}(U)_\CM\coloneq\varinjlim_{X}\widehat{\Div}_{S,\Q}(X), \ \ N_{S,\Q}(U)\coloneq \varinjlim_X N_{S,\Q}(X),  \ \ N_{S,\Q}'(U)\coloneq \varinjlim_X N_{S,\Q}'(X),\]
 where $X$ runs through all projective $K$-model of $U$. 
	
	A {\emph{weak boundary divisor}} (resp. a \emph{boundary divisor}) of $U$ is a pair $({X}_0,\overline{B})$ consisting of a projective $K$-model $U\hookrightarrow {X}_0$ over $K$ and  
	$\overline{B}\in M_{S,\Q}(U)_{\geq 0}$  such that $|B|\subset{X}_0\setminus U$ (resp. $|B|={X}_0\setminus U$). 
	The weak boundary divisors form a directed subset $T$ of $M_{S,\Q}(U)_{\geq 0}$. As in \cref{geometric intersection number}, a weak boundary divisor $(X_0,\overline{B})$ gives a topology, called \emph{$\overline{B}$-boundary topology}, on $\widehat{\Div}_{S,\Q}(U)_\CM$.
	We set $\widehat{\Div}_{S,\Q}(U)_\CM^{d_{\overline{B}}}$ (resp. $N_{S,\Q}(U)^{d_{\overline{B}}}$, resp. $N_{S,\Q}'(U)^{d_{\overline{B}}}$) as the completion of $\widehat{\Div}_{S,\Q}(U)_\CM$ (resp. $N_{S,\Q}(U)$, resp. $N_{S,\Q}'(U)$) with respect to the $\overline{B}$-boundary topology. The space
	\[\widehat{\Div}_{S,\Q}(U)_{\cpt}\coloneq\varinjlim_{\overline{B}\in T}\widehat{\Div}_{S,\Q}(U)_\CM^{d_{\overline{B}}}\]
	is called the space of \emph{compactified $S$-metrized} (\emph{$\Q$-})\emph{divisors} of $U$. We also set
	\[\widehat{\Div}_{S,\Q}(U)_{\relsnef}\coloneq\varinjlim_{\overline{B}\in T}N_{S,\Q}(U)^{d_{\overline{B}}}, \ \ \widehat{\Div}_{S,\Q}(U)_{\arsnef}\coloneq\varinjlim_{\overline{B}\in T}N_{S,\Q}'(U)^{d_{\overline{B}}},\]
	\[\widehat{\Div}_{S,\Q}(U)_\relint\coloneq \widehat{\Div}_{S,\Q}(U)_\relsnef-\widehat{\Div}_{S,\Q}(U)_\relsnef,\] 
	\[\widehat{\Div}_{S,\Q}(U)_\arint\coloneq \widehat{\Div}_{S,\Q}(U)_\arsnef-\widehat{\Div}_{S,\Q}(U)_\arsnef,\]
	and denote by $\widehat{\Div}_{S,\Q}(U)_\relnef$
	(resp. $\widehat{\Div}_{S,\Q}(U)_\arnef$) the closure of $\widehat{\Div}_{S,\Q}(U)_\relsnef$ (resp. $\widehat{\Div}_{S,\Q}(U)_\arsnef$) in $\widehat{\Div}_{S,\Q}(U)_\relint$ (resp. $\widehat{\Div}_{S,\Q}(U)_\arint$) with respect to the finite subspace topology. A compactified $S$-metrized divisors $\overline{D}\in \widehat{\Div}_{S,\Q}(U)_\cpt$ is called \emph{relatively integrable} (resp. \emph{strongly relatively nef}, resp. \emph{relatively nef}, resp. \emph{arithmetically integrable}, resp. \emph{strongly arithmetically nef}, resp. \emph{arithmetically nef}) if $\overline{D}\in \widehat{\Div}_{S,\Q}(U)_\relint$ (resp. $\widehat{\Div}_{S,\Q}(U)_\relsnef$, resp. $\widehat{\Div}_{S,\Q}(U)_\relnef$, resp. $\widehat{\Div}_{S,\Q}(U)_\arint$, resp. $\widehat{\Div}_{S,\Q}(U)_\arnef$, resp. $\widehat{\Div}_{S,\Q}(U)_\arnef$).
	
	We have injective homomorphisms
	\begin{align} \label{eq:injectivity of compactified divisors}
		\widehat{\Div}_{S,\Q}(U)_{\arint}\hookrightarrow\widehat{\Div}_{S,\Q}(U)_{\relint}\hookrightarrow\widehat{\Div}_{S,\Q}(U)_\cpt\hookrightarrow \widehat{\Div}_{S,\Q}(U)
	\end{align}
	(see \cite[Proposition~7.4]{cai2024abstract} for the injectivity) and a forgetting homomorphism
	\[\widehat{\Div}_{S,\Q}(U)_\cpt\to \widetilde{\Div}_{\Q}(U)_\cpt.\]
	By the injectivity of \eqref{eq:injectivity of compactified divisors}, throughout this paper, we write an element $\overline{D}\in\widehat{\Div}_{S,\Q}(U)_\cpt^\YZ$ as $(D,g)$ with $D\in\widetilde{\Div}_{\Q}(U)_\cpt$ and $g$ an $S$-measurable, locally $S$-bounded $S$-Green function for $D|_U$. For a compactified divisor $D\in\widetilde{\Div}_{\Q}(U)_\cpt$, an \emph{$S$-Green function for $D$} is an $S$-Green function $g$ for $D|_U$ such that $(D,g)\in\widehat{\Div}_{S,\Q}(U)_\cpt$. 
    Let $\overline{D}=(D,g)\in \widehat{\Div}_{S,\Q}(U)_\cpt$ and $x\in U(\overline{K}$), we write $h_{\overline{D}}(x)\coloneq \int_{\Omega}g_\omega(x)\, \nu(d\omega).$
\end{art}
\begin{remark} \label{remark:cpt for projective case}
	If $U=X$ is projective, by \cite[Proposition~7.10]{cai2024abstract}, we have that
	\[\widehat{\Div}_{S,\Q}(X)_\CM=\widehat{\Div}_{S,\Q}(X)_\cpt=\widehat{\Div}_{S,\Q}(X)_\relint = \widehat{\Div}_{S,\Q}(X)_\arint,\]
	\[\text{$\widehat{\Div}_{S,\Q}(X)_{\relnef}=N_{S,\Q}(X)$ \ \ and \ \  $\widehat{\Div}_{S,\Q}(X)_{\arnef}=N_{S,\Q}'(X)$}.\]
\end{remark}
\subsection{Extension of Yuan-Zhang's arithmetic intersection number}

\label{Extension of Yuan-Zhang's arithmetic intersection number}
We have a symmetric bilinear map 
\[\underbrace{\widehat{\Div}_{S,\Q}({U})_\arint\times\cdots\times\widehat{\Div}_{S,\Q}({U})_\arint}_{(d+1)\text{-times}}\to \R, \ \ (\overline{D_0},\dots, \overline{D_d}) \mapsto (\overline{D_0}\cdots \overline{D_d}\mid U)_S,\] see \cite[Theorem~7.22]{cai2024abstract}.
We want to extend this map to relatively nef case, see \cite[Definition~4.6]{burgos2024on} for archimedean case and \cite[Theorem~11.3]{cai2024abstract} for general case.
\begin{art}\label{def:equivalent singularities}
	Let $D\in \widetilde{\Div}_\Q(U)_\cpt$ and $g_1, g_2$ measurable $S$-Green functions for $D$, see \cref{global adelic Green functions for proper varieties}. We say that $g_1$ is \emph{more singular than $g_2$}, 
	denoted by $[g_1]\leq [g_2]$, if there is a $\nu$-integrable function $C\in \mathscr{L}^1(\Omega,\mathcal{A},\nu)$ such that $g_{1,\omega}\leq g_{2,\omega}+C(\omega)$ for any $\omega\in\Omega$. We say that $g_1$, $g_2$ \emph{have equivalent singularities}, 
	denoted by $[g_1]=[g_2]$, if $[g_2]\leq [g_1]$ and $[g_1]\leq[g_2]$.
\end{art}
Denote by $\widehat{\Div}_{S,\Q}({U})_{\relnef}^\arnef$ the monoid of relatively nef compactified $S$-metrized divisors $\overline{D}=(D,g)\in \widehat{\Div}_{S,\Q}({U})_{\relnef}$ satisfying the following property: there is an $S$-Green function $g'$ for ${D}$ such that $(D,g')\in \widehat{\Div}_{S,\Q}({U})_{\arnef}.$

\begin{art} \label{extension of YZ intersection number}
	Let $\overline{D_j}=({D}_j,g_j)\in \widehat{\Div}_{S,\Q}({U})_{\relnef}$ for $j=0,\dots, d$. Assume that there are $\overline{D_j'}=({D}_j,g_j')\in \widehat{\Div}_{S,\Q}({U})_{\arnef}$ with $[g_j]\leq [g_j']$ for $j=0,\dots, d$  (in particular, $\overline{D_j}\in \widehat{\Div}_{S,\Q}({U})_{\relnef}^\arnef$). For any $\omega\in \Omega$, we set 
	\[E(\mathbf{g}_\omega',\mathbf{g}_\omega)\coloneq \sum\limits_{j=0}^d\int_{U_\omega^\an}(g_{j,\omega}-g_{j,\omega}') \, c_1(\overline{D_{0,\omega}})\wedge\cdots\wedge c_1(\overline{D_{j-1,\omega}})\wedge c_1(\overline{D_{j+1,\omega}'})\wedge \cdots\wedge c_1(\overline{D_{d,\omega}'}),\]
	\[E(\mathbf{g}',\mathbf{g})\coloneq \int_{\Omega} E(\mathbf{g}_\omega',\mathbf{g}_\omega) \, \nu(d\omega)\]
	and define
	\begin{align*}(\overline{D_0}\cdots\overline{D_d}\mid U)_S\coloneq (\overline{D_0'}\cdots\overline{D_d'}\mid U)_S+E(\mathbf{g}',\mathbf{g}).
	\end{align*}
	By \cite[Lemma~10.7]{cai2024abstract}, the map $\omega\mapsto E(\mathbf{g}_\omega',\mathbf{g}_\omega)$ is $\nu$-integrable. The value $(\overline{D_0}\cdots\overline{D_d}\mid U)_S$ is in $\R\cup\{-\infty\}$ and it is finite if $[g_j]=[g_j']$ for any $j$. It is shown in \cite[Proposition~10.10, Theorem~11.2]{cai2024abstract} that $(\overline{D_0}\cdots\overline{D_d}\mid U)_S$ is independent of the choices of $\overline{D_j'}$. Notice that every $\overline{D}=(D,g)\in \widehat{\Div}_{S,\Q}(U)_{\relnef}^\arnef$ satisfies the assumption based on the following fact: if $\overline{D'}=(D,g')\in \widehat{\Div}_{S,\Q}({U})_\arnef$, then $h\coloneq \max\{g,g'\}$ is an $S$-Green function for ${D}$ with $({D},h)\in \widehat{\Div}_{S,\Q}({U})_\arnef$ and $[g]\leq [h]$. Hence, we have a symmetric mulitlinear map
	\[\underbrace{\widehat{\Div}_{S,\Q}({U})^\arnef_\relnef\times\cdots\times\widehat{\Div}_{S,\Q}({U})^\arnef_\relnef}_{(d+1)\text{-times}}\to \R\cup\{-\infty\}, \ \ (\overline{D_0},\dots, \overline{D_d}) \mapsto (\overline{D_0}\cdots \overline{D_d}\mid U)_S.\]
\end{art}
\subsection{Graded linear series}
\begin{art} \label{flag of a quasi-projective variety}
	An \emph{admissible flag} of $U_{\overline{K}}$ is a flag \[Y_\bullet\colon \{\text{pt}\}=
	Y_d\subset Y_{d-1}\subset\cdots \subset Y_0=U_{\overline{K}}\]
	of irreducible subvarieties of $U_{\overline{K}}$ such that $\mathrm{codim}(Y_i)=i$ in $U_{\overline{K}}$ and that the closed point $Y_d$ is regular in each $Y_i$. Giving an admissible flag $Y_\bullet$ is equivalent to choosing a closed regular point $p\coloneq Y_d$ in $U_{\overline{K}}$ and a system of parameters $\{z_1, \cdots, z_d\}$ such that $\widehat{\OO_{U_{\overline{K}},p}}=\overline{K}[[z_1,\cdots, z_d]]$, where $\kappa(p)$ is the residue field of $p$, and $\widehat{\OO_{U_{\overline{K}},p}}$ is the Krull completion of the local ring $\OO_{U_{\overline{K}},p}$ with respect to its maximal ideal.  Hence we have a valuation $v_Y$ on $\widehat{\OO_{U_{\overline{K}},p}}$ as follows: for any $f= \sum\limits_{\alpha\in \N^d}a_\alpha z^\alpha\in \widehat{\OO_{U_{\overline{K}},p}}=\overline{K}[[z_1,\cdots, z_d]]$,
	\[v_Y(f)\coloneq \min\{\alpha\in \N^d\mid a_\alpha\not=0\},\]
	where the minimum is taken with respect to the lexicographic order on the variables $z_1,\cdots, z_d$. 
\end{art}

In the rest of this section, we assume that $U$ is normal and fix an admissible flag $Y_\bullet$ of $U_{\overline{K}}$. Then we have a valuation function $v_Y$ on $H^0(U, D)\otimes_K\overline{K}$ for any $D\in\widetilde{\Div}_{\Q}(U)_\cpt$.

Next we introduce the notion of a graded linear series containing an ample series. It can be thought of as a generalization of the bigness property of $D$ to arbitrary graded linear sub-series of $D$.

\begin{definition}
	\label{def:ampleser}
	A \emph{graded linear series} of $D\in \widetilde\Div_\Q(U)_\cpt$ is a graded sub-algebra $V_\bullet=\{V_m\}_{m\in\N}$ of $\{H^0(U,mD)\}_{m\in\N}$ with $\dim_K(V_m)<\infty$ for any $m\in\N$. 
	We say a graded linear series $V_\bullet$ of $D$ \emph{contains an ample series} if the following two condition holds:
	\begin{enumerate}[resume,leftmargin=*,label=\it(\alph*),ref=\it{(\alph*)}]
		\item \label{ampleser a} $V_m\neq \{0\}$ for large enough $m$;
		\item there is a positive integer $n$, a projective model $X$ of $U$ and an ample divisor $A$ on it such that $nD\ge A$ and the canonical injection $H^0(U,mA)\xhookrightarrow{\otimes s^{\otimes m}} H^0(U,mnD)$ factors through $V_{mn}$, i.e.
		\[H^0(U,mA)\xhookrightarrow{\otimes s^{\otimes m}}V_{mn}\xhookrightarrow{}{H^0}(U,mnD),\]
		for some non-zero $s\in H^0(U,nD-A)$ for all $m\in\N$.
	\end{enumerate}
\end{definition}

\begin{art} \label{okounkov body of a graded linear series}
	Let $V_\bullet=\{V_m\}_{m\in\N}$ be a graded linear series  of $D$. As \cref{geometric volume}~\ref{definition of volume of divisors}, we can define the \emph{algebraic volume} of $V_\bullet$ as 
	\[\vol(V_\bullet)\coloneq\limsup_{m\to\infty}\frac{\dim_K(V_m)}{m^d/d!}.\]
	Let $D_{\overline{K}}$ be the pull-back of $D$ to $U_{\overline{K}}$. Then the valuation $v_Y$ is defined on $H^0(U_{\overline{K}},D_{\overline{K}})=H^0(U,D)\otimes_{K}\overline{K}$. We denote set the semi-group
	\[\Gamma(V_\bullet)\coloneq \left\{(m,\gamma)\in \N_{\geq 1}\times\Z^{d}\mid \gamma=v_Y(s) \text{ for some $s\in V_m\otimes_K\overline{K}$}\right\}\]
	and $\Gamma(V_\bullet)_m\coloneq \Gamma(V_\bullet)\cap (\{m\}\times\Z^d)$. The associated \emph{Okounkov body of $V_\bullet$} is defined as 
	\[\Delta(V_\bullet)\coloneq \text{ the closure of $\bigcup\limits_{m\in\N_{\geq 1}}\frac{1}{m}\Gamma(V_\bullet)_m$ in $\{1\}\times\R^d$ }.\]
	We set $\Gamma(D)\coloneq \Gamma\left(\{H^0(U,mD)\}_{m\in\N}\right)$ and $\Delta(D)\coloneq \Delta\left(\{H^0(U,mD)\}_{m\in\N}\right)$.
	
	On the other hand, let $\Gamma(V_\bullet)_\Z$ be the subgroup of $\Z^{d+1}$ generated by $\Gamma(V_\bullet)$, and $\Gamma(V_\bullet)_{\Z,0}\coloneq \Gamma(V_\bullet)_\Z\cap (\{0\}\times\Z^d)$. Notice that $V_{\overline{K},\bullet}$ is a $\overline{K}$-subalgebra of $\overline{K}[[z_1,\cdots, z_d]]$, we have the Kodaira dimension $\kappa=\dim(\Delta(V_\bullet))\leq d$ of $V_\bullet$, see \cref{remark:kodaira dimension}. Then $\kappa=\rk_\Z(\Gamma(V_\bullet)_{\Z,0})$ by \cite[Proposition~6.3.6~(1)]{chen2020arakelov}. Moreover, by \cite[Proposition~6.3.18]{chen2020arakelov}, 
	\[\lim\limits_{m\in\N(V_\bullet), m\to\infty}\frac{\dim_K(V_m)}{m^\kappa} = \vol_{\R^\kappa}(\Delta(V_\bullet))>0,\]
	where $\N(V_\bullet)\coloneq \{m\in\N\mid \dim_K(V_m)\not=0\}$.
	In particular, for $D\in \widetilde{\Div}_{\Q}(U)_\cpt$, $D$ is big if and only if the Kodaira dimension of $\{H^0(U,mD)\}_{m\in\N}$ is $d$.
\end{art}

We state our next lemma which will be repeatedly used throughout this paper.

\begin{lemma}
	\label{lemma:refokoun}
	Let $V_\bullet=\{V_m\}_{m\in\N}$ be a graded linear series of $D\in \widetilde\Div_\Q(U)_\cpt$. Suppose $V_\bullet$ contains an ample series. Then $\Gamma(V_\bullet)_\Z=\Z^{d+1}$. 
	In particular, we have
	\begin{align} \label{algebraic volume and geometric volume}
		{\vol(V_\bullet)} =\lim\limits_{m\to\infty}\frac{\dim_K(V_m)}{m^d/d!}={d!}\cdot{\vol_{\mathbb{R}^d}(\Delta(V_\bullet))}={d!}\cdot{\vol_{\mathbb{R}^d}(\Delta(V_\bullet)^{\circ})}
	\end{align}
	where $S^{\circ}$ denotes the topological interior of any subset $S\subseteq \mathbb{R}^d$.
\end{lemma}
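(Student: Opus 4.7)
The plan is to first establish the lattice-generation statement $\Gamma(V_\bullet)_{\Z}=\Z^{d+1}$, and then deduce the volume formula directly from the Okounkov body convergence result cited in \cref{okounkov body of a graded linear series} (Chen-Moriwaki \cite[Proposition~6.3.18]{chen2020arakelov}).

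For the lattice step, I would exploit the ample-series embedding from condition \emph{(b)}. Setting $\beta\coloneq v_Y(s)\in\Z^d$, multiplication by $s^{\otimes m}$ shifts valuations by $m\beta$, so the affine map $\phi\colon(m,\alpha)\mapsto (mn,\alpha+m\beta)$ sends $\Gamma(\{H^0(X,mA)\})$ into $\Gamma(V_\bullet)$. Since $A_{\overline{K}}$ is ample on the projective variety $X_{\overline{K}}$ and the admissible flag $Y_\bullet$ passes through a regular point $p=Y_d\in U\subset X$, the classical Lazarsfeld-Musta\c{t}\u{a} lemma (see \cite[Lemma~2.2]{lazarsfeld2009convex} or \cite[Lemma~1.11]{boucksom2011okounkov}) yields $\Gamma(A)_{\Z}=\Z^{d+1}$. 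Applied to $\phi$, which is a $\Z$-linear injection with image generated by $(n,\beta)$ and $\{(0,e_i)\}_{1\le i\le d}$, this gives
\[n\Z\times\Z^d\;\subset\;\Gamma(V_\bullet)_{\Z}.\]
To upgrade $n\Z$ to $\Z$ in the first coordinate, invoke condition \emph{(a)}: for $m$ large enough both $V_m\neq 0$ and $V_{m+1}\neq 0$, so elements $(m,\alpha),(m+1,\alpha')\in\Gamma(V_\bullet)$ exist, and their difference $(1,\alpha'-\alpha)$ combined with $\{0\}\times\Z^d$ generates $\Z^{d+1}$.

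With $\Gamma(V_\bullet)_{\Z}=\Z^{d+1}$ established, the Kodaira dimension of $V_\bullet$ is $\kappa=\rk_{\Z}(\Gamma(V_\bullet)_{\Z,0})=d$. Applying \cite[Proposition~6.3.18]{chen2020arakelov},
\[\lim_{\substack{m\in\N(V_\bullet)\\ m\to\infty}}\frac{\dim_K(V_m)}{m^d}\;=\;\vol_{\R^d}(\Delta(V_\bullet)),\]
and since $\N(V_\bullet)$ is cofinite in $\N$ by \emph{(a)}, the limit holds over all $m\to\infty$. Multiplying by $d!$ gives the desired formula $\vol(V_\bullet)=d!\cdot\vol_{\R^d}(\Delta(V_\bullet))$. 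Finally, the relation $\vol_{\R^d}(\Delta(V_\bullet))=\vol_{\R^d}(\Delta(V_\bullet)^{\circ})$ follows because $\Delta(V_\bullet)$ is a compact convex subset of $\R^d$ of full dimension $d$ (as $\kappa=d$), whose topological boundary therefore has Lebesgue measure zero.

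The main obstacle is the lattice-generation step $\Gamma(V_\bullet)_{\Z}=\Z^{d+1}$: everything reduces to the ample projective case $\Gamma(A)_{\Z}=\Z^{d+1}$, but one must carefully verify that the admissible flag on $U_{\overline{K}}$ (which by assumption passes through a regular point of $U_{\overline{K}}$) gives a well-defined valuation on $H^0(X_{\overline{K}},mA_{\overline{K}})\subset H^0(U_{\overline{K}},mA_{\overline{K}})$ compatible with the one used to define $\Gamma(V_\bullet)$. Once this compatibility is in place, the remainder is a routine lattice computation and a direct appeal to Chen-Moriwaki's convergence theorem.
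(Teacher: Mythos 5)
Your proof is correct and follows essentially the same strategy as the paper's: reduce to the Lazarsfeld–Musta\c{t}\u{a} lemma for the (very) ample divisor $A$ via the graded embedding $H^0(U,mA)\hookrightarrow V_{mn}$, obtain $\{0\}\times\Z^d\subset\Gamma(V_\bullet)_\Z$, and then use condition \emph{(a)} to supply an element of the form $(1,*)$ from two adjacent nonzero graded pieces, before invoking Chen--Moriwaki's convergence result. The only cosmetic differences are that the paper chooses $A$ very ample explicitly (which you should too, for the cited lemma to apply directly) and exhibits generators $(m,f_0)$, $(m,f_0+e_i)$, $(m+1,f_1)$ in a single step rather than passing through the intermediate lattice $n\Z\times\Z^d$.
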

\begin{proof}
	
We choose a projective $K$-model $X$ of $U$ and an ample divisor $A$ as in \cref{def:ampleser} which can be furthermore chosen to be very ample. As argued in the proof of \cite[Lemma 2.2]{lazarsfeld2009convex}, there exist sections $s_0, s_1,\cdots, s_d$ in $H^0(X,A)$, hence in $H^0(U,A)$, with $v_Y(s_0)=0$ and $v_Y(s_i)=e_i$ for $1\leq i\leq d$, where $\{e_i\}_{1\leq i\leq d}$ is the standard basis of $\mathbb{R}^d$ and $0$ is the zero vector. Choose $m$  such that there is a sequence of inclusions
	\[H^0(U,A)\xhookrightarrow{\otimes t_0}V_m\xhookrightarrow{}H^0(U,mD)\]
	for a non-zero $t_0\in H^0(U,mD-A)$. Write $f_0\coloneq v_Y(t_0)$. Since $V_m\neq \{0\}$ for large enough $m$, we can choose $m$ above such that there is a non-zero section $t_1\in V_{m+1}$. Write $f_1\coloneq v(t_1)$. Then clearly the vectors $(m,f_0)$, $(m,f_0+e_i)$ and $(m+1,f_1)$ are all in $\Gamma(V_\bullet)$. 
	Hence $\Gamma(V_\bullet)_{\Z}=\Z^{d+1}$, $\Gamma(V_\bullet)_{\Z,0}\simeq\Z^{d}$. This implies that the Kodaira dimension of $V_\bullet$ is $d$, so \[\lim\limits_{m\to\infty}\frac{\dim_K(V_m)}{m^d}=\lim\limits_{m\in\N(V_\bullet), m\to\infty}\frac{\dim_K(V_m)}{m^d}=\vol_{\mathbb{R}^d}(\Delta(V_\bullet)),\]
	the first equality uses \cref{def:ampleser}~\ref{ampleser a}. Hence \eqref{algebraic volume and geometric volume} holds.
\end{proof}

We end this subsection by noting a result on the behavior of the Okounkov bodies under perturbation.

\begin{art}\label{convergence of concave bodies}
A \emph{convex body} in $\R^d$ is a compact concave subset in $\R^d$ with non-empty interior. For example, by \cref{okounkov body of a graded linear series}, for $D\in \widetilde{\Div}_\Q(U)$, $D$ is big if and only if $\Delta(D)$ is a convex body in $\R^d$ (i.e. the corresponding Kodaira dimension is $d$). We denote by $\mathscr{K}^d$ the set of concave bodies in $\R^d$. 

Let $\Delta_1, \Delta_2$ be compact concave subsets in $\R^d$ (not necessarily with interior points). We define the \emph{Hausdorff distance} between $\Delta_1, \Delta_2$ as
\[d_H(\Delta_1, \Delta_2)\coloneq \inf\{\varepsilon\in \R_{\geq0}\mid \Delta_1\subset\Delta_2+\varepsilon\mathbb{B}, \Delta_2\subset\Delta_1+\varepsilon\mathbb{B}\}\]
where $\mathbb{B}$ is the unit ball in $\R^d$. We also set
\[d_S(\Delta_1,\Delta_2)\coloneq \vol_{\R^d}((\Delta_1\cup\Delta_2)\setminus(\Delta_1\cap\Delta_2)).\]
Then $d_H$ and $d_S$ are metrics, called the \emph{Hausdorff metric} and the \emph{symmetric difference metric}, respectively, on $\mathscr{K}^d$. 
 Let  $(\Delta_j)_{j\in\N_{\geq 1}}\subset \mathscr{K}^d$ and $\Delta\in \mathscr{K}^d$. By \cite[Theorem~7]{shephard1965metrics}, $\lim\limits_{j\to\infty}d_H(\Delta_j,\Delta)=0$ if and only if $\lim\limits_{j\to\infty}d_S(\Delta_j,\Delta)=0$.
\end{art}
\begin{lemma}\label{lemma:interior point convergence of concave}
	Let $(\Delta_j)_{j\in \N_{\geq 1}}$ be a sequence of compact concave bodies in $\R^d$ converging to $\Delta$, i.e. $\lim\limits_{j\to\infty}d_H(\Delta_j,\Delta)=0$. Then for any $\lambda\in \Delta^\circ$, we have $\lambda\in \Delta^\circ_j$ when $j$ is large enough. 
\end{lemma}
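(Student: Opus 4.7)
The plan is to use convexity of $\Delta_j$ together with an approximating simplex inside $\Delta$ whose vertices can be perturbed into $\Delta_j$ while keeping $\lambda$ in its interior.

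Fix $\lambda\in\Delta^\circ$ and pick $\varepsilon>0$ with $\overline{B}(\lambda,\varepsilon)\subset\Delta$. The first step is to choose $d+1$ affinely independent points $v_0,\ldots,v_d$ in $\overline{B}(\lambda,\varepsilon)\subset\Delta$ whose convex hull $T\coloneq\mathrm{conv}(v_0,\ldots,v_d)$ contains $\lambda$ in its interior. A concrete choice is $v_i=\lambda+\tfrac{\varepsilon}{2}e_i$ for $1\le i\le d$ and $v_0=\lambda-\tfrac{\varepsilon}{2d}(e_1+\cdots+e_d)$; then $\lambda$ is the barycenter corresponding to a strictly positive vector of barycentric coordinates, so $\lambda\in T^\circ$.

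The second step is to perturb the $v_i$ into $\Delta_j$ using Hausdorff convergence. Given any $\eta>0$, for $j$ sufficiently large we have $\Delta\subset\Delta_j+\eta\mathbb{B}$, so we can choose $v_i^{(j)}\in\Delta_j$ with $|v_i^{(j)}-v_i|<\eta$ for each $i=0,\ldots,d$. Since $\Delta_j$ is convex, the simplex $T_j\coloneq\mathrm{conv}(v_0^{(j)},\ldots,v_d^{(j)})$ is contained in $\Delta_j$.

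The last step is to observe that $\lambda$ lies in the interior of $T_j$ once $\eta$ is small enough. Indeed, affine independence is an open condition on the vertices, and the barycentric coordinates of a fixed point with respect to an affinely independent tuple depend continuously on that tuple; since the barycentric coordinates of $\lambda$ with respect to $(v_0,\ldots,v_d)$ are all strictly positive, the same holds with respect to $(v_0^{(j)},\ldots,v_d^{(j)})$ for $\eta$ small enough, so $\lambda\in T_j^\circ\subset\Delta_j^\circ$.

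There is essentially no obstacle; the only thing to handle carefully is the continuity argument in the last step, which is just standard linear algebra (or can be replaced by an explicit quantitative estimate: if each $|v_i^{(j)}-v_i|<\eta$ and $\eta$ is a small fraction of $\varepsilon/d$, then $\overline{B}(\lambda,\eta)$ still lies in $T_j$).
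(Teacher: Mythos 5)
Your proof is correct, and it takes a genuinely different route from the paper's. The paper intersects $\Delta_j$ with a fixed closed ball $B\subset\Delta$ centered at $\lambda$, invokes the equivalence of Hausdorff and symmetric-difference convergence (cited from Shephard--Webster) to get $\vol_{\R^d}(B\cap\Delta_j)\to\vol_{\R^d}(B)$, and then argues by contradiction: if $\lambda\notin(B\cap\Delta_j)^\circ$, a separating or supporting hyperplane (Rockafellar, Theorem~11.6) traps $B\cap\Delta_j$ in a half-ball of $B$, so its volume is at most half that of $B$, contradicting the volume convergence. Your argument instead places an explicit small simplex $T\subset\overline{B}(\lambda,\varepsilon)\subset\Delta$ with $\lambda$ strictly interior, perturbs each vertex into $\Delta_j$ using only the inclusion $\Delta\subset\Delta_j+\eta\mathbb{B}$, and concludes via convexity of $\Delta_j$ together with continuity (and openness of invertibility) of the barycentric-coordinate map. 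Your route is more elementary and a bit sharper: it needs only one direction of the Hausdorff bound, avoids both the metric-equivalence theorem and the supporting-hyperplane theorem, does not require verifying that $B\cap\Delta_j$ is eventually a convex body, and yields an explicit quantitative threshold ($d_H(\Delta_j,\Delta)$ small relative to $\varepsilon/d$ suffices). What the paper's argument buys is consonance with the symmetric-difference metric $d_S$, which is the tool it uses in the surrounding lemmas (e.g.\ the volume comparison in Lemma~3.36), so the choice there is one of local economy of machinery rather than necessity. One cosmetic remark: the paper's ``concave body'' terminology is nonstandard and, per its own definition in~3.34, means compact convex with nonempty interior; you correctly read it as ``convex body,'' which is exactly what your use of $\mathrm{conv}(v_0^{(j)},\ldots,v_d^{(j)})\subset\Delta_j$ requires.
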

\begin{proof}
	Let $\lambda\in \Delta^\circ$ and $r\in\R_{>0}$ such that $B\coloneq\{x\in \R^d\mid |x-\lambda|\leq r\}\subset \Delta$. We consider $B_j\coloneq B\cap \Delta_j$. Since $B\setminus B_j \subset (\Delta\cup\Delta_j)\setminus (\Delta\cap \Delta_j)$ and $\lim\limits_{j\to\infty}d_S(\Delta_j,\Delta)=0$ by \cref{convergence of concave bodies}, we have that $\lim\limits_{j\to\infty}d_S(B_j,B)=0$, i.e. $B_j$ converge to $B$. Since $B_j\subset B$, we have that
	\begin{align}\label{limit of convex body}
\lim\limits_{j\to\infty}\vol_{\R^d}(B_j)=\vol_{\R^d}(B)
	\end{align}
	and $B_j$ is a convex body for $j$ large enough. To show $\lambda\in \Delta_j^\circ$ for $j$ large, it suffices to show that $\lambda\in B_j^\circ$ for $j$ large. If $\lambda\not\in B_j^\circ$, since $B_j$ is a convex body,
	\begin{itemize}
		\item there is a hyperplane in $\R^d$ separating $\lambda$ and $B_j$ when $\lambda\not\in B_j$;
		\item there is a supporting hyperplane of $B_j$ containing $\lambda$ when $\lambda$ is in the boundary of $B_j$ by \cite[Theorem~11.6]{rockafellar1970convex}.
	\end{itemize} 
	In either case, $B_j$ is contained in a semi-ball of $B$. By \eqref{limit of convex body}, we know that $\lambda\in B_j^\circ$ for $j$ large enough. This completes the proof of the lemma.
\end{proof}
\begin{lemma}
	\label{lemma:convbodagain}
	Let $(D_j)_{j\in\N_{\geq 1}}$ be a Cauchy sequence in $\widetilde{\Div}_\Q(U)_\cpt$ converging to $D$. Assume that $D$ is big. Then 	
	\[\lim\limits_{j\to \infty}d_H(\Delta(D_j),\Delta(D))=0,\]
	where $d_H(\cdot,\cdot)$ is the usual Hausdorff metric on the space of compact convex bodies.
	In particular, if $(D_j)_{j\in\N_{\geq 1}}$ is decreasing (resp. increasing) defining $D$, then 
	\[\Delta(D)=\bigcap\limits_{j=1}^\infty\Delta(D_j)\]
	\[\text{(resp. $\Delta(D)^\circ=\bigcup_{j=1}^\infty\Delta(D_j)^{\circ}$).}\]
\end{lemma}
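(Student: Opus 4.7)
The strategy is to sandwich $\Delta(D_j)$ and $\Delta(D)$ between the Okounkov bodies of two bracketing divisors $D\pm\varepsilon B$ for a fixed boundary divisor $B$, and then let $\varepsilon\to 0$ using continuity of the geometric volume.

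Fix a boundary divisor $B$ of $U$, so that a basis of neighbourhoods of any point $E\in\widetilde{\Div}_\Q(U)_\cpt$ in the boundary topology is given by $\{F : -rB\le F-E\le rB\}$ for $r\in\Q_{>0}$. Since $(D_j)$ is Cauchy with limit $D$, for every $\varepsilon\in\Q_{>0}$ there exists $j_0$ with
\[D-\varepsilon B \le D_j \le D+\varepsilon B \quad\text{for all } j\ge j_0.\]
From the definition $H^0(U,E)=\{f\in K(U)^\times \mid \mathrm{div}(f)+E\ge 0\}\cup\{0\}$ one sees that $E\le E'$ forces $H^0(U,mE)\subseteq H^0(U,mE')$, and since the valuation $v_Y$ is applied to the same rational functions, this inclusion respects $v_Y$ and thus $\Delta(E)\subseteq\Delta(E')$. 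Applied here, this gives
\[\Delta(D-\varepsilon B) \subseteq \Delta(D_j),\;\Delta(D) \subseteq \Delta(D+\varepsilon B).\]

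Since the volume is continuous in the boundary topology by \cref{geometric volume}~\ref{continuity of geometric volume} and $\vol(D)>0$, for all sufficiently small $\varepsilon>0$ both $D\pm\varepsilon B$ are big, hence by \cref{lemma:refokoun} the sets $\Delta(D\pm\varepsilon B)$ are convex bodies with
\[\vol_{\R^d}(\Delta(D+\varepsilon B))-\vol_{\R^d}(\Delta(D-\varepsilon B))=\tfrac{1}{d!}\bigl(\vol(D+\varepsilon B)-\vol(D-\varepsilon B)\bigr)\xrightarrow{\varepsilon\to 0^+} 0.\]
Combined with $\Delta(D-\varepsilon B)\subseteq\Delta(D+\varepsilon B)$, this yields $d_S(\Delta(D-\varepsilon B),\Delta(D+\varepsilon B))\to 0$, and by the equivalence of $d_S$ and $d_H$ recorded in \cref{convergence of concave bodies} the Hausdorff distance between these two convex bodies also tends to $0$. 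Since $\Delta(D_j)$ and $\Delta(D)$ are both trapped between them, the first assertion $\lim_j d_H(\Delta(D_j),\Delta(D))=0$ follows.

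For the \emph{in particular} assertions, the monotonicity above together with the Hausdorff convergence just established suffices. If $(D_j)$ decreases to $D$, then $\Delta(D)\subseteq\Delta(D_j)$ for all $j$, hence $\Delta(D)\subseteq\bigcap_j\Delta(D_j)$; conversely, any $x$ in this intersection lies in $\Delta(D)+\varepsilon_j\mathbb{B}$ with $\varepsilon_j\to 0$ by the Hausdorff bound, and since $\Delta(D)$ is closed we get $x\in\Delta(D)$. If $(D_j)$ increases to $D$, then $\Delta(D_j)^\circ\subseteq\Delta(D)^\circ$ yields one inclusion, while the reverse follows by applying \cref{lemma:interior point convergence of concave} to the Hausdorff-convergent sequence $\Delta(D_j)\to\Delta(D)$ (noting that $D_j$ is big for $j$ large by continuity of $\vol$, so that $\Delta(D_j)$ is indeed a convex body as required by the lemma). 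The main obstacle I anticipate is verifying the monotonicity of the Okounkov body cleanly in the compactified setting: one must ensure that the admissible flag $Y_\bullet$ is chosen so that $v_Y$ is simultaneously defined on $H^0(U,mE)$ for all $E$ in the countable family of divisors $\{D_j, D, D\pm\varepsilon B\}$ used above, but once $p=Y_d$ is sufficiently generic with respect to the relevant models and boundary $B$, this is automatic, and then the argument reduces to the continuity-of-volume sandwich above.
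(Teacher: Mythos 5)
Your proof is correct, and for the first assertion it is more self-contained than the paper's, which simply cites \cite[Corollary~1.19]{nijeribanra} as a black box. Your sandwich argument — bracketing both $\Delta(D_j)$ and $\Delta(D)$ between $\Delta(D-\varepsilon B)$ and $\Delta(D+\varepsilon B)$, using monotonicity of Okounkov bodies and the continuity of $\vol(\cdot)$ from \cref{geometric volume}~\ref{continuity of geometric volume} — is exactly the kind of argument one expects to underlie that corollary, so a reader who does not wish to chase the external reference gains from your version. One small point to tighten: the metric-equivalence statement recorded in \cref{convergence of concave bodies} concerns a sequence converging to a \emph{fixed} convex body, so passing from $d_S(\Delta(D-\varepsilon B),\Delta(D+\varepsilon B))\to 0$ to the corresponding $d_H$-convergence, where both endpoints move with $\varepsilon$, needs a triangle inequality through the fixed body $\Delta(D)$; alternatively, observe directly that $\Delta(D-\varepsilon B)\subseteq\Delta(D_j)\cap\Delta(D)$ and $\Delta(D_j)\cup\Delta(D)\subseteq\Delta(D+\varepsilon B)$ for $j\ge j_0(\varepsilon)$, giving $d_S(\Delta(D_j),\Delta(D))\le\vol_{\R^d}(\Delta(D+\varepsilon B))-\vol_{\R^d}(\Delta(D-\varepsilon B))$, and then apply the equivalence with $\Delta(D)$ as the fixed limit. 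For the decreasing ``in particular'' claim the paper deduces $\Delta(D)=\bigcap_j\Delta(D_j)$ from the equality of volumes of two nested convex bodies, while you use the Hausdorff bound and closedness of $\Delta(D)$ directly; both are fine. Your closing worry about the choice of flag is a non-issue: $v_Y$ extends to all of $\Frac(\overline{K}[[z_1,\dots,z_d]])\supseteq\overline{K}(U_{\overline{K}})$ (the Okounkov semigroup is a subset of $\N_{\geq 1}\times\Z^d$, not $\N_{\geq 1}\times\N^d$, for exactly this reason), so it is simultaneously defined on $H^0(U,mE)\otimes_K\overline{K}$ for every compactified $E$, with no genericity condition on $p$ beyond admissibility of the flag.
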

\begin{proof}
	The first claim is a consequence of \cite[Corollary 1.19]{nijeribanra}. For the second claim, if $D_j$ is decreasing, then $\Delta(D)\subset \bigcap\limits_{j=1}^\infty\Delta(D_j)$. By \cref{convergence of concave bodies} $\lim\limits_{j\to \infty}d_H(\Delta(D_j),\Delta(D))=0$ implies that \[\lim\limits_{j\to \infty}d_S(\Delta(D_j),\Delta(D)) = \lim\limits_{j\to \infty}\vol_{\R^d}(\Delta(D_j))-\vol_{\R^d}(\Delta(D))=0,\] so $\vol_{\R^d}\left(\bigcap\limits_{j=1}^\infty\Delta(D_j)\right)=\vol_{\R^d}(\Delta(D))$. This implies that  $\Delta(D)=\bigcap\limits_{j=1}^\infty\Delta(D_j)$ since they are both concave bodies. If  $D_j$ is increasing, then $\bigcup\limits_{j=1}^\infty\Delta(D_j)^{\circ}\subset \Delta(D)^\circ$. By \cref{lemma:interior point convergence of concave}, the equality holds. 
\end{proof}

\section{Arithmetic volumes and concave transforms of compactified $S$-metrized divisors}
\label{section:arithmetic volume and concave transforms}
In this section, we will define the arithmetic volume, arithmetic $\chi$-volume and concave transform of a compactified $S$-metrized divisor, and use these notion to study essential minimal and give a height inequality. Throughout this section, we fix an adelic curve $S=(K,\Omega,\mathcal{A},\nu)$ satisfying the condition \ref{finite measure for archimedean} \ref{exists finite measure for subset} \ref{K countable or A discrete} in \S \ref{subsection:Global theory: Compactified (model) divisors} and the tensorial minimal slope property of level $\ge C_0$ for some $C_0\in\R_{\geq0}$ (see \cref{def:tensorialproperty}). We also fix a $d$-dimensional normal quasi-projective variety $U$ over $K$. We 
	fix an admissible flag $Y_\bullet$ of $U_{\overline{K}}$ with $Y_d=p$, see \cref{flag of a quasi-projective variety} for the definition of admissible flag, then we have the algebra $\widehat{\OO_{U_{\overline{K}},p}}=\overline{K}[[z_1,\cdots, z_d]]$. By a graded $K$-algebra of adelic vector bundle, we mean a graded $K$-algebra of adelic vector bundle with respect to $R\coloneq\Frac(\overline{K}[[z_1,\cdots, z_d]])$ in the sense of \cref{def:gradedalgerba}.

\subsection{Auxiliary graded linear series for compactified $S$-metrized divisors}
In this subsection, we associate a graded $K$-algebra of adelic vector bundles to any compactified $S$-metrized divisor. We need to show that this definition actually gives us a graded $K$-algebra of adelic vector bundles. We derive the necessary results from the corresponding facts in the projective case.
\begin{art} \label{sup-norm} 
	For any compactified $S$-metrized divisor $\overline{D}=(D,g)\in \widehat{\Div}_{S,\Q}(U)_\cpt$ introduced in \cref{def:CM divisors on non-proper} and any $\omega\in \Omega$, we can define the \emph{sup-norm} on $H^0(U_\omega,D_\omega)$ as follows: for any $s\in H^0(U_\omega,D_\omega)$, set
	\[\log\|s\|_{\sup,\omega}\coloneq\sup\{(-g_\omega+\log|s|_\omega)(x)\mid x\in U_\omega^\an\}.\]
	Note that by the above definition, $\metr_{\sup,\omega}$ can take $\infty$ as a value and hence it is not an honest norm. 
	Our idea will be to look at those sections in $H^0(U,D)$ which satisfies an appropriate dominancy.
\end{art}

\begin{definition}
	\label{def:auxglobsec}
	Let $\overline{D}\in\widehat{\Div}_{S,\Q}(U)_\cpt$. The space of \emph{auxiliary global sections} as 
	\[{H^0_+(U,\overline{D})}\coloneq\{s\in H^0(U,D)\mid \text{{$\|s\|_{\sup,\omega}<\infty$ for any $\omega\in \Omega$,}} \tint_{\Omega} \log\|s\|_{\sup,\omega} \,\nu(d\omega)<\infty\}.\]
	With the above definition we associate an \emph{auxiliary graded $K$-algebra} to $\overline{D}$ as \break $\{H^0_+(U,m\overline{D})\otimes_K\overline{K}\}_{m\in\N}$ 
	which is a $\overline{K}$-subalgebra of $R[T]$, see \cref{flag of a quasi-projective variety}. 
	Each of the graded pieces $H^0_+(U,m\overline{D})$ equipped with the sup-norm $\metr_{\sup,m,\omega}$ for each $\omega\in\Omega$ becomes an $S$-normed $K$-vector space.
\end{definition}
\begin{remark} \label{remark:auxiliariy for divisors from projective model}
	Let $\overline{D}\in \widehat{\Div}_{S,\Q}(U)_\CM$, and $X$ a projective $K$-model of $U$ such that $\overline{D}\in \widehat{\Div}_{S,\Q}(X)$. If $X$ is integrally closed in $U$, then $H^0_+(U, \overline{D})=H^0(U,D)=H^0(X,D)$. Indeed, the second equality is from \cref{geometric volume}. For the first equality, notice that for any $\omega\in\Omega$, $U^\an_\omega$ is dense in $X^\an_\omega$, then the sup-norm on $H^0(U,D)$ is exactly the one on $H^0(X,D)$ via the identity, so $\|s\|_{\sup,\omega}<\infty$ for any $s\in H^0(U,D)$. Moreover, by \cite[Theorem~6.1.13]{chen2020arakelov}, we have that $\tint_{\Omega} \log\|s\|_{\sup,\omega} \,\nu(d\omega)<\infty$ for any $s\in H^0(X,D)$. This proves our claim.
\end{remark}
We want to show next that $\{(H^0_+(U,m\overline{D}),\metr_{\sup,m})\}_{m\in\N}$ is a graded $K$-algebra of adelic vector bundles, so that we can study their arithmetic volumes. The main task is to prove that $(H^0_+(U,m\overline{D}),\metr_{\sup,m})$ is an adelic vector bundle (see \cref{def:adelicvectorbundle}) for each $m\in\N$ and we will use approximating projective models to deduce that. We need the following lemma. 
\begin{lemma}
	\label{lemma:convergences of sup-norms}
	Let $\overline{D}\in\widehat{\Div}_{S,\Q}(U)_\cpt$, and $(\overline{D_{j}})_{j\in \N_{\geq 1}}\subset\widehat{\Div}_{S,\Q}(U)_\cpt$ a sequence decreasingly converging to $\overline{D}$ with respect to the $\overline{B}$-boundary topology for some weak boundary divisor $\overline{B}$ of $U$. Then for every $s\in H^0_+(U,\overline{D})\subset  H^0_+(U,\overline{D_j})$ and $\omega\in\Omega$, we have that
	\[\lim\limits_{j\to\infty}\log\|s\|_{j,\sup,\omega}=\log\|s\|_{\sup,\omega},\]
	where $\metr_{\sup,\omega}$ (resp. $\metr_{j,\sup,\omega}$) is the sup-norm on $H^0_+(U,\overline{D})$ (resp. $H^0_+(U,\overline{D_j})$) at $\omega$.
\end{lemma}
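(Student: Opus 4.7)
The plan is to use the decreasing convergence of $\overline{D_j}$ to $\overline{D}$ together with the finiteness of $g_{B,\omega}$ at any fixed point of $U_\omega^{\an}$. Write $\overline{D_j} = \overline{D} + \overline{E_j}$ with $\overline{E_j} = (E_j, h_j) \in \widehat{\Div}_{S,\Q}(U)_\CM$. The hypothesis that $\overline{D_j}$ decreases to $\overline{D}$ in the $\overline{B}$-boundary topology gives two crucial facts: first, $\overline{E_j} \geq 0$, meaning $E_j$ is effective and $h_{j,\omega} \geq 0$ on $U_\omega^{\an}$; second, for every $r \in \Q_{>0}$ there exists an index $J(r) \in \N$ such that $\overline{E_j} \leq r\overline{B}$ for all $j \geq J(r)$, which at the level of Green functions reads $h_{j,\omega} \leq r g_{B,\omega}$ on $U_\omega^{\an}$.

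The upper bound $\log\|s\|_{j,\sup,\omega} \leq \log\|s\|_{\sup,\omega}$ is immediate. Since $g_{j,\omega} = g_\omega + h_{j,\omega} \geq g_\omega$, the function $-g_{j,\omega} + \log|s|_\omega$ is dominated pointwise on $U_\omega^{\an}$ by $-g_\omega + \log|s|_\omega$, and taking suprema gives the inequality. This bound simultaneously ensures that $\|s\|_{j,\sup,\omega} < \infty$ at every $\omega$ and dominates the upper integral, so it also justifies the inclusion $H^0_+(U,\overline{D}) \subset H^0_+(U,\overline{D_j})$ tacitly used in the statement.

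For the reverse inequality, fix $\varepsilon > 0$. Since $\log\|s\|_{\sup,\omega} < \infty$ by the hypothesis $s \in H^0_+(U,\overline{D})$, pick $x^* \in U_\omega^{\an}$ with $(\log|s|_\omega - g_\omega)(x^*) > \log\|s\|_{\sup,\omega} - \varepsilon$. Because $|B| \cap U = \emptyset$, the Green function $g_{B,\omega}$ is continuous at $x^*$, so $M \coloneq g_{B,\omega}(x^*)$ is a fixed finite number. Choose $r \in \Q_{>0}$ small enough that $rM < \varepsilon$, and take $J = J(r)$ as above. Then for every $j \geq J$,
\begin{align*}
\log\|s\|_{j,\sup,\omega} &\geq (\log|s|_\omega - g_{j,\omega})(x^*) \\
&= (\log|s|_\omega - g_\omega)(x^*) - h_{j,\omega}(x^*) \\
&> \log\|s\|_{\sup,\omega} - \varepsilon - rM \\
&> \log\|s\|_{\sup,\omega} - 2\varepsilon.
\end{align*}
Combined with the upper bound, this yields $|\log\|s\|_{j,\sup,\omega} - \log\|s\|_{\sup,\omega}| < 2\varepsilon$ for all sufficiently large $j$, proving the convergence.

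The main obstacle to overcome is that $g_{B,\omega}$ need not be bounded on $U_\omega^{\an}$, since it typically blows up as one approaches the boundary $|B|$; hence the naive estimate $\sup_x h_{j,\omega}(x) \leq r \sup_x g_{B,\omega}(x)$ is useless. The remedy is to commit to a nearly-optimal point $x^*$ \emph{first} and then exploit that $g_{B,\omega}(x^*)$ is finite at this one point, so that the asymptotics in $r$ decouple from the behaviour of $g_{B,\omega}$ at the boundary of $U$.
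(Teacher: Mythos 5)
Your proof is correct and takes essentially the same route as the paper's: bound from above via the effectivity $\overline{D_j}\geq\overline{D}$, then for the lower bound fix a near-optimal point $x^*\in U_\omega^{\an}$ and use pointwise convergence of Green functions at that point (which in the paper is invoked directly from convergence in the boundary topology, and which you unwind as $0\leq h_{j,\omega}(x^*)\leq r\,g_{B,\omega}(x^*)$ with $g_{B,\omega}(x^*)$ finite). The only minor blemish is the claim $\overline{E_j}=\overline{D_j}-\overline{D}\in\widehat{\Div}_{S,\Q}(U)_\CM$: both terms only live in the completion $\widehat{\Div}_{S,\Q}(U)_\cpt$, so $\overline{E_j}$ need not be a genuine model divisor, but the two facts you actually use (effectivity and $\overline{E_j}\leq r\overline{B}$ for $j\geq J(r)$) hold regardless, so this does not affect the argument.
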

\begin{proof}
	Write $\overline{D}=(D,g)$ and $\overline{D_j}=(D_j,g_j)$ with $D, D_j\in \widetilde{\Div}_\Q(U)_\cpt$. Let $s\in H^0_+(U,\overline{D})\subseteq H^0_+(U,\overline{D_j})$ and $\omega\in\Omega$. The effectivity relation $\overline{D_j}\ge \overline{D}$ implies that for any $x\in U^\an_\omega$, we have
	\[-g_{j,\omega}(x)+|s(x)|_\omega\leq -g_\omega(x)+|s(x)|_\omega,\]
	which implies $\log\|s\|_{j,\sup,\omega}\leq \log\|s\|_{\sup,\omega}$. Similarly, the monotonicity of the Cauchy sequence clearly implies that $\log\|s\|_{j,\sup,\omega}$ is increasing. Then $\lim\limits_{j\to\infty}\log\|s\|_{j,\sup,\omega}$ exists and \begin{equation}\label{eq:limits of sup-norm}
		\lim\limits_{j\to\infty}\log\|s\|_{j,\sup,\omega}\leq \log\|s\|_{\sup,\omega}.
	\end{equation} On the other hand, let $\varepsilon\in\R_{>0}$. By definition, there is a point $x\in U_\omega^\an$ such that \[-g_\omega(x)+\log|s(x)|_\omega\ge \log\|s\|_{\sup,\omega}-\varepsilon.\] 
	Since $\overline{D_j}$ converges to $\overline{D}$ with respect to the $\overline{B}$-boundary topology for some weak boundary divisor $\overline{B}$ of $U$, we have that \begin{align*}
		\lim\limits_{j\to\infty}\log\|s\|_{j,\sup,\omega}&\ge\lim\limits_{j\to\infty}(-g_{j,\omega}(x)+\log|s(x)|_\omega)\\
		&=-g_\omega(x)+\log|s(x)|_\omega\\
		&\ge \log\|s\|_{\sup,\omega}-\varepsilon.
	\end{align*}
	Since $\varepsilon$ is arbitrary, we have that $\lim\limits_{j\to\infty}\log\|s\|_{j,\sup,\omega} \geq \log\|s\|_{\sup,\omega}$. Combining this with \eqref{eq:limits of sup-norm}, we complete the proof.
\end{proof}
\begin{proposition} \label{H+ is adelic}
	Let $\overline{D}\in \widehat{\Div}_{S,\Q}(U)_\cpt$. Then $(H^0_+(U,\overline{D}),\metr_{\sup})$ is an adelic vector bundle. Moreover, the graded $K$-algebra $\{(H^0_+(U,m\overline{D}),\metr_{\sup,m})\}_{m\in\N}$ equipped with the sup-norms is a graded $K$-algebra of adelic vector bundles in the sense of \cref{def:gradedalgerba}.
\end{proposition}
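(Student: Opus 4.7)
The strategy is to reduce to the projective case (established in \cite{chen2020arakelov}) by a Cauchy approximation. Since $\overline{D} \in \widehat{\Div}_{S,\Q}(U)_\cpt$, there exists a Cauchy sequence $(\overline{D_j})_{j \in \N_{\geq 1}} \subset \widehat{\Div}_{S,\Q}(U)_\CM$ converging to $\overline{D}$ in some $\overline{B}$-boundary topology (\cref{def:CM divisors on non-proper}). After extracting a subsequence and adding to each term a suitable positive multiple of $\overline{B}$ tending to $0$, we may assume that $\overline{D_j} \geq \overline{D}$ decreases to $\overline{D}$. For each $j$, fix a projective $K$-model $X_j$ of $U$, integrally closed in $U$, representing $\overline{D_j} \in \widehat{\Div}_{S,\Q}(X_j)$; by \cite{chen2020arakelov} together with \cref{remark:auxiliariy for divisors from projective model}, the pair $(H^0(U, D_j), \metr_{\sup,j}) = (H^0(X_j, D_j), \metr_{\sup,j})$ is an adelic vector bundle on $S$.

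Next I will verify the conditions of \cref{def:adelicvectorbundle} for $(H^0_+(U, \overline{D}), \metr_{\sup})$. The inclusion $H^0_+(U, \overline{D}) \subseteq H^0(U, D_j) = H^0(X_j, D_j)$ combined with $g \leq g_j$ gives $\metr_{\sup,j,\omega} \leq \metr_{\sup,\omega}$ on $H^0_+(U, \overline{D})$ at every $\omega \in \Omega$, so by the subspace stability of adelic bundles recalled in \cref{basic invariant of adelic vector bundle}, the restricted bundle $(H^0_+(U, \overline{D}), \metr_{\sup,j})$ is adelic. In particular, $\omega \mapsto \log\|s\|_{\sup,j,\omega}$ is measurable for every $s \in H^0_+(U, \overline{D})$, and by \cref{lemma:convergences of sup-norms} it converges monotonically to $\log\|s\|_{\sup,\omega}$, which is therefore measurable. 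Upper dominance $\tint_\Omega \log\|s\|_{\sup,\omega}\,\nu(d\omega) < \infty$ is exactly the defining condition of $H^0_+(U, \overline{D})$ in \cref{def:auxglobsec}. For the dual, the inequality $\metr_{\sup,\omega} \geq \metr_{\sup,j,\omega}$ passes to $\|t\|^\vee_{\sup,\omega} \leq \|t\|^\vee_{\sup,j,\omega}$ for every $t \in H^0_+(U, \overline{D})^\vee$, and the right-hand side is upper dominated by the adelicness of $(H^0_+(U, \overline{D}), \metr_{\sup,j})$. Thus $\omega \mapsto \log\|t\|^\vee_{\sup,\omega}$ is upper dominated as required.

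For the graded $K$-algebra assertion, conditions \ref{gradedalgebra ii} and \ref{gradedalgebra iii} of \cref{def:gradedalgerba} are immediate: at $\omega \in \Omega_{\mathrm{fin}}$, the sup-norm inherits ultrametricity from the ultrametric $|\cdot|_\omega$, and sub-multiplicativity $\|s_1 s_2\|_{\sup,m_1+m_2,\omega} \leq \|s_1\|_{\sup,m_1,\omega}\|s_2\|_{\sup,m_2,\omega}$ follows from the additivity $g_{m_1+m_2} = g_{m_1} + g_{m_2}$ of Green functions of $m\overline{D}$ and the pointwise multiplicativity of $|\cdot|_\omega$. The plan for condition \ref{gradedalgebra i} is to use the admissible flag $Y_\bullet$ at the regular point $p$ (\cref{flag of a quasi-projective variety}) and a local trivialization to embed $\bigoplus_m H^0_+(U_{\overline{K}}, m\overline{D})T^m$ into $R[T]$ via formal expansion at $p$, and then to dominate it by the section ring of an ample divisor on a projective compactification: one has $H^0_+(U, m\overline{D}) \subseteq H^0(U, mD_1) = H^0(X_1, mD_1)$, and the latter sits inside the finitely generated section algebra of an ample divisor $A$ on $X_1$ with $D_1 \leq A$. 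I expect \ref{gradedalgebra i} to be the main technical point, as it requires careful bookkeeping of finite generation under the completion procedure; however it should fall out cleanly from the reduction to $X_1$ together with classical finite-generation of section rings of ample divisors.
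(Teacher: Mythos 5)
Your proof is correct and follows essentially the same route as the paper: reduce to projective models via a decreasing Cauchy sequence, use \cref{lemma:convergences of sup-norms} together with \cite[Proposition~6.1.26]{chen2020arakelov} for measurability, and use an embedding into $H^0(X, mA)$ for an ample $A$ on a projective model for the finite-generation condition. The only small deviation is in the dominance step, where you equip $H^0_+(U,\overline{D})$ with the restricted norm $\metr_{\sup,j}$ from the projective model and invoke subspace stability of adelic bundles, while the paper instead passes to the surjective dual map $f^\vee$ and lifts $t$ to a preimage $t'$ in the dual of $H^0(X,D')$; both arguments are valid and amount to the same comparison of dual norms.
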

\begin{proof}
	We prove the proposition in the following steps.
	
	\vspace{2mm} \noindent
	Step 1: {\it The $S$-normed space $(H^0_+(U,\overline{D}),\metr_{\sup})$ is measurable.}
	
	\vspace{2mm} \noindent
	Let $(\overline{D_{j}})_{j\in \N_{\geq 1}}$ be a  sequence in $\widehat{\Div}_{S,\Q}(U)_\CM$ decreasingly converging to $\overline{D}$ with respect to the $\overline{B}$-boundary topology for some weak boundary divisor $\overline{B}$ of $U$. Assume that $\overline{D_j}\in \widehat{\Div}_{S,\Q}(X_j)$ for some projective $K$-model of $U$ which is integrally closed in $U$. Then $H_+^0(U,\overline{D})=H^0(U,\overline{D_j})=H^0(X_j,D_j)$ by Remark~\ref{remark:auxiliariy for divisors from projective model}. Let $\metr_{j,\sup,\omega}$ the sup-norm on $H^0(X,D_j)$ at $\omega\in\Omega$. By \cite[Proposition~6.1.26]{chen2020arakelov}, for any $j\in \N_{\geq 1}$ and $s\in H^0_+(U,\overline{D})\subset H^0(X,{D}_j)$, the function $\log\|s\|_{j,\sup,\omega}$ is $\mathcal{A}$-measurable. By \cref{lemma:convergences of sup-norms}, this implies that $\log|s|_{\sup,\omega}$ is $\mathcal{A}$-measurable. Hence $(H^0_+(U,\overline{D}),\metr_{\sup})$ is measurable.
	
	\vspace{2mm} \noindent
	Step 2: {\it The $S$-normed space $(H^0_+(U,\overline{D}),\metr_{\sup})$ is dominated.}
	
	\vspace{2mm} \noindent
	Let $\overline{V}^\vee=(V^\vee, \metr^\vee)$ be the dual of $(H^0_+(U,\overline{D}),\metr_{\sup})$. By definition of $H^0_+(U,\overline{D})$ in \cref{def:auxglobsec}, we have that 
	\[\tint_{\Omega}\log\|s\|_{\sup,\omega} \,\nu(d\omega)<\infty\]
	for any $s\in H^0_+(U,\overline{D})$. It remains to show that 
	\[\tint_{\Omega}\log\|t\|_{\omega}^\vee \,\nu(d\omega)<\infty\]
	for any $t\in V^\vee$. Let $\overline{D'}\in \widehat{\Div}_{S,\Q}(X)$ for some projective $K$-model $X$ of $U$ such that $X$ is integrally closed in $U$ and $\overline{D'}\ge \overline{D}$ in $\widehat{\Div}_{S,\Q}(U)_\CM$. By Remark~\ref{remark:auxiliariy for divisors from projective model}, we have that $H^0_+(U,\overline{D'})=H^0(X,D')$. We denote 
	${\overline{V'}}^\vee=({V'}^\vee, {\metr'}^\vee)$ the dual of $H^0(X,D')$ equipped with the sup-norm $\metr_{\sup}'$. 
	Since $\overline{D}\le \overline{D'}$, we have a canonical injective map of $K$-vector spaces \[f\colon H^0_+(U,\overline{D})\rightarrow H^0_+(U,\overline{D'})=H^0(X,D')\] 
	which induces a contractive map  
	\[f_{\omega}\colon (H^0_+(U,\overline{D})\otimes_K K_\omega,\metr_{\sup,\omega})\rightarrow (H^0(X,D')\otimes_K K_\omega,\metr_{\sup,\omega}'),\] 
	i.e. the operator norm $\|f_\omega\|\leq 1$, for any $\omega\in\Omega$. For the dual map
	\[f_{\omega}^\vee\colon ({V'}^\vee\otimes_KK_\omega, {\metr'}^\vee_\omega)\to (V^\vee\otimes_KK_\omega, \metr^\vee_\omega)\]
	which is surjective by \cite[Proposition~1.1.22]{chen2020arakelov}, we have that $\|f_\omega^\vee\|\leq \|f_\omega\|\leq 1$. Then for any $t\in V^\vee$, there is $t'\in{V'}^\vee$ such that $t=f^\vee(t')$, and we have that
	\[\log\|t\|_\omega^\vee=\log\|f^\vee(t')\|_\omega^\vee\leq \log{\|t'\|'}^\vee_\omega\]
	for any $\omega\in\Omega$.
	Note that $(H^0(X,D'),\metr_{\sup}')$ is dominated by \cite[Theorem~6.1.13]{chen2020arakelov}, then
	\begin{equation*}
		\label{equa:modeldominance}
		\tint_{\Omega} \log\|t\|_\omega^\vee\, \nu(d\omega)\leq \tint_{\Omega} \log{\|t'\|'}_\omega^\vee\, \nu(d\omega)<\infty.   
	\end{equation*}
	This completes the proof of Step~2.
	
	\vspace{2mm} \noindent
	Step 3: {\it The graded $K$-algebra $\{(H^0_+(U,m\overline{D}),\metr_{\sup,m})\}_{m\in\N}$ is a graded $K$-algebra of adelic vector bundle.}
	
	\vspace{2mm} \noindent
	For any $m\in\N$, by Step~1 and Step~2 the $S$-normed space $(H^0_+(U,m\overline{D}),\metr_{\sup,m})$ is an adelic vector bundle on $S$. It remains to verify the three conditions in \cref{def:gradedalgerba}. To verify condition~\ref{gradedalgebra i}, note that we can choose a projective model $X$ of $U$ and a very ample divisor $A$ in $X$ such that $X$ is integrally closed in $U$ and $A\ge D$ in $\widetilde{\Div}_\Q(U)_{\cpt}$, then there are inclusions 
	\[H^0_+(U,m\overline D)\xhookrightarrow{} H^0(U,mD)\xhookrightarrow{} S_m\coloneq H^0(U,mA)=H^0(X,mA).\]
	Since $A$ is very ample on $X$, Serre's theorem implies that $\bigoplus\limits_{m\in\N} S_mT^m$ is a $K$-algebra of finite type which clearly shows condition~\ref{gradedalgebra i} ($\bigoplus\limits_{m\in\N} (S_m\otimes_{K}\overline{K})T^m$ is $\overline{K}$-subalgebra of $R[T]$ by restricting at $p$). Condition~\ref{gradedalgebra ii} is clear and condition~\ref{gradedalgebra iii} is clearly satisfied for the sup-norms $\metr_{\sup,\omega}$ due to how tensor product of two metrics are defined. This finishes the proof of Step~3.
\end{proof}
\subsection{Arithmetic volumes of compactified $S$-metrized divisors}
\label{subsection:arithmetic volume of cpt}
In this subsection, we define the arithmetic volumes for compactified $S$-metrized divisors. We first consider the geometric volume. For $\overline{D}=(D,g)\in\widehat{\Div}_{S,\Q}(U)_\cpt$, we use the graded linear series $\{H^0(U,mD)\}_{m\in\N}$ to define the geometric volume $\vol(D)$ and the Okounkov body $\Delta(D)$ in \cref{okounkov body of a graded linear series} (see also \cref{def:volume}). On the other hand, we have the graded linear series $\{H_+^0(U,m\overline{D})\}_{m\in\N}$ defined in \cref{def:auxglobsec}, we denote the corresponding volume and Okounkov body by $\vol_+(\overline{D})$ and $\Delta_+(\overline{D})$ (see \cref{okounkov body of a graded linear series}), respectively. We have the following result.
\begin{lemma}
	\label{lemma:gooddef}
	Let $\overline{D}=(D,g)\in\widehat{\Div}_{S,\Q}(U)_\cpt$ with $D$ big. Then the graded linear series $\{H^0_+(U,m\overline{D})\}_{m\in\N}$ contains an ample series (see \cref{def:ampleser}). Furthermore, we have that
	\[\vol_+(\overline{D})={\vol}(D)\]
	In particular, we have that
	\[\Delta_+(\overline{D})^\circ=\Delta(D)^\circ.\]
\end{lemma}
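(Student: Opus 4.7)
The plan is to approximate $\overline{D}$ from below by model $S$-metrized divisors living on projective $K$-models of $U$, and to deduce all three assertions from the well-understood projective case. Since $\overline{D}\in\widehat{\Div}_{S,\Q}(U)_\cpt$, there exists a weak boundary divisor $\overline{B}$ of $U$ and a Cauchy sequence $(\overline{E_j})_{j\ge 1}\subset\widehat{\Div}_{S,\Q}(U)_\CM$ converging to $\overline{D}$ in the $\overline{B}$-boundary topology. Choosing $\varepsilon_j\in\Q_{>0}$ with $\varepsilon_j\to 0$ and $\overline{E_j}-\overline{D}\leq\varepsilon_j\overline{B}$, I would set $\overline{D_j}\coloneq \overline{E_j}-\varepsilon_j\overline{B}$, so that $\overline{D_j}\in\widehat{\Div}_{S,\Q}(U)_\CM$, $\overline{D_j}\leq\overline{D}$, and $\overline{D_j}\to\overline{D}$. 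After passing to common refinements I would arrange that $\overline{D_j}\in\widehat{\Div}_{S,\Q}(X_j)$ for a projective $K$-model $X_j$ of $U$ that is integrally closed in $U$.

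The crucial input is the inclusion $H^0(X_j,mD_j)\subseteq H^0_+(U,m\overline{D})\subseteq H^0(U,mD)$ for all $m,j$. By \cref{geometric volume} and \cref{remark:auxiliariy for divisors from projective model}, $H^0(X_j,mD_j)=H^0(U,mD_j)=H^0_+(U,m\overline{D_j})$; the inequality $\overline{D_j}\leq\overline{D}$ gives $g_{\overline{D_j},\omega}\leq g_{\overline{D},\omega}$ and hence $\|f\|_{m\overline{D},\sup,\omega}\leq \|f\|_{m\overline{D_j},\sup,\omega}$ for every section $f$, which transfers finiteness and domination of the sup-norms from $m\overline{D_j}$ to $m\overline{D}$. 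To verify the ample-series property in \cref{def:ampleser}, I would invoke the continuity of $\vol$ in \cref{geometric volume}~\ref{continuity of geometric volume} to obtain $\vol(D_j)\to\vol(D)>0$, fix $j_0$ with $D_{j_0}$ big on $X_{j_0}$, and apply the classical Kodaira lemma on the projective variety $X_{j_0}$ to produce an ample divisor $A$ on $X_{j_0}$, a positive integer $n$, and a non-zero $s_0\in H^0(X_{j_0},nD_{j_0}-A)$. Since $D\geq D_{j_0}$ in $\widetilde{\Div}_\Q(U)_\cpt$, one has $nD\ge A$ and $s_0\in H^0(U,nD-A)$, and the chain
\[H^0(U,mA)=H^0(X_{j_0},mA)\xhookrightarrow{\,\otimes s_0^{\otimes m}\,}H^0(X_{j_0},mnD_{j_0})\subseteq H^0_+(U,mn\overline{D})\]
exhibits the ample series.

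The volume and interior equalities then follow quickly. The inclusion above gives the sandwich $\vol(D_j)\le \vol_+(\overline{D})\le \vol(D)$; combined with $\vol(D_j)\to\vol(D)$ this yields $\vol_+(\overline{D})=\vol(D)$. Since both $\{H^0(U,mD)\}$ and $\{H^0_+(U,m\overline{D})\}$ now contain ample series, \cref{lemma:refokoun} applied to each gives $\vol_{\R^d}(\Delta_+(\overline{D}))=\vol_{\R^d}(\Delta(D))$; combined with the obvious inclusion $\Delta_+(\overline{D})\subseteq\Delta(D)$ and the fact that two nested compact convex bodies in $\R^d$ of equal positive Lebesgue measure must coincide, this yields $\Delta_+(\overline{D})=\Delta(D)$, and in particular $\Delta_+(\overline{D})^\circ=\Delta(D)^\circ$. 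The main obstacle lies in the first step: $\overline{E_j}$ and $\overline{B}$ a priori live on different projective models, so one must pass to a common refinement dominating both while preserving the inequality $\overline{D_j}\leq \overline{D}$ and the convergence $\overline{D_j}\to\overline{D}$, and further refine so that $X_j$ is integrally closed in $U$. Once this bookkeeping is carried out, the remaining steps are essentially routine transfers from the projective case.
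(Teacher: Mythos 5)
Your proposal is correct and follows essentially the same route as the paper: approximate $\overline D$ from below by $\overline{D_j'}=\overline{E_j}-\varepsilon_j\overline B$ living on projective models, use the inclusions $H^0(X_j,mD_j')\subseteq H^0_+(U,m\overline D)\subseteq H^0(U,mD)$ (with finiteness and domination of sup-norms transferred via monotonicity and \cite[Theorem~6.1.13]{chen2020arakelov}), fix $j_0$ with $D_{j_0}'$ big by continuity of the geometric volume, apply Kodaira's lemma on $X_{j_0}$, and then sandwich the volumes. Your explicit reduction to integrally closed models and your direct conclusion $\Delta_+(\overline D)=\Delta(D)$ from the nested-convex-bodies-of-equal-volume argument are minor stylistic cleanups of the same argument; no gap.
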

\begin{proof}
	Let $\overline{B}=(B,g_B)$ be a weak boundary divisor of $U$, $(\overline{D_j})_{j\in\N_{\geq 1}}=(D_j,g_j)_{j\in\N_{\geq 1}}\subset \widehat{\Div}_{S,\Q}(U)_\CM$ and $(\varepsilon_j)_{j\in\N_{\geq 1}}\subset \Q_{>0}$ such that $\lim\limits_{j\to\infty}\varepsilon_j=0$ and 
	\[-\varepsilon_j\overline{B}\leq \overline{D}-\overline{D_j}\leq \varepsilon_j\overline{B}.\]
 For any $j\in\N_{\geq 1}$, let $X_j$ be a projective model of $U$ such that $\overline{D_j}\in\widehat{\Div}_{S,\Q}(X_j)$. We claim that there is an inclusion 
	\begin{align}\label{containment of sections}
		H^0(U,mD_j')\xhookrightarrow{}H^0_+(U,m\overline{D})\subseteq H^0(U,mD)
	\end{align}
	for all $j$ and $m$ where $\overline{D_j'}=(D_j',g'_j)\coloneq\overline{D_j}-\varepsilon_j\overline{B}$. If the claim holds, then by \cite[Theorem~5.2.1]{yuan2021adelic} we can deduce that $D_j'$ is big for large enough $j$ since $D$ is assumed to be big. We fix such a $j_0$. Since $D_{j_0}'$ is big, we have that $H^0(X_{j_0}, mD_{j_0}')\not=0$ for large $m$, so $H^0(U,m\overline{D})\not=0$. By Kodaira lemma (\cite[Proposition~2.2.6]{lazarsfeld2017positivity}), we can find a positive integer $n$ and an ample divisor $A$ on $X_{j_0}$ such that $nD_{j_0}'-A\ge 0$ which in turn will induce injections 
	\[H^0(U,mA)\xhookrightarrow{\otimes s^{\otimes m}} H^0(U,mnD_{j_0}')\xhookrightarrow{}H^0_+(U,mn\overline{D})\]
	for a non-zero section $s\in H^0(U,nD_{j_0}'-A)$. This clearly shows that $\{H^0_+(U,n\overline{D})\}_{n\in\N}$ contains an ample series. Moreover from \eqref{containment of sections}, we have inequalities 
	\[\vol(D_j')\le\vol_+(\overline{D})\le \vol(D)\]
	which also shows the equality of volumes by \cref{geometric volume}\ref{continuity of geometric volume}. Then we can deduce the claim about the interiors by noting that two compact convex subsets of $\mathbb{R}^d$, one contained in another, can have the same volume if and only if their difference has empty interior. Hence it is enough to show our claim. To show the claim, for any $j\in\N_{\geq1}$, note that the effectivity $\overline{D_j'}\le \overline{D}$ induces an inequality 
	\begin{equation}
		\label{equa:mukhene}
		-g_{\omega}+\log|s|_\omega\leq -g'_{j,\omega}+\log|s|_\omega
	\end{equation}
	on $U_\omega^{\an}$ for all $s\in H^0(U,D_j')$ and $\omega\in\Omega$. Furthermore since $\overline{D_j'}\in\widehat{\Div}_{S,\Q}(X_j)$, by \cite[Theorem~6.1.13]{chen2020arakelov} we have that
	\[\log\|s\|'_{j,\sup,\omega}\coloneq \sup\{-g_{j,\omega}'(x)+\log|s(x)|_\omega\mid x\in X_{j,\omega}^\an\}<\infty,\]
	\[\tint_{\Omega}\log \|s\|'_{j,\sup,\omega}\, \nu(d\omega)<\infty\]
	for all $s\in H^0(U,D_j')$. Now we can easily deduce the claim noting the inequality \eqref{equa:mukhene} and \cite[Proposition~A.4.2~(2)]{chen2020arakelov}.
\end{proof}

Recall the arithmetic $\chi$-volume $\widehat{\vol}_\chi(\overline{E_\bullet})$ and arithmetic volume $\widehat{\vol}(\overline{E_\bullet})$ of a graded algebra of adelic vector bundles $\overline{E_\bullet}=\{\overline{E_m}\}_{m\in\N}$ in \cref{def:arithvol}.

\begin{definition}
	\label{def:volume}
	Let $\overline{D}\in\widehat{\Div}_{S,\Q}(U)_\cpt$. We call   
	\[\overline{V_{\overline{D},\bullet}}=\left\{\overline{V_{\overline{D},m}}\right\}_{m\in\N}\coloneq\left\{(H^0_+(U,m\overline{D}),\metr_{\sup,m})\right\}_{m\in\N}\] the \emph{graded $K$-algebra of adelic vector bundles associated to $\overline{D}$} and define the \emph{arithmetic volume} and  \emph{arithmetic $\chi$-volume} of $\overline{D}$ as
	\[\widehat{\vol}(\overline{D})\coloneq \widehat{\vol}(\overline{V_{\overline{D},\bullet}})=\limsup_{m\to\infty} \frac{\widehat{\deg_+}(\overline{V_{\overline{D},m}})}{m^{d+1}/(d+1)!}\]    
	and 
	\[\widehat{\vol}_\chi(\overline{D})\coloneq \widehat{\vol}_\chi(\overline{V_{\overline{D},\bullet}})=\limsup_{m\to\infty} \frac{\widehat{\deg}(\overline{V_{\overline{D},m}})}{m^{d+1}/(d+1)!},\]
	respectively.
	We furthermore set 
	\[\widehat{\mu}^{\mathrm{asy}}_{\max}(\overline{D})\coloneq \widehat{\mu}^{\mathrm{asy}}_{\max}(\overline{V_{\overline{D},\bullet}})\ \text{ and }\ \widehat{\mu}^{\mathrm{asy}}_{\min}(\overline{D})\coloneq \widehat{\mu}^{\mathrm{asy}}_{\min}(\overline{V_{\overline{D},\bullet}}).\]
	We say that $\overline{D}$ is \emph{big} if $\widehat{\vol}(\overline{D})>0$.
\end{definition}

\begin{remark}
	Even if we can define the arithmetic volume of any compactified $S$-metrized divisor with any assumption on the generic fiber thanks to the formalism of graded $K$-algebras, we will mostly stick to the case where $D$ is big. In that case \cref{lemma:gooddef} shows that the graded $K$-algebra $\{H_+^0(U,\overline{D})\}_{m\in\N}$ has Kodaira dimension $d$. Furthermore the construction of Okounkov bodies in \cite{nijeribanra} coincides with the more general construction sketched in \cite[Section 6.3.2]{chen2020arakelov}. The main difference is that in the big case, all the Okounkov bodies live in the same ambient Euclidean space $\R^d$ even if we perturb it in small enough directions whereas the Okounkov bodies in \cite{chen2020arakelov} may live in different ambient spaces of different dimension. Hence we almost always stick to the big case to be able to use properties shown in \cite{nijeribanra}.
\end{remark}
\begin{remark}\label{remark:compare asy min of divisors}
	When $U=X$ is projective, and $\overline{D}=(D,g)\in N_{S,\Q}(X)$ a relatively nef element, recall $N_{S,\Q}(X)$ in \cref{global adelic Green functions for proper varieties}, then Chen-Moriwaki also define the asymptotic minimal slope for $\overline{D}$ in \cite[Definition~6.4.2]{chen2022hilbert}. 
	Since $X$ is normal, if $K$ is perfect, 
	$D$ is ample and with $\Z$-coefficients, then our definition above is the same as their definition in \cite[Definition~6.4.2]{chen2022hilbert}.
\end{remark}
\begin{lemma}
	\label{lemma:slopebounded}
	Let $\overline{D}\in\widehat{\Div}_{S,\Q}(U)_\cpt$.  Then \[\widehat{\mu}^{\mathrm{asy}}_{\max}(\overline{D})<\infty.\]
\end{lemma}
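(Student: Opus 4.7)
The plan is to dominate $\overline{D}$ by a compactified $S$-metrized divisor that lives on a projective $K$-model of $U$, and then reduce finiteness of the asymptotic maximal slope to the projective case. Using the definition of $\widehat{\Div}_{S,\Q}(U)_\cpt$ in \cref{def:CM divisors on non-proper}, fix a weak boundary divisor $\overline{B}=(B,g_B)$ of $U$ with respect to which $\overline{D}=(D,g)$ is approximated. Choose $\overline{D'}\in\widehat{\Div}_{S,\Q}(U)_\CM$ with $-\overline{B}\le \overline{D'}-\overline{D}\le\overline{B}$ and set $\overline{D''}\coloneq\overline{D'}+\overline{B}$. Then $\overline{D''}\ge \overline{D}$, and $\overline{D''}$ lies in $\widehat{\Div}_{S,\Q}(X)$ for some projective $K$-model $X$ of $U$, which we may take to be integrally closed in $U$. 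Writing $\overline{D''}=(D'',g'')$ we have $D''\ge D$ and $g''_\omega\ge g_\omega$ on $U^\an_\omega$ for every $\omega\in\Omega$, and \cref{remark:auxiliariy for divisors from projective model} gives a natural inclusion of adelic vector spaces $H^0_+(U,m\overline{D})\hookrightarrow H^0(X,mD'')=H^0_+(U,m\overline{D''})$ for every $m$.

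Next I would transfer the effectivity $\overline{D''}\ge \overline{D}$ into a pointwise comparison of sup-norms. For $s\in H^0_+(U,m\overline{D})\subseteq H^0(X,mD'')$ and $\omega\in\Omega$, the function $-mg''_\omega+\log|s|_\omega$ extends continuously from $U^\an_\omega$ to all of $X^\an_\omega$, so density yields
\[
\log\|s\|_{\sup,\overline{D''},m,\omega}=\sup_{x\in U^\an_\omega}\bigl(-mg''_\omega(x)+\log|s(x)|_\omega\bigr)\le \sup_{x\in U^\an_\omega}\bigl(-mg_\omega(x)+\log|s(x)|_\omega\bigr)=\log\|s\|_{\sup,\overline{D},m,\omega}.
\]
Hence on $H^0_+(U,m\overline{D})$ the sup-norm attached to $\overline{D}$ dominates the restricted sup-norm attached to $\overline{D''}$ at every place. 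A pointwise larger norm induces a pointwise larger determinant norm and hence a smaller Arakelov degree; applied to every non-zero subspace $W\subseteq H^0_+(U,m\overline{D})$ and combined with the obvious inequality between supremum over $W\subseteq V$ and over $W\subseteq V''$, this gives $\widehat{\mu}_{\max}(\overline{V_{\overline{D},m}})\le \widehat{\mu}_{\max}(\overline{V_{\overline{D''},m}})$. Dividing by $m$ and passing to $\limsup_{m\to\infty}$ yields
\[
\widehat{\mu}^{\mathrm{asy}}_{\max}(\overline{D})\le \widehat{\mu}^{\mathrm{asy}}_{\max}(\overline{D''}).
\]

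Finally I would reduce the right-hand side to the $S$-ample case. Since $X$ is projective, one can pick an $S$-ample $\overline{A}\in N^+_{S,\Q}(X)$ with $\overline{A}\ge \overline{D''}$, and the same norm/slope comparison gives $\widehat{\mu}^{\mathrm{asy}}_{\max}(\overline{D''})\le \widehat{\mu}^{\mathrm{asy}}_{\max}(\overline{A})$. Finiteness of $\widehat{\mu}^{\mathrm{asy}}_{\max}(\overline{A})$ in the $S$-ample projective setting is Chen-Moriwaki's slope theory: combining the assumed tensorial minimal slope property with the arithmetic Hilbert-Samuel formula \eqref{eq:introduction hilber adelic} forces linear growth in $m$ of the maximal slopes of $H^0(X,mA)$ equipped with the sup-norms, so the $\limsup$ defining $\widehat{\mu}^{\mathrm{asy}}_{\max}(\overline{A})$ is finite. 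The main obstacle is the density-plus-continuity step used to replace the sup over $X^\an_\omega$ by a sup over $U^\an_\omega$ when evaluating $\|\cdot\|_{\sup,\overline{D''},m,\omega}$ on sections coming from $H^0_+(U,m\overline{D})$; once that is justified, the remainder is a chain of monotonicities plus a reference to the projective case.
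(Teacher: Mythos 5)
Your proposal follows the same overall strategy as the paper: dominate $\overline{D}$ by a model $S$-metrized divisor on a projective $K$-model, deduce norm-contractiveness of the inclusion of graded pieces, transfer the slope inequality, and reduce to finiteness in the projective case. The explicit sup-norm comparison you work out via density of $U^\an_\omega$ in $X^\an_\omega$ is correct and encodes the same content the paper gets by noting $\|f_m\|_\omega\le 1$ and citing \cite[Proposition~4.3.31~(1)]{chen2020arakelov}. The further reduction to an $S$-ample $\overline{A}\ge \overline{D''}$ is harmless but unnecessary; the projective bound is available for $\overline{D''}$ directly.

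The genuine gap is at the base case. You assert that finiteness of $\widehat{\mu}^{\mathrm{asy}}_{\max}(\overline{A})$ ``in the $S$-ample projective setting'' follows from ``the tensorial minimal slope property with the arithmetic Hilbert-Samuel formula,'' which ``forces linear growth of the maximal slopes.'' That is not a valid deduction. The Hilbert--Samuel formula controls $\widehat{\deg}(\overline{V_m})$, \ie the \emph{sum} of the jumping numbers $\widehat{\mu}_i(\overline{V_m})$, after normalizing by $m^{d+1}$; it says nothing about the largest one. The tensorial minimal slope property bounds minimal slopes of tensor products from \emph{below}, which is the wrong direction and the wrong invariant. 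Concretely, if exactly one slope of $\overline{V_m}$ were $m^{2}$ and all others vanished, the $\chi$-volume would still be zero whenever $d\geq 1$, yet $\widehat{\mu}^{\mathrm{asy}}_{\max}$ would be infinite, so Hilbert--Samuel cannot rule this out. The finiteness the paper invokes is from \cite[Proposition~6.4.4]{chen2020arakelov} (maximal slope is bounded by the essential minimum, \ie by heights of generic points) together with \cite[Proposition~6.2.7]{chen2020arakelov} (such heights grow linearly in $m$ for an adelic $\Q$-divisor on a projective variety). You would need to replace your final paragraph with that argument, or with some other genuine upper bound on $\widehat{\mu}_{\max}(\overline{V_m})$.
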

\begin{proof}
	The idea will be to deduce it from {the projective case} as usual. Let $\overline{D'}=(D',g)\in \widehat{\Div}_{S,\Q}(U)_\CM$  such that $\overline{D'}\ge \overline{D}$. For any $m\in\N$, write $\overline{V_m}\coloneq (H^0(U,mD),\metr_{m,\sup})$ (resp. $\overline{V_m'}\coloneq (H^0(U,mD'),\metr_{m,\sup}')$), where $\metr_{m,\sup}$ (resp. $\metr_{m,\sup}'$) is the corresponding sup-norm. Assume that $\overline{D'}\in\widehat{\Div}_{S,\Q}(X)$ for some projective $K$-model of $U$ which is integrally closed in $U$. Then $H^0_+(U,m\overline{D'})=H^0(X,mD')$ for any $m\in\N$ by Remark~\ref{remark:auxiliariy for divisors from projective model}. Moreover, \cite[Proposition~6.4.4]{chen2020arakelov} and \cite[Proposition~6.2.7]{chen2020arakelov} together show that $\widehat{\mu}^{\mathrm{asy}}_{\max}(\overline{D'})<\infty$. Furthermore the effectivity relation $\overline{D'}\ge \overline{D}$ implies that there is an injective $K$-linear map 
	\[f_m\colon H^0_+(U,m\overline{D})\xhookrightarrow{} H^0_+(U,m\overline{D'})\]
	which is furthermore norm-contractive at every place $\omega$, i.e. $\|f_m\|_\omega\le 1$ for all $\omega\in\Omega$ and $m\in\N$. Consequently we have that $h(f_m)\le 0$ for all $m\in\N$. Then \cite[Proposition~4.3.31~(1)]{chen2020arakelov} shows that 
	\[\widehat{\mu}_{\max}(\overline{V_m})\le \widehat{\mu}_{\max}(\overline{V_m'})\]
	for any $m\in\N$ which clearly implies 
	\[\widehat{\mu}^{\mathrm{asy}}_{\max}(\overline{D})\le \widehat{\mu}^{\mathrm{asy}}_{\max}(\overline{D'})\]
	and finishes the proof.
\end{proof}

\subsection{Concave transforms of compactified $S$-metrized divisors}

\begin{art}\label{construction of concave transform}
Let $\overline{V_\bullet}=\{\overline{V_m}\}_{m\in\N}=\{(V_m,\metr_m)\}_{m\in\N}$ be a graded $K$-algebra of adelic vector bundles over $S$, and  $\mathcal{H}_\bullet=\{\mathcal{H}_m\}_{m\in\N}$ be the the Harder-Narasimhan filtration of $V_\bullet=\{V_m\}_{m\in\N}$ defined in \cref{def:harder filtration}. We assume that
	\[\N(V_\bullet)\coloneq \{n\in\N\mid V_n\not=0\}\not= \{0\}.
	\] 
Write $V_{\overline{K},\bullet} = \{V_{\overline{K},m}\}_{m\in\N}\coloneq \{V_{m}\otimes_K\overline{K}\}_{m\in\N}$ $\mathcal{H}_{\overline{K},\bullet}\coloneq \{\mathcal{H}_m\otimes_K\overline{K}\}_{m\in\N}$, notice that $\mathcal{H}_{\overline{K},\bullet}$ may not be the Harder-Narasimhan filtration of $V_{\overline{K},\bullet}$. However, by \cite[Proposition~6.3.25]{chen2020arakelov}(notice that the canonical extension $S_{\overline{K}}$ of $S$ to $\overline{K}$ (see \cite[\S 3.4.2]{chen2020arakelov}) satisfies the tensorial minimal slope property in sense of \cref{def:tensorialproperty}), $\mathcal{H}_{\overline{K},\bullet}$ is a $\delta$-superadditive $\R$-filtration on $V_{\overline{K},\bullet}$ in the sense of \cite[Definition~6.3.19~(c)]{chen2020arakelov}, where $\delta\colon \N\mapsto \R_{\geq 0}, \ \ n\mapsto C\log(\dim_K(V_n))$ (although $\overline{V_\bullet}$ may not be a graded $K$-algebra of adelic vector bundles in the sense of \cite[Definition~6.4.24]{chen2020arakelov}, but the proof of \cite[Proposition~6.3.25]{chen2020arakelov} works without the assumption that $V_\bullet$ is a $K$-subalgebra of $\Frac(K[[T_1,\cdots, T_d]])[T]$). Notice that $\dim_K(V_n)\leq C_1n^\kappa$ for some $C_1\in \R_{>0}$, where $\kappa$ is the Kodaira dimension of $V_{\bullet}$, we can replace $\delta$ by the map $n\mapsto C_2\log n$ for some suitable $C_2\in\R_{>0}$. It is obvious that $\delta$ is increasing, and 
\begin{align*}
	\sum\limits_{a\in\N}\frac{\delta(2^a)}{2^a}&=C_2\log2\cdot\sum\limits_{a\in\N}\frac{a}{2^a}=C_2\log2<\infty,
\end{align*}
i.e. the condition in \cite[Theorem~6.3.20]{chen2020arakelov} holds (similarly, the condition for $\dim_K(W_n)$ in \cite[Proposition~6.3.28]{chen2020arakelov} automatically holds).
Then we can construct a \emph{concave transform} $G_{\mathcal{H}_\bullet}\colon \Delta(V_{\bullet})^\circ\to \R\cup\{+\infty\}$ associated to $\mathcal{H}_{\bullet}$ as follows (see \cref{okounkov body of a graded linear series} for the definitions of $\Gamma(V_{\bullet})$ and $\Delta(V_{\bullet})$).
 We set
\begin{align*}\label{eq:g function}
	\rho\colon \Gamma(V_{\bullet})&\to \R\cup\{\infty\},\\
	(m,\gamma)&\mapsto \sup\{t\in\R\mid \text{there is $x\in\mathcal{H}_{\overline{K},m}^t(V_{\overline{K},m})$ such that $v_Y(x)=\gamma$}\},
\end{align*} and construct the auxiliary function $\widetilde{\rho}(m,\gamma)\coloneq \limsup\limits_{n\to\infty}\frac{\rho(nm,n\gamma)}{n}$, where $v_Y$ is the valuation defined by the admissible flag $Y_\bullet$, see \cref{flag of a quasi-projective variety}.
We furthermore define 
\[\Gamma^t(V_{\bullet})\coloneq\{(m,\gamma)\in \Gamma(V_{\bullet})\mid \widetilde{\rho}(m,\gamma)\ge mt\},\]
\[\Gamma^t(V_\bullet)_m\coloneq \Gamma^t(V_\bullet)\cap (\{m\}\times\Z^d),\]	\[\Delta(\Gamma^t(V_{\bullet}))\coloneq \text{ the closure of $\bigcup\limits_{m\in\N_{\geq 1}}\frac{1}{m}\Gamma^t(V_\bullet)_m$ in $\{1\}\times\R^d$ }.\]
For any $\lambda\in \Delta(V_\bullet)^\circ$, 
\[G_{\mathcal{H}_\bullet}(\lambda)\coloneq\sup\{t\in\R\mid \lambda\in \Delta(\Gamma^t(V_{\bullet}))\}.\]
Since $G_{\mathcal{H}_\bullet}$ is convex, we have that $G_{\mathcal{H}_\bullet}\equiv +\infty$ or $G_{\mathcal{H}_\bullet}(\Delta(V_\bullet)^\circ)\subset \R$.
On the other hand, for $m\in\N(V_\bullet)$, let $\metr_{\mathcal{H}_{m}}$ (resp. $\metr_{\mathcal{H}_{\overline{K},m}}$) be the norm on $V_m$ (resp. $V_{\overline{K},m}$) associated to $\mathcal{H}_{m}$ (resp.  $\mathcal{H}_{\overline{K},m}$) in \cite[Remark~1.1.40]{chen2020arakelov}, by \cite[Remark~4.3.63]{chen2020arakelov}, we have that
\begin{align}\label{eq:slops for filtrations}
\widehat{\mu}_i(V_{\overline{K},m},\metr_{\mathcal{H}_{\overline{K},m}}) = \widehat{\mu}_i(V_{m},\metr_{\mathcal{H}_{m}}) = \widehat{\mu}_i(\overline{V_m}),
\end{align}
where $\widehat{\mu}_i$ is the $i$-th jumping number of the Harder-Narasimhan filtration defined in \cref{def:harder filtration} (we view $(V_m, \metr_{\mathcal{H}_{m}})$ (resp. $(V_{\overline{K},m},\metr_{\mathcal{H}_{\overline{K},m}})$) as adelic vector bundle over $K$ (resp. $\overline{K}$) with the trivial absolute value). Write $\nu_m$ the Borel probability measure on $\R$ such that for any Borel function $f$ on $\R$, we have that
\[\int_\R f(t)\ \nu_m(dt)\coloneq \frac{1}{\dim_K(V_m)}\sum_{i=1}^{\dim_K(V_m)}f\left(\frac{1}{m}\widehat{\mu}_i(\overline{V_m})\right).\]
From our discussion above and apply \cite[Theorem~6.3.20]{chen2020arakelov} to $\mathcal{H}_{\overline{K},\bullet}$, we have that $(\nu_m)_{m\in\N(V_\bullet)}$ vaguely converge to the zero measure or to the pushforward $\frac{1}{\vol_{\R^\kappa}(\Delta(V_{\bullet}))}G_{\mathcal{H}_\bullet,*}(d\lambda)$, where $d\lambda$ is the standard Lebesgue measure on $\Delta(V_{\bullet})^\circ\subset \R^\kappa$. More precisely, from Step~6 of the proof of \cite[Theorem~6.3.12]{chen2020arakelov}, if $G_{\mathcal{H}_\bullet}\equiv +\infty$, then $(\nu_m)_{m\in\N(V_\bullet)}$ vaguely converge to the zero measure; if $G_{\mathcal{H}_\bullet}(\Delta(V_\bullet)^\circ)\subset \R$, then  $(\nu_m)_{m\in\N(V_\bullet)}$ weakly converge to $\frac{1}{\vol_{\R^\kappa}(\Delta(V_{\bullet}))}G_{\mathcal{H}_\bullet,*}(d\lambda)$. By \cite[Remark~4.3.48, Remark~6.3.21]{chen2020arakelov} and \eqref{eq:slops for filtrations}, if $\widehat{\mu}_{\max}^{\mathrm{asy}}(\overline{V_\bullet})<+\infty$, then
\begin{align}\label{eq:sup of concave transform}
\sup\limits_{\lambda\in \Delta(V_{\bullet})^\circ}G_{\mathcal{H}_\bullet}(\lambda) = \lim\limits_{m\in\N(V_\bullet), m\to+\infty}\frac{1}{n}\widehat{\mu}_1(\overline{V_m})=\widehat{\mu}_{\max}^{\mathrm{asy}}(\overline{V_\bullet})<+\infty,
\end{align}
hence in this case, $(\nu_m)_{m\in\N(V_\bullet)}$ converge weakly to $\frac{1}{\vol_{\R^\kappa}(\Delta(V_{\bullet}))}G_{\mathcal{H}_\bullet,*}(d\lambda)$.

Let $\overline{D}\in\widehat{\Div}_{S,\Q}(U)_\cpt$ such that there is $n\in\N_{\geq 1}$ with $\dim_K(H^0_+(U,m\overline{D}))>0$. We consider the case where $\overline{V_\bullet}$ is the graded $K$-algebra of adelic vector bundles associated to $D$ defined in \cref{def:auxglobsec}, then all above assumptions for $\overline{V_\bullet}$ are satisfied (notice that $\widehat{\mu}_{\max}^{\mathrm{asy}}(\overline{D})<\infty$ by \cref{lemma:slopebounded}). The corresponding concave transform $G_{\overline{D}}\colon \Delta_+(\overline{D})^\circ\to \R$  is called the \emph{concave transform} of $\overline{D}$.
\end{art}


We have the following theorem which generalizes a result of Chen-Moriwaki \cite[Theorem~6.4.6]{chen2020arakelov}.

\begin{theorem}
	\label{theorem:concavemain}
 Let $\overline{D}=(D,g)\in\widehat{\Div}_{S,\Q}(U)_\cpt$, and $\overline{V_\bullet}=\{\overline{V_m}\}_{m\in\N}$ its associated graded $K$-algebra of adelic vector bundles defined in \cref{def:auxglobsec}. Assume that $\N(V_\bullet)\not=0$. Then the associated Borel probability measures $(\nu_m)_{m\in\N(V_\bullet)}$ in \cref{construction of concave transform} converge weakly to the probability measure $\frac{1}{\vol_{\R^\kappa}(\Delta_+(\overline{D}))}G_{\overline{D},*}(d\lambda)$, where  $d\lambda$ is the standard Lebesgue measure on $\Delta(D)\subseteq \R^\kappa$ and $\kappa$ is the Kodaira dimension $V_\bullet$ ($\kappa=d$ if $D$ is big). In particular, if $\overline{D}$ is big, then $D$ is big. Moreover, in the case where $D$ is big (note that $\Delta(D)^\circ=\Delta_+(\overline{D})^\circ$ by \cref{lemma:gooddef}), we have that
 \begin{align}\label{eq:main theorem concave 1}
\widehat{\vol}(\overline{D})=\lim_{ m\to\infty}\frac{\widehat{\deg}_+(\overline{V_m})}{m^{d+1}/(d+1)!}={(d+1)!} \int_{\Delta(D)^{\circ}} \max\{G_{\overline{D}}(\lambda),0\} \, d\lambda.
 \end{align}
and	\begin{align}\label{eq:main theorem concave 2}
\widehat{\vol}_{\chi}(\overline{D})=\limsup\limits_{ m\to\infty}\frac{\widehat{\deg}(\overline{V_m})}{m^{d+1}/(d+1)!}\leq(d+1)!\int_{\Delta(D)^{\circ}} G_{\overline{D}}(\lambda) \, d\lambda.
	\end{align}
with equality if $\widehat{\mu}_{\min}^{\mathrm{asy}}(\overline{D})>-\infty$ (in this case the supremum limit in \eqref{eq:main theorem concave 2} is a limit).  
\end{theorem}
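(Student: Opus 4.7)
The weak convergence of $(\nu_m)_{m \in \N(V_\bullet)}$ is already established by the discussion in \cref{construction of concave transform}, once one verifies that all hypotheses of \cite[Theorem~6.3.20]{chen2020arakelov} apply to $\overline{V_\bullet}$. The necessary inputs are: (a) that $\overline{V_\bullet}$ is a graded $K$-algebra of adelic vector bundles, provided by \cref{H+ is adelic}; (b) the $\delta$-superadditivity of the Harder--Narasimhan filtration $\mathcal{H}_{\overline{K},\bullet}$ on $V_{\overline{K},\bullet}$, guaranteed by the tensorial minimal slope property assumed for $S$; and (c) the finiteness $\widehat{\mu}_{\max}^{\mathrm{asy}}(\overline{D}) < \infty$ from \cref{lemma:slopebounded}, which via \eqref{eq:sup of concave transform} upgrades vague convergence to weak convergence and uniformly bounds the supports of $\nu_m$ from above by some constant $M$.

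The implication ``$\overline{D}$ big $\Rightarrow$ $D$ big'' will then follow from the trivial estimate $\widehat{\deg}_+(\overline{V_m}) \leq \dim_K(V_m) \cdot \max\{0, \widehat{\mu}_{\max}(\overline{V_m})\}$: combined with $\widehat{\mu}_{\max}^{\mathrm{asy}}(\overline{D}) < \infty$, this forces $\dim_K(V_m) \gtrsim m^d$ along a subsequence whenever $\widehat{\vol}(\overline{D}) > 0$, and the inclusion $V_m \subseteq H^0(U, mD)$ then yields $\vol(D) > 0$.

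Assume now that $D$ is big. By \cref{lemma:gooddef} we have $\Delta_+(\overline{D})^\circ = \Delta(D)^\circ$ of full dimension $d$, and \cref{lemma:refokoun} gives $\lim_{m\to\infty} \dim_K(V_m)/(m^d/d!) = \vol(D) = d! \cdot \vol_{\R^d}(\Delta(D))$. The definition of $\widehat{\deg}_+$ in terms of the Harder--Narasimhan jumping numbers (\cite[Corollary~4.3.52]{chen2020arakelov}) together with the definition of $\nu_m$ yield the key identity
\[\frac{\widehat{\deg}_+(\overline{V_m})}{m^{d+1}/(d+1)!} = (d+1) \cdot \frac{\dim_K(V_m)}{m^d/d!} \cdot \int_\R \max\{t, 0\}\, \nu_m(dt),\]
together with its analogue for $\widehat{\deg}(\overline{V_m})$ with $\max\{t, 0\}$ replaced by $t$. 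On the supports of $\nu_m$ (which lie in $(-\infty, M]$), the function $t \mapsto \max\{t, 0\}$ agrees with a bounded continuous function, so weak convergence gives
\[\lim_{m\to\infty} \int_\R \max\{t, 0\}\, \nu_m(dt) = \frac{1}{\vol_{\R^d}(\Delta(D))}\int_{\Delta(D)^\circ} \max\{G_{\overline{D}}(\lambda), 0\}\, d\lambda,\]
which combined with the identity above proves \eqref{eq:main theorem concave 1}.

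The main obstacle is the passage to the limit for $\widehat{\vol}_\chi$, since the integrand $t$ is not uniformly bounded below. I would handle this by truncation: for any $R > 0$, since $\max\{t, -R\} \geq t$, weak convergence applied to the bounded continuous truncation yields
\[\limsup_{m\to\infty} \int_\R t\, \nu_m(dt) \leq \frac{1}{\vol_{\R^d}(\Delta(D))}\int_{\Delta(D)^\circ} \max\{G_{\overline{D}}(\lambda), -R\}\, d\lambda,\]
and letting $R \to \infty$ via monotone convergence (using that $G_{\overline{D}}$ is bounded above by $\widehat{\mu}_{\max}^{\mathrm{asy}}(\overline{D})$) yields \eqref{eq:main theorem concave 2}. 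When $\widehat{\mu}_{\min}^{\mathrm{asy}}(\overline{D}) > -\infty$, the supports of $\nu_m$ are also uniformly bounded below for $m$ large, so $t$ is effectively a bounded continuous function on those supports and weak convergence gives the equality directly, upgrading the $\limsup$ to a genuine limit.
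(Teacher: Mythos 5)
Your proposal follows the same skeleton as the paper's proof: establish weak convergence of $(\nu_m)$ via the discussion in \cref{construction of concave transform}, then convert the measure-convergence statement into the volume formulas using \cref{lemma:gooddef} and \cref{lemma:refokoun}, and handle the unbounded $\chi$-volume integrand by truncation. The bigness implication is presented slightly differently (you appeal to a trivial positive-degree estimate rather than deducing $\vol_+(\overline{D})>0$ directly from the volume formula), but the two arguments are interchangeable.

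There is, however, one genuine imprecision. Your ``key identity''
\[\frac{\widehat{\deg}_+(\overline{V_m})}{m^{d+1}/(d+1)!}=(d+1)\cdot\frac{\dim_K(V_m)}{m^d/d!}\cdot\int_\R\max\{t,0\}\,\nu_m(dt)\]
and its analogue for $\widehat{\deg}(\overline{V_m})$ are not exact equalities. In the adelic curve setting the Arakelov degree and the sum of Harder--Narasimhan jumping numbers differ by the invariant $\delta(\overline{V_m})$, i.e.\ by \cite[Proposition~4.3.50, Proposition~4.3.51]{chen2020arakelov} one only has
\[\sum_{i}\widehat{\mu}_i(\overline{V_m})\le\widehat{\deg}(\overline{V_m})\le\sum_i\widehat{\mu}_i(\overline{V_m})+\delta(\overline{V_m}),\]
and similarly for $\widehat{\deg}_+$. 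One must therefore verify that the normalized error $\delta(\overline{V_m})/(m\dim_K(V_m))$ tends to zero, which is where the standing hypothesis $\nu(\Omega_\infty)<\infty$ enters (via the bound $\delta(\overline{V_m})\le\tfrac12\dim_K(V_m)\log(\dim_K(V_m))\nu(\Omega_\infty)$ and the polynomial growth of $\dim_K(V_m)$ coming from \cref{remark:kodaira dimension}). The paper carries out this estimate explicitly for the $\chi$-volume statement; your proposal silently assumes the discrepancy is zero. Once you insert this elementary but necessary estimate, the rest of the argument---the bounded-from-above justification for the $\widehat{\deg}_+$ limit, the $R$-truncation plus monotone convergence for the $\chi$-volume inequality, and the two-sided boundedness of supports when $\widehat{\mu}_{\min}^{\mathrm{asy}}(\overline{D})>-\infty$---is correct and matches the paper.
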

\begin{proof}
     By our discussion in \cref{construction of concave transform}, $(\nu_m)_{m\in\N(V_\bullet)}$ converge weakly to the probability measure $\frac{1}{\vol_{\R^\kappa}(\Delta_+(\overline{D}))}G_{\overline{D},*}(d\lambda)$.  In particular, by \cite[Remark~6.3.27]{chen2020arakelov},	\begin{align}
     	\begin{split}     	\label{eq:concave 2}
     	\frac{1}{\vol_{\R^\kappa}(\Delta_+(\overline{D}))}\int_\R \max\{x,0\}\, ((G_{\overline{D}})_*d\lambda)(dx)=&\frac{1}{\vol_{\R^\kappa}(\Delta_+(\overline{D}))}\int_{\Delta_+(D)^{\circ}} \max\{G_{\overline{D}}(\lambda),0\} \, d\lambda\\
     	=&\lim_{m\in\N(V_\bullet), m\to\infty} \frac{\widehat{\deg}_+(\overline{V_m})}{m\dim_K(V_m)}.
     	\end{split}
     \end{align}
Since $\N(V_\bullet)\not=0$, there are infinitely many $m\in \N$ such that $V_m\not=0$. So 
 \begin{align}\label{eq:volume 1}
\vol_+(\overline{D})=	\limsup\limits_{m\to\infty}\frac{\dim_K(V_m)}{m^d/d!}= 	\limsup\limits_{m\in\N(V_\bullet), m\to\infty}\frac{\dim_K(V_m)}{m^d/d!},
 \end{align}
  \begin{align}
  	\begin{split}\label{eq:arith volume 1}
\widehat{\vol}(\overline{D})=&	\limsup\limits_{m\to\infty}\frac{\widehat{\deg}_+(\overline{V_m})}{m^{d+1}/(d+1)!}= 	\limsup\limits_{m\in\N(V_\bullet), m\to\infty}\frac{\widehat{\deg}_+(\overline{V_m})}{m^{d+1}/(d+1)!}\\
=&(d+1)\cdot\limsup\limits_{m\in\N(V_\bullet), m\to\infty}\left(\frac{\dim_K(V_m)}{m^d/d!}\cdot\frac{\widehat{\deg}_+(\overline{V_m})}{m\dim_K(V_m)}\right)
  	\end{split}
  \end{align}
Combine \eqref{eq:arith volume 1}~\eqref{eq:concave 2}~\eqref{eq:volume 1}, we have that
\begin{align}\label{eq:arith volume 2}
\widehat{\vol}(\overline{D})= (d+1)\cdot \vol_+(\overline{D})\cdot \frac{1}{\vol_{\R^\kappa}(\Delta_+(\overline{D}))}\int_{\Delta_+(D)^{\circ}} \max\{G_{\overline{D}}(\lambda),0\} \, d\lambda.
	\end{align}
      If $\overline{D}$ is big, i.e. $\widehat{\vol}(\overline{D})
      >0$, then by \eqref{eq:arith volume 2}, $\vol_+(\overline{D})>0$ which implies that $D$ is big since $\vol(D)\geq \vol_+(\overline{D})>0$. 
     
     In the case where $D$ is big, by \cref{lemma:gooddef}, $V_\bullet$ contains an ample series, $\vol_+(\overline{D}) =\vol(D)$, $\Delta_+(\overline{D})^\circ=\Delta(D)^\circ$ and $\kappa=d$ (see \cref{okounkov body of a graded linear series}). In particular, $V_m\not=0$ for $m$ large enough, so \eqref{eq:concave 2} becomes
     \begin{align}
     	\begin{split}\label{eq:concave 3}
\frac{1}{\vol_{\R^d}(\Delta_+(\overline{D}))}\int_{\Delta_+(D)^{\circ}} \max\{G_{\overline{D}}(\lambda),0\} \, d\lambda=&\lim_{m\in\N(V_\bullet), m\to\infty} \frac{\widehat{\deg}_+(\overline{V_m})}{m\dim_K(V_m)}\\
=&\lim_{m\to\infty} \frac{\widehat{\deg}_+(\overline{V_m})}{m\dim_K(V_m)}.
     	\end{split}
     \end{align} 
     We can deduce from \cref{lemma:refokoun} that 
	\begin{align}\label{eq:concave 1}
		\vol_+(\overline{D})=\lim\limits_{m\to\infty}\frac{\dim_K(V_m)}{m^d/d!}=d!\cdot\vol_{\R^d}(\Delta_+(\overline{D})).
	\end{align} 
	Combine \eqref{eq:arith volume 2} and  \eqref{eq:concave 1}, we have that
	\begin{align*}
		\widehat{\vol}(\overline{D}) =&(d+1)\cdot \vol_+(\overline{D}) \cdot \frac{1}{\vol_{\R^d}(\Delta_+(\overline{D}))}\int_{\Delta_+(\overline{D})^{\circ}} \max\{G_{\overline{D}}(\lambda),0\} \, d\lambda\\
		=&(d+1)!\int_{\Delta(D)^{\circ}} \max\{G_{\overline{D}}(\lambda),0\} \, d\lambda
		\end{align*}
On the other hand, 
combine \eqref{eq:arith volume 2} \eqref{eq:concave 3} \eqref{eq:concave 1}, we have that 
	\begin{align}\label{eq:arith vol is a limit}
\widehat{\vol}(\overline{D}) = 
(d+1)\cdot\lim\limits_{m\to\infty}\frac{\dim_K(V_m)}{m^d/d!}\cdot\lim_{m\to\infty} \frac{\widehat{\deg}_+(\overline{V_m})}{m\dim_K(V_m)}= \lim\limits_{m\to\infty}\frac{\widehat{\deg}_+(\overline{V_m})}{m^{d+1}/(d+1)!},
	\end{align}
i.e. the supremum limit in the definition of $\widehat{\vol}(\overline{D})$ is actually a limit when $D$ is big.
Hence we have completed the proof of \eqref{eq:main theorem concave 1}.

It remains to consider \eqref{eq:main theorem concave 2}. For any large $m\in\N$ (notice that $m\in\N(V_\bullet)$), $\omega\in\Omega$, we recall the invariant $\delta_\omega(\overline{V_m})$ of $\overline{V_m}$ defined in \cite[Definition~4.3.9]{chen2020arakelov} and set $$\delta(\overline{V_m})\coloneq\int_{\omega\in\Omega}\log\delta_\omega(\overline{V_m})\, \nu(d\omega)$$
as in \cite[Definition~4.3.12]{chen2020arakelov}. Then by \cite[Proposition~4.3.10]{chen2020arakelov}
\[0\leq \delta(\overline{V_m})\leq \frac{1}{2}\dim_K(V_m)\log(\dim_K(V_m))\nu(\Omega_\infty),\]
i.e.
\begin{align*}
0\leq \frac{\delta(\overline{V_m})}{m\dim_K(V_m)}\leq \frac{1}{2}\cdot\frac{\log(\dim_K(V_m))}{m}\cdot\nu(\Omega_\infty).
\end{align*}
By \cref{remark:kodaira dimension}, we have that $\log(\dim_K(V_m))\leq C'\log(m)$ for some $C'\in\R_{>0}$, then
\begin{align}\label{eq:delta}
	0\leq \frac{\delta(\overline{V_m})}{m\dim_K(V_m)}\leq \frac{C'}{2}\cdot\frac{\log(m)}{m}\cdot\nu(\Omega_\infty).
\end{align}
By our assumption that $\nu(\Omega_\infty)<\infty$, after taking the limit on \eqref{eq:delta}, we have that
\begin{align}\label{eq:limit of delta}
\lim\limits_{m\to\infty}\frac{\delta(\overline{V_m})}{m\dim_K(V_m)}=0.
\end{align}
 By \cite[Proposition~4.3.50, Proposition~4.3.51]{chen2020arakelov},
\begin{align*}
\sum\limits_{i=1}^{\dim_K(V_m)}\widehat{\mu}_i(\overline{V_m})\leq \widehat{\deg}(\overline{V_m})\leq \sum\limits_{i=1}^{\dim_K(V_m)}\widehat{\mu}_i(\overline{V_m})+\delta(\overline{V_m}).
\end{align*}
i.e. 
\begin{align}\label{eq:arith volume}
	\frac{\sum_{i=1}^{\dim_K(V_m)}\widehat{\mu}_i(\overline{V_m})}{m\dim_K(V_m)}\leq \frac{\widehat{\deg}(\overline{V_m})}{m\dim_K(V_m)}\leq \frac{\sum_{i=1}^{\dim_K(V_m)}\widehat{\mu}_i(\overline{V_m})+\delta(\overline{V_m})}{m\dim_K(V_m)}.
\end{align}
After taking the supremum limit on \eqref{eq:arith volume}, by \cref{def:volume} and \eqref{eq:limit of delta}, we have that
\begin{align}\label{eq:arith degree mu}
\limsup\limits_{m\to\infty}\frac{\widehat{\deg}(\overline{V_m})}{m\dim_K(V_m)}=\limsup\limits_{m\to\infty}\frac{\sum_{i=1}^{\dim_K(V_m)}\widehat{\mu}_i(\overline{V_m})}{m\dim_K(V_m)}.
\end{align}
Then  
\begin{align}
	\begin{split}\label{eq:arith volume mu}
	\widehat{\vol}_\chi(\overline{D})=&\limsup\limits_{m\to\infty}\frac{\widehat{\deg}(\overline{V_m})}{m^{d+1}/(d+1)!}\\=&(d+1)\cdot\limsup\limits_{m\to\infty}\left(\frac{\dim_K(V_m)}{m^{d}/d!}\cdot\frac{\widehat{\deg}(\overline{V_m})}{m\dim_K(V_m)}\right)\\=&(d+1)\cdot \vol_+(\overline{D})\cdot\limsup\limits_{m\to\infty}\frac{\sum_{i=1}^{\dim_K(V_m)}\widehat{\mu}_i(\overline{V_m})}{m\dim_K(V_m)}.
	\end{split}
\end{align}
where the last equality is from \eqref{eq:concave 1} and from \eqref{eq:arith degree mu}, notice that the supremum limit in the definition of $\vol_+(\overline{D})$ (see \eqref{eq:volume 1}) is actually a limit in \eqref{eq:concave 1} when $D$ is big.
On the other hand, notice that $\sup\limits_{\lambda\in\Delta(D)^\circ}\{G_{\overline{D}}(\lambda)\}=\widehat{\mu}^{\mathrm{asy}}_{\max}(\overline{D})=\limsup\limits_{m\to\infty}\frac{\widehat{\mu}_{\max}(V_m)}{m}<\infty$, so there exists $c>\max\{0, \widehat{\mu}^{\mathrm{asy}}_{\max}(\overline{D})\}$ such that $\frac{\widehat{\mu}_i(V_m)}{m}\leq \frac{\widehat{\mu}_{\max}(V_m)}{m}\leq c$ for any $m\in \N(V_\bullet)$ and $1\leq i\leq \dim_K(V_m)$. Hence for any $\alpha\in\R_{>0}$,  
\begin{align*}
\frac{1}{\vol_{\R^d}(\Delta_+(\overline{D}))}\int_{-\alpha}^{\infty}x\,  ((G_{\overline{D}})_*d\lambda)(dx) =&\frac{1}{\vol_{\R^d}(\Delta_+(\overline{D}))}\int_{-\alpha}^{c}x\,  ((G_{\overline{D}})_*d\lambda)(dx)\\ =&\lim\limits_{m\to\infty}\int_{-\alpha}^{c}x\, \nu_m(dx) \\
=&\lim\limits_{m\to\infty}\int_{-\alpha}^{\infty}x\, \nu_m(dx)\\
\geq &\limsup\limits_{m\to\infty}\int_{-\infty}^{\infty}x\, \nu_m(dx)\\
=&\limsup\limits_{m\to\infty}\frac{\sum_{i=1}^{\dim_K(V_m)}\widehat{\mu}_i(\overline{V_m})}{m\dim_K(V_m)}.
\end{align*}
Let $\alpha\to\infty$ in the above inequality, we get
\begin{align}\label{eq: measure mu ineq}
\frac{1}{\vol_{\R^d}(\Delta_+(\overline{D}))}\int_{-\infty}^{\infty}x\,  ((G_{\overline{D}})_*d\lambda)(dx) = \frac{d!}{\vol_+(\overline{D})}\int_{\Delta(D)^\circ}G_{\overline{D}}(\lambda)\,d\lambda\geq \limsup\limits_{m\to\infty}\frac{\sum_{i=1}^{\dim_K(V_m)}\widehat{\mu}_i(\overline{V_m})}{m\dim_K(V_m)}.
\end{align}
Combine \eqref{eq:arith volume mu} and \eqref{eq: measure mu ineq}, we have that
\[\widehat{\vol}_\chi(\overline{D})\leq (d+1)\cdot \vol_+(\overline{D})\cdot \frac{d!}{\vol_+(\overline{D})}\int_{\Delta(D)^\circ}G_{\overline{D}}(\lambda)\,d\lambda=(d+1)!\cdot\int_{\Delta(D)^\circ}G_{\overline{D}}(\lambda)\,d\lambda.\]
If  $\widehat{\mu}_{\min}^{\mathrm{asy}}(\overline{D}) = \liminf\limits_{m\to\infty}\frac{\widehat{\mu}_{\min}(m\overline{D})}{m}>-\infty$, 
by \cite[Remark~6.3.27]{chen2020arakelov}, we have that
\[\frac{1}{\vol_{\R^d}(\Delta_+(\overline{D}))}\int_{-\infty}^{\infty}x\,  ((G_{\overline{D}})_*d\lambda)(dx) = \frac{d!}{\vol_+(\overline{D})}\int_{\Delta(D)^\circ}G_{\overline{D}}(\lambda)\,d\lambda= \lim\limits_{m\to\infty}\frac{\sum_{i=1}^{\dim_K(V_m)}\widehat{\mu}_i(\overline{V_m})}{m\dim_K(V_m)},\]
in particular, the supremum limits in \eqref{eq:arith volume mu} are limits. Combine \eqref{eq:arith volume mu} and \eqref{eq: measure mu ineq}, we have that
\[\widehat{\vol}_\chi(\overline{D})=(d+1)\cdot \vol_+(\overline{D})\cdot \frac{d!}{\vol_+(\overline{D})}\int_{\Delta(D)^\circ}G_{\overline{D}}(\lambda)\,d\lambda=(d+1)!\cdot\int_{\Delta(D)^\circ}G_{\overline{D}}(\lambda)\,d\lambda.\]
This completed the proof of \eqref{eq:main theorem concave 2}.
\end{proof}

For any $c=(c(\omega))_{\omega\in\Omega}\in \mathscr{L}^1(\Omega,\mathcal{A},\nu)$, we denote $(0, c)\in \widehat{\Div}_{S,\Q}(U)_\cpt$ the compactified $S$-metrized divisor such that the underlying compactified divisor of $(0,c)$ on a projective model of $U$ is $0$, and the Green functions at $\omega\in\Omega$ are $c(\omega)$. For any $\overline{D}\in\widehat{\Div}_{S,\Q}(U)_\cpt$,  we write $\overline{D}(c)\coloneq \overline{D}+(0,c)$.

We list the following properties of the concave transforms.

\begin{proposition} \label{properties of concave transforms}
 Let $\overline{D}=(D,g), \overline{D'}=(D',g')\in \widehat{\Div}_{S,\Q}(U)_\cpt$ with $D, D'$ big. Then the following statements hold.
	\begin{enumerate1}
		\item \label{mutiple} For any $\alpha\in \Q_{>0}$, we have that $\Delta(\alpha\overline{D})=\alpha\Delta(\overline{D})$ and 
		\[G_{\alpha\overline{D}}(\alpha\lambda) = \alpha\cdot G_{\overline{D}}(\lambda)\]
		for any $\lambda\in\Delta(\overline{D})^\circ$.
		\item \label{shifting} 	Let $c=(c(\omega))_{\omega\in\Omega}\in \mathscr{L}^1(\Omega,\mathcal{A},\nu)$. Then 
		\[G_{\overline{D}(c)}=G_{\overline{D}}+\int_{\Omega}c(\omega)\,\nu(d\omega)\]
		on $\Delta(\overline{D}(c))^{\circ}=\Delta(D)^{\circ}$.
		\item \label{monotone}  If $\overline{D}\le \overline{D'}$, then $G_{\overline{D}}\le G_{\overline{D'}}$ on $\Delta(D)^{\circ}\subseteq \Delta(D')^{\circ}$.
		\item \label{inf and sup of G} We have that	\[\widehat{\mu}^{\mathrm{asy}}_{\min}(\overline{D})\le G_{\overline{D}}\le \widehat{\mu}^{\mathrm{asy}}_{\max}(\overline{D}).\]
		Moreover,  $\sup\limits_{\lambda\in\Delta(D)^\circ}G_{\overline{D}}(\lambda)=\widehat{\mu}_{\max}^{\mathrm{asy}}(\overline D)$.
		\item \label{additivity} For any $\lambda\in \Delta(\overline{D})^\circ$ and $\lambda'\in \Delta(\overline{D'})^\circ$, we have that $\lambda+\lambda'\in \Delta(\overline{D}+\overline{D'})^\circ$ and
		\[G_{\overline{D}+\overline{D'}}(\lambda+\lambda')\ge G_{\overline{D}}(\lambda)+G_{\overline{D'}}(\lambda').\] 
		\item \label{birational}	Let $\varphi\colon V\to U$ a birational morphism of normal quasi-projective varieties such that $\varphi^*Y_\bullet$ is an admissible flag of $V$. Then $\widehat{\vol}(\varphi^*\overline{D}) = \widehat{\vol}(\overline{D})$, $\Delta(\varphi^*D)^\circ=\Delta(D)^\circ$ and $G_{\varphi^*\overline{D}}=G_{\overline{D}}$ on $\Delta(D)^\circ$. 
	\end{enumerate1}
\end{proposition}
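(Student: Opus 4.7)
The plan is to establish each of the six properties by unpacking the construction of the concave transform in \cref{construction of concave transform} and reducing each claim to a statement about the Harder--Narasimhan filtration of the adelic vector bundles $\overline{V_{\overline{D},m}}=(H^0_+(U,m\overline{D}),\metr_{\sup,m})$ together with the leading-term valuation $v_Y$. For \ref{mutiple}, multiplication of $\overline{D}$ by $\alpha\in\Q_{>0}$ gives a natural identification of $V_{\overline{D},\alpha m}$ with $V_{\alpha\overline{D},m}$ (for $m$ such that $\alpha m\in\N$) that scales each element of $\Gamma(V_{\overline{D},\bullet})$ by $\alpha$ in both coordinates and multiplies the associated HN-slopes, and hence $\rho$ and $\widetilde{\rho}$, by $\alpha$; the claimed identity for $G$ then falls straight out of the definition. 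For \ref{shifting}, adding $(0,c)$ shifts $\log\metr_{\sup,m,\omega}$ by $-mc(\omega)$ at every place, which by the defining integral of the Arakelov degree shifts every HN-slope of $\overline{V_{\overline{D}(c),m}}$ relative to $\overline{V_{\overline{D},m}}$ by $m\int_\Omega c\,d\nu$; consequently $\mathcal{H}^t_{\overline{D}(c),m}=\mathcal{H}^{t-m\int c\,d\nu}_{\overline{D},m}$, and dividing by $m$ before passing to the concave transform gives the claimed translate.

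For \ref{monotone}, the inequality $\overline{D}\leq \overline{D'}$ yields a contractive injection $\overline{V_{\overline{D},m}}\hookrightarrow \overline{V_{\overline{D'},m}}$ at every place, and for a subspace $W\subseteq V_{\overline{D},m}$ the quotient norm at $\omega$ coming from $\overline{D}$ dominates the one coming from $\overline{D'}$; this gives $\widehat{\mu}_{\min}(\overline{W}_{\overline{D}})\leq \widehat{\mu}_{\min}(\overline{W}_{\overline{D'}})$, hence $\mathcal{H}^t_{\overline{D},m}\subseteq \mathcal{H}^t_{\overline{D'},m}$ via the inclusion, and therefore $\rho_{\overline{D}}\leq \rho_{\overline{D'}}$ on $\Gamma(V_{\overline{D},\bullet})\subseteq \Gamma(V_{\overline{D'},\bullet})$, whence $G_{\overline{D}}\leq G_{\overline{D'}}$ on $\Delta(D)^\circ\subseteq \Delta(D')^\circ$. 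For \ref{inf and sup of G}, the upper bound $G_{\overline{D}}\leq \widehat{\mu}_{\max}^{\mathrm{asy}}(\overline{D})$ together with the equality at the supremum is exactly \eqref{eq:sup of concave transform}; the lower bound follows because $\mathcal{H}^{\widehat{\mu}_{\min}(\overline{V_{\overline{D},m}})}(V_m)=V_m$ forces $\rho(m,\gamma)\geq \widehat{\mu}_{\min}(\overline{V_{\overline{D},m}})$ for every $(m,\gamma)\in \Gamma(V_{\overline{D},\bullet})_m$, and since $\widehat{\mu}_{\min}^{\mathrm{asy}}(\overline{D})=\liminf_m \widehat{\mu}_{\min}(\overline{V_{\overline{D},m}})/m$ while $\widetilde{\rho}$ is built via $\limsup$, we get $\widetilde{\rho}(m,\gamma)/m\geq \widehat{\mu}_{\min}^{\mathrm{asy}}(\overline{D})$ and hence $G_{\overline{D}}\geq \widehat{\mu}_{\min}^{\mathrm{asy}}(\overline{D})$. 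Part \ref{birational} is the most mechanical: since $\varphi$ is birational between normal varieties, $\varphi_*\OO_V=\OO_U$ identifies $H^0(V,m\varphi^*D)$ with $H^0(U,mD)$, and the fact that $\varphi^\an_\omega$ is an isomorphism on a dense open guarantees the sup-norms coincide; combined with $v_{\varphi^*Y}=v_Y$ under these identifications of sections (granted by the assumption on $\varphi^*Y_\bullet$), the graded $K$-algebra $\overline{V_{\varphi^*\overline{D},\bullet}}$, its Okounkov body and its concave transform all agree with those of $\overline{D}$, and then $\widehat{\vol}(\varphi^*\overline{D})=\widehat{\vol}(\overline{D})$ follows from \cref{theorem:concavemain}.

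The main obstacle is \ref{additivity}. The Minkowski inclusion $\lambda+\lambda'\in\Delta(\overline{D}+\overline{D'})^\circ$ is straightforward: if $B(\lambda,\varepsilon)\subseteq \Delta(\overline{D})$ and $B(\lambda',\varepsilon)\subseteq \Delta(\overline{D'})$, then $B(\lambda+\lambda',\varepsilon)\subseteq \Delta(\overline{D})+\Delta(\overline{D'})\subseteq \Delta(\overline{D}+\overline{D'})$ using that the multiplication of sections is compatible with $v_Y$. The delicate step is the superadditivity of $G$. The same multiplication gives a contractive morphism $\overline{V_{\overline{D},m}}\otimes_{\varepsilon,\pi}\overline{V_{\overline{D'},m}}\to \overline{V_{\overline{D}+\overline{D'},m}}$ by the submultiplicativity \ref{gradedalgebra iii} of the sup-norms, and the tensorial minimal slope property of $S$ (\cref{def:tensorialproperty}) only yields
\[\widehat{\mu}_{\min}\bigl(\mathcal{H}^t(V_{\overline{D},m})\otimes_{\varepsilon,\pi}\mathcal{H}^{t'}(V_{\overline{D'},m})\bigr)\geq t+t'-C_0\log\bigl(\dim_K V_{\overline{D},m}\cdot \dim_K V_{\overline{D'},m}\bigr),\]
giving an approximate inequality $\rho_{\overline{D}+\overline{D'}}(m,\gamma+\gamma')\geq \rho_{\overline{D}}(m,\gamma)+\rho_{\overline{D'}}(m,\gamma')-O(\log m)$. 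Replacing $(m,\gamma,\gamma')$ by $(nm,n\gamma,n\gamma')$ and dividing by $nm$, the logarithmic error becomes $O(\log n/n)\to 0$ as $n\to\infty$, which produces the clean inequality $\widetilde{\rho}_{\overline{D}+\overline{D'}}(m,\gamma+\gamma')\geq \widetilde{\rho}_{\overline{D}}(m,\gamma)+\widetilde{\rho}_{\overline{D'}}(m,\gamma')$ and hence the desired superadditivity of $G$. This is exactly the mechanism underlying the $\delta$-superadditive formalism of \cite[Theorem~6.3.20]{chen2020arakelov} that is invoked in \cref{construction of concave transform}, so the argument essentially reduces to verifying that the tensor-product map of HN-filtrations descends, under multiplication of sections, to the image inside $V_{\overline{D}+\overline{D'},m}$ with the correct interaction with $v_Y$.
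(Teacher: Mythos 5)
Your proposal is correct and follows essentially the same route as the paper's proof: \emph{(i)}–\emph{(iv)} are established by the same contractive-map / HN-filtration arguments (the paper encapsulates the quotient-norm domination you re-derive in part \emph{(iii)} by citing \cite[Proposition~4.3.49]{chen2020arakelov}); for \emph{(v)} the paper simply cites \cite[Proposition~6.3.28]{chen2020arakelov}, whereas you unfold the underlying $\delta$-superadditive mechanism, which is the right intuition but the same mathematics; and for \emph{(vi)} your identification $H^0(V,m\varphi^*D)=H^0(U,mD)$ via $\varphi_*\OO_V=\OO_U$ and normality is the key point the paper makes, though the paper is more careful about the reverse containment, reducing it to showing that $\varphi^*\overline{D}\geq 0$ forces $\overline{D}\geq 0$ by working through approximating normal projective models.
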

\begin{proof}
	Let $\overline{V}_{\bullet}=\{\overline{V}_{m}\}_{m\in\N} \coloneq \{(H_+^0(U,m\overline{D}), \metr_{m,\sup})\}_{m\in \N}$ (resp. $\overline{V_{\bullet}'}=\{\overline{V_{m}'}\}_{m\in\N} \coloneq \{(H_+^0(U,m\overline{D'}), \metr_{m,\sup}')\}_{m\in \N}$)  be the graded $K$-algebra of adelic vector bundles corresponding to $\overline{D}$ (resp. $\overline{D'}$), $\mathcal{H}$ the Harder-Narasimhan filtration on $V_{\bullet}$ (resp. $V_{\bullet}'$) induced by $\overline{D}$ (resp. $\overline{D}'$) and $\rho, \widetilde{\rho}, \Gamma^t(V_{\bullet})$ (resp. $\rho', \widetilde{\rho}'$, $\Gamma^t(V_{\bullet}')$) the corresponding functions and semi-group given in \cref{construction of concave transform}. 
	
	\ref{mutiple} This is from \cite[Remark~6.3.21~(2)]{chen2020arakelov}.
	
	\ref{shifting} For any $m\in\N_{\geq 1}$, let $\metr_{m,c,\sup}$ be the sup-norm on $H^0(U,m\overline{D}(c))$. Then
	\[\metr_{m,c,\sup,\omega}=e^{-c(\omega)}\metr_{m,\sup,\omega}\]
	on $H_+^0(U,m\overline{D})=H_+^0(U,m\overline{D}(c))$. This equality above of sup-norms now easily implies \ref{shifting} by the construction of concave transforms in \cref{construction of concave transform}.
	
	\ref{monotone}   The effectivity relation $\overline{D}\le \overline{D'}$ implies that there is an injective $K$-linear map 
	\[f_m\colon V_{m}\hookrightarrow V_{m}'\]
	for every $m\in\N$ which is furthermore norm-contractive at every $\omega\in\Omega$, i.e. $\|f_m\|_\omega\le 1$ for all $\omega\in\Omega$ (see \cref{basic invariant of adelic vector bundle} for the notation). This implies that $h(f_m)\le 0$. By \cite[Proposition 4.3.49]{chen2020arakelov} we have that $\mathcal{H}^t(V_{m})\subseteq \mathcal{H}^t(V_{m}')$ since $h(f_m)\le 0$ and the filtrations are non-increasing. This implies that $\rho\leq \rho', \widetilde{\rho}\leq \widetilde{\rho}'$, hence $\Gamma^t(V_{\bullet})\subset \Gamma^t(V_{\bullet}')$, for any $t$. By the construction of $G_{\overline{D}}, G_{\overline{D'}}$, we have $G_{\overline{D}}\leq G_{\overline{D'}}$, this completes the proof of \ref{monotone}. 
	
	\ref{inf and sup of G} 	From the discussion in \cref{construction of concave transform}, we have that $\sup\limits_{\lambda\in\Delta(D)^\circ}G_{\overline{D}}(\lambda)=\widehat{\mu}_{\max}^{\mathrm{asy}}(\overline D)$. It remains to show that $\widehat{\mu}_{\min}^{\mathrm{asy}}(\overline{D})\leq G_{\overline{D}}$. Notice that $\rho(m,\gamma)\geq \widehat{\mu}_{\min}(\overline{V_m})$ for any $(m,\gamma)\in \Gamma(V_{\bullet})$ (see \cref{construction of concave transform}). For any $t<\widehat{\mu}^{\mathrm{asy}}_{\min}(\overline{D})$, by definition we have that $\widehat{\mu}_{\min}(\overline{V_m})>mt$ for all large enough $m$. Then for any  large enough $m$ with $(m,\gamma)\in\Gamma(V_\bullet)$, we conclude that $\rho(m,\gamma)\geq \widehat{\mu}_{\min}(\overline{V_m})>mt$ which in turn implies $\widetilde{\rho}(m,\gamma)>mt$ for all $(m,\gamma)\in \Gamma(V_\bullet)$ by definition. In particular, we conclude that for any $t<\widehat{\mu}^{\mathrm{asy}}_{\min}(\overline{D})$, $\Gamma^t(V_\bullet)=\Gamma(V_\bullet)$, $\Delta(\Gamma^t(V_\bullet))=\Delta(V_\bullet)$ which clearly shows that $G_{\overline{D}}(\lambda)\ge t$ for all $\lambda\in \Delta(D)^\circ$. As $t<\widehat{\mu}^{\mathrm{asy}}_{\min}(\overline{D})$ was arbitrary we deduce that $\widehat{\mu}_{\min}^{\mathrm{asy}}(\overline{D})\leq G_{\overline{D}}$.

	
    \ref{additivity} This is from \cite[Proposition~6.3.28]{chen2020arakelov}, notice that the condition for $\dim_K(W_n)$ in \cite[Proposition~6.3.28]{chen2020arakelov} holds, see \cref{construction of concave transform}.
	
	\ref{birational} 
Since the sup-norm is stable under birational morphism, it suffices to show that $H^0(V,\varphi^*D)=H^0(U,D)$ which will imply the corresponding graded algebras of adelic vector bundles associated to $\varphi^*\overline{D}, \overline{D}$, respectively, are isomorphic, hence proves \ref{birational}. We have a homomorphism $H^0(U,{D})\to H^0(V,\varphi^*{D})$ which is injective since $\varphi$ is birational, it remains to show that if $f\in K(V)\simeq K(U)$ such that $\widehat{\mathrm{div}}(f)+\varphi^*\overline{D}\geq 0$, then $\widehat{\mathrm{div}}(f)+D\geq 0$. It suffices to show that $\varphi^*\overline{D}\geq 0$ implies $\overline{D}\geq 0$. If $\varphi^*\overline{D}\geq 0$, let $(\overline{D_j})_{j\in\N_{\geq 1}}$ be a sequence of $S$-metrized divisors in $\widehat{\Div}_{S,\Q}(U)_\CM$ decreasingly converging to $\overline{D}$ with respect to the $\overline{B}$-boundary topology for some weak boundary divisor $\overline{B}$ of $U$. Let $X_j$ be a normal projective $K$-model of $U$ such that $\overline{D_j}=(D_j,g_j)\in \widehat{\Div}_{S,\Q}(X_j)$. Let $X_j'$ be a projective $K$-model of $V$ such that the extension $\varphi_j\colon X_j'\to X_j$ of $\varphi$ exists. Moreover, since $\varphi^*\overline{D_j}\geq \varphi^*\overline D\geq 0$ in $\widehat{\Div}_{S,\Q}(U)_\cpt$, we can assume that $\varphi_j^*\overline{D_j}\geq 0$ in $\widehat{\Div}_{S,\Q}(X_j')$. This implies that $g_j\circ\varphi_j\geq0$ and the Weil divisor corresponding to $\varphi_j^*{D}_j$ is non-negative, so $g_j\geq 0$ and the Weil divisor corresponding to $\overline{D_j}$ is non-negative. Since $X_j$ is normal, so $\overline{D_j}\geq 0$. This completes the proof of \ref{birational}. 
\end{proof}
\begin{remark}\label{rmk:monotone of volume}
		Note that if $\overline{D}\leq \overline{D'}$ in $\widehat{\Div}_{S,\Q}(U)_\cpt$, as in the proof of \cref{properties of concave transforms}~\ref{monotone}, we have that $\widehat{\deg}_+(H_+^0(U,\overline{D}),\metr_{\sup}) \leq \widehat{\deg}_+(H_+^0(U,\overline{D'}),\metr_{\sup})$. In particular, $\widehat{\vol}(\overline{D})\leq \widehat{\vol}(\overline{D'})$.
\end{remark}

\begin{corollary}\label{cor:bigness}
	Let $\overline{D}=(D,g)\in\widehat{\Div}_{S,\Q}(U)_\cpt$. Then $\overline{D}$ is big if and only if $D$ is big and $\widehat{\mu}_{\max}^{\mathrm{asy}}(\overline{D})>0$. 
\end{corollary}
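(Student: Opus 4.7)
The plan is to deduce both implications directly from Theorem~\ref{theorem:concavemain} and Proposition~\ref{properties of concave transforms}~\ref{inf and sup of G}, using the integral representation of $\widehat{\vol}(\overline{D})$ in terms of the concave transform $G_{\overline{D}}$.

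For the forward direction, suppose $\overline{D}$ is big, i.e.\ $\widehat{\vol}(\overline{D})>0$. Theorem~\ref{theorem:concavemain} already asserts that the bigness of $\overline{D}$ forces $D$ to be big. Given that $D$ is big, the same theorem yields
\[
0 \;<\; \widehat{\vol}(\overline{D}) \;=\; (d+1)!\int_{\Delta(D)^\circ}\max\{G_{\overline{D}}(\lambda),0\}\, d\lambda,
\]
so there must exist some $\lambda_0\in\Delta(D)^\circ$ with $G_{\overline{D}}(\lambda_0)>0$. Since $\widehat{\mu}_{\max}^{\mathrm{asy}}(\overline{D})=\sup_{\Delta(D)^\circ}G_{\overline{D}}$ by Proposition~\ref{properties of concave transforms}~\ref{inf and sup of G}, we conclude $\widehat{\mu}_{\max}^{\mathrm{asy}}(\overline{D})>0$.

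For the reverse direction, suppose $D$ is big and $\widehat{\mu}_{\max}^{\mathrm{asy}}(\overline{D})>0$. Again by Proposition~\ref{properties of concave transforms}~\ref{inf and sup of G}, there exists $\lambda_0\in\Delta(D)^\circ$ with $G_{\overline{D}}(\lambda_0)>0$ (the supremum is strictly positive and approached on the interior). The function $G_{\overline{D}}$ is concave on the open convex set $\Delta(D)^\circ$, hence continuous there, so there is an open neighborhood $N\subset\Delta(D)^\circ$ of $\lambda_0$ on which $G_{\overline{D}}$ remains bounded below by a positive constant. This neighborhood has positive Lebesgue measure, giving
\[
\int_{\Delta(D)^\circ}\max\{G_{\overline{D}}(\lambda),0\}\, d\lambda \;\geq\; \int_{N}G_{\overline{D}}(\lambda)\, d\lambda \;>\;0.
\]
Applying Theorem~\ref{theorem:concavemain} once more, we conclude $\widehat{\vol}(\overline{D})>0$, i.e.\ $\overline{D}$ is big.

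No step here is expected to be a serious obstacle; the only subtlety is invoking continuity of the concave function $G_{\overline{D}}$ on the open set $\Delta(D)^\circ$ to upgrade a pointwise positivity statement to a set of positive Lebesgue measure, which is standard.
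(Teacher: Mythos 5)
Your proof is correct and takes essentially the same route as the paper: both directions reduce to \cref{theorem:concavemain} together with \cref{properties of concave transforms}~\ref{inf and sup of G}. The only cosmetic difference is in the forward direction: the paper re-derives ``$\overline{D}$ big $\Rightarrow$ $D$ big and $\widehat{\mu}_{\max}^{\mathrm{asy}}(\overline{D})>0$'' in one stroke via the slope bound $\widehat{\deg}_+(\overline{V_m})\le \dim_K(V_m)\,\widehat{\mu}_1(\overline{V_m})$, whereas you cite the already-stated implication from \cref{theorem:concavemain} that bigness of $\overline{D}$ forces bigness of $D$ and then read off positivity of $\sup G_{\overline{D}}$ from the integral formula; both are valid and rest on identical ingredients.
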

\begin{proof}
	As argued in the proof of \cite[Proposition~6.4.18]{chen2020arakelov}, the corollary is from \cref{properties of concave transforms}~\ref{inf and sup of G} and \cref{theorem:concavemain}.  For the convenience of readers, we give a complete proof. 
	Let $\overline{V}_{\bullet}=\{\overline{V}_{m}\}_{m\in\N} \coloneq \{(H_+^0(U,m\overline{D}), \metr_{m,\sup})\}_{m\in \N}$ be the graded $K$-algebra of adelic vector bundle corresponding to $\overline{D}$.
	
	If $\overline{D}$ is big, then for big enough $m\in \N$, $\widehat{\deg}_+(\overline{V_m})>0$. By \cite[Remark 4.3.53]{chen2020arakelov} we have that 
	\[\widehat{\deg}_+(\overline{V_m})\le \dim_K(V_m)\cdot \widehat{\mu}_1(\overline{V_m}).\]
	Then we can see that 
	\[0<\widehat{\vol}(\overline{D})=\limsup_{m\to\infty}\frac{\deg_+(\overline{V_m})}{m^{d+1}}\le \limsup_{m\to\infty}\frac{\dim_K(V_m)}{m^d}\cdot \frac{\widehat{\mu}_1(\overline{V_m})}{m}=\vol(D)\cdot \widehat{\mu}^{\text{asy}}_{\max}(\overline{D}).\]
	This then clearly implies that $\vol(D)>0$ and $\widehat{\mu}_{\max}^{\mathrm{asy}}(\overline{D})>0$.
	
	Conversely, assume that $D$ is big and $\widehat{\mu}_{\max}^{\mathrm{asy}}(\overline{D})>0$. Since $D$ is big, we have that $\Delta(D)^\circ\not=\emptyset$. Since $\widehat{\mu}_{\max}^{\mathrm{asy}}(\overline{D})>0$ and $G_{\overline{D}}$ is continuous on $\Delta(D)^\circ$, there is a non-empty open subset $W$ of $\Delta(D)^\circ$ such that $G_{\overline{D}}>0$ on $W$. By \cref{theorem:concavemain}, we have that
	\[\widehat{\vol}(\overline{D})=(d+1)!\int_{\Delta(D)^\circ}\max\{G_{\overline{D}}(\lambda), 0\}\, d\lambda\geq (d+1)!\int_{W}G_{\overline{D}}(\lambda)\, d\lambda>0,\]
	which implies that $\overline{D}$ is big.
\end{proof}

\begin{corollary}
   \label{cor:sumofbigisbig}
Let $\overline{D_1}$, $\overline{D_2}\in \widetilde{\Div}_{S,\Q}(U)_\cpt$. If $\overline{D_1}$, $\overline{D_2}$ are big. Then 
\[\widehat{\vol}(\overline{D_1}+\overline{D_2})^{\frac{1}{d+1}}\ge \widehat{\vol}(\overline{D_1})^{\frac{1}{d+1}}+\widehat{\vol}(\overline{D_2})^{\frac{1}{d+1}}.\]
In particular, $\overline{D_1}+\overline{D_2}$ is big.
\end{corollary}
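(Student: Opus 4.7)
The plan is to apply the classical Brunn--Minkowski inequality to convex bodies in $\R^{d+1}$ built from the concave transforms. For any $\overline{D}\in\widehat{\Div}_{S,\Q}(U)_\cpt$ with $D$ big, I would introduce
\[\widehat{\Delta}^+(\overline{D})\coloneq \{(\lambda,t)\in\R^{d+1}\mid \lambda\in \Delta(D)^\circ,\ 0\le t\le G_{\overline{D}}(\lambda)\}.\]
Concavity of $G_{\overline{D}}$ on the convex set $\Delta(D)^\circ$ makes this a convex subset of $\R^{d+1}$. When $\overline{D}$ is big, \cref{cor:bigness} yields $\widehat{\mu}_{\max}^{\mathrm{asy}}(\overline{D})>0$, so $G_{\overline{D}}$ is strictly positive on a non-empty open subset of $\Delta(D)^\circ$; together with the finite upper bound in \cref{properties of concave transforms}~\ref{inf and sup of G}, this shows the closure of $\widehat{\Delta}^+(\overline{D})$ is an honest convex body in $\R^{d+1}$. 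By Fubini and \cref{theorem:concavemain} one then has
\[\vol_{\R^{d+1}}\bigl(\widehat{\Delta}^+(\overline{D})\bigr)=\int_{\Delta(D)^{\circ}}\max\{G_{\overline{D}}(\lambda),0\}\,d\lambda=\frac{\widehat{\vol}(\overline{D})}{(d+1)!}.\]

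Since $\overline{D_1},\overline{D_2}$ are big, in particular $D_1,D_2$ are big, hence so is $D_1+D_2$ (either from superadditivity of $\vol(\cdot)$ via Kodaira's lemma, or by specializing \cref{properties of concave transforms}~\ref{additivity} to the underlying geometric Okounkov bodies). Consequently the three bodies $\widehat{\Delta}^+(\overline{D_1})$, $\widehat{\Delta}^+(\overline{D_2})$, and $\widehat{\Delta}^+(\overline{D_1}+\overline{D_2})$ are all well-defined. The superadditivity statement of \cref{properties of concave transforms}~\ref{additivity}, namely $G_{\overline{D_1}+\overline{D_2}}(\lambda_1+\lambda_2)\ge G_{\overline{D_1}}(\lambda_1)+G_{\overline{D_2}}(\lambda_2)$ together with the fact that $\lambda_1+\lambda_2\in \Delta(D_1+D_2)^\circ$, gives by a direct check the Minkowski inclusion
\[\widehat{\Delta}^+(\overline{D_1})+\widehat{\Delta}^+(\overline{D_2})\subseteq \widehat{\Delta}^+(\overline{D_1}+\overline{D_2}).\]

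Applying the classical Brunn--Minkowski inequality in $\R^{d+1}$ to the closures of these convex bodies then yields
\[\vol_{\R^{d+1}}\bigl(\widehat{\Delta}^+(\overline{D_1}+\overline{D_2})\bigr)^{\frac{1}{d+1}}\ge \vol_{\R^{d+1}}\bigl(\widehat{\Delta}^+(\overline{D_1})\bigr)^{\frac{1}{d+1}}+\vol_{\R^{d+1}}\bigl(\widehat{\Delta}^+(\overline{D_2})\bigr)^{\frac{1}{d+1}},\]
and multiplying through by $((d+1)!)^{\frac{1}{d+1}}$ produces the claimed inequality. The \emph{in particular} clause is then immediate: the right-hand side is strictly positive, forcing $\widehat{\vol}(\overline{D_1}+\overline{D_2})>0$.

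The main point to verify carefully is that the closures $\overline{\widehat{\Delta}^+(\overline{D_i})}$ are genuine convex bodies (compact with non-empty interior), so that Brunn--Minkowski applies without degeneracy or dimensional pathology; this reduces cleanly to \cref{cor:bigness} and the boundedness part of \cref{properties of concave transforms}~\ref{inf and sup of G}, and is the only step beyond straightforward bookkeeping.
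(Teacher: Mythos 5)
Your argument is correct and follows essentially the same strategy as the paper, which defers to the proof of \cite[Theorem~6.4.7]{chen2020arakelov}: one forms the region under the graph of $\max\{G_{\overline{D}},0\}$ over the Okounkov body, uses \cref{properties of concave transforms}~\ref{additivity} for the Minkowski-sum inclusion, and concludes via \cref{theorem:concavemain} and Brunn--Minkowski. The careful check that the bodies are full-dimensional (via \cref{cor:bigness} and \cref{properties of concave transforms}~\ref{inf and sup of G}) is exactly the right thing to flag, and your reduction handles it cleanly.
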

\begin{proof}
Arguing as in the proof of \cite[Theorem 6.4.7]{chen2020arakelov}, we easily deduce the corollary from \cref{properties of concave transforms}~\ref{additivity}, \cref{theorem:concavemain} and the classical Brunn-Minkowski inequality.
\end{proof}
\begin{remark}\label{remark:volme inequality by adding big}
	An easy observation from the corollary is that, if $\overline{D}\in\widehat{\Div}_{S,\Q}(U)_\cpt$ is big, then for any $\overline{M}\in \widehat{\Div}_{S,\Q}(U)_\cpt$, we have that
	\[\widehat{\vol}(\overline{M})\leq \widehat{\vol}(\overline{M}+\overline{D}).\]
	Indeed, it is trivial if $\widehat{\vol}(\overline{M})=0$, otherwise the inequality is from the corollary.
	\end{remark}

For further application, we give the following definition.
\begin{definition}
	Let $\overline{D}=(D,g)\in\widehat{\Div}_{S,\Q}(U)_\cpt$ with $D$ big and $G_{\overline{D}}$ the concave transform of $\overline{D}$ given in \cref{construction of concave transform}. We define the \emph{numerical $\chi$-volume} of $\overline{D}$ as 
	\[\widehat{\vol}_{\chi}^{\mathrm{num}}(\overline{D})\coloneq (d+1)!\int_{\Delta(D)^\circ}G_{\overline{D}}(\lambda) \ d\lambda.\]
	By \cref{theorem:concavemain}, we have that $\widehat{\vol}_{\chi}^{\mathrm{num}}(\overline{D})\geq\widehat{\vol}_{\chi}(\overline{D})$ with equality if $\widehat{\mu}_{\min}^{\mathrm{asy}}(\overline{D})>-\infty$. 
\end{definition}
Next we record an easy corollary of the previous theorem which will be useful later.
\begin{corollary}
	\label{corol:volsame}
	Let $\overline{D}=(D,g)\in\widehat{\Div}_{S,\Q}(U)_\cpt$ with $D$ big. If $G_{\overline{D}}\ge 0$ on $\Delta(D)^{\circ}$, then $\widehat{\vol}(\overline{D})=\widehat{\vol}_\chi^{\num}(\overline{D}) 
	$. More generally, if $\inf\limits_{\lambda\in\Delta(D)^\circ}G_{\overline{D}}(\lambda)>-\infty$, then there exists $c\in \mathscr{L}^1(\Omega,\mathcal{A},\nu)$ such that
	$\widehat{\vol}(\overline{D}(c))=\widehat{\vol}_\chi^\num(\overline{D}({c}))$. 
\end{corollary}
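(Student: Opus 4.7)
The plan is to derive this directly from \cref{theorem:concavemain} and \cref{properties of concave transforms}~\ref{shifting}, without any new ingredients. For the first assertion, I will invoke \cref{theorem:concavemain} which gives
\[\widehat{\vol}(\overline{D})=(d+1)!\int_{\Delta(D)^\circ}\max\{G_{\overline{D}}(\lambda),0\}\,d\lambda,\]
and the definition of $\widehat{\vol}_\chi^\num(\overline{D})$ as $(d+1)!\int_{\Delta(D)^\circ}G_{\overline{D}}(\lambda)\,d\lambda$. Under the hypothesis $G_{\overline{D}}\ge 0$ on $\Delta(D)^\circ$, the two integrands coincide pointwise, and the claimed equality is immediate.

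For the general statement, the idea is to shift the concave transform up by a constant until it becomes nonnegative. Set $M\coloneq -\inf\limits_{\lambda\in\Delta(D)^\circ}G_{\overline{D}}(\lambda)\in\R$, which is finite by hypothesis. By \cref{properties of concave transforms}~\ref{shifting}, for any $c\in\mathscr{L}^1(\Omega,\mathcal{A},\nu)$ we have $\Delta(\overline{D}(c))^\circ=\Delta(D)^\circ$ and
\[G_{\overline{D}(c)}=G_{\overline{D}}+\int_\Omega c(\omega)\,\nu(d\omega)\qquad\text{on }\Delta(D)^\circ.\]
Thus it suffices to construct $c\in\mathscr{L}^1(\Omega,\mathcal{A},\nu)$ with $\int_\Omega c\,d\nu\ge M$. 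This is where the standing assumption \ref{exists finite measure for subset} on $S$ enters: since $\nu(\mathcal{A})\not\subset\{0,+\infty\}$, we can fix a measurable $A\subseteq\Omega$ with $0<\nu(A)<\infty$, and take $c\coloneq\frac{M}{\nu(A)}\mathbb{1}_A$, which lies in $\mathscr{L}^1(\Omega,\mathcal{A},\nu)$ and satisfies $\int_\Omega c\,d\nu=M$.

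With this choice, $G_{\overline{D}(c)}\ge 0$ on $\Delta(\overline{D}(c))^\circ=\Delta(D)^\circ$. Since $D$ is big and $\overline{D}(c)\in\widehat{\Div}_{S,\Q}(U)_\cpt$, the first part of the corollary applies to $\overline{D}(c)$ and yields $\widehat{\vol}(\overline{D}(c))=\widehat{\vol}_\chi^\num(\overline{D}(c))$, which is the desired conclusion. There is no real obstacle here beyond checking that the shifting formula in \cref{properties of concave transforms}~\ref{shifting} applies (so that $\Delta$ is unchanged and $G$ is translated by a constant), and that the standing hypotheses on $(\Omega,\mathcal{A},\nu)$ permit constructing an integrable function with any prescribed integral.
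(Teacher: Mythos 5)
Your argument is correct and matches the paper's intended proof: both the first assertion (apply \cref{theorem:concavemain} and compare $\max\{G_{\overline{D}},0\}$ with $G_{\overline{D}}$ under nonnegativity) and the reduction of the second to the first via the shift formula in \cref{properties of concave transforms}~\ref{shifting} are exactly the paper's route, and your explicit construction of an integrable $c$ from the standing hypothesis $\nu(\mathcal{A})\not\subset\{0,+\infty\}$ is a fair fleshing-out of a detail the paper leaves implicit.
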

\begin{proof}
	The proof is immediate from the two statements of \cref{theorem:concavemain} and \cref{properties of concave transforms}~\ref{shifting}.
\end{proof}

\subsection{Continuity of arithmetic volumes}

\begin{lemma}
    \label{lemma:twistbig}
    Let $\overline{D}=(D,g)\in\widehat{\Div}_{S,\Q}(U)_{\cpt}$ with $D$ big. Then there is a non-negative integrable function $c\colon\Omega\rightarrow \R_{\ge 0}$ such that the twist $\overline{D}(c)\coloneqq \overline{D}+(0,c)$ is 
    big.
\end{lemma}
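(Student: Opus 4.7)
The plan is to reduce the statement to the bigness criterion of \cref{cor:bigness}: since $D$ is big by hypothesis, it suffices to produce a non-negative integrable $c$ such that $\widehat{\mu}_{\max}^{\mathrm{asy}}(\overline{D}(c))>0$. By \cref{lemma:slopebounded}, the quantity $M\coloneq \widehat{\mu}_{\max}^{\mathrm{asy}}(\overline{D})$ is finite. Combining the shifting property in \cref{properties of concave transforms}~\ref{shifting} with the identification $\widehat{\mu}_{\max}^{\mathrm{asy}}(\overline{D})=\sup_{\lambda\in\Delta(D)^\circ}G_{\overline{D}}(\lambda)$ of \cref{properties of concave transforms}~\ref{inf and sup of G}, one gets
\[\widehat{\mu}_{\max}^{\mathrm{asy}}(\overline{D}(c))=M+\int_{\Omega}c(\omega)\,\nu(d\omega).\]
Therefore, the goal becomes finding a non-negative $c\in\mathscr{L}^1(\Omega,\mathcal{A},\nu)$ with $\int_{\Omega}c\,d\nu>\max\{0,-M\}$.

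To construct such a $c$, we invoke the standing hypothesis \ref{exists finite measure for subset} on $S$, which guarantees the existence of some $A\in\mathcal{A}$ with $0<\nu(A)<\infty$. Then $c\coloneq \alpha\cdot \mathbbm{1}_A$ is a non-negative element of $\mathscr{L}^1(\Omega,\mathcal{A},\nu)$ for any $\alpha>0$, so that $(0,c)\in\widehat{\Div}_{S,\Q}(U)_{\cpt}$ and $\overline{D}(c)=\overline{D}+(0,c)$ is well-defined. Choosing $\alpha$ large enough so that $\alpha\nu(A)>\max\{0,-M\}$, the formula above yields $\widehat{\mu}_{\max}^{\mathrm{asy}}(\overline{D}(c))>0$, and since the underlying divisor of $\overline{D}(c)$ is still the big divisor $D$, \cref{cor:bigness} gives that $\overline{D}(c)$ is big, completing the proof. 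There is no real obstacle here; the argument is simply a bookkeeping assembly of \cref{lemma:slopebounded}, \cref{properties of concave transforms}, and \cref{cor:bigness}, together with the measure-theoretic assumption \ref{exists finite measure for subset} on the adelic curve $S$.
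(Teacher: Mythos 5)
Your proof is correct and follows essentially the same route as the paper: both exploit the shift identity for $c\mapsto\overline{D}(c)$ from \cref{properties of concave transforms}~\ref{shifting} and the standing assumption \ref{exists finite measure for subset} to manufacture a non-negative integrable $c$ with large enough $\int_\Omega c\,d\nu$. The only cosmetic difference is that the paper pins a single point $\alpha\in\Delta(D)^\circ$, shifts $G_{\overline{D}}(\alpha)$ to be positive, and invokes \cref{theorem:concavemain} directly, whereas you shift the supremum $\widehat{\mu}_{\max}^{\mathrm{asy}}(\overline{D})=\sup_\lambda G_{\overline{D}}(\lambda)$ and conclude via the equivalent criterion of \cref{cor:bigness}; since the latter is itself proved from \cref{theorem:concavemain}, the two arguments coincide up to relabeling. (Also note that the appeal to \cref{lemma:slopebounded} is not actually needed: you only require $\widehat{\mu}_{\max}^{\mathrm{asy}}(\overline{D})>-\infty$, which is automatic because $G_{\overline{D}}$ is real-valued on the non-empty set $\Delta(D)^\circ$.)
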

\begin{proof}
    Let $\alpha\in\Delta(D)^\circ$ and $r\coloneq G_{\overline{D}}(\alpha)>-\infty$. There is a function $c\in\mathscr{L}^1(\Omega,\mathcal{A},\nu)_{\geq 0}$ such that $\int_{\Omega}c(\omega)\, \nu(d\omega)>-r$. Then clearly from \cref{properties of concave transforms}~\ref{shifting} we have that $$G_{\overline{D}(c)}(\alpha)=G_{\overline{D}}(\alpha)+ \int_{\Omega} c(\omega)\, \nu(d\omega)={r+\int_{\Omega}c(\omega)\, \nu(d\omega)}>0$$ and thus we have $\widehat{\vol}(\overline{D}(c))>0$ by \cref{theorem:concavemain}.
\end{proof}
We next show that the big cone is open along arbitrary directions. This is the quasi-projective analogue of \cite[Proposition 6.4.23]{chen2020arakelov}.
\begin{lemma}
    \label{lemma:bigtwists}
    Let $\overline{D}=(D,g), \overline{M}=(M,f)\in\widehat{\Div}_{S,\Q}(U)_{\cpt}$. If $\overline{D}$ is big,  then there is an $n_0\in \N_{\geq 1}$ such that $n\overline{D}+\overline{M}$ is big for all $n\ge n_0$.
\end{lemma}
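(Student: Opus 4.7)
The plan is to reduce to finding a single witness $n_0$ and produce it via a decomposition of $\overline{M}$ combined with the concave-transform machinery.

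First, I reduce: it suffices to exhibit one $n_0\in\N_{\geq 1}$ with $n_0\overline{D}+\overline{M}$ big. Indeed, for $n\geq n_0$, the identity $n\overline{D}+\overline{M}=(n_0\overline{D}+\overline{M})+(n-n_0)\overline{D}$ expresses $n\overline{D}+\overline{M}$ as a sum of a big divisor and a non-negative multiple of the big divisor $\overline{D}$, whence $\widehat{\vol}(n\overline{D}+\overline{M})\geq\widehat{\vol}(n_0\overline{D}+\overline{M})>0$ by \cref{remark:volme inequality by adding big}.

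To produce such $n_0$, I write $\overline{M}$ as a difference of big divisors. Fix a weak boundary divisor $\overline{B}$ of $U$. By the direct-limit definition of $\widehat{\Div}_{S,\Q}(U)_{\cpt}$, there exist a projective $K$-model $X_0$ of $U$ and $\overline{M_0}\in\widehat{\Div}_{S,\Q}(X_0)$ with $\overline{M}\geq\overline{M_0}-\overline{B}$. Since $X_0$ is projective, write $\overline{M_0}=\overline{P}-\overline{Q}$ with $\overline{P},\overline{Q}$ arithmetically ample. Setting $\overline{M_2}\coloneq\overline{Q}+\overline{B}$ (big, as a sum of an arithmetically ample divisor and an effective boundary) and $\overline{M_1}\coloneq\overline{M}+\overline{M_2}\geq\overline{P}$ (big), I obtain a decomposition $\overline{M}=\overline{M_1}-\overline{M_2}$ with both $\overline{M_i}$ big in $\widehat{\Div}_{S,\Q}(U)_{\cpt}$.

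Now \cref{cor:bigness} yields $\alpha_0\in\Delta(D)^{\circ}$ with $r\coloneq G_{\overline{D}}(\alpha_0)>0$ and $\beta_i\in\Delta(M_i)^{\circ}$ with $G_{\overline{M_i}}(\beta_i)$ finite. For $n$ large, the geometric divisor $nD+M=nD+M_1-M_2$ is big by the classical openness of the geometric big cone (cf.\ \cite[Theorem~5.2.1]{yuan2021adelic}), and the point $n\alpha_0+\beta_1-\beta_2$ lies in $\Delta(nD+M)^{\circ}$. By \cref{cor:bigness} it then suffices to verify $\widehat{\mu}_{\max}^{\mathrm{asy}}(n\overline{D}+\overline{M})>0$ for $n$ large.

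The main obstacle is that \cref{properties of concave transforms}~\ref{additivity} applied to $(n\overline{D}+\overline{M})+\overline{M_2}=n\overline{D}+\overline{M_1}$ gives only the upper bound $G_{n\overline{D}+\overline{M}}(\lambda)\leq G_{n\overline{D}+\overline{M_1}}(\lambda+\beta_2)-G_{\overline{M_2}}(\beta_2)$, whereas I need the matching lower bound. I expect the cleanest resolution to be at the level of the graded $K$-algebra of adelic vector bundles: the norm-contractive multiplication map $V^{n\overline{D}+\overline{M}}_m\otimes V^{\overline{M_2}}_m\to V^{n\overline{D}+\overline{M_1}}_m$ allows one to transfer Arakelov-degree estimates from the multiplicatively generated part of the target (whose maximal slope grows at least like $nr+G_{\overline{M_1}}(\beta_1)$ by additivity on the sum side) back to $V^{n\overline{D}+\overline{M}}_m$, up to a bounded correction controlled by $\widehat{\mu}_{\max}^{\mathrm{asy}}(\overline{M_2})$. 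For $n$ large the linear growth in $n$ dominates this correction, yielding the desired positivity of $\widehat{\mu}_{\max}^{\mathrm{asy}}(n\overline{D}+\overline{M})$; this mirrors the projective-case argument of \cite[Proposition~6.4.23]{chen2020arakelov}.
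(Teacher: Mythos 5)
The reduction to producing a single $n_0$, via \cref{remark:volme inequality by adding big}, is fine and matches the closing line of the paper's proof. But the core of your argument --- producing such an $n_0$ --- stops at a sketch and the sketch does not close.

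The fatal step is the final paragraph. You propose to \emph{transfer} maximal-slope estimates from the target of the multiplication map
$V^{n\overline{D}+\overline{M}}_m\otimes V^{\overline{M_2}}_m\to V^{n\overline{D}+\overline{M_1}}_m$
back to the source in order to bound $\widehat{\mu}_{\max}^{\mathrm{asy}}(n\overline{D}+\overline{M})$ from below. There is no mechanism for that. The multiplication map is norm-contractive but nowhere near surjective, so high-degree subbundles of $V^{n\overline{D}+\overline{M_1}}_m$ need not meet its image; and even if they did, passing from a section of $n\overline{D}+\overline{M_1}$ to one of $n\overline{D}+\overline{M}$ amounts to division by a fixed section of $\overline{M_2}$, which reverses the sup-norm inequality (one only gets $\|t/s_2\|_{\sup}\geq\|t\|_{\sup}/\|s_2\|_{\sup}$, the useless direction). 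Likewise, the intended slope inequality
$\widehat{\mu}_{\max}(V^{n\overline{D}+\overline{M_1}}_m)\le\widehat{\mu}_{\max}(V^{n\overline{D}+\overline{M}}_m)+\widehat{\mu}_{\max}(V^{\overline{M_2}}_m)+\text{error}$
is exactly the wrong direction: the tensorial slope properties give super-additivity of $\widehat{\mu}_{\max}$, not sub-additivity, and certainly say nothing about a proper subquotient of a tensor product. There is also a smaller unsupported claim earlier: $\Delta(nD+M_1)^\circ\supseteq\Delta(nD+M)^\circ+\Delta(M_2)^\circ$ gives $n\alpha_0+\beta_1\in\Delta(nD+M_1)^\circ$, but it does not let you conclude $n\alpha_0+\beta_1-\beta_2\in\Delta(nD+M)^\circ$; Okounkov bodies are not closed under Minkowski \emph{difference}. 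Finally, the decomposition of a general $\overline{M_0}\in\widehat{\Div}_{S,\Q}(X_0)$ as a difference of arithmetically ample ($S$-ample) divisors is not available: only divisors with metrics in $M_{S,\Q}(X_0)=N_{S,\Q}(X_0)-N_{S,\Q}(X_0)$ admit such a decomposition, and arbitrary $S$-measurable, locally $S$-bounded Green functions need not lie there (although one can still manufacture a decomposition of $\overline{M}$ into big pieces by other means, so this is repairable).

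The paper avoids all of this by staying entirely within the shift formalism. It first uses continuity of the \emph{geometric} volume to find $m$ with $mD+M$ big, then applies \cref{lemma:twistbig} to get $c\geq 0$ integrable with $(m\overline{D}+\overline{M})(c)$ big. The shifting estimate $\widehat{\vol}(\overline{D}(-c/N))\ge\widehat{\vol}(\overline{D})-\frac{(d+1)\vol(D)}{N}\int_\Omega c\,\nu$ (which is just \cref{properties of concave transforms}~\ref{shifting} integrated over $\Delta(D)^\circ$) then makes $\overline{D}(-c/N)$ big for $N$ large. Writing $(m+N)\overline{D}+\overline{M}=(m\overline{D}+\overline{M})(c)+N\,\overline{D}(-c/N)$ exhibits the left side as a sum of two big divisors, so bigness follows from \cref{cor:sumofbigisbig}. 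This never needs a decomposition of $\overline{M}$ and never invokes any reverse-direction slope estimate. I'd suggest you scrap the multiplication-map heuristic and rewrite the argument along those lines.
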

\begin{proof}
    For any $\overline{D'}=(D',g')\in \widehat{\Div}_{S,\Q}(U)_\cpt$ with $D'$ big, we set $\widetilde{G}_{\overline{D'}}\coloneqq \max\{G_{\overline{D'}},0\}$ for simplicity. By bigness of $\overline{D}$, we know that the underlying compactified (geometric) divisor $D$ is big from \cref{cor:bigness}. Hence by continuity of geometric volumes, there is an integer $m$ such that $mD+M$ is big. On the other hand, by \cref{properties of concave transforms}~\ref{shifting}, for any integrable function $c\colon \Omega\to \R_{\ge 0}$, we have that
    \[G_{\overline{D}(c)}=G_{\overline{D}}+\int_{\Omega} c(\omega)\, \nu(d\omega)\] which clearly implies \[\widetilde{G}_{\overline{D}}(\alpha)\le \widetilde{G}_{\overline{D}(c)}(\alpha)\le \widetilde{G}_{\overline{D}}(\alpha)+\int_{\Omega}c(\omega)\, \nu(d\omega)\]
    for all $\alpha\in\Delta(D)^{\circ}$. Similarly we have 
    \[\widetilde{G}_{\overline{D}}(\alpha)\ge \widetilde{G}_{\overline{D}(-c)}(\alpha)\ge \widetilde{G}_{\overline{D}}(\alpha)-\int_{\Omega}c(\omega)\, \nu(d\omega),\]
     for all $\alpha\in\Delta(D)^{\circ}$.
    By \cref{theorem:concavemain} the above then clearly implies
    \begin{equation}
    \label{equa:lagbe}
    \widehat{\vol}(\overline{D})\le\widehat{\vol}(\overline{D}(c))\le \widehat{\vol}(\overline{D})+(d+1)\cdot\vol(D)\cdot \int_{\Omega} c(\omega)\, \nu(d\omega).
    \end{equation}
    \begin{equation}
        \label{equa:lagbeagain}
           \widehat{\vol}(\overline{D})\ge\widehat{\vol}(\overline{D}(-c))\ge \widehat{\vol}(\overline{D})-(d+1)\cdot\vol(D)\cdot \int_{\Omega} c(\omega)\, \nu(d\omega).
    \end{equation}
      By \cref{lemma:twistbig}, there is a non-negative integrable function $c\colon \Omega\to\R_{\geq 0}$ such that $(m\overline{D}+\overline{M})(c)$ is big. Applying our observation in \eqref{equa:lagbeagain} to the non-negative integrable function $c/N$ for any positive integer $N\in\N_{\geq 1}$, we get
    \[\widehat{\vol}\left(\overline{D}\left(c/N\right)\right)\ge \widehat{\vol}(\overline{D})-\frac{(d+1)\cdot\vol(D)}{N}\int_{\Omega}c(\omega)\, \nu(d\omega).\] Since $\widehat{\vol}(\overline{D})>0$ and $c$ is an integrable function, we can choose a large integer $N$ such that $\widehat{\vol}((\overline{D}(c/N))>0$. Now we write 
    \[(m+N)\overline{D}+\overline{M}=(m\overline{D}+\overline{M})(c)+N\overline{D}(-c/N)\] which is clearly big by choice of $c$ and $N$ and \cref{cor:sumofbigisbig}. Now clearly $n_0=m+N$ satisfies the claim again by \cref{cor:sumofbigisbig}.
\end{proof}
We now state the main theorem which is the quasi-projective analogue of \cite[Theorem 6.4.24]{chen2020arakelov}.
\begin{theorem}
    \label{theorem:continuityofvolumes}
   {Let $(\overline{D_j})_{j\in\N_{\geq 1}}$ be a sequence in $\widehat{\Div}_{S,\Q}({U})_{\cpt}$ converging to $\overline{D}=(D,g)$ with respect to the $\overline{B}$-boundary topology for some weak boundary divisor $\overline{B}=(B,g_B)\in\widehat{\Div}_{S,\Q}(U)_\CM$. Then
    \[\widehat{\vol}(\overline{D})=\lim_{j\to \infty}\widehat{\vol}(\overline{D_j}).\]}
\end{theorem}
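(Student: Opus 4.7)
The plan is to bracket $\widehat{\vol}(\overline{D_j})$ by $\widehat{\vol}(\overline{D} \pm \varepsilon_j \overline{B})$ via monotonicity of the arithmetic volume, then establish the continuity of the one-parameter family $t \mapsto \widehat{\vol}(\overline{D} + t\overline{B})$ at $t = 0$, following the strategy of \cite[Theorem~6.4.24]{chen2020arakelov}. The convergence $\overline{D_j} \to \overline{D}$ in the $\overline{B}$-boundary topology yields $\varepsilon_j \in \Q_{>0}$ with $\varepsilon_j \to 0$ and $-\varepsilon_j\overline{B} \leq \overline{D_j} - \overline{D} \leq \varepsilon_j\overline{B}$, so by \cref{rmk:monotone of volume}
\[
\widehat{\vol}(\overline{D} - \varepsilon_j\overline{B}) \leq \widehat{\vol}(\overline{D_j}) \leq \widehat{\vol}(\overline{D} + \varepsilon_j\overline{B}),
\]
reducing the theorem to the continuity statement $\lim_{\varepsilon \to 0^+}\widehat{\vol}(\overline{D} \pm \varepsilon\overline{B}) = \widehat{\vol}(\overline{D})$.

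If $D$ is not big, then $\widehat{\vol}(\overline{D}) = 0$ by \cref{cor:bigness}. Combining \cref{theorem:concavemain} with \cref{properties of concave transforms}~\ref{inf and sup of G} gives the pointwise estimate $\widehat{\vol}(\overline{E}) \leq (d+1)\, \vol(E)\, \max\{\widehat{\mu}^{\mathrm{asy}}_{\max}(\overline{E}), 0\}$ for every $\overline{E}$. Applied to $\overline{E} = \overline{D} \pm \varepsilon\overline{B}$, this forces $\widehat{\vol}(\overline{D} \pm \varepsilon\overline{B}) \to 0$, since $\vol(D \pm \varepsilon B) \to 0$ by \cref{geometric volume}~\ref{continuity of geometric volume} while $\widehat{\mu}^{\mathrm{asy}}_{\max}(\overline{D} \pm \varepsilon\overline{B}) \leq \widehat{\mu}^{\mathrm{asy}}_{\max}(\overline{D} + \overline{B}) < +\infty$ by the monotonicity of the asymptotic maximal slope (argued as in \cref{lemma:slopebounded}).

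Assume now $D$ is big, adjusting the boundary divisor (using that the $\overline{B}$-boundary topology is independent of the choice of $\overline{B}$) so that $B$ is big; use \cref{lemma:twistbig} to pick $c \in \mathscr{L}^1(\Omega, \mathcal{A}, \nu)_{\geq 0}$ such that $\overline{B}(c) := \overline{B} + (0,c)$ is arithmetically big. The shift estimates \eqref{equa:lagbe}--\eqref{equa:lagbeagain} give $|\widehat{\vol}(\overline{D} \pm \varepsilon\overline{B}) - \widehat{\vol}(\overline{D} \pm \varepsilon\overline{B}(c))| = O(\varepsilon)$, so it suffices to prove continuity of $t \mapsto \widehat{\vol}(\overline{D} + t\overline{B}(c))$ at $t = 0$. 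When $\overline{D}$ is itself arithmetically big, the openness of the bigness cone (\cref{lemma:bigtwists}) supplies $t_0 > 0$ such that $\overline{D} + t\overline{B}(c)$ is big for every $t \in \Q \cap (-t_0, t_0)$, and Brunn--Minkowski (\cref{cor:sumofbigisbig}) combined with homogeneity (\cref{properties of concave transforms}~\ref{mutiple}) ensures that $h(t) := \widehat{\vol}(\overline{D} + t\overline{B}(c))^{1/(d+1)}$ is concave and non-decreasing on $\Q \cap (-t_0, t_0)$; expressing $0 = \tfrac{s}{s+t}t + \tfrac{t}{s+t}(-s)$ as a convex combination of a positive and a negative rational and using concavity yields $h(t) \leq h(0) + (t/s)(h(0) - h(-s))$, and with monotonicity this forces $\lim_{t \to 0} h(t) = h(0)$. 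In the remaining subcase $\widehat{\vol}(\overline{D}) = 0$ with $D$ big, the lower bound $\widehat{\vol}(\overline{D} - \varepsilon\overline{B}) \leq \widehat{\vol}(\overline{D}) = 0$ is immediate from monotonicity, while the upper bound is obtained through the concave-transform integral of \cref{theorem:concavemain} together with dominated convergence over the Hausdorff-convergent Okounkov bodies $\Delta(D + \varepsilon B)$ (\cref{lemma:convbodagain}, \cref{lemma:interior point convergence of concave}), using the uniform bound $\max\{G_{\overline{D} + \varepsilon\overline{B}(c)}, 0\} \leq \widehat{\mu}^{\mathrm{asy}}_{\max}(\overline{D} + \overline{B}(c))$ from \cref{properties of concave transforms}~\ref{inf and sup of G}.

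The main obstacle is precisely this last subcase $D$ big with $\widehat{\vol}(\overline{D}) = 0$, where Brunn--Minkowski no longer applies at $\overline{D}$ itself; the crux is establishing the pointwise limit $G_{\overline{D} + \varepsilon\overline{B}(c)}(\lambda) \to G_{\overline{D}}(\lambda)$ for almost every $\lambda \in \Delta(D)^\circ$ as $\varepsilon \to 0^+$, which requires carefully comparing the Harder--Narasimhan filtrations of the graded algebras of adelic vector bundles attached to $\overline{D}$ and $\overline{D} + \varepsilon\overline{B}(c)$ (cf.\ \cref{construction of concave transform} and \cref{properties of concave transforms}~\ref{additivity}).
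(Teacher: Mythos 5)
Your bracketing reduction, the non-big case, and the arithmetically big subcase (via Brunn--Minkowski concavity of $t \mapsto \widehat{\vol}(\overline{D}+t\overline{B}(c))^{1/(d+1)}$ together with \cref{lemma:bigtwists}) are all sound, and in fact diverge from the paper's route through that subcase. The gap is where you already flag it: the subcase $D$ big but $\widehat{\vol}(\overline{D})=0$. There you want to pass to the limit $G_{\overline{D}+\varepsilon\overline{B}(c)}\to G_{\overline{D}}$ pointwise (a.e.) on $\Delta(D)^\circ$ and invoke dominated convergence, but this pointwise convergence is \emph{not} available at this stage of the development: it is exactly \cref{thm:convergence of concave transforms}, and the paper's proof of that theorem \emph{invokes} \cref{theorem:continuityofvolumes}. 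So your proposed resolution of the ``crux'' would be circular. A priori, one only knows that $\limsup_{\varepsilon\to 0^+} G_{\overline{D}+\varepsilon\overline{B}(c)}(\lambda)\ge G_{\overline{D}}(\lambda)$; the nontrivial content is ruling out a strict inequality on a set of positive measure, and nothing in \cref{construction of concave transform} or \cref{properties of concave transforms}~\ref{additivity} alone forces this.

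The paper sidesteps the issue with a small but crucial trick that makes the big and pseudo-big subcases uniform: choose $c\ge 0$ with $\overline{D}(c)$ big (\cref{lemma:twistbig}) and then $n$ with $n\overline{D}(c)\pm\overline{B}$ big (\cref{lemma:bigtwists}). Since adding a big divisor does not decrease the arithmetic volume (\cref{remark:volme inequality by adding big}), one gets
$$\widehat{\vol}(\overline{D}-nt\,\overline{D}(c)) \le \widehat{\vol}(\overline{D}\pm t\overline{B}) \le \widehat{\vol}(\overline{D}+nt\,\overline{D}(c))$$
for $t\in\Q_{>0}$, and these outer terms rescale by homogeneity to $\widehat{\vol}\bigl(\overline{D}(s c)\bigr)$ for $s\to 0$, whose convergence to $\widehat{\vol}(\overline{D})$ is the elementary shift estimate \eqref{equa:lagbe}--\eqref{equa:lagbeagain}. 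This never touches pointwise behaviour of $G$ and applies regardless of whether $\widehat{\vol}(\overline{D})$ vanishes. I recommend replacing your last subcase with this comparison; the rest of your argument can be kept if you prefer the Brunn--Minkowski phrasing for the arithmetically big subcase, though the paper's reduction handles both at once.
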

\begin{proof}
	{There is a sequence $(\varepsilon_{j})_{j\geq 0}\subset \Q_{>0}$ converging to $0$ such that
	\[\overline{D}-\varepsilon_j\overline{B}\leq \overline{D_j}\leq \overline{D}+\varepsilon_j\overline{B},\]
	Then by \cref{rmk:monotone of volume}, 
	\[\widehat{\vol}(\overline{D}-\varepsilon_j\overline{B})\leq \widehat{\vol}(\overline{D_j})\leq \widehat{\vol}(\overline{D}+\varepsilon_j\overline{B}).\]
	So it suffices to show that
	\begin{align}\label{eq:continuity of volume1}
		\widehat{\vol}(\overline{D}) = \lim_{\Q\ni t\to 0}\widehat{\vol}(\overline{D}+t\overline{B}).
		\end{align}}
	
	{We firstly consider the case where $D\in \widetilde{\Div}_{\Q}(U)_\cpt$ is not big, i.e. ${\vol}(D)=0$. Notice that in this case, $\widehat{\vol}(\overline{D})=0$ by \cref{cor:bigness}.
		  \begin{itemize}
			\item For any $t\in\Q_{\leq 0}$, $\vol(D+tB)=0$, so $\widehat{\vol}(\overline{D}+t\overline{B})=0$ by \cref{cor:bigness} which implies that
				\begin{align}\label{eq:continuity of volume2}
				\widehat{\vol}(\overline{D}) = \lim_{\Q_{\leq 0}\ni t\to 0}\widehat{\vol}(\overline{D}+t\overline{B}).
			\end{align}
			\item If ${\vol}(D+tB) = 0$ for $t\in\Q_{>0}$ small enough, then $\widehat{\vol}(\overline{D}+t\overline{B}) = 0$ when $t\in \Q_{>0}$ is small by \cref{cor:bigness}, so $$\lim\limits_{\Q_{>0}\ni t\to 0}=\widehat{\vol}(\overline{D}+t\overline{B})=0.$$ If ${\vol}(D+tB) > 0$, i.e. $D+tB$ is big, for any $t\in\Q_{>0}$, by \cref{properties of concave transforms}~\ref{monotone}~\ref{inf and sup of G}, \cref{lemma:slopebounded}, we have that
			\[G_{\overline{D}+t\overline{B}}\leq G_{\overline{D}+\overline{B}}\leq \widehat{\mu}_{\max}^{\mathrm{asy}}(\overline{D}+\overline{B})<\infty\]
			on $\Delta(D+tB)$ for any $t\in\Q_{<1}$. By \cref{theorem:concavemain} and \cref{lemma:refokoun},
			\begin{align*}
\widehat{\vol}(\overline{D}+t\overline{B})=&(d+1)!\cdot \int_{\Delta(D+tB)}\max\{G_{\overline{D}+t\overline{B}}(\lambda),0\}\, d\lambda\\ \leq& (d+1)!\cdot\max\{\mu^{\mathrm{asy}}_{\max}(\overline{D}+\overline{B}),0\}\cdot \frac{1}{d!}\cdot \vol(D+tB)\\
=&(d+1)\cdot\max\{\mu^{\mathrm{asy}}_{\max}(\overline{D}+\overline{B}),0\}\cdot \vol(D+tB)
			\end{align*}
			for any $t\in\Q_{<1}$. So 
			\[\lim\limits_{\Q_{>0}\ni t\to0}\widehat{\vol}(\overline{D}+t\overline{B})=0\]
			by the continuity of geometric volumes, see \cref{geometric volume}~\ref{continuity of geometric volume}.
		\end{itemize}
	From our discussion above, \eqref{eq:continuity of volume2} holds when $D$ is not big.}
	
    If $D$ is big,  by \cref{lemma:twistbig}, we can find a non-negative integrable function $c\colon \Omega\rightarrow \R$  such that $\overline{D}(c)=(D,g+c)$ is big. Then by \cref{lemma:bigtwists}, we can choose a positive integer $n$ such that $n\overline{D}(c)\pm \overline{B}$ is big. Let $t\in \Q_{>0}$. Then by \cref{remark:volme inequality by adding big} we have that
    \[\widehat{\vol}(\overline{D}-nt\cdot\overline{D}(c))\le \widehat{\vol}(\overline{D}-t\overline{B})\le \widehat{\vol}(\overline{D}+nt\cdot\overline{D}(c))\] and  
    \[\widehat{\vol}(\overline{D}-nt\cdot\overline{D}(c))\le \widehat{\vol}(\overline{D}+t\overline{B})\le \widehat{\vol}(\overline{D}+nt\cdot\overline{D}(c)),\]
    since $nt\overline{D}(c)\pm t\overline{B}$ is big.
    But \[\overline{D}-nt\cdot \overline{D}(c)=(1-nt)\overline{D}-nt\cdot(0,c)=(1-nt)\cdot\left(\overline{D}-\frac{nt}{1-nt}(0,c)\right).\]
    Similarly,
    \[\overline{D}+nt\cdot \overline{D}(c)=(1+nt)\cdot\left(\overline{D}+\frac{nt}{1+nt}(0,c)\right).\]
    Thus it reduces to showing 
    \[\widehat{\vol}(\overline{D})=\lim_{\Q\ni t\to 0}\widehat{\vol}\left((1+nt)\cdot\left(\overline{D}+\frac{nt}{1+nt}(0,c)\right)\right).\]
    Using homogeneity of arithmetic volumes and the fact that {$\lim\limits_{t\to 0}(1+nt)=1$}, 
    it is enough to show that 
    \[\widehat{\vol}(\overline{D})=
    \lim_{\Q\ni t\to 0}\widehat{\vol}\left(\overline{D}+\frac{nt}{1+nt}(0,c)\right).\]
    Since $\lim\limits_{t\to 0}\frac{nt}{1-nt}= 0$, it amounts showing that for any non-negative integrable function $c\colon\Omega\rightarrow \R$, we have 
    \[\lim_{\Q\ni t\to 0}\widehat{\vol}(\overline{D}+t(0,c))=\widehat{\vol}(\overline{D})).\]
    However from \eqref{equa:lagbe}, we have that for $t>0$,
    \[\widehat{\vol}(\overline{D})\le \widehat{\vol}(\overline{D}(tc))\le \widehat{\vol}(\overline{D})+t(d+1)\cdot\vol(D)\cdot\int_{\Omega}c(\omega)\, \nu(d\omega).\]
    Similarly from \eqref{equa:lagbeagain} on the other side, we have
    \[\widehat{\vol}(\overline{D})\ge \widehat{\vol}(\overline{D}(-tc))\ge \widehat{\vol}(\overline{D})-t(d+1)\cdot\vol(D)\cdot\int_{\Omega}c(\omega)\, \nu(d\omega).\]
    The claim now clearly follows by letting $t\to 0$ as $c$ is an integrable function. This completed the proof of \cref{theorem:continuityofvolumes}.
\end{proof}

{
\begin{corollary}\label{cor:continuity of arithmetic volume2}
	Let $\overline{D}, \overline{M_1}, \dots, \overline{M_r}\in\widehat{\Div}_{S,\Q}({U})_{\cpt}$. Then
	\[\lim\limits_{\Q^{r}\ni(t_1,\dots, t_r)\to0}\widehat{\vol}(\overline{D}+t_1\overline{M_1}+ \cdots+t_r\overline{M_r}) = \widehat{\vol}(\overline{D}).\]
\end{corollary}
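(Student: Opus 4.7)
The plan is to reduce the multi-parameter continuity statement to the single-parameter continuity already established in \cref{theorem:continuityofvolumes}. First I would observe that it suffices to fix an arbitrary sequence $(t_{1,n},\ldots,t_{r,n})_{n \in \N}$ in $\Q^r$ tending to zero and verify $\widehat{\vol}(\overline{D_n}) \to \widehat{\vol}(\overline{D})$, where $\overline{D_n} \coloneq \overline{D} + \sum_{i=1}^r t_{i,n}\overline{M_i}$. By \cref{theorem:continuityofvolumes}, it then suffices to exhibit a single weak boundary divisor $\overline{B}$ with respect to which $(\overline{D_n})_n$ converges to $\overline{D}$ in the $\overline{B}$-boundary topology.

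Next I would construct such a $\overline{B}$. Since $\widehat{\Div}_{S,\Q}(U)_\cpt = \varinjlim_{\overline{B}\in T}\widehat{\Div}_{S,\Q}(U)_\CM^{d_{\overline{B}}}$ is a directed colimit and only finitely many divisors $\overline{D},\overline{M_1},\ldots,\overline{M_r}$ are involved, the directedness of $T$ produces a common weak boundary divisor $\overline{B_0}$ such that each $\overline{M_i}$ belongs to $\widehat{\Div}_{S,\Q}(U)_\CM^{d_{\overline{B_0}}}$. For each $i$, the Cauchy property of the representative of $\overline{M_i}$ furnishes an element $\overline{N_i} \in \widehat{\Div}_{S,\Q}(U)_\CM$ with $-\overline{B_0} \le \overline{M_i} - \overline{N_i} \le \overline{B_0}$. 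A further enlargement of $\overline{B_0}$ to a weak boundary divisor $\overline{B}$ dominating each $\overline{N_i}$ up to a scalar then yields constants $C_i>0$ with $-C_i\overline{B} \le \overline{M_i} \le C_i\overline{B}$ for $i=1,\ldots,r$.

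Given such $\overline{B}$ and $C_i$, I would set $\varepsilon_n \coloneq \sum_{i=1}^r |t_{i,n}|C_i$. Then $\varepsilon_n \to 0$ and $-\varepsilon_n\overline{B} \le \overline{D_n} - \overline{D} \le \varepsilon_n\overline{B}$, so $(\overline{D_n})_n$ converges to $\overline{D}$ in the $\overline{B}$-boundary topology. Applying \cref{theorem:continuityofvolumes} yields $\widehat{\vol}(\overline{D_n}) \to \widehat{\vol}(\overline{D})$, and since the sequence was arbitrary the desired joint limit follows.

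The main obstacle is the enlargement step of the second paragraph: dominating each model divisor $\overline{N_i}$ by a multiple of a weak boundary divisor. Because a weak boundary divisor must have support in the complement of $U$ on its projective model, components of $\overline{N_i}$ supported inside $U$ cannot be absorbed into $\overline{B}$ by a naive choice. Overcoming this requires either refining the projective model carrying $\overline{N_i}$ so that the offending components land on the boundary, or replacing $\overline{N_i}$ by a suitable difference of boundary-supported effective elements, exploiting the flexibility one has in the choice of Cauchy representative for $\overline{M_i}$ in $\widehat{\Div}_{S,\Q}(U)_\CM^{d_{\overline{B_0}}}$. Once that step is carried out, the rest of the argument is a direct application of the already-proven \cref{theorem:continuityofvolumes}.
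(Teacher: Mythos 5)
Your reduction to the one-parameter continuity theorem is the right strategy, and you correctly identify the crux: you need to dominate the perturbations $\overline{M_i}$ by a weak boundary divisor, but an arbitrary model divisor $\overline{N_i}$ has components supported \emph{inside} $U$, and a weak boundary divisor must have support in the complement of $U$. That is a genuine gap, and unfortunately neither of your two proposed remedies closes it. Refining the projective model cannot move the support of a divisor off $U$, since $U$ is fixed and the components in question lie over $U$; and re-choosing the Cauchy representative inside $\widehat{\Div}_{S,\Q}(U)_\CM^{d_{\overline{B_0}}}$ does not help either, because the constraint is on the support over $U$, which is insensitive to boundary perturbations.

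The paper's actual fix is to \emph{shrink $U$}. For each $i$ one first chooses an effective $\overline{M_i'} \in \widehat{\Div}_{S,\Q}(U)_\CM$ with $\overline{M_i'} \pm \overline{M_i} \geq 0$ (so a single-parameter perturbation by $\overline{M} \coloneq \sum_i \overline{M_i'}$ controls all $r$ directions simultaneously), and then passes to a dense open $V \subset U$ removing the support of $\overline{M}$ inside $U$, so that $\overline{M}|_V$ \emph{is} a weak boundary divisor of $V$. The point that makes this legitimate is \cref{properties of concave transforms}~\ref{birational}: arithmetic volume is invariant under restriction along a (birational) open immersion $V \hookrightarrow U$. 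Once on $V$, the statement is literally the single-parameter continuity \eqref{eq:continuity of volume1} inside \cref{theorem:continuityofvolumes}. So the missing ingredient in your proof is the birational invariance of $\widehat{\vol}$ plus the freedom to shrink the quasi-projective base; without it, the domination step you flag cannot be carried out. The rest of your argument — sequences, directed colimit, appeal to \cref{theorem:continuityofvolumes} — is sound modulo this point, though the paper's bookkeeping via the single divisor $\overline{M}$ is leaner than your sequence-by-sequence setup.
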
}
\begin{proof}
	For any $1\leq i\leq r$, let $\overline{M_i'}\in \widehat{\Div}_{S,\Q}(U)_{\CM}$ be an effective $S$-metric divisor on some projective model of $U$ such that $\overline{M_i'}\pm \overline{M_i}\geq 0$. Denote $\overline{M} \coloneq \overline{M_1'}+\cdots+\overline{M_r'}$. It suffices to show that
	\begin{align}\label{eq:corollary continuity}
\widehat{\vol}(\overline{D}) = \lim_{\Q\ni t\to 0}\widehat{\vol}(\overline{D}+t\overline{M}).
	\end{align}
	By \cref{properties of concave transforms}~\ref{birational}, after shrinking $U$, we may assume that $\overline{M}$ is a weak boundary divisor of $U$. Then \eqref{eq:corollary continuity} is exactly \eqref{eq:continuity of volume1} which is proved. This proved the corollary.
\end{proof}
\subsection{Essential minimum}
\begin{definition}
	Let $\overline{D}\in\widehat{\Div}_{S,\Q}(U)_\cpt$. We define the \emph{essential minimum} of $\overline{D}$ as
	\[\zeta_{\mathrm{ess}}(\overline{D})\coloneq\sup\limits_{\text{$V\subset U$ open}}\inf\limits_{x\in V(\overline{K})}h_{\overline{D}}(x).\]
\end{definition}

We next show \cref{thm:essential minimum} which gives an analogue for compactified divisors of the fundamental inequality of Zhang. To prove this theorem, we need the following definition from \cite[\S 1.1.10, Remark~1.1.67]{chen2020arakelov}.
\begin{art}\label{psi-direct sum}
	Let $\overline{V}=(V,\metr_V), \overline{W}=(W,\metr_W)$ be $S$-norm vector spaces and $\psi\colon [0,1]\to[0,1]$ such that $\max\{t,1-t\}\leq \psi(t)$ for any $t\in[0,1]$. For any $(x,y)\in V_{\omega}\oplus W_{\omega}$, set 
	\[\|(x,y)\|_{\psi,\omega}\coloneq(\|x\|_{V,\omega}+\|y\|_{W,\omega})\cdot\psi\left(\frac{\|x\|_{V,\omega}}{\|x\|_{V,\omega}+\|y\|_{W,\omega}}\right).\]
	We call $\metr_\psi=\{\metr_{\psi,\omega}\}_{\omega\in\Omega}$ the \emph{$\psi$-direct sum} of $\metr_V$ and $\metr_W$.
	
	A basis $\{e_1,\dots, e_r\}\subset V$ is a \emph{Hadamard basis} if 
	\[\|e_1\wedge\cdots\wedge e_r\|_{V,\det,\omega} = \|e_1\|_{V,\omega}\cdots\|e_r\|_{V,\omega},\]
	for every $\omega\in\Omega$ where $\metr_{V,\det}=\{\metr_{V,\det,\omega}\}_{\omega\in\Omega}$ is the determinant norm of $\metr_V$ defined in \cref{basic invariant of adelic vector bundle}.
\end{art}

\begin{theorem}\label{thm:essential minimum}
	Let $\overline{D}=(D,g)\in \widehat{\Div}_{S,\Q}(U)_\cpt$. Then \begin{align} \label{essential minimum1}
		\zeta_{\mathrm{ess}}(\overline{D})\geq \widehat{\mu}_{\max}(\overline{D}) \ \text{ and } \ \zeta_{\mathrm{ess}}(\overline{D})\geq \widehat{\mu}_{\max}^{\mathrm{asy}}(\overline{D}).
	\end{align}
	In particular, if $D$ is big, then
	\begin{align} \label{essential minimum2}
		\zeta_{\mathrm{ess}}(\overline{D})\geq \widehat{\mu}_{\max}^{\mathrm{asy}}(\overline{D})\geq \frac{\widehat{\vol}_\chi^{\mathrm{num}}(\overline{D})}{(d+1)\cdot\vol(D)}\geq \frac{\widehat{\vol}_\chi(\overline{D})}{(d+1)\cdot\vol(D)}.
	\end{align}
\end{theorem}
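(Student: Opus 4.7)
My plan has two parts. First I would establish the intermediate inequality
\[\widehat{\mu}_{\max}(\overline{V_{\overline{D},m}})/m \leq \zeta_{\mathrm{ess}}(\overline{D})\]
for every $m\geq 1$, from which both inequalities in \eqref{essential minimum1} follow by specializing to $m=1$ and by taking $\limsup_{m\to\infty}$, respectively. Once this is in hand, \eqref{essential minimum2} will be deduced from the identity $\widehat{\mu}_{\max}^{\mathrm{asy}}(\overline{D}) = \sup_{\Delta(D)^\circ} G_{\overline{D}}$ of \cref{properties of concave transforms}~\ref{inf and sup of G} together with the elementary fact that the supremum of a function dominates its average.

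For the intermediate inequality I would fix $m\geq 1$ and consider the first Harder--Narasimhan subspace $\overline{W}^{\overline{K}}$ of the base change $\overline{V_{\overline{D},m}}\otimes_K\overline{K}$ to the canonical extension $S_{\overline{K}}$. By uniqueness of the HN filtration combined with Galois descent, $\overline{W}^{\overline{K}} = \overline{W}\otimes_K\overline{K}$ for a $K$-subspace $\overline{W}\subseteq \overline{V_{\overline{D},m}}$ satisfying
\[\widehat{\mu}(\overline{W}) \;=\; \widehat{\mu}_{\max}(\overline{V_{\overline{D},m}}) \;=\; \widehat{\mu}_{\min}(\overline{W}^{\overline{K}}),\]
the last equality by semistability of the first HN piece over $\overline{K}$. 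The base locus $\mathrm{BL}(\overline{W})\subsetneq U$ is a proper closed subvariety, and for each $x\in (U\setminus \mathrm{BL}(\overline{W}))(\overline{K})$ the evaluation morphism
\[\mathrm{ev}_x\colon \overline{W}^{\overline{K}}\longrightarrow \overline{\mathcal{O}(mD)|_x}\]
is a nonzero $\overline{K}$-linear map of adelic vector bundles over $S_{\overline{K}}$ whose $1$-dimensional target has Arakelov degree $m\,h_{\overline{D}}(x)$; the last identity follows by applying the product formula on $S_{\overline{K}}$ to any local trivializing section. The estimate $\|\mathrm{ev}_x(s)\|_\omega\leq \|s\|_{\sup,\omega}$ at every $\omega\in\Omega_{\overline{K}}$ yields $h(\mathrm{ev}_x)\leq 0$, so the standard slope inequality gives
\[\widehat{\mu}_{\max}(\overline{V_{\overline{D},m}}) \;=\; \widehat{\mu}_{\min}(\overline{W}^{\overline{K}}) \;\leq\; m\,h_{\overline{D}}(x).\]
Taking the infimum over $x$ in the open $(U\setminus\mathrm{BL}(\overline{W}))(\overline{K})$ completes Part 1.

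For Part 2, assuming $D$ is big, I would invoke \cref{properties of concave transforms}~\ref{inf and sup of G} to write $\widehat{\mu}_{\max}^{\mathrm{asy}}(\overline{D}) = \sup_{\Delta(D)^\circ} G_{\overline{D}}$ and bound this supremum below by the average of $G_{\overline{D}}$ on $\Delta(D)^\circ$. Using $\vol(D) = d!\,\vol_{\R^d}(\Delta(D))$ from \cref{lemma:refokoun} and the defining formula for $\widehat{\vol}_\chi^{\mathrm{num}}(\overline{D})$,
\[\widehat{\mu}_{\max}^{\mathrm{asy}}(\overline{D}) \;\geq\; \frac{1}{\vol_{\R^d}(\Delta(D))}\int_{\Delta(D)^\circ} G_{\overline{D}}(\lambda)\,d\lambda \;=\; \frac{\widehat{\vol}_\chi^{\mathrm{num}}(\overline{D})}{(d+1)\vol(D)}.\]
The final inequality $\widehat{\vol}_\chi^{\mathrm{num}}(\overline{D}) \geq \widehat{\vol}_\chi(\overline{D})$ is immediate from \cref{theorem:concavemain}, closing the chain in \eqref{essential minimum2}.

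The main technical obstacle is the slope-inequality step in Part 1: I must verify that the evaluation map at an $\overline{K}$-point fits cleanly into the Chen--Moriwaki slope formalism over $S_{\overline{K}}$, identify $\widehat{\deg}(\overline{\mathcal{O}(mD)|_x}) = m\,h_{\overline{D}}(x)$ via the product formula on $S_{\overline{K}}$, and control the maximum slope under Galois descent from $\overline{K}$ back to $K$. Once these set-up points are in place the argument is a packaged application of a Zhang-type height inequality, and Part 2 is a pure averaging inequality on the Okounkov body.
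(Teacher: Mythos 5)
Your Part~2 reproduces the paper's argument exactly: reduce to the first inequality of \eqref{essential minimum1} via homogeneity of $\zeta_{\mathrm{ess}}$, then invoke $\widehat{\mu}_{\max}^{\mathrm{asy}}(\overline{D})=\sup_{\Delta(D)^\circ}G_{\overline{D}}$ from \cref{properties of concave transforms}~\ref{inf and sup of G} and bound the supremum below by the average. Your Part~1, however, follows a genuinely different route. The paper fixes $t>\zeta_{\mathrm{ess}}(\overline{D})$, takes a Zariski-dense set $\Lambda\subset U(\overline{K})$ of points of height $<t$, and for an \emph{arbitrary} $K$-subspace $W\subseteq V_{\overline{D},m}$ evaluates at $\dim_K W$ points of $\Lambda$ where the evaluation is a bijection; it controls the degree of the target by the Hadamard-basis / $\psi$-direct-sum computation of \cref{psi-direct sum} and concludes $\widehat{\mu}(\overline{W})\leq t$ directly, with no semistability input. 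You instead take the first Harder--Narasimhan piece $\overline{W}^{\overline{K}}$ over $\overline{K}$ and evaluate at a \emph{single} point outside its base locus, using $\widehat{\mu}_{\min}(\overline{W}^{\overline{K}})=\widehat{\mu}(\overline{W}^{\overline{K}})$ to turn the slope inequality into the desired bound.

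The gap is the Galois-descent step ``$\overline{W}^{\overline{K}}=\overline{W}\otimes_K\overline{K}$ for a $K$-subspace $\overline{W}$''. In the adelic-curve setting this is not free: the paper itself cautions in \cref{construction of concave transform} that the Harder--Narasimhan filtration need not commute with the base change $K\to\overline{K}$, and your proposal does not check that the canonical extension $S_{\overline{K}}$ carries the Galois-equivariance of norms and of the measure on $\Omega_{\overline{K}}$ needed for the uniqueness argument to yield a Galois-stable filtration. Moreover, descent is what you implicitly use to know that $U\setminus\mathrm{BL}(\overline{W})$ is a non-empty Zariski-\emph{open subset of $U$ over $K$}, which is what the definition of $\zeta_{\mathrm{ess}}$ requires; the base locus of a $\overline{K}$-subspace is a priori only closed in $U_{\overline{K}}$. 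The gap is fixable: you only need the easy inequality $\widehat{\mu}_{\max}(\overline{V_m})\leq\widehat{\mu}_{\max}(\overline{V_m}\otimes_K\overline{K})=\widehat{\mu}_{\min}(\overline{W}^{\overline{K}})$, not full descent of the filtration, and one can replace $\mathrm{BL}(\overline{W})$ by the union of its finitely many Galois conjugates (which is defined over $K$). But as written the argument leaves this point open, whereas the paper's multi-point evaluation against an arbitrary $K$-subspace sidesteps the descent issue entirely at the cost of the Hadamard-basis bookkeeping.
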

\begin{proof}
	Notice that $\zeta_{\mathrm{ess}}(m\overline{D}) = m\zeta_{\mathrm{ess}}(\overline{D})$, then the second inequality of \eqref{essential minimum1} is a consequence of the first inequality of \eqref{essential minimum1}. If $D$ is big, let $\Delta(D)$ be the Okounkov body of $D$ defined in \cref{okounkov body of a graded linear series}. By \cref{properties of concave transforms}~\ref{inf and sup of G} and \cref{lemma:refokoun}, 
	we have that
	\[\widehat{\mu}_{\max}^{\mathrm{asy}}(\overline{D})\geq \frac{1}{\vol_{\R^d}(\Delta(D))}\int_{\Delta(D)^\circ}G_{\overline{D}}(\lambda)\ d\lambda=\frac{d!}{\vol(D)}\cdot\frac{\widehat{\vol}_\chi^{\mathrm{num}}(\overline{D})}{(d+1)!}=\frac{\widehat{\vol}_\chi^{\mathrm{num}}(\overline{D})}{(d+1)\cdot\vol(D)}.\]
	This shows the second inequality of \eqref{essential minimum2}. Hence it suffices to show the first inequality of \eqref{essential minimum1}.
	
	We follow the idea of the proof of \cite[Proposition~6.4.4]{chen2020arakelov} to show that $\zeta_{\mathrm{ess}}(\overline{D})\geq \widehat{\mu}_{\max}(\overline{D})$. 
	Let $S_{\overline{K}}=(\overline{K}, \Omega_{\overline{K}}, \mathcal{A}_{\overline{K}}, \nu_{\overline{K}})$ be the adelic curve associated to the algebraic closure $\overline{K}$ induced by $S$ and let $\overline{V}=(V,\metr)=(H_+^0(U,\overline{D}),\metr_{\sup})$ be the adelic vector bundle of auxiliary global section associated to $\overline{D}$ defined in \cref{def:auxglobsec}. Let $t\in\R$ such that $\zeta_{\mathrm{ess}}(\overline{D})<t$. Then there is an infinite subset $\Lambda$ of $U(\overline{K})$ such that $\Lambda$ is dense in $U$ and $h_{\overline{D}}(x)<t$ for all $x\in \Lambda$. Let $W\subset V$ be an arbitrary subspace.  Notice that $V$ is a subspace of $H^0(U,D|_U)$ and hence a subspace of $K(U)$ since $U$ is integral. We consider the evaluation map
	\[W\otimes_K\overline{K}\to \prod\limits_{x\in \Lambda}\kappa(x)\]
	which is injective since $\Lambda$ is dense in $U$, where $\kappa(x)$ is the residue field of $x\in\Lambda$. Then there are points $x_1,\dots, x_{\dim_K(W)}\in\Lambda$ such that the evaluation map $\varphi\colon W\otimes_K\overline{K}\to \bigoplus\limits_{i=1}^{\dim_K(W)}\kappa(x_i)$ is bijective. For any $\omega\in\Omega_{\overline{K}}$, let $\metr_{i,\omega}$ be a norm of $\kappa(x_i)_\omega\coloneq\kappa(x_i)\otimes_{\overline{K}}\overline{K}_\omega$ given by $\|1\|_{i,\omega}\coloneq \exp(g_{\pi(\omega)}(\sigma(x_i)))$ where $\pi\colon \Omega_{\overline{K}}\to\Omega_K$ is the canonical map and $\sigma\colon (U_{\overline{K}})^\an_{\omega}\to U_{\pi(\omega)}^\an$ is the canonical morphism as analytic spaces. We equip $\bigoplus\limits_{i=1}^{\dim_K(W)}\kappa(x_i)$ with the $\psi$-direct sum $\metr_{\psi}=\{\metr_{\psi,\omega}\}_{\omega\in\Omega}$, where $\psi\colon [0,1]\to[0,1], \ \ t\mapsto \max\{t,1-t\}$, i.e. for any $(s_1,\dots, s_{\dim_K(W)})\in \bigoplus\limits_{i=1}^{\dim_K(W)}\kappa(x_i)_\omega$, we have that
	\[\|(s_1,\dots, s_{\dim_K(W)})\|_{\psi,\omega}=\max\{\|s_1\|_{1,\omega},\dots,\|s_{\dim_K(W)}\|_{\dim_K(W),\omega}\}.\] If we denote $\{e_i\}_{i=1}^{\dim_K(W)}$ a basis of $\bigoplus\limits_{i=1}^{\dim_K(W)}\kappa(x_i)$ such that $e_i\in \kappa(x_i)$, then this basis is orthogonal with respect to $\metr_{\psi,\omega}$ for any $\omega\in\Omega_{\overline{K}}$. By \cite[Proposition~1.2.23]{chen2020arakelov}, $\{e_i\}_{i=1}^{\dim_K(W)}$ is a Hadamard basis for $\metr_{\psi,\omega}$ for any $\omega\in \Omega_{\overline{K}}$. In particular,
	\[\widehat{\deg}\left(\bigoplus\limits_{i=1}^{\dim_K(W)}\left(\kappa(x_i),\metr_\psi\right)\right) = \sum\limits_{i=1}^{\dim_K(W)}h_{\overline{D}}(x_i)\leq \dim_K(W)\cdot t.\]
	Moreover, for any $\omega\in\Omega_{\overline{K}}$ the operator norm $\|\varphi\|_\omega\leq 1$. By \cite[Proposition~4.3.18]{chen2020arakelov}, one has
	\[\widehat{\mu}(\overline{W})=\widehat{\mu}(\overline{W}\otimes_K\overline{K})\leq \frac{1}{\dim_K(W)}\widehat{\deg}\left(\bigoplus\limits_{i=1}^{\dim_K(W)}\left(\kappa(x_i),\metr_\psi\right)\right)\leq t.\]
	This completes the proof of \eqref{essential minimum1}.
\end{proof}
\subsection{Height Inequality}
We end this section by obtaining a height inequality which is a generalization of the height inequalities obtained in \cite[Theorem 5.3.5~{(1)}(3)]{yuan2021adelic}. Note that this height inequality has been used later as one of the tools to obtain a uniform Bogomolov type result, see \cite[\S 4.5]{yuan2021arithmetic}. We generalize this in the setting of adelic curves.
\begin{theorem}
	\label{theorem:heightinequality}
	Let $\overline{D}=(D,g), \overline{M}\in \widehat{\Div}_{S,\Q}(U)_{\cpt}$.
    \begin{enumerate1}
			\item \label{theorem:heightinequality1} If $\overline{D}$ is big, then there exists $\epsilon>0$ and a non-empty Zariski open subset $V$ of $U$ such that 
			\[h_{\overline{D}}(x)\ge \epsilon\cdot h_{\overline{M}}(x).\]
			\item \label{theorem:heightinequality2} If $D$ is big, then there exists $\epsilon>0$, $c\in \R$ and a non-empty Zariski open subset $V$ of $U$ such that 
			\[h_{\overline{D}}(x)\ge \epsilon\cdot h_{\overline{M}}(x)-c \text{ for all } x\in V(\overline{K}).\]
	\end{enumerate1}.
\end{theorem}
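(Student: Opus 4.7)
The plan is to reduce both parts to the essential minimum inequality \cref{thm:essential minimum}, applied to the twisted compactified $S$-metrized divisor $\overline{D}-\epsilon\overline{M}$, possibly after adding a constant shift of the form $(0,c_0)$ to ensure bigness.

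For part \ref{theorem:heightinequality1}, I would first apply \cref{lemma:bigtwists} with the pair $(\overline{D},-\overline{M})$ in place of $(\overline{D},\overline{M})$, which is legitimate since $\widehat{\Div}_{S,\Q}(U)_{\cpt}$ is a $\Q$-vector space. This yields an integer $n_0$ such that $n\overline{D}-\overline{M}$ is big for every $n\geq n_0$. Rational homogeneity of the arithmetic volume (a consequence of \cref{properties of concave transforms}~\ref{mutiple} together with \cref{theorem:concavemain}) then shows that $\overline{D}-\frac{1}{n}\overline{M}$ is big for such $n$. Fixing $\epsilon=1/n_0$, \cref{cor:bigness} gives $\widehat{\mu}_{\max}^{\mathrm{asy}}(\overline{D}-\epsilon\overline{M})>0$, and hence \cref{thm:essential minimum} yields $\zeta_{\mathrm{ess}}(\overline{D}-\epsilon\overline{M})>0$. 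Unwinding the definition of the essential minimum produces a non-empty Zariski open $V\subseteq U$ with $h_{\overline{D}-\epsilon\overline{M}}(x)\geq 0$ for every $x\in V(\overline{K})$, which is equivalent to the desired inequality.

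For part \ref{theorem:heightinequality2}, the assumption that $D$ is big is strictly weaker than bigness of $\overline{D}$, so I would first upgrade to the latter by a constant twist. By \cref{lemma:twistbig}, there is a non-negative $c_0\in\mathscr{L}^1(\Omega,\mathcal{A},\nu)$ such that $\overline{D}(c_0)=\overline{D}+(0,c_0)$ is big. Applying part \ref{theorem:heightinequality1} to the pair $(\overline{D}(c_0),\overline{M})$ yields $\epsilon>0$ and a non-empty Zariski open $V\subseteq U$ such that $h_{\overline{D}(c_0)}(x)\geq\epsilon\cdot h_{\overline{M}}(x)$ on $V(\overline{K})$. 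Since the Green function of $(0,c_0)$ is fibrewise constant, the integral defining the height function gives $h_{\overline{D}(c_0)}(x)=h_{\overline{D}}(x)+\int_{\Omega}c_0(\omega)\,\nu(d\omega)$ (using the compatibility of the measure $\nu_{\overline{K}}$ with the canonical projection $\pi\colon\Omega_{\overline{K}}\to\Omega$ from \cite[\S 3.4.2]{chen2020arakelov}). Setting $c\coloneq\int_{\Omega}c_0\,\nu(d\omega)$ produces the inequality claimed in \ref{theorem:heightinequality2}.

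The main obstacle is essentially packaging rather than any new idea: the substance has been absorbed into three earlier results, namely the openness of the big cone along arbitrary directions (\cref{lemma:bigtwists}), the existence of a constant twist making $\overline{D}$ big whenever $D$ is big (\cref{lemma:twistbig}), and the fundamental lower bound on the essential minimum (\cref{thm:essential minimum}). The only minor technical verification is the translation formula $h_{\overline{D}(c_0)}=h_{\overline{D}}+\int_{\Omega}c_0\,d\nu$, which is immediate from the definition of the height together with the measure-preserving structure on $\Omega_{\overline{K}}\to\Omega$.
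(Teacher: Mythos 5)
Your proof is correct and takes essentially the same route as the paper: both parts reduce to the essential-minimum bound of \cref{thm:essential minimum} applied to $\overline{D}-\epsilon\overline{M}$, after arranging $\overline{D}-\epsilon\overline{M}$ (respectively a constant twist of it) to be big. The only cosmetic differences are that for part~\ref{theorem:heightinequality1} you invoke \cref{lemma:bigtwists} plus rational homogeneity rather than the continuity statement \cref{cor:continuity of arithmetic volume2} (which is itself derived from \cref{lemma:bigtwists}), and for part~\ref{theorem:heightinequality2} you cite \cref{lemma:twistbig} rather than rewriting its short proof inline as the paper does.
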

\begin{proof}
	We see that \ref{theorem:heightinequality1} implies that \ref{theorem:heightinequality2}. In fact, if $D$ is big, then there exists $c_0\in \R$ and $\lambda\in \Delta(D)^{\circ}$ such that $G_{\overline{D}}(\lambda)>-c_0$. We take a function ${c}\in \mathscr{L}^1(\Omega, \mathcal{A}, \nu)$ with $c_0= \int_{\Omega}\widetilde{c}(\omega)\, \nu(d\omega)$. By \cref{properties of concave transforms}~\ref{shifting}~\ref{inf and sup of G}, \[\widehat{\mu}_{\max}^{\mathrm{asy}}(\overline{D}(c))\geq G_{\overline{D}({c})}(\lambda) = G_{\overline{D}}(\lambda)+c_0>0.\]
Then $\overline{D}({c})$ is big by \cref{cor:bigness}. We can apply \ref{theorem:heightinequality1} to $\overline{D}({c})$. This gives \ref{theorem:heightinequality2} by the simple relation
\[h_{\overline{D}({c})}(x)=h_{\overline{D}}(x)+\int_{\Omega}{c}(\omega)\, \nu(d\omega) = h_{\overline{D}}(x)+c_0.\]
	
	It remains to show \ref{theorem:heightinequality1}. By \cref{cor:continuity of arithmetic volume2}, there is $\epsilon\in \Q_{>0}$ such that $\widehat{\vol}(\overline{D}-\epsilon \overline{M})>0$, i.e. $\overline{D}-\epsilon \overline{M}$ is big. By \cref{thm:essential minimum} and \cref{cor:bigness}, we then have that
	\[\zeta_{\mathrm{ess}}(\overline{D}-\epsilon\cdot\overline{M})\geq \widehat{\mu}_{\max}^{\mathrm{asy}}(\overline{D}-\epsilon\cdot \overline{M})>0.\]
	By definition of the essential minima, we then have a non-empty Zariski open subset $V$ of $U$ such that 
	\[h_{\overline{D}-\epsilon\cdot\overline{M}}(x)=h_{\overline{D}}(x)-\epsilon\cdot h_{\overline{M}}(x)>0  \text{ for all } x\in V(\overline{K})\]
	which readily proves \ref{theorem:heightinequality1}.
\end{proof}
The generalization of \cite[Theorem 5.3.5~(2)]{yuan2021adelic} will be seen in \cref{prop:height inequality 2} as an application of arithmetic Hilbert-Samuel formula.

\section{Arithmetic Hilbert-Samuel for singular metrics}

\label{section: arithmetic hilbert}
In this section, we will build the relation between the numerical $\chi$-volume and arithmetic auto-intersection number of a given relatively nef compactified $S$-metrized divisor, i.e. the arithmetic Hilbert-Samuel formula. We fix an adelic curve $S=(K,\Omega,\mathcal{A},\nu)$ satisfying the condition \ref{finite measure for archimedean} \ref{exists finite measure for subset} \ref{K countable or A discrete} in \S \ref{subsection:Global theory: Compactified (model) divisors}, and assume that $S$ has either the strong tensorial minimal slope property of level $\ge C_0$ or the Minkowski property of level $\ge C_0$ for some $C\in\R_{\geq0}$ (see \cref{def:tensorialproperty}). We also fix a $d$-dimensional normal quasi-projective variety $U$ over $K$ and an admissible flag $Y_\bullet$ of $U_{\overline{K}}$, see \cref{flag of a quasi-projective variety}.

\subsection{Behaviour of the concave transform under perturbations}
Recall that for $\overline{D}=(D,g)\in\widehat{\Div}_{S,\Q}(U)_\cpt$ with $D\in\widetilde{\Div}_{\Q}(U)_\cpt$ big, we denote by $\Delta(D)^{\circ}$ the topological interior of the Okounkov body of $D$ calculated with respect to the fixed flag $Y_\bullet$ in $U$, see \cref{okounkov body of a graded linear series}. Furthermore we have the concave transform $G_{\overline{D}}$ on $\Delta(D)^{\circ}$ constructed in \cref{construction of concave transform} corresponding to $\overline{D}$. 

\begin{art} \label{limits of concave transforms}
Let $(\overline{D_j})_{j\in\N_{\geq 1}} = (D_j, g_j)_{j\in\N_{\geq 1}}$ be a Cauchy sequence in $\widehat{\Div}_{S,\Q}({U})_{\cpt}$ converging to $\overline{D}=(D,g)$ with respect to the $\overline{B}$-boundary topology for some weak boundary divisor $\overline{B}\in\widehat{\Div}_{S,\Q}(U)_\CM$. If ${D}$ is big, by the continuity of (geometric) volume, see \cref{geometric volume}~\ref{continuity of geometric volume}, $D_j$ is big for $j$ large enough. Hence $G_{\overline{D_j}}$ is well-defined for $j$ large enough. By \cref{lemma:interior point convergence of concave}, we can conclude that every $\lambda\in \Delta(D)^{\circ}$ is in $\Delta(D_j)^{\circ}$ for $j$ large enough. Hence for any $\lambda\in  \Delta(D)^{\circ}$, it makes sense to consider the limits $\limsup\limits_{j\to\infty}G_{\overline{D_j}}(\lambda)$ and $\liminf\limits_{j\to\infty}G_{\overline{D_j}}(\lambda)$.
\end{art}

\begin{theorem}
\label{thm:convergence of concave transforms}
Let $(\overline{D_j})_{j\in\N_{\geq 1}}$ be a sequence in $\widehat{\Div}_{S,\Q}({U})_{\cpt}$ converging to $\overline{D}=(D,g)$ with respect to the $\overline{B}$-boundary topology for some weak boundary divisor $\overline{B}\in\widehat{\Div}_{S,\Q}(U)_\CM$. If $D$ is big, then  
\begin{align}\label{eq:lim of sequence of concave}
\lim_{j\to\infty}G_{\overline{D_j}}=G_{\overline{D}}
\end{align}
pointwise on $\Delta(D)^{\circ}$ (as we discussed in \cref{limits of concave transforms}, the limit makes sense for $j$ large enough).
\end{theorem}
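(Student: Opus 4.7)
The plan is to combine the monotonicity of the concave transform under the effective ordering (\cref{properties of concave transforms}~\ref{monotone}) with the continuity of arithmetic volumes (\cref{theorem:continuityofvolumes}) to obtain pointwise convergence via a squeezing argument. By definition of the $\overline{B}$-boundary topology, there is a sequence $(\varepsilon_j)_{j\geq 1}\subset\Q_{>0}$ with $\varepsilon_j\to 0$ such that
\[\overline{D}-\varepsilon_j\overline{B}\leq\overline{D_j}\leq\overline{D}+\varepsilon_j\overline{B}.\]
Fix $\lambda\in\Delta(D)^\circ$. Combining \cref{lemma:convbodagain} with \cref{lemma:interior point convergence of concave} ensures that $\lambda\in\Delta(D\pm\varepsilon_j B)^\circ$ for $j$ large enough, so monotonicity yields the sandwich
\[G_{\overline{D}-\varepsilon_j\overline{B}}(\lambda)\leq G_{\overline{D_j}}(\lambda)\leq G_{\overline{D}+\varepsilon_j\overline{B}}(\lambda).\]
Hence the theorem reduces to showing that $\lim_{\Q_{>0}\ni t\to 0^+}G_{\overline{D}\pm t\overline{B}}(\lambda)=G_{\overline{D}}(\lambda)$, and the one-sided limits $L^\pm(\lambda)$ already exist by the same monotonicity, with $L^-(\lambda)\leq G_{\overline{D}}(\lambda)\leq L^+(\lambda)$.

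To establish the reverse inequalities, I would first replace $\overline{D}$ by $\overline{D}(c)$ for a suitable $c\in\mathscr{L}^1(\Omega,\mathcal{A},\nu)$. The shifting property (\cref{properties of concave transforms}~\ref{shifting}) only alters the concave transform by the additive constant $\int_\Omega c\,\nu(d\omega)$, so neither side of the desired equality is affected. Choosing this constant large enough and using continuity of the concave function $G_{\overline{D}}$ on $\Delta(D)^\circ$, one arranges that $G_{\overline{D}(c)}$ is strictly positive on some open convex neighborhood $W$ of $\lambda$ whose closure is compact in $\Delta(D)^\circ$. Monotonicity (together with taking $t$ small enough) forces positivity of $G_{\overline{D}(c)\pm t\overline{B}}$ on $W$, so the positive-part max appearing in the volume formula of \cref{theorem:concavemain} is locally trivial. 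Integrating the sandwich over $W$ and invoking \cref{theorem:concavemain}, \cref{theorem:continuityofvolumes}, and the Hausdorff convergence $\Delta(D\pm\varepsilon_j B)\to\Delta(D)$ of Okounkov bodies from \cref{lemma:convbodagain}, we obtain
\[\int_W G_{\overline{D}(c)\pm t\overline{B}}\,d\lambda\xrightarrow{t\to 0^+}\int_W G_{\overline{D}(c)}\,d\lambda.\]
Since $L^\pm$ are concave on $\Delta(D)^\circ$ (as monotone limits of concave functions), hence continuous on the interior, applying dominated convergence (using the uniform upper bound $G_{\overline{D}(c)+t_0\overline{B}}$ for a fixed small $t_0>0$, itself continuous on a neighborhood of $\overline{W}$) to pass to the limit inside the integrals yields $\int_W L^\pm \,d\lambda=\int_W G_{\overline{D}(c)}\,d\lambda$. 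Combined with the pointwise sandwich, this forces $L^\pm=G_{\overline{D}(c)}$ almost everywhere on $W$, and thus everywhere on $W$ by continuity; undoing the shift gives the conclusion.

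The principal obstacle is the integral-to-pointwise passage in the last paragraph. The shifting trick is essential: it removes the $\max\{\cdot,0\}$ in the volume formula locally around $\lambda$, enabling the continuity of $\widehat{\vol}$ to be translated into a genuine integral equality for $G$ itself. The remaining technical point is reconciling the $j$-dependence of the ambient Okounkov bodies $\Delta(D\pm\varepsilon_j B)$ with the fixed reference domain $\Delta(D)$, for which the Hausdorff convergence of Okounkov bodies (\cref{lemma:convbodagain}) combined with the uniform control of $G$ on the compactly contained neighborhood $W$ does the job. The approach parallels \cite[Theorem~6.4.24]{chen2020arakelov} in the projective case, adapted to the quasi-projective compactified setting developed in the preceding sections.
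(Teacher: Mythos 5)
Your outline follows essentially the same route as the paper: reduce to the monotone one-sided approximations $\overline{D}\pm t\overline{B}$, use the shift $\overline{D}(c)$ to make the concave transform positive so that the $\max\{\cdot,0\}$ in \cref{theorem:concavemain} becomes trivial, and then leverage the continuity of arithmetic volumes (\cref{theorem:continuityofvolumes}). The only organizational difference is that you work locally around a fixed $\lambda$ with a compactly-contained neighborhood $W$, while the paper introduces the global ``bad sets'' $\Delta(D)^{\alpha,r}_\pm$ and shows they have Lebesgue measure zero; both are equivalent framings.

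However, there is a real gap in how you arrive at the central display
\[\int_W G_{\overline{D}(c)\pm t\overline{B}}\,d\lambda\xrightarrow{t\to 0^+}\int_W G_{\overline{D}(c)}\,d\lambda.\]
You justify this by ``integrating the sandwich over $W$'' together with continuity of volumes, but integrating the pointwise sandwich over $W$ only squeezes $\int_W G_{\overline{D_j}}$ between $\int_W G_{\overline{D}\mp\varepsilon_j\overline{B}}$, with no control on the outer two terms; and continuity of $\widehat{\vol}$ controls integrals over the \emph{entire} Okounkov body $\Delta(D\pm tB)^\circ$, not over a fixed subdomain $W$. To make the display precise you need the chain: $G_{\overline{D}(c)+t\overline{B}}-G_{\overline{D}(c)}\ge 0$ on $\Delta(D)^\circ$ by monotonicity, so
\[0\le \int_W \bigl(G_{\overline{D}(c)+t\overline{B}}-G_{\overline{D}(c)}\bigr)\,d\lambda \le \int_{\Delta(D)^\circ}\bigl(\max\{G_{\overline{D}(c)+t\overline{B}},0\}-\max\{G_{\overline{D}(c)},0\}\bigr)\,d\lambda\le \frac{\widehat{\vol}(\overline{D}(c)+t\overline{B})-\widehat{\vol}(\overline{D}(c))}{(d+1)!},\]
where the last step additionally throws away the nonnegative contribution from $\Delta(D+tB)^\circ\setminus\Delta(D)^\circ$; and the analogous (slightly trickier) splitting over $\Delta(D)^\circ\setminus\Delta(D-tB)^\circ$ for the minus side. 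This domain-comparison step is exactly what the paper spells out when it rewrites $\widehat{\vol}(\overline{D_j}(\alpha))-\widehat{\vol}(\overline{D}(\alpha))$ as a sum of two nonnegative integrals and lower-bounds it by $r\cdot\vol_{\R^d}(\Delta(D)^{\alpha,r}_\pm)$. Without it, your display is asserted rather than derived. Once this is put in, the rest of your argument (dominated/monotone convergence, a.e.\ equality on $W$, then everywhere by continuity of concave functions) is sound.
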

\begin{proof}
We prove the theorem in the following steps.

\vspace{2mm} \noindent
Step 1: {\it If the theorem holds when $\overline{D_j}$ is decreasing or $\overline{D_j}$ is increasing, then it holds in general.} 

\vspace{2mm} \noindent
Let $\overline{B}$ be a weak boundary divisor in $\widehat{\Div}_{S,\Q}(U)_\CM$, and $(\varepsilon_{j})_{j\in\N_{\geq1}}$ a sequence in $\Q_{>0}$ such that $\overline{D}-\varepsilon_j\overline{B}\leq \overline{D_j}\leq \overline{D}+\varepsilon_j\overline{B}$ for all $j\in\N_{\geq 1}$ and $\lim\limits_{j\to\infty}\varepsilon_j=0$. Let $\lambda\in \Delta(D)^\circ$. By \cref{properties of concave transforms}~\ref{monotone}, we have that
\begin{align}\label{inequality of concave by cauchy}
G_{\overline{D}-\varepsilon_j\overline{B}}(\lambda)\leq G_{\overline{D_j}}(\lambda)\leq G_{\overline{D}+\varepsilon_j\overline{B}}(\lambda).
\end{align}
It suffices to show that
\begin{align}\label{eq:limsup of sequence of concave}
G_{\overline{D}}(\lambda)= \limsup\limits_{j\to\infty}G_{\overline{D}+\varepsilon_j\overline{B}}(\lambda)=\liminf\limits_{j\to\infty}G_{\overline{D}-\varepsilon_j\overline{B}}(\lambda).
\end{align}
We only consider the the sequence $(\overline{D}+\varepsilon_j\overline{B})_{j\in\N_{\geq 1}}$, it is similar for $(\overline{D}-\varepsilon_j\overline{B})_{j\in\N_{\geq 1}}$.
Let $(\varepsilon_{j_i})_{i\in\N_{\geq 1}}$ be a subsequence of $(\varepsilon_{j})_{j\in \N_{\geq 1}}$ such that 
\[\lim\limits_{i\to\infty}G_{\overline{D}+\varepsilon_{j_i}\overline{B}}(\lambda)=\limsup\limits_{j\to\infty}G_{\overline{D}+\varepsilon_j\overline{B}}(\lambda).\]
Furthermore, we can assume that $\varepsilon_{j_i} \leq 1/i$ for any $i\in\N_{\geq 1}$. Since the theorem holds for $(\overline{D}+\frac{1}{i}\overline{B})_{j\in\N_{\geq 1}}$ by our assumption, from \cref{properties of concave transforms}~\ref{monotone}, we have that
\[G_{\overline{D}}(\lambda)\leq \limsup\limits_{j\to\infty}G_{\overline{D}+\varepsilon_j\overline{B}}(\lambda)= \lim\limits_{i\to\infty}G_{\overline{D}+\varepsilon_{j_i}\overline{B}}(\lambda) \leq \lim\limits_{i\to\infty}G_{\overline{D}+\frac{1}{i}\cdot\overline{B}}(\lambda)=G_{\overline{D}}(\lambda).\]
This shows the first equality of \eqref{eq:limsup of sequence of concave}. The proof for the second equality of \eqref{eq:limsup of sequence of concave} is similar. Since $\lambda\in \Delta(D)^\circ$ is arbitrary, we have that Step~1 holds.

\vspace{2mm} \noindent

In the following, we fix an integrable function $c\in \mathscr{L}^1(\Omega,\mathcal{A},\nu)$ such that $\int_\Omega c(\omega)\, \nu(d\omega) = 1$. For any $\alpha>0$, we set \[\Delta(D)^\alpha\coloneq\{\lambda\in\Delta(D)\mid G_{\overline{D}}(\lambda)\ge -\alpha\},\]
and denote  $\overline{D}(\alpha)\coloneq \overline{D}+(0,\alpha c)$, similarly for $\overline{D_j}(\alpha)$.

\vspace{2mm} \noindent
Step 2: {\it If $\overline{D_j}$ is decreasing, then the theorem holds.}

\vspace{2mm} \noindent
If $\overline{D_j}$ is decreasing, then for any $\lambda\in\Delta(D)^\circ$, $G_{\overline{D_j}}(\lambda)\geq G_{\overline{D}}(\lambda)$ and $\lim\limits_{j\to\infty}G_{\overline{D_j}}(\lambda)$ exists by \cref{properties of concave transforms}~\ref{monotone}. So it suffices to show that $\lim\limits_{j\to\infty}G_{\overline{D_j}}\leq G_{\overline{D}}$ almost everywhere on $\Delta(D)^\circ$. For any $r\in \Q_{>0}$ and $\alpha\in\R_{>0}$, we set 	
\[{K_{\mathrm{Sing},r}^+}\coloneq\{\lambda\in\Delta(D)^{\circ}\mid \lim_{j\to\infty}G_{\overline{D_j}}(\lambda)>G_{\overline{D}}(\lambda)+r\},\] 
\[\Delta(D)^{\alpha,r}_+\coloneq\Delta(D)^\alpha\cap K_{\mathrm{Sing},r}^+\subset \Delta(D)^\circ.\]
We claim that the measure $\vol_{\R^d}(\Delta(D)^{\alpha,r}_+)$ of $\Delta(D)^{\alpha,r}_+$ is $0$ for any $r\in\Q_{>0}, \alpha\in\R_{>0}$. Indeed, given $r\in\Q_{>0}, \alpha\in\R_{>0}$, for any $\lambda\in\Delta(D)_+^{\alpha,r}$ we have 
\begin{equation}\label{equa:uffvai1}
	G_{\overline{D_j}(\alpha)}(\lambda)\geq \lim_{i\to\infty}G_{\overline{D_i}(\alpha)}(\lambda)>G_{\overline{D}(\alpha)}(\lambda)+r\geq r>0
\end{equation} 
thanks to \cref{properties of concave transforms}~\ref{shifting}. Write $f_{\overline{D}(\alpha)}\coloneq \max\{G_{\overline{D}(\alpha)},0\}$ on $\Delta(D)$ (resp. $f_{\overline{D}_j(\alpha)}\coloneq \max\{G_{\overline{D_j}(\alpha)},0\}$ on $\Delta(D_j)$), then $f_{{\overline{D_j}}(\alpha)}\geq f_{\overline{D}(\alpha)}$ on $\Delta(D)^\circ$. 
For all $\lambda\in\Delta(D)^{\alpha,r}_+\subseteq \Delta(D)^\circ$ and $j\in\N_{\geq 1}$, by \eqref{equa:uffvai1} we have $$f_{\overline{D_j}(\alpha)}(\lambda)=G_{\overline{D_j}(\alpha)}(\lambda), \ \  f_{\overline{D}(\alpha)}(\lambda)=G_{\overline{D}(\alpha)}(\lambda),$$ then
\begin{equation}
	\label{equa:uffvai2}
	f_{\overline{D_j}(\alpha)}(\lambda)-f_{\overline{D}(\alpha)}(\lambda)=G_{\overline{D_j}(\alpha)}(\lambda)-G_{\overline{D}(\alpha)}(\lambda)=G_{\overline{D_j}}(\lambda)-G_{\overline{D}}(\lambda)>r,
\end{equation}
the last inequality holds since $\lambda\in {K_{\mathrm{Sing},r}^+}$.
Using the first claim of \cref{theorem:concavemain}, we have
\begin{align*}
	&\frac{1}{(d+1)!}(\widehat{\vol}(\overline{D_j}(\alpha))-\widehat{\vol}(\overline{D}(\alpha)))\\
	=&\int_{\Delta(D)^{\circ}} \left(f_{\overline{D_j}(\alpha)}(\lambda)-f_{\overline{D}(\alpha)}(\lambda)\right) \, d\lambda+\int_{\Delta(D_j)^{\circ}\backslash \Delta(D)^{\circ}}f_{\overline{D_j}(\alpha)}(\lambda) \, d\lambda \\
	\geq& \int_{\Delta(D)^{\alpha,r}_+} \left(f_{\overline{D_j}(\alpha)}(\lambda)-f_{\overline{D}(\alpha)}(\lambda)\right) \, d\lambda \\
	\geq& r\cdot \vol_{\R^d}(\Delta(D)^{\alpha,r}_+) \geq 0.
\end{align*}
By the continuity of arithmetic volumes, see \cref{theorem:continuityofvolumes}, we have that the left-hand side of the equation above goes to $0$ as $j\to \infty$. Hence the $\vol_{\R^d}(\Delta(D)^{\alpha,r}_+)=0$. This proves our claim. So $K_{\mathrm{Sing},r}^+ = \bigcup\limits_{\alpha\in\Q_{>0}}\Delta(D)^{\alpha,r}_+$ has measure $0$. Note the set of points where the mentioned pointwise convergence fails to happen is exactly given by $\bigcup\limits_{r\in\mathbb{Q}} K_{\mathrm{Sing},r}^+$. So \eqref{eq:lim of sequence of concave} holds almost everywhere on $\Delta(D)^\circ$. Since $\lim\limits_{j\to\infty}G_{\overline{D_j}}$ and $G_{\overline{D}}$ are concave, hence continuous on $\Delta(D)^\circ$, we have that \eqref{eq:lim of sequence of concave} holds pointwise on $\Delta(D)^\circ$, which proves Step~2.

\vspace{2mm} \noindent
Step 3: {\it If $\overline{D_j}$ is increasing, then the theorem holds.}

\vspace{2mm} \noindent	
If $\overline{D_j}$ is increasing, as before, for any $\lambda\in \Delta(D)^\circ$, the limit $\lim\limits_{j\to\infty}G_{\overline{D_j}}(\lambda)$ exists. For any $r\in\Q_{>0}, \alpha\in\R_{>0}$ and $j\in\N_{\geq 1}$, we set
\[ K_{\mathrm{Sing},r}^-\coloneq\{\lambda\in\Delta(D)^{\circ}\mid \lim_{j\to\infty}G_{\overline{D_j}}(\lambda)<G_{\overline{D}}(\lambda)-r\},\]
\[\Delta(D)^{\alpha,r}_-\coloneq\Delta(D)^\alpha\cap K_{\mathrm{Sing},r}^-\subset \Delta(D)^\circ,\]
\[\Delta(D_j)^{\alpha,r}_-\coloneq\Delta(D)^{\alpha,r}_-\cap \Delta(D_j)^\circ\subset \Delta(D_j)^\circ.\]
As before, to show the theorem in this case, it suffices to show that the measure $\vol_{\R^d}(\Delta(D)^{\alpha,r}_-)=0$ for any $r\in\Q_{>0}, \alpha\in\R_{>0}$. 
Given $r\in\Q_{>0}, \alpha\in\R_{>0}$, we can assume that $\alpha\geq r$ (notice that $\Delta(D)_-^{\alpha,r}$ is increasing at $\alpha$). By \cref{lemma:convbodagain} we have that $\Delta(D)^{\alpha,r}_-=\bigcup\limits_{j=1}^\infty \Delta(D_j)^{\alpha,r}_-$.  Notice that $\Delta(D_i)^{\alpha,r} \subset \Delta(D_j)^{\alpha,r}$ if $i\leq j$, then
\begin{align} \label{limits of measure at minus}
	\lim\limits_{j\to\infty}\vol_{\R^d}(\Delta(D_j)^{\alpha,r}_-)=\vol_{\R^d}(\Delta(D)^{\alpha,r}_-).
\end{align}  For any $\lambda\in\Delta(D_j)_-^{\alpha,r}$ we have 
\begin{equation}
	\label{equa:uffvai3}
	G_{\overline{D_j}(2\alpha)}(\lambda)\leq \lim_{i\to\infty}G_{\overline{D_i}(2\alpha)}(\lambda)<G_{\overline{D}(2\alpha)}(\lambda)-r,
\end{equation} 
\begin{align}\label{equa:uffvai4}
	G_{\overline{D}(2\alpha)}(\lambda)-r\ge \alpha-r\geq 0
\end{align}
thanks to \cref{properties of concave transforms}~\ref{shifting}.  Write $f_{\overline{D}(2\alpha)}\coloneq \max\{G_{\overline{D}(2\alpha)},0\}$ on $\Delta(D)$ (resp. $f_{\overline{D_j}(2\alpha)}\coloneq \max\{G_{\overline{D_j}(2\alpha)},0\}$ on $\Delta(D_j)$), then $f_{\overline{D_j}(2\alpha)}\leq f_{\overline{D}(2\alpha)}$ on $\Delta(D_j)^\circ$. 
For all $\lambda\in\Delta(D_j)^{\alpha,r}_-$ and $j\in\N_{\geq 1}$, by \eqref{equa:uffvai3} and \eqref{equa:uffvai4}, we have 
\begin{equation}
	\label{equa:hawal}
	f_{\overline{D_j}(2\alpha)}(\lambda)=\max\{G_{\overline{D_j}(2\alpha)}(\lambda),0\}\le G_{\overline{D}(2\alpha)}(\lambda)-r\leq f_{\overline{D}(2\alpha)}(\lambda)-r
\end{equation}
Now we will apply a similar argument as the decreasing case, more precisely using \cref{theorem:concavemain}, we have
\begin{align*}
	&\frac{1}{(d+1)!}(\widehat{\vol}(\overline{D_j}(2\alpha))-\widehat{\vol}(\overline{D}(2\alpha))\\
	=&\int_{\Delta(D_j)^{\circ}} (f_{\overline{D_j}(2\alpha)}(\lambda)-f_{\overline{D}(2\alpha)}(\lambda)) \,  d\lambda-\int_{\Delta(D)^{\circ}\setminus \Delta(D_j)^{\circ}}f_{\overline{D}(2\alpha)}(\lambda) \, d\lambda\\
	\le& \int_{\Delta(D_j)^{\circ}} (f_{\overline{D_j}(2\alpha)}(\lambda)-f_{\overline{D}(2\alpha)}(\lambda)) \, d\lambda\\
	\le & \int_{\Delta(D_j)^{\alpha,r}_-} (f_{\overline{D_j}(2\alpha)}(\lambda)-f_{\overline{D}(2\alpha)}(\lambda)) \, d\lambda\\
	\leq & -r\cdot \vol_{\R^d}(\Delta(D_j)^{\alpha,r}_-) \leq 0.
\end{align*} 
By the continuity of arithmetic volumes, see \cref{theorem:continuityofvolumes}, and \eqref{limits of measure at minus}, taking $j\to\infty$, we have that $\vol_{\R^d}(\Delta(D)^{\alpha,r}_-) =0$. This implies Step~3. Hence we complete the proof.
\end{proof}

The following proposition is based on the fact that $S$ has the strong tensorial minimal slope property or the Minkowski property (of level $\geq C_0$ for some $C_0\in \R_{\geq0}$). 

\begin{proposition} \label{prop:the volume and concave of arith nef}
Let $\overline{D}=(D,g)\in\widehat{\Div}_{S,\Q}(U)_{\arnef}$. Then the following statements hold:
\begin{enumerate}
	\item \label{prop:hilbert-samuel for arith nef} we have that 
	$$\widehat{\vol}(\overline{D}) = (\overline{D}^{d+1}\mid U)_S.$$
	\item \label{prop:the concave of arith nef is non-negative} if $D$ is big,  $G_{\overline{D}}\geq 0$ on $\Delta(D)^\circ$.
\end{enumerate}
\end{proposition}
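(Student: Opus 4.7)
The plan is to reduce both claims to the projective $S$-nef case via the two-step approximation encoded in the definition of $\widehat{\Div}_{S,\Q}(U)_{\arnef}$: any arithmetically nef $\overline{D}$ is first a finite-subspace limit of elements of $\widehat{\Div}_{S,\Q}(U)_{\arsnef}$, which in turn are $\overline{B}$-boundary limits of $S$-nef divisors $\overline{D_j} \in N_{S,\Q}'(X_j)$ on projective $K$-models $X_j$ of $U$. On each projective model, Chen--Moriwaki's arithmetic Hilbert--Samuel formula applies, and this is the anchor of the induction.

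I would first dispatch both claims for a projective $S$-nef divisor $\overline{L} \in N_{S,\Q}'(X)$ with $L$ big. By \cite[Theorem~B]{chen2022hilbert}, one has $\widehat{\vol}_\chi(\overline{L}) = (\overline{L}^{d+1} \mid X)_S$. Moreover, for such $\overline{L}$ we have $\widehat{\mu}_{\min}^{\mathrm{asy}}(\overline{L}) \geq 0$, which is a standard consequence of approximation by $S$-ample divisors combined with the essential minimum inequality (essentially \cref{thm:essential minimum} applied in the projective setting, where $h_{\overline{L}}(Z) \geq \varepsilon\deg_L(Z)(\dim Z+1)$ gives lower bounds on all relevant slopes). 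By \cref{properties of concave transforms}~\ref{inf and sup of G} this yields $G_{\overline{L}} \geq 0$ on $\Delta(L)^\circ$, and then \cref{corol:volsame} provides $\widehat{\vol}(\overline{L}) = \widehat{\vol}_{\chi}^{\mathrm{num}}(\overline{L}) = \widehat{\vol}_\chi(\overline{L}) = (\overline{L}^{d+1} \mid X)_S$. This simultaneously establishes the projective case of both (i) and (ii).

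For the general case, I would approximate $\overline{D}$ by such projective $S$-nef divisors in two stages and pass to the limit. For the $\overline{B}$-boundary step, the continuity of arithmetic volumes (\cref{theorem:continuityofvolumes}) handles the left-hand side of (i), the continuity of the arithmetic intersection number on the arithmetically nef cone built from the energy formulation of \cref{extension of YZ intersection number} and \cite[Theorem~11.3]{cai2024abstract} handles the right-hand side, and \cref{thm:convergence of concave transforms} propagates the non-negativity $G_{\overline{D_j}} \geq 0$ to the limit, settling (ii) modulo the finite-subspace step. For the finite-subspace step, \cref{cor:continuity of arithmetic volume2} yields continuity of arithmetic volumes; the pointwise non-negativity of the concave transform under a finite-dimensional perturbation is obtained by applying \cref{thm:convergence of concave transforms} along one-parameter sub-families, using openness of the big cone to keep $D$ big along small perturbations.

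The main obstacle I anticipate is establishing continuity of the arithmetic intersection number along finite-subspace convergence inside $\widehat{\Div}_{S,\Q}(U)_{\arnef}$. Along $\overline{B}$-boundary Cauchy sequences this is essentially built into the energy-theoretic construction recalled in \cref{extension of YZ intersection number}, but finite-subspace convergence is weaker, and one must exploit multilinearity and polarization to reduce the problem to the one-parameter case along directions that remain within a cone to which the intersection theory applies. Managing this reduction, while simultaneously ensuring that the approximating projective models can be chosen compatibly (by passing to joint compactifications), is the delicate bookkeeping step of the argument.
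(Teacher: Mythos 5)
Your overall architecture — reduce to the projective $S$-ample case via Chen--Moriwaki, then pass to $S$-nef, then to quasi-projective by boundary approximation — matches the paper's, but there are three substantive gaps that are worth pointing out.

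First, in your projective base case you assert that $\widehat{\mu}_{\min}^{\mathrm{asy}}(\overline{L})\geq 0$ for an $S$-nef $\overline{L}$ is ``a standard consequence of approximation by $S$-ample divisors combined with the essential minimum inequality.'' This does not work. The inequality in \cref{thm:essential minimum} bounds the essential minimum from below in terms of $\widehat{\mu}_{\max}$ and the $\chi$-volume, not $\widehat{\mu}_{\min}$; it cannot be turned around to produce a lower bound on the minimal slope. The genuine input the paper uses is Chen--Moriwaki's hard theorem that arithmetic ampleness forces $\widehat{\mu}_{\min}^{\mathrm{asy}}>0$ (via \cite[Theorem~8.8.3, Proposition~9.1.2]{chen2022hilbert}), which in turn requires $S$ to satisfy either the strong tensorial minimal slope property or the Minkowski property — precisely the hypotheses stated at the top of the section, which your argument never invokes. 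Moreover the paper carefully treats the two hypotheses separately (Steps~1 and~2), because the published reference assumes $K$ perfect and the paper needs to note that this can be weakened.

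Second, the paper does \emph{not} prove $\widehat{\mu}_{\min}^{\mathrm{asy}}\geq 0$ for $S$-nef $\overline{D}$, and neither can you by limiting from $S$-ample: $\widehat{\mu}_{\min}^{\mathrm{asy}}$ is a $\liminf$ and does not pass through limits. This is exactly why the paper descends to the concave transform: it proves $\widehat{\mu}_{\min}^{\mathrm{asy}}>0$ only for $S$-ample, concludes $G_{\overline{D}+\frac{1}{n}\overline{A}}\geq 0$ via \cref{properties of concave transforms}~\ref{inf and sup of G}, and then lets $n\to\infty$ using \cref{thm:convergence of concave transforms}, which gives pointwise convergence of the concave transforms. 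The object that behaves well under limits is $G$, not $\widehat{\mu}_{\min}^{\mathrm{asy}}$. Your proposal conflates the two. (You also apply \cite[Theorem~B]{chen2022hilbert} directly to $S$-nef $\overline{L}$, whereas it is stated for relatively ample divisors.)

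Third, the obstacle you correctly identify at the end — continuity of the intersection number under finite-subspace convergence — is not resolved by your sketch, and the ``multilinearity and polarization'' route is not what the paper does. The paper sidesteps the finite-subspace limit entirely: by \cref{nef become strongly S-nef after shrinking U} one may shrink $U$ so that the finitely many nef divisors at hand become \emph{strongly} nef, and by \cref{properties of concave transforms}~\ref{birational} the concave transform and arithmetic volume are invariant under restriction to an open dense subset. After shrinking, one is back in the $\overline{B}$-boundary-limit regime where both \cref{theorem:continuityofvolumes} and \cref{thm:convergence of concave transforms} apply directly, and the continuity of the intersection number is \cite[Theorem~10.9~(v)]{cai2024abstract}. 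This reduction is essential and you should flag it explicitly. Finally, you also omit the case $D$ not big in part~(i), which the paper handles separately at the start of Step~3 by noting that both sides vanish.
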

\begin{proof} 
We prove the proposition in the following steps.

\vspace{2mm} \noindent
Step 1: {\it If  $S$ has the strong tensorial minimal slope property of level $\geq C_0$ for some $C_0\in \R_{\geq0}$, $U=X$ is projective and $\overline{D}$ is $S$-ample in sense of \cref{global adelic Green functions for proper varieties}, then $\widehat{\mu}_{\min}^{\mathrm{asy}}(\overline{D})> 0$.}

\vspace{2mm} \noindent
Notice that \cite[Theorem~8.8.3]{chen2022hilbert} holds for adelic curves with the strong tensorial property. Indeed, in the proof of \cite[Theorem~8.8.3]{chen2022hilbert}, the condition that $K$ is perfect is only required in the application of \cite[Theorem~7.2.4]{chen2020arakelov} which holds for adelic curves with the strong tensorial property. Since \cite[Theorem~8.8.3]{chen2022hilbert} implies that \cite[Proposition~9.1.2]{chen2022hilbert}, Step~1 holds.

\vspace{2mm} \noindent
Step 2: {\it If  $S$ has the Minkowsi property of level $C_0$ for some $C_0\in \R_{\geq0}$, $U=X$ is projective and $\overline{D}$ is $S$-ample, then then $\widehat{\mu}_{\min}^{\mathrm{asy}}(\overline{D})> 0$.}

\vspace{2mm} \noindent
If $S$ has the Minkowski property for level $C_0$ for some $C_0\in\R_{\geq 0}$, by \cite[Theorem~7.2.1]{chen2020arakelov} and the fact that restriction of $S$-ample on a closed subvariety is again $S$-ample, to show Step~2, it suffices to show the claim: there is $n\in \N_{>0}$ and $0\not\in s\in H^0(X,D)$ such that $\widehat{\deg}(s)> 0$, where $H^0(X,nD)$ is endowed with the sup-norm, see \cref{sup-norm}. By definition of $S$-ampleness, there is $\varepsilon$ such that $$(\overline{D}^{d+1}\mid X)_S\geq \varepsilon D^d (\dim(X)+1)>0.$$ Then \cite[Theorem~B]{chen2022hilbert} implies that $$\widehat{\vol}(\overline{D})\geq \widehat{\vol}_\chi(\overline{D})=(\overline{D}^{d+1}\mid X)_S>0.$$ 
So $\overline{D}$ is big. Hence $\overline{D}$ is \emph{strongly big} in the sense of \cite[Definition~6.4.19]{chen2020arakelov}, i.e. our claim holds. This completed the proof of Step~2.

\vspace{2mm} \noindent
Step 3: {\it If $U=X$ is projective and $\overline{D}\in\widehat{\Div}_{S,\Q}(X)_{\arnef}$, then $\widehat{\vol}(\overline{D}) = (\overline{D}^{d+1}\mid X)_S$. If, furthermore, $D$ is big, then $G_{\overline{D}}\geq 0$.}

\vspace{2mm} \noindent
We first consider the case where $D$ is not big. Then $\overline{D}$ is not big, i.e. $\widehat{\vol}(\overline{D})=0$, by \cite[Proposition~6.4.18]{chen2020arakelov}. On the other hand, by \cite[Proposition~7.10~(v), Proposition~6.34~(1)]{cai2024abstract}, we have that $(\overline{D}^{d+1}\mid X)_S\geq 0$.
Hence we have the inequalities
\[0=\widehat{\vol}(\overline{D})\geq \widehat{\vol}_\chi(\overline{D})=(\overline{D}^{d+1}\mid X)_S\geq0\]
which implies that $\overline{D} =(\overline{D}^{d+1}\mid X)_S =0.$

Assume that $D$ is big. Let $\overline{A}\in N_{S,\Q}^+(X)$ be an $S$-ample divisor on $X$ such that the point corresponding to the admissible flag $Y_\bullet$ is not in $|A|$. Then for any $n\in\N_{\geq 1}$, $\overline{D}+\frac{1}{n}\overline{A}$ is $S$-ample. This implies that $G_{\overline{D}+\frac{1}{n}\overline{A}}\geq 0$ by Step~1 and Step~2. Let $V=X\setminus|A|$, and $i\colon V\to X$ the corresponding open immersion. Notice that $i^*\left(\overline{D}+\frac{1}{n}\overline{A}\right)$ converges to $\overline{D}$ in $\widehat{\Div}_{S,\Q}(V)_\cpt$ with respect to the $\overline{A}$-boundary topology, by \cref{properties of concave transforms}~\ref{birational} and \cref{thm:convergence of concave transforms}, we have that
\[G_{\overline{D}}=G_{i^*\overline{D}}=\lim\limits_{n\to\infty}G_{i^*\left(\overline{D}+\frac{1}{n}\overline{A}\right)}=\lim\limits_{n\to\infty}G_{\overline{D}+\frac{1}{n}\overline{A}}\geq0\]
almost everywhere on $\Delta(D)^\circ$. Since $G_{\overline{D}}$ is continuous on $\Delta(D)^\circ$, so the statement holds in this case.

\vspace{2mm} \noindent
Step 4: {\it If $U$ is quasi-projective and $\overline{D}\in\widehat{\Div}_{S,\Q}(U)_{\arnef}$, then $\widehat{\vol}(\overline{D}) = (\overline{D}^{d+1}\mid U)_S$. If, furthermore, $D$ is big, then $G_{\overline{D}}\geq 0$.}

\vspace{2mm} \noindent
For general case where $U$ is quasi-projective and $\overline{D}\in \widehat{\Div}_{S,\Q}(U)_\arnef$, by \cref{nef become strongly nef after shrinking U} and \cref{properties of concave transforms}~\ref{birational}, after shrinking $U$, we can assume that $\overline{D}$ is the limit of a sequence $(\overline{D_j})_{j\in \N_{\geq 1}}=(D_j,g_j)_{j\in\N_{\geq 1}}\subset N_{S,\Q}'(U)$ with respect to the $\overline{B}$-boundary topology for some  weakly boundary divisor $\overline{B}\in N_{S,\Q}'(U)$. Since $\overline{D_j}\in N_{S,\Q}'(U)$, we can deduce from Step~3 and \cref{properties of concave transforms}~\ref{birational} that $\widehat{\vol}(\overline{D_j})=(\overline{D_j}^{d+1}\mid U)_S$ and $G_{\overline{D_j}}\ge 0$ on $\Delta(D_j)^\circ$. By \cref{theorem:continuityofvolumes}, we have that
\[\widehat{\vol}(\overline{D})=\lim\limits_{j\to\infty}\widehat{\vol}(\overline{D_j})=\lim\limits_{j\to\infty} (\overline{D_j}^{d+1}\mid U)_S =  (\overline{D}^{d+1}\mid U)_S.\]
It $D$ is big, then $D_j$ is big for $j$ large enough. So $G_{\overline{D_j}}\geq 0$ on $\Delta(D_j)^\circ$ by Step~3. By \cref{thm:convergence of concave transforms}, we have that $G_{\overline{D}}\ge 0$ on $\Delta(D)^{\circ}$. 
\end{proof}

\subsection{Arithmetic Hilbert-Samuel formula}
\begin{proposition} \label{proposition:limits property for chi volume}
Let $\overline{D}=(D,g)\in\widehat{\Div}_{S,\Q}({U})_{\cpt}$ and  $(\overline{D_j})_{j\in\N_{\geq 1}}=(D_j, g_j)_{j\in\N_{\geq 1}}$ a sequence in $\widehat{\Div}_{S,\Q}({U})_{\cpt}$ converging to $\overline{{D}}$ with respect to the $\overline{B}$-boundary topology for some weak boundary divisor $\overline{B}\in\widehat{\Div}_{S,\Q}(U)_\CM$ of $U$. Assume that $D_j, D\in \widetilde\Div_{\Q}(U)_{\cpt}$ are big and one of the following condition holds:
\begin{enumerate}
\item \label{lower bound finite for limits property for chi volume} there is $c_0\in\R$ such that $\inf\limits_{\lambda\in\Delta(D_j)^\circ}G_{\overline{D_j}}(\lambda)$, $\inf\limits_{\lambda\in\Delta(D)^\circ}G_{\overline{D}}(\lambda) >c_0$ for any $j\in\N_{\geq 1}$;
\item \label{decreasing for limits property for chi volume} $\overline{D_j}$ is decreasing and $D_j=D$ in $\widetilde\Div_\Q(U)_{\cpt}$ for arbitrary large $j\in\N_{\geq 1}$.
\end{enumerate}
Then
\[\lim\limits_{j\to\infty}\widehat{\vol}^\num_\chi(\overline{D_j})=\widehat{\vol}^\num_\chi(\overline{D}).\]
\end{proposition}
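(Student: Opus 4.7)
My plan is to reduce both cases to classical measure-theoretic convergence theorems on the fixed domain $\Delta(D)^\circ$. The pointwise limit $G_{\overline{D_j}}(\lambda) \to G_{\overline{D}}(\lambda)$ for every $\lambda \in \Delta(D)^\circ$ is supplied by \cref{thm:convergence of concave transforms} (together with \cref{lemma:interior point convergence of concave}, which ensures $\lambda \in \Delta(D_j)^\circ$ for $j$ large), and the Hausdorff convergence of the Okounkov bodies from \cref{lemma:convbodagain} yields $\vol_{\R^d}(\Delta(D_j)^\circ \triangle \Delta(D)^\circ) \to 0$. The essential preliminary step is to establish a uniform upper bound: since the convergence is in the $\overline{B}$-boundary topology, for $j$ large one has $\overline{D_j} \leq \overline{D}+\overline{B}$, so by \cref{properties of concave transforms}~\ref{monotone}~\ref{inf and sup of G} and \cref{lemma:slopebounded},
\[G_{\overline{D_j}}(\lambda) \;\leq\; G_{\overline{D}+\overline{B}}(\lambda) \;\leq\; \widehat{\mu}^{\mathrm{asy}}_{\max}(\overline{D}+\overline{B}) \;\coloneq\; M \;<\; +\infty,\]
and the same bound applies to $G_{\overline{D}}$ on $\Delta(D)^\circ$.

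For case \ref{lower bound finite for limits property for chi volume}, the hypothesis supplies the matching uniform lower bound $c_0$, so I would extend the functions $G_{\overline{D_j}}\cdot \mathbf{1}_{\Delta(D_j)^\circ \cap \Delta(D)^\circ}$ by $0$ to all of $\Delta(D)^\circ$: these converge pointwise to $G_{\overline{D}}$ and are uniformly dominated by the integrable constant $\max(|c_0|,|M|)$ on the bounded set $\Delta(D)^\circ$, so dominated convergence handles the main contribution. The leftover contributions $\int_{\Delta(D_j)^\circ \setminus \Delta(D)^\circ} G_{\overline{D_j}}\, d\lambda$ and $\int_{\Delta(D)^\circ \setminus \Delta(D_j)^\circ} G_{\overline{D}}\, d\lambda$ are bounded in absolute value by $\max(|c_0|,|M|)\cdot \vol_{\R^d}(\Delta(D_j)^\circ \triangle \Delta(D)^\circ)$, which tends to $0$.

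For case \ref{decreasing for limits property for chi volume}, the eventual equality $D_j = D$ stabilizes the Okounkov body to $\Delta(D)^\circ$, thereby removing the boundary issue entirely. The decreasing hypothesis combined with \cref{properties of concave transforms}~\ref{monotone} makes $G_{\overline{D_j}}$ a pointwise decreasing sequence on $\Delta(D)^\circ$ whose limit is $G_{\overline{D}}$, so that $M - G_{\overline{D_j}}$ is non-negative and increases pointwise to $M - G_{\overline{D}}$. The monotone convergence theorem on the finite-measure set $\Delta(D)^\circ$ then gives convergence of the integrals, accommodating the possibility that $\widehat{\vol}^\num_\chi(\overline{D}) = -\infty$; subtracting the finite constant $M\cdot\vol_{\R^d}(\Delta(D)^\circ)$ concludes.

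The principal subtlety is in controlling $G_{\overline{D_j}}$ on the boundary discrepancy $\Delta(D_j)^\circ \triangle \Delta(D)^\circ$ when $G_{\overline{D}}$ may a priori be unbounded below; this is precisely what the uniform lower-bound hypothesis in (i) guards against, and conversely why case (ii) must separately stabilize the Okounkov body so that monotone convergence alone suffices.
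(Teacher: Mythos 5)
Your proof is correct. Case \ref{decreasing for limits property for chi volume} is essentially the paper's argument: \cref{thm:convergence of concave transforms} gives the pointwise decreasing convergence $G_{\overline{D_j}}\searrow G_{\overline{D}}$ on the stable body $\Delta(D)^\circ$, and a monotone convergence argument finishes (your step of passing to the non-negative increasing $M-G_{\overline{D_j}}$ and subtracting the finite constant is a cleaner phrasing of the same thing, correctly handling the case $\widehat{\vol}_\chi^\num(\overline{D})=-\infty$).

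For case \ref{lower bound finite for limits property for chi volume} your route is genuinely different. The paper does \emph{not} integrate directly: it picks $c\in\mathscr{L}^1(\Omega)$ with $\int c\,d\nu=c_0$, observes via \cref{properties of concave transforms}~\ref{shifting} that $G_{\overline{D_j}(-c)}>0$, invokes \cref{corol:volsame} to identify $\widehat{\vol}_\chi^\num(\overline{D_j}(-c))=\widehat{\vol}(\overline{D_j}(-c))$, and then appeals to the already-established continuity of $\widehat{\vol}$ (\cref{theorem:continuityofvolumes}) together with continuity of the geometric volume to pass to the limit, finally undoing the shift. You instead establish the two-sided uniform pointwise bound $c_0<G_{\overline{D_j}}\leq\widehat{\mu}^{\mathrm{asy}}_{\max}(\overline{D}+\overline{B})$ (valid for $j$ large, using $\overline{D_j}\leq\overline{D}+\overline{B}$ and \cref{properties of concave transforms}~\ref{monotone}~\ref{inf and sup of G}, \cref{lemma:slopebounded}), combine the pointwise convergence from \cref{thm:convergence of concave transforms} with \cref{lemma:interior point convergence of concave}, and run dominated convergence on the common domain plus a symmetric-difference estimate from \cref{lemma:convbodagain}. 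Both are valid: the paper's proof recycles the global continuity theorem for $\widehat{\vol}$, which is proved independently, while your proof exposes the measure-theoretic mechanism directly and avoids the detour through $\widehat{\vol}$ and the shift-by-$c$ device. Note one small redundancy: once you decompose $\widehat{\vol}_\chi^\num(\overline{D_j})/(d+1)!$ as the integral over $\Delta(D_j)^\circ\cap\Delta(D)^\circ$ plus the integral over $\Delta(D_j)^\circ\setminus\Delta(D)^\circ$, the second leftover term you mention, $\int_{\Delta(D)^\circ\setminus\Delta(D_j)^\circ}G_{\overline{D}}$, does not actually enter the estimate; dominated convergence on the common part already produces the target $\int_{\Delta(D)^\circ}G_{\overline{D}}$.
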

\begin{proof}
\ref{lower bound finite for limits property for chi volume} Let $c_0\in\R$ such that  $\inf\limits_{\lambda\in\Delta(D_j)^\circ}G_{\overline{D_j}}(\lambda)$, $\inf\limits_{\lambda\in\Delta(D)^\circ}G_{\overline{D}}(\lambda) >c_0$ for any $n\geq 1$. We fix an integrable function $c\in\mathscr{L}^1(\Omega,\mathcal{A},\nu)$ such that $c_0=\int_{\Omega}c(\omega)\, \nu(d\omega)$. By \cref{properties of concave transforms}~\ref{shifting}, we have that $G_{\overline{D_j}(-{c})}>0$ on $\Delta(D_j)^\circ$ (resp. $G_{\overline{D}(-{c})}>0$ on $\Delta(D)^\circ$). By  \cref{corol:volsame} and \cref{lemma:refokoun}, we have that
\[\widehat{\vol}_{\chi}^\num(\overline{D_j}) = \widehat{\vol}_{\chi}^\num(\overline{D_j}(-{c}))+c_0(d+1)\vol(D_j)= \widehat{\vol}(\overline{D_j}(-{c}))+c_0(d+1)\vol(D_j).\] Similarly, we have $\widehat{\vol}_\chi^\num(\overline{D}) = \widehat{\vol}(\overline{D}(-{c}))+c_0(d+1)\vol(D)$. By \cref{geometric volume}~\ref{continuity of geometric volume} and \cref{theorem:continuityofvolumes}, we have that 
\begin{align*}
\lim\limits_{j\to\infty}\widehat{\vol}_\chi^\num(\overline{D_j})&=\lim\limits_{j\to\infty}\left(\widehat{\vol}(\overline{D_j}(-c))+c_0(d+1)\vol(D_j)\right)\\
&=\widehat{\vol}(\overline{D}(-c))+c_0(d+1)\vol(D)\\
&=\widehat{\vol}_\chi^\num(\overline{{D}}).
\end{align*}

\ref{decreasing for limits property for chi volume} By \cref{thm:convergence of concave transforms}, the concave transforms $G_{\overline{D_j}}$ decreasingly converges to $G_{\overline{D}}$ pointwise on $\Delta(D)^\circ$. Hence, by the monotone convergence theorem, we have that
\[\lim\limits_{n\to\infty}\widehat{\vol}_\chi^\num(\overline{D_j})=\lim\limits_{n\to\infty}(d+1)!\int_{\Delta(D)^\circ}G_{\overline{D_j}}(\lambda)\, d\lambda=(d+1)!\int_{\Delta(D)^\circ}G_{\overline{D}}(\lambda)\, d\lambda=\widehat{\vol}_\chi^\num(\overline{D}).\]
\end{proof}

We are ready to prove the main result of this section.
\begin{theorem} \label{thm:Hilbert-Samuel formula}
Let $\overline{D}=(D,g)\in \widehat{\Div}_{S,\Q}({U})_{\relnef}^{\arnef}$ with $D$ big. Then
\[\widehat{\vol}_\chi^{\num}(\overline{D})=(\overline{D}^{d+1} \mid U)_S.\]
In particular, if $\widehat{\mu}_{\min}^{\mathrm{asy}}(\overline{D})>-\infty$, then by \cref{theorem:concavemain}, we have that \[\widehat{\vol}_\chi(\overline{D})=(\overline{D}^{d+1} \mid U)_S.\]
\end{theorem}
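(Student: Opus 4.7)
The strategy is to reduce to the arithmetically nef case (already handled by \cref{prop:the volume and concave of arith nef}) by approximating $\overline{D}$ from above by arithmetically nef compactified $S$-metrized divisors that share the same underlying compactified divisor $D$. First, I would pick an arithmetically nef companion $\overline{D'} = (D,g')$ of $\overline{D}$ with $[g]\leq [g']$ and set $\overline{h} := (D, h)$ with $h:=\max\{g,g'\}$, which lies in $\widehat{\Div}_{S,\Q}(U)_{\arnef}$ by the max-trick recalled in \cref{extension of YZ intersection number}. Applying \cref{prop:the volume and concave of arith nef} to $\overline{h}$ yields $G_{\overline{h}}\geq 0$ on $\Delta(D)^\circ$ and $\widehat{\vol}(\overline{h}) = (\overline{h}^{d+1}\mid U)_S$, so \cref{corol:volsame} upgrades this to $\widehat{\vol}_\chi^{\num}(\overline{h}) = (\overline{h}^{d+1}\mid U)_S$.

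For the approximation, choose $c\in \mathscr{L}^1(\Omega,\mathcal{A},\nu)$ with $c\geq 0$, $c>0$ $\nu$-a.e., and $\int_\Omega c\, \nu(d\omega) = 1$, and set $\overline{D}_n := (D, g_n)$ with $g_n := \max\{g, h - nc\}$ for $n\in\N_{\geq 1}$. Then $g\leq g_n\leq h$, the sequence $(g_n)$ decreases pointwise $\nu$-a.e.\ to $g$ as $n\to\infty$, and consequently $\overline{D}_n\searrow\overline{D}$ in the $\overline{B}$-boundary topology for any suitable weak boundary divisor $\overline{B}$. Since $g_n\leq h$, each $\overline{D}_n$ automatically lies in $\widehat{\Div}_{S,\Q}(U)_\relnef^{\arnef}$ with companion $\overline{h}$. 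Granting that each $\overline{D}_n$ is genuinely arithmetically nef, the base case yields $\widehat{\vol}_\chi^{\num}(\overline{D}_n) = (\overline{D}_n^{d+1}\mid U)_S$ for every $n$. Taking $n\to\infty$, the left-hand side converges to $\widehat{\vol}_\chi^{\num}(\overline{D})$ by \cref{proposition:limits property for chi volume}\ref{decreasing for limits property for chi volume} (decreasing sequence, constant underlying divisor), while the right-hand side converges to $(\overline{D}^{d+1}\mid U)_S$ by monotone convergence applied to the defining formula
\[(\overline{D}_n^{d+1}\mid U)_S = (\overline{h}^{d+1}\mid U)_S + \sum_{j=0}^d \int_\Omega \int_{U_\omega^\an}(g_n - h)\, c_1(\overline{D}_n)_\omega^j\wedge c_1(\overline{h})_\omega^{d-j}\, \nu(d\omega),\]
using $g_n - h\searrow g - h\leq 0$ together with the continuity of mixed Monge-Amp\`ere measures under the decreasing convergence $\overline{D}_n\searrow\overline{D}$. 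The ``In particular'' assertion is then immediate from the equality case in \cref{theorem:concavemain}.

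The principal technical obstacle is verifying that each approximation $\overline{D}_n$ is genuinely arithmetically nef, rather than merely lying in $\relnef^{\arnef}$. Intuitively, at every place $\omega$ the Green function $h_\omega - nc(\omega)$ remains a uniform limit of semipositive model (or $\tFS$-type) Green functions, because $nc(\omega)$ is a real scalar; the max of this with the relatively nef $g_\omega$ should then still be a uniform limit of semipositive model Green functions by a placewise regularization argument analogous to the max-trick of \cref{extension of YZ intersection number}. Carrying this out measurably in $\omega$ and checking the required global $\overline{B}$-boundary convergence of the local approximations is the crux. Should this direct arnef approximation prove elusive, an alternative route is to prove the variational identity $\widehat{\vol}_\chi^{\num}(\overline{D}) - \widehat{\vol}_\chi^{\num}(\overline{h}) = E(\mathbf{h},\mathbf{g})$ directly by decomposing the concave transform $G_{\overline{D}}$ into local contributions and invoking the local Hilbert-Samuel formula \eqref{eq: Hilbert-samuel local} place by place, then integrating over $\Omega$ against $\nu$.
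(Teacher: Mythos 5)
Your overall strategy — reduce to the arithmetically nef case by approximating $\overline{D}$ decreasingly by relatively nef perturbations, then pass to the limit using \cref{proposition:limits property for chi volume} on one side and continuity of the intersection pairing on the other — is essentially the same route the paper takes. However, there are two genuine gaps in the execution, both of which the paper's proof addresses and you do not.

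First, you already flag the central problem yourself: your approximants $\overline{D}_n=(D,\max\{g,h-nc\})$ need not be arithmetically nef, and you only handwave about why they might be. The paper's resolution is not to prove they are arnef but to observe that the twisted divisor $\overline{D}_n(nc)=(D,\max\{g+nc,h\})\geq \overline{h}$ is arnef (being a relatively nef divisor dominating an arnef one, cf.\ \cite[Lemma~10.3]{cai2024abstract}), and then to insert an intermediate step showing that the Hilbert--Samuel identity for $\widehat{\vol}_\chi^\num$ is invariant under adding a constant function $c$: both $\widehat{\vol}_\chi^\num$ and $(\,\cdot\,^{d+1}\mid U)_S$ shift by $(d+1)\int_\Omega c\,\nu(d\omega)$ times $\vol(D)$, resp.\ $D^d$, and these agree because $D$ is nef and big. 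Without that twist-invariance lemma (the paper's Step~2), your reduction does not close; you need the HS formula for divisors whose arnef structure only appears after a constant twist, not merely for genuinely arnef ones.

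Second, your choice $g_n:=\max\{g,\,h-nc\}$ for a fixed integrable $c$ does not obviously produce a sequence $\overline{D}_n$ converging to $\overline{D}$ in the $\overline{B}$-boundary topology. The condition needed is $\max\{0,\,h-g-nc\}\le\varepsilon_n g_B$ for some $\varepsilon_n\to0$, and since $h-g$ can be unbounded near the boundary (there is no a priori upper bound on $g'-g$ even when $[g]\le[g']$), subtracting a fixed constant $nc(\omega)$ need not tame this. The paper invokes \cite[Lemma~10.6]{cai2024abstract} precisely to construct an increasing sequence $(c_m)\subset\mathscr{L}^1(\Omega,\mathcal{A},\nu)$, \emph{adapted to the singularity of $g'-g$}, so that $\overline{D_m}=(D,\max\{g,g'-c_m\})$ both lies in $\widehat{\Div}_{S,\Q}(U)_{\relsnef}$ and converges in the boundary topology. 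Your alternative route via the local identity \eqref{eq: Hilbert-samuel local} integrated over $\Omega$ would run into analogous measurability and dominated-convergence issues; the paper sidesteps them by working with the already-established global intersection-number machinery of \cite{cai2024abstract}.

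So the plan is correct in outline, and your decomposition of the intersection number along the energy functional mirrors \cref{extension of YZ intersection number}, but both the constant-twist reduction and the construction of a boundary-convergent sequence are needed and missing. Adding a Step~2 (twist invariance) and replacing the fixed $nc$ by the adapted $(c_m)$ of \cite[Lemma~10.6]{cai2024abstract} would repair the argument and yield the paper's proof.
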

\begin{proof}
We prove the theorem in the following steps.

\vspace{2mm} \noindent
Step 1: {\it If $\overline{D}\in \widehat{\Div}_{S,\Q}(U)_\arnef$, then $\widehat{\vol}_\chi^\num(\overline{D})=(\overline{D}^{d+1}\mid U)_S$.} 

\vspace{2mm} \noindent
By \cref{prop:the volume and concave of arith nef}~\ref{prop:the concave of arith nef is non-negative} and \cref{theorem:concavemain}, we have that $$\widehat{\vol}_\chi^\num(\overline D)=(d+1)!\int_{\Delta(\overline{D})^\circ}G_{\overline D}(\lambda) \, d\lambda=(d+1)!\int_{\Delta(\overline{D})^\circ}\max\{G_{\overline D}(\lambda),0\} \, d\lambda=\widehat{\vol}(\overline{D}).$$
By \cref{prop:the volume and concave of arith nef}~\ref{prop:hilbert-samuel for arith nef}, we have that $\widehat{\vol}(\overline{D})=(\overline{D}^{d+1}\mid U)_S$. This proves Step~1. 

\vspace{2mm} \noindent
Step 2: {\it If there is ${c}\in\mathscr{L}^1(\Omega,\mathcal{A},\nu)$ such that $\overline{D}(c)\in\widehat{\Div}_{S,\Q}(U)_\arnef$, then  $\widehat{\vol}_\chi^\num(\overline{D})=\overline{D}^{d+1}$.}

\vspace{2mm} \noindent

Let ${c}\in\mathscr{L}^1(\Omega,\mathcal{A},\nu)$ be the one in the assumption of Step~2. By \cref{properties of concave transforms}~\ref{shifting} and \cref{lemma:refokoun}, we have that $$\widehat{\vol}_{\chi}^\num(\overline D(c))=\widehat{\vol}_{\chi}^\num(\overline D) + (d+1)\int_{\Omega}c(\omega)\,\nu(d\omega)\cdot\vol(D).$$ On the other hand, by the properties of arithmetic intersection number on quasi-projective varieties (see \cite[Theorem~7.22]{cai2024abstract}), we can induce that  $$(\overline{D}({c})^{d+1}\mid U)_S = (\overline{D}^{d+1}\mid U)_S+(d+1)\int_{\Omega}c(\omega)\,\nu(d\omega)\cdot D^d.$$ By \cite[Theorem~5.2.2]{yuan2021adelic}, we have $\vol(D)=D^{d}$. Combining these with Step~1, we easily see that Step~2 holds.

\vspace{2mm} \noindent 
Step 3: {\it Theorem holds.} 	

\vspace{2mm} \noindent
Let $\overline{D'}= ({D},g')\in\widehat{\Div}_{S,\Q}({U})_{\arnef}$. By \cref{nef become strongly S-nef after shrinking U} and \cref{properties of concave transforms}~\ref{birational}, we can shrink $U$ and assume that there is a $\overline{B}\in N_{S,\Q}(U)$ such that $\overline{D}\in N_{S,\Q}(U)^{d_{\overline B}}$ (resp. $\overline{D'}\in N_{S,\Q}'(U)^{d_{\overline B}}$). We can assume that $g'\geq g$, otherwise, by \cite[Lemma~10.3~(ii)]{cai2024abstract}, we can replace $g'$ by $\max\{g',g\}$. By \cite[Lemma~10.6]{cai2024abstract}, there is an increasing sequence of $\nu$-integrable functions $(c_m)_{m\in \N_{\geq 1}}\subset \mathscr{L}^1(\Omega,\mathcal{A},\nu)$ such that $\overline{D_m}=(D,g_m')\coloneq (D,\max\{g,g'-c_m\})\in \widehat{\Div}_{S,\Q}({U})_{\relsnef}$ and $\overline{D_m}$ converges decreasingly to $\overline{D}$ with respect to the $\overline{B}$-boundary topology. By \cite[Theorem~10.9~(v)]{cai2024abstract}, we have that
\begin{align}\label{proof of main 1}
	\lim\limits_{m\to\infty}(\overline{D_m}^{d+1}\mid U)_S=(\overline{D}^{d+1}\mid U)_S.
\end{align}
On the other hand, by \cref{proposition:limits property for chi volume}~\ref{decreasing for limits property for chi volume}, we have that
\begin{align} \label{proof of main 2}
	\lim\limits_{m\to\infty}\widehat{\vol}_{\chi}^\num(\overline{D_m}) = \widehat{\vol}_{\chi}^\num(\overline{D}).
\end{align}
Notice that $\overline{D_m}(c_m)\geq \overline{D'}$ and $\overline{D'}\in \widehat{\Div}_{S,\Q}({U})_{\arnef}$, so $\overline{D_m}(c_m)\in \widehat{\Div}_{S,\Q}({U})_{\arnef}$ by \cite[Lemma~10.3]{cai2024abstract}.
 By Step~2, we have that
\[\widehat{\vol}_\chi^\num(\overline{D_m})=(\overline{D_m}^{d+1}\mid U)_S.\]
Take $m\to\infty$ on both sides and by \eqref{proof of main 1}, \eqref{proof of main 2}, we prove the theorem.
\end{proof}
\begin{remark}
\cref{thm:Hilbert-Samuel formula} gives another definition of arithmetic auto-intersection of relatively nef compactified line bundle in \cite{cai2024abstract}  when the underlying geometry line bundles are big by realising it as the integral of a concave function in analogy to the toric setting. In particular, the Hodge bundles on Shimura varieties and the Jacobi line bundles on the universal elliptic surface are \emph{relatively nef compactified line bundles}.
\end{remark}
The following corollary follows directly.

\begin{corollary}\label{cor:big if intersection number>0}
Let $\overline{D}=(D,g)\in\widehat{\Div}_{S,\Q}(U)_{\relnef}^{\arnef}$ with $D$ big. Then 
\[\widehat{\vol}(\overline{D})\geq (\overline{D}^{d+1}\mid U)_S.\]
In particular, if $\overline{D}^{d+1}>0$, then $\overline{D}$ is big.
\end{corollary}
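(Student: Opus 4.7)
The plan is to combine the Hilbert–Samuel formula of \cref{thm:Hilbert-Samuel formula} with the integral representation of the arithmetic volume in \cref{theorem:concavemain}, and then use the elementary pointwise inequality $\max\{t,0\} \geq t$ on the concave transform $G_{\overline{D}}$.

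Concretely, since $D$ is big and $\overline{D}\in \widehat{\Div}_{S,\Q}(U)_{\relnef}^{\arnef}$, \cref{thm:Hilbert-Samuel formula} gives
\[
(\overline{D}^{d+1}\mid U)_S = \widehat{\vol}_{\chi}^{\num}(\overline{D}) = (d+1)!\int_{\Delta(D)^\circ} G_{\overline{D}}(\lambda)\, d\lambda,
\]
where the last identity is the definition of $\widehat{\vol}_\chi^{\num}$. On the other hand, \cref{theorem:concavemain} gives
\[
\widehat{\vol}(\overline{D}) = (d+1)!\int_{\Delta(D)^\circ} \max\{G_{\overline{D}}(\lambda),0\}\, d\lambda.
\]
Since $\max\{t,0\}\ge t$ for every $t\in\R$, integrating termwise yields
\[
\widehat{\vol}(\overline{D}) \;\ge\; (d+1)!\int_{\Delta(D)^\circ} G_{\overline{D}}(\lambda)\, d\lambda \;=\; (\overline{D}^{d+1}\mid U)_S,
\]
which is the first claim.

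For the ``in particular'' statement: if $(\overline{D}^{d+1}\mid U)_S>0$, then the just-proved inequality forces $\widehat{\vol}(\overline{D})>0$, which is precisely the definition of bigness of $\overline{D}$ from \cref{def:volume}. There is no real obstacle here; the whole argument is a direct consequence of already-established results, with the only content being the comparison $\max\{G_{\overline{D}},0\}\ge G_{\overline{D}}$ that separates $\widehat{\vol}$ from $\widehat{\vol}_\chi^{\num}$.
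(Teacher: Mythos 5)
Your argument is correct and is precisely the ``direct'' deduction the paper intends: combine the identity $\widehat{\vol}_\chi^{\num}(\overline D)=(\overline D^{d+1}\mid U)_S$ from \cref{thm:Hilbert-Samuel formula} with the integral formula for $\widehat{\vol}(\overline D)$ from \cref{theorem:concavemain}, and compare the two integrands via $\max\{t,0\}\ge t$. The paper gives no written proof (it only states that the corollary ``follows directly''), so your write-up fills in exactly the intended steps.
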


We have the following height inequality generalizing \cite[Theorem~5.3.5~(2)]{yuan2021adelic}.
\begin{proposition}\label{prop:height inequality 2}
	Let $\overline{D}=(D,g), \overline{M}\in \widehat{\Div}_{S,\Q}(U)_\cpt$. If $\overline{D}\in \widehat{\Div}_{S,\Q}(U)_\relnef^\arnef$ and $D\in\widetilde{\Div}_{\Q}(U)_\cpt$ is big, then for any $c>0$, there is $\epsilon\in \Q_{>0}$ and a Zariski open, dense subset $V$ of $U$ such that
	\[h_{\overline{D}}(x)\geq \epsilon h_{\overline{M}}(x)+\frac{(\overline{D}^{d+1}\mid U)_S}{(d+1)D^{d}}-c, \ \ \text{ for any $x\in V(\overline{K})$.}\]
\end{proposition}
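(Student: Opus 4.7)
The plan is to apply the essential minimum inequality \cref{thm:essential minimum} to the perturbed divisor $\overline{D}-\epsilon\overline{M}$ and combine it with the arithmetic Hilbert--Samuel formula \cref{thm:Hilbert-Samuel formula}. For $\epsilon\in \Q_{>0}$ small enough, continuity of the geometric volume (\cref{geometric volume}~\ref{continuity of geometric volume}) guarantees that $D-\epsilon M\in \widetilde{\Div}_{\Q}(U)_\cpt$ remains big, so the concave transform $G_{\overline{D}-\epsilon\overline{M}}$ and the numerical $\chi$-volume $\widehat{\vol}_\chi^\num(\overline{D}-\epsilon\overline{M})$ are defined. \Cref{thm:essential minimum} then yields
\[\zeta_{\mathrm{ess}}(\overline{D}-\epsilon\overline{M})\geq \frac{\widehat{\vol}_\chi^\num(\overline{D}-\epsilon\overline{M})}{(d+1)\vol(D-\epsilon M)},\]
so using $h_{\overline{D}-\epsilon\overline{M}}(x)=h_{\overline{D}}(x)-\epsilon h_{\overline{M}}(x)$ and unwinding the definition of $\zeta_{\mathrm{ess}}$, it will be enough to find $\epsilon\in\Q_{>0}$ for which the right-hand side exceeds $(\overline{D}^{d+1}\mid U)_S/((d+1)D^d)-c$.

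By continuity of the geometric volume the denominator tends to $D^d$, and by Hilbert--Samuel \cref{thm:Hilbert-Samuel formula} the target numerator equals $\widehat{\vol}_\chi^\num(\overline{D})=(\overline{D}^{d+1}\mid U)_S$. Hence the task reduces to the lower semi-continuity
\[\liminf_{\epsilon\to 0^+}\widehat{\vol}_\chi^\num(\overline{D}-\epsilon\overline{M})\geq \widehat{\vol}_\chi^\num(\overline{D}).\]
To prove this I would first pick an effective arithmetically nef divisor $\overline{B}\in \widehat{\Div}_{S,\Q}(U)_\arnef$ dominating $\overline{M}$, i.e.\ $\overline{B}\pm\overline{M}\geq 0$ in $\widehat{\Div}_{S,\Q}(U)_\cpt$; such $\overline{B}$ exists because $\overline{M}$ is approximated in some $\overline{B_0}$-boundary topology by a Cauchy sequence in $\widehat{\Div}_{S,\Q}(U)_\CM$, and each approximant can be bounded in both directions by a large multiple of an $S$-ample divisor on a fixed projective model plus a multiple of $\overline{B_0}$. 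Monotonicity \cref{properties of concave transforms}~\ref{monotone} then provides the sandwich
\[G_{\overline{D}-\epsilon\overline{B}}\leq G_{\overline{D}-\epsilon\overline{M}}\leq G_{\overline{D}+\epsilon\overline{B}}\]
on the intersection of the relevant Okounkov body interiors, whose outer terms converge pointwise to $G_{\overline{D}}$ as $\epsilon\to 0^+$ by \cref{thm:convergence of concave transforms}, with the upper envelope bounded uniformly by $\widehat{\mu}^{\mathrm{asy}}_{\max}(\overline{D}+\overline{B})<\infty$ via \cref{properties of concave transforms}~\ref{inf and sup of G} and \cref{lemma:slopebounded}.

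To integrate the sandwich cleanly I would shift first: choose $c_0\in\mathscr{L}^1(\Omega,\mathcal{A},\nu)$ with $\int_\Omega c_0\,\nu(d\omega)$ large enough so that by \cref{properties of concave transforms}~\ref{shifting} the shifted concave transforms $G_{\overline{D}(c_0)-\epsilon\overline{M}}$ are uniformly non-negative on their Okounkov bodies for all small $\epsilon$. Under this shift \cref{corol:volsame} converts $\widehat{\vol}_\chi^\num(\overline{D}(c_0)-\epsilon\overline{M})$ into $\widehat{\vol}(\overline{D}(c_0)-\epsilon\overline{M})$, and the continuity of arithmetic volume \cref{theorem:continuityofvolumes} then yields the required convergence directly; unshifting via \cref{properties of concave transforms}~\ref{shifting} and $\vol(D-\epsilon M)\to D^d$ recovers the lower semi-continuity of $\widehat{\vol}_\chi^\num$. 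The main technical obstacle will be arranging this uniform shift: it amounts to producing a uniform lower bound on $G_{\overline{D}-\epsilon\overline{M}}$ independent of $\epsilon$, for which one needs $\widehat{\mu}^{\mathrm{asy}}_{\min}(\overline{D}-\overline{B})>-\infty$. Since $\overline{D}-\overline{B}$ is not in general arithmetically nef, this requires a separate argument, likely by bounding $\overline{D}-\overline{B}$ below by another arithmetically nef divisor using the colimit structure of $\widehat{\Div}_{S,\Q}(U)_\cpt$, or by an ad hoc comparison with a model divisor on a suitable projective compactification of $U$.
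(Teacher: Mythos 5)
Your plan hinges on a lower semicontinuity statement, $\liminf_{\epsilon\to 0^+}\widehat{\vol}_\chi^\num(\overline{D}-\epsilon\overline{M})\geq \widehat{\vol}_\chi^\num(\overline{D})$, and the mechanism you propose for it is exactly where the argument breaks. To apply \cref{corol:volsame} after shifting, you need the concave transforms $G_{\overline{D}(c_0)-\epsilon\overline{M}}$ (or $G_{\overline{D}(c_0)-\epsilon\overline{B}}$) to be non-negative \emph{uniformly in $\epsilon$}; this requires a uniform lower bound on $G_{\overline{D}-\epsilon\overline{B}}$, i.e.\ a finite lower bound on $\widehat{\mu}^{\mathrm{asy}}_{\min}$ along the perturbation. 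But $\overline{D}-\overline{B}$ is not arithmetically nef, and the relatively-nef hypothesis gives no control on $\widehat{\mu}^{\mathrm{asy}}_{\min}$ — indeed the proposition is stated with $\widehat{\vol}_\chi^\num$ precisely because $\widehat{\mu}^{\mathrm{asy}}_{\min}(\overline{D})$ itself may be $-\infty$. The pointwise convergence of $G_{\overline{D}\pm\epsilon\overline{B}}$ to $G_{\overline{D}}$ from \cref{thm:convergence of concave transforms} does not yield convergence of the integrals without an integrable lower envelope, and you acknowledge you do not know how to produce one. As it stands the proposal is incomplete.

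The paper avoids this difficulty entirely, and the key idea you miss is to shift by a \emph{constant} rather than perturb in the $\overline{M}$-direction, and then invoke the already-proved big-case height inequality. Concretely: assuming $(\overline{D}^{d+1}\mid U)_S>-\infty$ (otherwise the claim is vacuous), choose $\widetilde{c}\in\mathscr{L}^1(\Omega,\mathcal{A},\nu)$ with $\int_\Omega\widetilde{c}\,\nu(d\omega)=-\tfrac{(\overline{D}^{d+1}\mid U)_S}{(d+1)D^d}+c$. Since $D$ is nef and big, $\vol(D)=D^d>0$, and by multilinearity $(\overline{D}(\widetilde{c})^{d+1}\mid U)_S=(d+1)cD^d>0$; hence $\overline{D}(\widetilde{c})$ is big by \cref{cor:big if intersection number>0}, which is a direct consequence of the Hilbert--Samuel formula \cref{thm:Hilbert-Samuel formula}. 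Now \cref{theorem:heightinequality}~\ref{theorem:heightinequality1} applied to the big divisor $\overline{D}(\widetilde{c})$ gives $\epsilon\in\Q_{>0}$ and a dense open $V\subseteq U$ with $h_{\overline{D}(\widetilde{c})}(x)\geq\epsilon h_{\overline{M}}(x)$ on $V(\overline{K})$; unshifting via $h_{\overline{D}(\widetilde{c})}=h_{\overline{D}}+\int_\Omega\widetilde{c}\,\nu(d\omega)$ yields the stated inequality. This uses Hilbert--Samuel only to detect bigness after a shift and requires no limiting argument on concave transforms at all.
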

\begin{proof}
	We can assume that $(\overline{D}^{d+1}\mid U)_S>-\infty$. Let $c$ be an arbitrary positive number and $\widetilde{c}\in \mathscr{L}^1(\Omega,\mathcal{A},\nu)$ such that
	\[\int_\Omega\widetilde{c}(\omega)\, \nu(d\omega) = -\frac{(\overline{D}^{d+1}\mid U)_S}{(d+1)D^{d}}+c.\]
	Since $D$ is nef and big, we have that $\vol(D)=D^d>0$ by \cite[Theorem~5.2.2]{yuan2021adelic} and
	\[(\overline{D}(\widetilde{c})^{d+1}\mid U)_S= (\overline{D}^{d+1}\mid U)_S+(d+1)\cdot\int_\Omega\widetilde{c}(\omega)\, \nu(d\omega)\cdot D^{d} = (d+1)cD^{d}>0.\]
	Then $\overline{D}(\widetilde{c})$ is big by \cref{cor:big if intersection number>0}. By \cref{theorem:heightinequality}~\ref{theorem:heightinequality1}, there is $\epsilon>0$ and a Zariski open, dense subvariety $V$ of $U$ such that
	\[h_{\overline{D}(\widetilde{c})}=h_{\overline{D}}(x)+\int_\Omega\widetilde{c}(\omega)\, \nu(d\omega)\geq \epsilon h_{\overline{M}}(x), \ \ \text{ for any $x\in V(\overline{K})$.}\]
	This completes the proof.
	\end{proof}
\section{Equidistribution on quasi-projective varieties over classical adelic curves and comparison with classical volumes}
\label{section: equidistribtion}
{In our final section, we obtain an equidistribution result for compactified metrized divisors on quasi-projective varieties over the classical global fields. 
We fix $C$ to be the spectrum of the ring of integers of a number field $K$ or a regular projective curve over a field $k$ with function field $K$, and denote the corresponding adelic curve given in \cref{adelic curve defined by number field and projective curves} by $S=(K,\Omega,\mathcal{A},\nu)$. Notice that $S$ has the strong tensorial minimal slope property of level $\geq C_0$ for some $C_0\in\R_{\geq 0}$ when $K$ is a number field; $S$ has the Minkowski property of level $\geq C_0$ for some $C_0\in\R_{\geq 0}$ when $K$ is the function field of a curve, see \cref{def:tensorialproperty}. 
We also fix a $d$-dimensional normal quasi-projective variety $U$ over $K$ and an admissible flag $Y_\bullet$ of $U_{\overline{K}}$.}

\subsection{Compactified $S$-metrized $\YZ$-divisors}
Recall the space of compactified $S$-metrized divisors $\widehat{\Div}_{S,\Q}(U)_\cpt$, the relatively nef cone $\widehat{\Div}_{S,\Q}(U)_{\relnef}$ and the arithmetically nef cone $\widehat{\Div}_{S,\Q}(U)_{\arnef}$ defined in \cref{def:CM divisors on non-proper}.

We will prove the arithmetic Hilbert-Samuel formula for the relatively nef compactified $S$-metrized $\YZ$-divisors which form a subset $\widehat{\Div}_{S,\Q}(U)_\relnef^\YZ$ of $\widehat{\Div}_{S,\Q}(U)_\relnef$. To introduce such divisors, we follow the process similar as \cref{def:CM divisors on non-proper}.

\begin{art} \label{model metrics and model Green functions}
	Let $X$ be a projective variety over $K$. A \emph{projective $C$-model} of $X$ is a flat, integral scheme $\mathcal{X}$ projective over $C$ with generic fiber $X$. We say $(D,g)\in \widehat{\Div}_{S,\Q}(X)$ is a \emph{model $S$-metrized} (\emph{$\Q$-})\emph{divisor} if it is induced by a pairing $(\mathcal{D},g_{\infty})$ on a projective $C$-model $\mathcal{X}$ of $X$ where $\mathcal{D}$ is a $\Q$-Cartier divisor on $\mathcal{X}$ and $g_{\infty}=\{g_\omega\}_{\omega\in\Omega_\infty}$ is a family of smooth Green functions for $D$ on $X^\an_\omega$ for each $\omega\in\Omega_\infty$. Set $\widehat{\Div}_{S,\Q}({X})_\mo$ to be the subspace of model $S$-metrized divisors in $\widehat{\Div}_{S,\Q}({X})$.
	We denote  \[N_{S,\Q}(X)_\mo\coloneq \widehat{\Div}_{S,\Q}({X})_\mo \cap N_{S,\Q}(X), \ \  N_{S,\Q}'(X)_\mo\coloneq \widehat{\Div}_{S,\Q}({X})_\mo \cap N_{S,\Q}'(X)\] (see \cref{global adelic Green functions for proper varieties} for $N_{S,\Q}(X), N_{S,\Q}'(X)$). 
\end{art}

\begin{art} \label{definition of compactified YZ-divisors}
	The space of \emph{model $S$-metrized} (\emph{$\Q$-})\emph{divisors} on $U$ is defined as the limit 
	\[ \widehat{\Div}_{S,\Q}(U)_{\mathrm{mo}}\coloneq\varinjlim_{X}\widehat{\Div}_{S,\Q}(X)_{\mathrm{mo}},\]
	where $X$ runs through all projective $K$-models of $U$. Similarly, we also denote
	\[N_{S,\Q}(U)_\mo\coloneq \varinjlim_{X}N_{S,\Q}(X)_\mo, \ \ N_{S,\Q}'(U)_\mo\coloneq \varinjlim_{X}N_{S,\Q}'(X)_\mo\]
	Let $T$ be the subset of weak boundary divisors in $\widehat{\Div}_{S,\Q}(U)_\mo$. For a weak boundary divisor $\overline{B}\in T$, we have a \emph{$\overline{B}$-boundary topology} on $\widehat{\Div}_{S,\Q}(U)_\mo$, see \cref{def:CM divisors on non-proper},
	and denote by $\widehat{\Div}_{S,\Q}(U)^{d_{\overline{B}}}_\mo$ (resp. $N_{S,\Q}(U)_\mo^{d_{\overline{B}}}$, resp. $N_{S,\Q}'(U)_\mo^{d_{\overline{B}}}$) the completion of $\widehat{\Div}_{S,\Q}(U)_\mo$ (resp. $N_{S,\Q}(U)_\mo$, resp. $N_{S,\Q}'(U)_\mo$) with respect to the $\overline{B}$-boundary topology. The space of \emph{compactified $S$-metrized $\YZ$-divisors} is defined as 
	\[\widehat{\Div}_{S,\Q}(U)_{\cpt}^{\YZ}\coloneq\varinjlim_{\overline{B}\in T}\widehat{\Div}_{S,\Q}(U)^{d_{\overline{B}}}_\mo.\]
	We also set
	\[\widehat{\Div}_{S,\Q}(U)_{\relsnef}^\YZ\coloneq\varinjlim_{\overline{B}\in T}N_{S,\Q}(U)_\mo^{d_{\overline{B}}}, \ \ \widehat{\Div}_{S,\Q}(U)_{\arsnef}^\YZ\coloneq\varinjlim_{\overline{B}\in T}N_{S,\Q}'(U)_\mo^{d_{\overline{B}}}\]
	\[\widehat{\Div}_{S,\Q}(U)_\relint^\YZ\coloneq \widehat{\Div}_{S,\Q}(U)^\YZ_\relsnef-\widehat{\Div}_{S,\Q}(U)^\YZ_\relsnef,\] 
	\[\widehat{\Div}_{S,\Q}(U)_\arint^\YZ\coloneq \widehat{\Div}_{S,\Q}(U)^\YZ_\arsnef-\widehat{\Div}_{S,\Q}(U)^\YZ_\arsnef,\]
	and denote by $\widehat{\Div}_{S,\Q}(U)_\relnef^\YZ$ (resp. $\widehat{\Div}_{S,\Q}(U)_\arnef^\YZ$)
	the closure of $\widehat{\Div}_{S,\Q}(U)_\relsnef^\YZ$ (resp. $\widehat{\Div}_{S,\Q}(U)_\arsnef^\YZ$) in $\widehat{\Div}_{S,\Q}(U)_\relint^\YZ$ (resp. $\widehat{\Div}_{S,\Q}(U)_\arint^\YZ$) with respect to the finite subspace topology. The elements of $\widehat{\Div}_{S,\Q}(U)_\relint^\YZ$ (resp. $\widehat{\Div}_{S,\Q}(U)_\relsnef^\YZ$, $\widehat{\Div}_{S,\Q}(U)_\relnef^\YZ$, $\widehat{\Div}_{S,\Q}(U)_\arint^\YZ$, $\widehat{\Div}_{S,\Q}(U)_\arsnef^\YZ$, $\widehat{\Div}_{S,\Q}(U)_\arnef^\YZ$) are called \emph{relatively integrable} (resp. \emph{strongly relatively nef}, \emph{relatively nef}, \emph{arithmetically integrable}, \emph{strongly arithmetically nef}, \emph{arithmetically nef}) compactified $S$-metrized $\YZ$-divisors of $U$.
	We have that $$\widehat{\Div}_{S,\Q}(U)^\YZ_\cpt\subset \widehat{\Div}_{S,\Q}(U)_\cpt\subset \widehat{\Div}_{S,\Q}(U)$$ and a forgetting homomorphism
	\[\widehat{\Div}_{S,\Q}(U)_\cpt^\YZ\to \widetilde{\Div}_{\Q}(U)_\cpt,\]
	where $\widetilde{\Div}_{\Q}(U)_\cpt$ is the space of compactified (geometric) divisors of $U$ defined in \cref{geometric intersection number}. As \cref{def:CM divisors on non-proper}, from now on we write an element $\overline{D}\in\widehat{\Div}_{S,\Q}(U)_\cpt^\YZ$ as $(D,g)$ with $D\in\widetilde{\Div}_{\Q}(U)_\cpt$ and $g$ an $S$-measurable, locally $S$-bounded $S$-Green function for $D|_U$.
\end{art}
\begin{remark}\label{volume results for YZ-divisors}
	Our definition above is slightly different from Yuan-Zhang's definition given in \cite[\S 2.4, \S 2.5.3]{yuan2021adelic} when $K$ is a number field, their space of adelic (compactified) divisors on $U$ is a subspace of $\widehat{\Div}_{S,\Q}(U)_\cpt^\YZ$, see \cite[Section 8]{cai2024abstract}. However, it is not hard to show that the results for (classical) arithmetic volumes (see \cref{definition:yuanzhangseries} below) in \cite[Section 5]{yuan2021adelic} can be generalized to $\widehat{\Div}_{S,\Q}(U)_\cpt^\YZ$.
\end{remark}

\begin{remark}
	Obviously, we have that $\widehat{\Div}_{S,\Q}(U)^\YZ_{\relnef}\subset \widehat{\Div}_{S,\Q}(U)_\cpt^\YZ\cap\widehat{\Div}_{S,\Q}(U)_{\relnef}$, it is natural to ask if the converse holds or not, i.e. if $\overline{D}$ is a relative nef compactified $S$-metrized divisor which is also a compactified $S$-metrized $\YZ$-divisor, is $\overline{D}$ a compactified $S$-metrized $\YZ$-divisor?
\end{remark}

\subsection{Classical arithmetic volume}
In this subsection, we compare the arithmetic volume of a compactified $S$-metrized $\YZ$-divisor $\overline{D}\in\widehat{\Div}_{S,\Q}(U)$ defined in \cref{def:volume} and the classical one defined in \cite[Definition~5.1.3]{yuan2021adelic}. 
The comparison relies on the results given in \S \ref{over global field}. Recall the definition of purification in \cref{purification}, and the notion of generically trivial $S$-normed vector spaces, coherent $S$-normed vector space in \cref{def:adelicvectorspaces}.
The space of global sections of an adelic line bundle over a projective variety with sup-norms in \cite{chen2020arakelov} is not necessarily generically trivial. It is similar for quasi-projective case. Recall the space of auxiliary sections $H^0_+(U,\overline{D})$ of a compactified $S$-metrized divisor defined in \cref{def:volume}, it is an adelic vector bundle on $S$ by \cref{H+ is adelic}. We introduce a subspace of $H^0_+(U,\overline{D})$ which will naturally give rise to coherent adelic vector bundles.

\begin{definition}
	\label{definition:yuanzhangseries}
	Let $\overline{D}\in {\widehat{\Div}_{S,\Q}(U)_\cpt}$. We define
	\[H^0_\YZ(U,\overline{D})\coloneq \{s\in H^0_+(U,\overline{D})\mid \|s\|_{\sup,\omega}\le 1\ \text{for almost all}\ \omega\in\Omega\},\]
	where $\metr_{\sup}=(\metr_{\sup, \omega})_{\omega\in\Omega}$ is the family of sup-norms on $H^0_+(U,\overline{D})$, see \cref{sup-norm}.
	Since $(H^0_+(U,\overline{D}), \metr_{\sup})$ is an adelic vector bundle, we have that $(H^0_\YZ(U,\overline{D}), \metr_{\sup})$ is a coherent adelic vector bundle. Set $\overline{V_{\YZ,m}}\coloneq (H^0_\YZ(U,m\overline{D}), \metr_{\sup,m})$, where $\metr_{\sup,m}$ is the family of sup-norms on $H^0_\YZ(U,m\overline{D})$. We get a graded $K$-algebra of coherent adelic vector bundles $\overline{V_{\YZ,\bullet}}=\{\overline{V_{\YZ,m}}\}_{m\in\N}$. We define the \emph{classical arithmetic volume} of $\overline{D}$ as 
	\[\widehat{\vol}^\YZ(\overline{D})\coloneq \widehat{\vol}^\YZ(\overline{V_{\YZ,\bullet}})=\limsup_{m\to\infty} \frac{\widehat{h}^0(\overline{V_{m,\YZ}})}{m^{d+1}/(d+1)!}\]
	(see \cref{def:classical arithvol} for the notion $\widehat{\vol}^\YZ(\cdot)$).
	By \cref{lemma:adelicvectorspaceisvectorbundle}~\ref{purification is adelic}, the purification $\overline{V_{\YZ,\pur,m}}$ of  $\overline{V_{\YZ,m}}$ is a pure generically trivial adelic vector bundle. Set $\overline{V_{\YZ,\pur,\bullet}}\coloneq\{\overline{V_{\YZ,\pur,m}}\}_{m\in\N}$. We define the \emph{classical arithmetic $\chi$-volume} of $\overline{D}$ as 
	\[\widehat{\vol}_\chi^\YZ(\overline{D})\coloneq \limsup_{m\to\infty} \frac{\chi(\overline{V_{m,\YZ,\pur}})}{m^{d+1}/(d+1)!}\]
	(see \cref{def:classical arithvol} for the notion $\widehat{\vol}^\YZ_\chi(\cdot)$).
	Notice that $\widehat{\vol}^\YZ(\overline{D})=\widehat{\vol}^\YZ(\overline{V_{\YZ,\pur,\bullet}})$ by \cref{rmk: small section of purification}.
\end{definition}
\begin{remark} \label{generize YZ limits of vol}
	Let $\overline{D}\in\widehat{\Div}_{S,\Q}(U)_\cpt^\YZ$. Notice that our definition of $\widehat{\vol}^\YZ(\overline{D})$ coincide with the one in \cite[Definition~5.1.3~(2)]{yuan2021adelic}. Following a similar statement as \cite[Theorem~5.2.1]{yuan2021adelic}, we know that the limit $\widehat{\vol}^\YZ(\overline{D})=\lim\limits_{m\to\infty} \frac{\widehat{h}^0(\overline{V_{m,\YZ}})}{m^{d+1}/(d+1)!}$ exists and if a sequence of model $S$-metrized divisor $(\overline{D_j})_{j\in \N_{\geq 1}}\subset \widehat{\Div}_{S,\Q}(U)_\cpt^\YZ$ converges to $\overline{D}$ with respect to the $\overline{B}$-boundary topology for some weak boundary divisor $\overline{B}$ in $\widehat{\Div}_{S,\Q}(U)_\mo$, then
	\[\widehat{\vol}^\YZ(\overline{D})=\lim\limits_{j\to\infty}\widehat{\vol}^\YZ(\overline{D_j}).\] 
\end{remark}
\begin{example} \label{example:classicalcase}
	Assume that $K$ is a number field. Let $X$ be a projective variety over $K$, $\mathcal{X}$ a projective $C$-model of $X$ with the structure morphism $\varphi\colon \mathcal{X}\to C$, and $\overline{\mathcal{D}}=(\mathcal{D}, g_{\infty})$ a metrized divisor on $\mathcal{X}$ corresponding to a line bundle with smooth metrics. We can assume that $\mathcal{X}$ is integrally closed in $X$. Let $H^0(\mathcal{X},\mathcal{D})$
	be the space of global sections of $\mathcal{L}\coloneq\OO_{\mathcal{X}}(\mathcal{D})$ which is an $\OO_K$-module. Then $H^0(X,D)=H^0(\mathcal{X},\mathcal{D})\otimes_{\OO_K}K$. We consider an $S$-normed $\metr_{\overline{\mathcal{D}}}$ on $H^0(X,D)$ defined as follows: 
	\begin{itemize}
		\item For any $\omega\in \Omega_\infty$ and $s\in H^0(X,D)\otimes_KK_\omega$, we set
		\[\|s\|_{\overline{\mathcal D},\omega}\coloneq \|s\|_{\sup,\omega}=\sup\limits_{x\in X_\omega^\an}\|s(x)\|_{\omega}.\]
		\item For any $\omega\in\Omega_{\mathrm{fin}}$ and $s\in H^0(X,D)\otimes_KK_\omega$, we set 
		\[\|s\|_{\overline{\mathcal D},\omega}\coloneq\inf\{|\alpha|_\omega\mid \alpha\in K_\omega^\times, s\in \alpha (H^{0}(\mathcal{X},\mathcal{D})\otimes_{\OO_K}K_\omega^\circ)\}.\]
	\end{itemize} 	
	On the other hand, $\overline{\mathcal{D}}$ determines an element $\overline{D}=(D,g)\in \widehat{\Div}_{S,\Q}(X)_\mo$, and we have a family of sup-norms $\metr_{\sup}=\{\metr_{\sup,\omega}\}_{\omega\in\Omega}$ on $H^0(X,D)$, see \cref{sup-norm}. Then the following statements hold:
	\begin{enumerate1}
		\item \label{ex:pure} The $S$-norm vector space $(H^0(X, D), \metr_{\overline{\mathcal{D}}})$ is a pure generically trivial adelic vector bundle.
		\item \label{ex:sup eq pure almost} The purification of  $\metr_{\sup}$ is $\metr_{\overline{\mathcal D}}$, and $\metr_{\sup,\omega}=\metr_{\overline{\mathcal{D}},\omega}$ for all but finitely many $\omega\in\Omega$ (in fact, $\metr_{\sup,\omega} = \metr_{\overline{\mathcal{D}},\omega}$ if the special fiber of $\mathcal{X}$ over $\omega\in \Omega_{\mathrm{fin}}$ is reduced). In particular,  $H^0_\YZ(X,\overline{D})=H^0_+(X,\overline{D}) = H^0(X,D)$, and $(H^0(X, D), \metr_{\sup})$ is a generically trivial adelic vector bundle.
		\item \label{ex:volume and yz volume} Recall the arithmetic volume $\widehat{\vol}(\overline{D})$ and arithmetic $\chi$-volume $\widehat{\vol}_\chi(\overline{D})$ of $\overline{D}$ defined in \cref{def:arithvol}. Write $\overline{V_{\overline{\mathcal{D}},\bullet}}=\{(H^0(X,mD),\metr_{m\overline{\mathcal D}})\}_{m\in\N}$. Then
		\[\widehat{\vol}(\overline{D})=\widehat{\vol}^\YZ(\overline{D})=\widehat{\vol}^\YZ(\overline{V_{\overline{\mathcal D},\bullet}})= \widehat{\vol}(\overline{V_{\overline{\mathcal D},\bullet}}),\]
		\[\widehat{\vol}_\chi(\overline{D})=\widehat{\vol}_\chi^\YZ(\overline{D})=\widehat{\vol}^\YZ_\chi(\overline{V_{\overline{\mathcal D},\bullet}})= \widehat{\vol}_\chi(\overline{V_{\overline{\mathcal D},\bullet}}).\]
		Notice that $\widehat{\vol}^\YZ(\overline{V_{\overline{\mathcal D},\bullet}})$ (resp. $\widehat{\vol}^\YZ_\chi(\overline{V_{\overline{\mathcal D},\bullet}})$) is exactly the arithmetic volume (resp. arithmetic $\chi$-volume) of $\overline{\mathcal{D}}$ defined \cite[Section~4]{moriwaki2008continuity} (resp. \cite[\S 1.3]{yuan2009volumes}), see also \cite[Definition~3.14]{burgos2016arithmetic}. 
	\end{enumerate1}
	When $K$ is a function field of a curve over a field $k$, with the same notation, the module $H^0(\mathcal{X}, \mathcal{D})$ is replaced by $\varphi_*\OO_{\mathcal{X}}(\mathcal{D})$  on $C$ and we can define $\metr_{\overline{D}}$ on $H^0(X,D)$ similarly. Then the statements \ref{ex:pure}, \ref{ex:sup eq pure almost}, \ref{ex:volume and yz volume} also hold.
	
	We will show \ref{ex:pure}, \ref{ex:sup eq pure almost}, \ref{ex:volume and yz volume}. For \ref{ex:pure}, notice that $H^{0}(\mathcal{X},\mathcal{D})\otimes_{\OO_K}K_\omega^\circ$ is a lattice of $H^{0}({X},{D})\otimes_{K}K_\omega$ in the sense of \cite[Definition~1.1.23]{chen2020arakelov}, then $(H^0(X, D), \metr_{\overline{\mathcal{D}}})$ is pure by \cref{rmk: pure at non-archimedean}. 
	Write $\varphi\colon \mathcal{X}\rightarrow C$ the structure morphism. Since $\varphi_*\OO_{\mathcal{X}}(\mathcal{D})$ is free locally around the generic point of $C$, by choosing an open subset $W\subset C$ and an $\OO_W$-basis $\mathbf{e}$ of $\varphi_*\OO_{\mathcal{X}}(\mathcal{D})$ on $W$, it can be seen easily that $(H^0(X, D), \metr_{\overline{\mathcal{D}}})$ is generically trivial given by the basis $\mathbf{e}$. 
	
	For \ref{ex:sup eq pure almost}, by \cite[Proposition~6.3]{boucksom2021spaces}, for any $\omega\in\Omega$, if the special fiber of $\mathcal{X}$ over $\omega\in \Omega_{\mathrm{fin}}$ is reduced, then $\metr_{\overline{\mathcal{D}},\omega}=\metr_{\sup,\omega}$. Hence $\metr_{\overline{\mathcal{D}},\omega}=\metr_{\sup,\omega}$ for all but finitely many $\omega\in\Omega$, this implies that $H^0_\YZ(X,\overline{D})=H^0_+(X,\overline{D}) = H^0(X,D)$, and $(H^0(X,D), \metr_{\sup})$ is a generically trivial adelic vector bundle. Moreover, since $\mathcal{X}$ is assumed to be integrally closed in $X$, 	by \cite[Lemma~3.3.3]{yuan2021adelic}, we have that
	\[H^0(\mathcal{X},{\mathcal D})=\{s\in H^0(X,D)\mid \|s\|_{\sup,\omega}\leq 1 \text{ for any $\omega\in\Omega_{\mathrm{fin}}$}\}.\]
	Since $(H^0(X,D), \metr_{\sup})$ is coherent, by \cite[Proposition~4.4.2]{chen2020arakelov}, for any $\omega\in\Omega_{\mathrm{fin}}$, we have that
	\[H^0(\mathcal{X},{\mathcal D})\otimes _{\OO_K}K^\circ_\omega=\{s\in H^0(X,D)\otimes_KK_\omega\mid \|s\|_{\sup,\omega}\leq 1\},\]
	this implies that the purification of $\metr_{\sup}$ is $\metr_{\overline{\mathcal{D}}}$ by the definitions.
	
	For \ref{ex:volume and yz volume}, by \ref{ex:sup eq pure almost} and \cref{rmk: small section of purification}, we have that $$\widehat{\vol}^\YZ(\overline{D})=\widehat{\vol}^\YZ(\overline{V_{\overline{\mathcal D},\bullet}}), \ \  \widehat{\vol}_\chi^\YZ(\overline{D})=\widehat{\vol}_\chi^\YZ(\overline{V_{\overline{\mathcal D},\bullet}}).$$ By \ref{ex:pure} and \cref{arith volume and classical arith volume}, we have that $$\widehat{\vol}^\YZ(\overline{V_{\overline{\mathcal D},\bullet}})= \widehat{\vol}(\overline{V_{\overline{\mathcal D},\bullet}}), \ \  \widehat{\vol}^\YZ_\chi(\overline{V_{\overline{\mathcal D},\bullet}})= \widehat{\vol}_\chi(\overline{V_{\overline{\mathcal D},\bullet}}).$$ By \cite[Theorem~7.5.9]{chen2020arakelov}, the graded $K$-algebra of generically trivial adelic vector bundle $\{(H^0(X,m\overline{D}), \metr_{m,\sup})\}_{m\in\N}$ is asymptotically pure, where $\metr_{m,\sup}$ is the sup-norm on $H^0(X,m\overline{D})$. By \ref{ex:sup eq pure almost} and \cref{arith volume and classical arith volume} again, we have that \[\widehat{\vol}(\overline{D})=\widehat{\vol}^\YZ(\overline{D}), \ \ \widehat{\vol}_\chi(\overline{D})=\widehat{\vol}^\YZ_\chi(\overline{D}).\] This completes the proof of \ref{ex:volume and yz volume}.
\end{example}
Finally we can state the main result of this subsection which shows that the arithmetic volumes and the classical arithmetic volumes coincide.
\begin{proposition}
	\label{theorem:consistentvolumedef}
	Let $\overline{D}\in \widehat{\Div}_{S,\Q}(U)_\cpt$. Then 
	\[\widehat{\vol}^\YZ(\overline{D})\leq \widehat{\vol}(\overline{D}).\]
	The equality holds if $\overline{D}\in \widehat{\Div}_{S,\Q}(U)^\YZ_\cpt$.
\end{proposition}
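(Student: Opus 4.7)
The strategy is to express both $\widehat{\vol}^\YZ(\overline{D})$ and $\widehat{\vol}(\overline{D})$ in a way that lets us apply the comparison result \cref{arith volume and classical arith volume} from \S\ref{over global field}. The first step is to check that the graded $K$-algebra $\overline{V_{\YZ,\bullet}}=\{(H^0_\YZ(U,m\overline{D}),\metr_{\sup,m})\}_{m\in\N}$ is a graded $K$-algebra of coherent adelic vector bundles: each $\overline{V_{\YZ,m}}$ sits as a $K$-subspace of the adelic vector bundle $\overline{V_{\overline{D},m}}$ furnished by \cref{H+ is adelic} (hence is itself an adelic vector bundle under the restricted sup-norm), coherence is built into the defining inequality $\|s\|_{\sup,\omega}\le 1$ for almost every $\omega$, and submultiplicativity of sup-norms under tensor product gives the graded-algebra structure.

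For the inequality, by definition $\widehat{\vol}^\YZ(\overline{D}) = \widehat{\vol}^\YZ(\overline{V_{\YZ,\bullet}})$. Applying \cref{arith volume and classical arith volume} to $\overline{V_{\YZ,\bullet}}$ yields
\[\widehat{\vol}^\YZ(\overline{V_{\YZ,\bullet}})\leq \widehat{\vol}(\overline{V_{\YZ,\bullet}}),\]
where the right-hand side is computed via positive Arakelov degrees. Since the inclusion $\overline{V_{\YZ,m}}\hookrightarrow \overline{V_{\overline{D},m}}$ is an inclusion of $K$-subspaces carrying the restricted sup-norm, and $\widehat{\deg}_+$ is defined as a supremum over subspaces with the restricted norm, we obtain $\widehat{\deg}_+(\overline{V_{\YZ,m}})\le \widehat{\deg}_+(\overline{V_{\overline{D},m}})$ termwise, hence $\widehat{\vol}(\overline{V_{\YZ,\bullet}})\le \widehat{\vol}(\overline{D})$. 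Combining the two inequalities yields the desired bound.

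For the equality when $\overline{D}\in \widehat{\Div}_{S,\Q}(U)^\YZ_\cpt$, choose a sequence $(\overline{D_j})_{j\in\N_{\ge 1}}$ of model $S$-metrized divisors in $\widehat{\Div}_{S,\Q}(U)_\mo$ converging to $\overline{D}$ in some $\overline{B}$-boundary topology. After replacing the projective models by their normalizations in $U$, each $\overline{D_j}$ is induced by a model divisor on a projective $C$-model of some $X_j$, and \cref{remark:auxiliariy for divisors from projective model} identifies $H^0_+(U,m\overline{D_j})$ with $H^0(X_j,mD_j)$ together with the corresponding sup-norms. Moreover \cref{example:classicalcase}\ref{ex:sup eq pure almost} shows that in this model-divisor situation on a projective variety, $H^0_\YZ$ coincides with $H^0_+$. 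Hence \cref{example:classicalcase}\ref{ex:volume and yz volume} gives $\widehat{\vol}(\overline{D_j})=\widehat{\vol}^\YZ(\overline{D_j})$ for every $j$. Passing to the limit using the continuity of $\widehat{\vol}$ in \cref{theorem:continuityofvolumes} and the continuity of $\widehat{\vol}^\YZ$ noted in \cref{generize YZ limits of vol} delivers $\widehat{\vol}(\overline{D})=\widehat{\vol}^\YZ(\overline{D})$.

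The main obstacle I anticipate is verifying carefully that the two viewpoints on a model $S$-metrized divisor $\overline{D_j}$ --- one as an element of $\widehat{\Div}_{S,\Q}(U)^\YZ_\cpt$ computed with the auxiliary spaces $H^0_+(U,m\overline{D_j})$, the other as an element of $\widehat{\Div}_{S,\Q}(X_j)_\mo$ computed as in \cref{example:classicalcase} --- produce the same sup-norms and thus the same arithmetic volumes. This reduces to density of $U^\an_\omega$ in $(X_j)^\an_\omega$ together with normality of $X_j$ and its integral closure in $U$, both of which are already ingredients of \cref{remark:auxiliariy for divisors from projective model} and \cref{example:classicalcase}\ref{ex:sup eq pure almost}, so the technical input is available and the argument should be a careful but routine assembly.
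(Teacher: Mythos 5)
Your proof is correct. For the inequality $\widehat{\vol}^\YZ(\overline{D})\leq \widehat{\vol}(\overline{D})$ your chain $\widehat{\vol}^\YZ(\overline{D})=\widehat{\vol}^\YZ(\overline{V_{\YZ,\bullet}})\le\widehat{\vol}(\overline{V_{\YZ,\bullet}})\le\widehat{\vol}(\overline{V_{\overline{D},\bullet}})=\widehat{\vol}(\overline{D})$ matches the paper's in substance; the paper first passes to the purification $\overline{V_{\YZ,\pur,\bullet}}$, for which \cref{arith volume and classical arith volume} delivers an equality rather than an inequality, and then uses the norm-contractive injection $\overline{V_{\YZ,\pur,m}}\hookrightarrow\overline{V_m}$, but the net effect is the same.

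For the equality when $\overline{D}\in \widehat{\Div}_{S,\Q}(U)^\YZ_\cpt$ your route is genuinely a bit different from the paper's, and it is worth noting. You take an arbitrary approximating sequence $(\overline{D_j})\subset\widehat{\Div}_{S,\Q}(U)_\mo$ and invoke two-sided continuity, namely \cref{theorem:continuityofvolumes} for $\widehat{\vol}$ and \cref{generize YZ limits of vol} for $\widehat{\vol}^\YZ$. The paper instead works one-sidedly: it takes an upper approximation $\overline{D'}\ge\overline{D}$ with $\overline{D'}\in\widehat{\Div}_{S,\Q}(U)_\mo$, notes $\widehat{\vol}^\YZ(\overline{D})\le\widehat{\vol}(\overline{D})\le\widehat{\vol}(\overline{D'})=\widehat{\vol}^\YZ(\overline{D'})$ by monotonicity (\cref{rmk:monotone of volume}) and \cref{example:classicalcase}~\ref{ex:volume and yz volume}, and then lets $\overline{D'}$ decrease to $\overline{D}$ using only the continuity of $\widehat{\vol}^\YZ$ from \cref{generize YZ limits of vol}. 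Your argument is a little shorter to state but leans on the heavier internal result \cref{theorem:continuityofvolumes}; the paper's version is more parsimonious, relying only on monotonicity of $\widehat{\vol}$ and the Yuan--Zhang continuity of $\widehat{\vol}^\YZ$. Both are valid, and the identification of $H^0_\YZ(U,m\overline{D_j})=H^0_+(U,m\overline{D_j})=H^0(X_j,mD_j)$ you extract from \cref{remark:auxiliariy for divisors from projective model} and \cref{example:classicalcase}~\ref{ex:sup eq pure almost} (after passing to integral closures in $U$ and in $X_j$) is exactly the bridge that makes \cref{example:classicalcase}~\ref{ex:volume and yz volume} applicable to each $\overline{D_j}$.
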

\begin{proof}
	Let $\overline{V_\bullet}=\{\overline{V_m}\}_{m\in\N}\coloneq \{(H^0_+(X,\overline{D}),\metr_{\sup,m})\}_{m\in\N}$ (resp. $\overline{V_{\YZ,\bullet}}=\{\overline{V_{\YZ,m}}\}_{m\in\N}\coloneq \{(H^0_\YZ(X,\overline{D}),\metr_{\sup,m})\}_{m\in\N}$), where $\metr_{\sup,m}$ is the family of sup-norms on $H^0_+(U,m\overline{D})$. Let $\overline{V_{\YZ,\pur,m}}$ be the purification of $\overline{V_{\YZ,m}}$, and $\overline{V_{\YZ,\pur,\bullet}}\coloneq \{\overline{V_{\YZ,\pur,m}}\}_{m\in\N}$. By Remark~\ref{rmk: small section of purification} and \cref{arith volume and classical arith volume}, we have that $$\widehat{\vol}^{\YZ}(\overline{D})=\widehat{\vol}^{\YZ}(\overline{V_{\YZ,\pur,\bullet}})=\widehat{\vol}(\overline{V_{\YZ,\pur,\bullet}}).$$ Note that there is a canonical norm-contractive injection of adelic vector bundles
	\[f_m\colon\overline{V_{\YZ,\pur,m}}\xhookrightarrow{} \overline{V_m},\]
	i.e. the operator norms $\|f_m\|_\omega\le 1$ at all $\omega\in\Omega$ since $\metr_\omega\le \metr_{\omega,\pur}$. It is easy to deduce then from \cite[Proposition~4.3.18]{chen2020arakelov} that 
	\[\widehat{\deg}_+(\overline{V_{\YZ,\pur,m}})\le \widehat{\deg}_+(\overline{V_m})\]
	which clearly shows that $\widehat{\vol}(\overline{V_{\YZ,\pur,\bullet}})\le \widehat{\vol}(\overline{D})$. This proves the inequality. 
	
	If $\overline{D}\in \widehat{\Div}_{S,\Q}(U)^\YZ_\cpt$, we choose any model $S$-metrized divisor $\overline{D'}$ on some projective model of $U$ with $\overline{D}\leq \overline{D'}$. Then by \cref{example:classicalcase}~\ref{ex:volume and yz volume}, we have that $\widehat{\vol}(\overline{D'})=\widehat{\vol}^\YZ(´\overline{D'})$.  However the effectivity relation $\overline{D}\le \overline{D'}$ shows by arguments just like before that 
	\[\widehat{\vol}^\YZ(\overline{D})\le\widehat{\vol}(\overline{D})\le \widehat{\vol}(\overline{D'})=\widehat{\vol}^{\YZ}(\overline{D'}).\]
	Now choosing a sequence of such model $S$-metrized divisors $\overline{D'}$ converging to $\overline{D}$ with respect to the $\overline{B}$-boundary topology for some weak boundary divisor $\overline{B} \in \widehat{\Div}_{S,\Q}(U)_\mo$, by the generalization of \cite[Theorem 5.2.1]{yuan2021adelic}, see Remark~\ref{generize YZ limits of vol}, one has $\widehat{\vol}^\YZ(\overline{D})= \widehat{\vol}(\overline{D})$.
\end{proof}
\begin{remark}
	Similarly, for $\overline{D}\in \widehat{\Div}_{S,\Q}(U)_\cpt$, we have that	$\widehat{\vol}_\chi^\YZ(\overline{D})\leq \widehat{\vol}_\chi(\overline{D})$. We may expect that the equality holds if $\overline{D}\in \widehat{\Div}_{S,\Q}(U)^\YZ_\cpt$, but it is unknown. 
\end{remark}

\cref{theorem:consistentvolumedef} allows us to translate the results for in \cite[\S~5.2]{yuan2021adelic} in the language of our paper (although $\widehat{\Div}_{S,\Q}(U)_\cpt^\YZ$ is slightly large than the group of adelic divisors defined in \cite{yuan2021adelic}).
\subsection{Suitably approximating sequences}
In this subsection, we construct suitable Cauchy sequences of compactified $S$-metrized divisors which will be crucial for obtaining arithmetic Hilbert-Samuel formula later. 

\begin{lemma} \label{lemma: max is YZ divisor}
	Let $\overline{{D}}=(D,g), \overline{D'}=(D',g')\in \widehat{\Div}_{S,\Q}({U})_\relsnef$ be given as a limit of a sequence in $N_{S,\Q}(U)$ with respect to the $\overline{B}$-boundary topology for some boundary divisor $\overline{B}\in \widehat{\Div}_{S,\Q}(U)$. Assume that ${D} \geq {D}'$ in $\widetilde{\Div}_\Q(U)_\cpt$ and
	write 
	$h\coloneq \max\{g,g'\}$. 	
	\begin{enumerate1}
		\item \label{max is strongly relatively nef} If we can choose $\overline B$ in $N_{S,\Q}(U)_\mo$, then $h$ is an $S$-Green function for $D\in \widetilde{\Div}_{\Q}(U)_\cpt$, and $(D,h) \in \widehat{\Div}_{S,\Q}(U)_\relsnef^\YZ$. 
		\item \label{max is strongly S-nef} If we can choose $\overline B$ in $N_{S,\Q}(U)_\mo$  and if additionally $(D,g)\in N'_{S,\Q}(U)_\mo^{d_{\overline B}}$, then  $(D,h)\in \widehat{\Div}_{S,\Q}(U)_\arsnef^\YZ$.
	\end{enumerate1}
\end{lemma}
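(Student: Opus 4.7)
The structure of the proof will closely parallel \cite[Lemma~10.3]{cai2024abstract}, with a crucial extra layer of approximation to ensure that the resulting Cauchy sequence of $S$-metrized divisors stays in the $\YZ$-class $N_{S,\Q}(U)_\mo$ rather than the larger class $N_{S,\Q}(U)$. First, by the definition of $N_{S,\Q}(X)$ on each projective $K$-model, any element of $N_{S,\Q}(U)$ is a uniform limit at every place of elements of $N_{S,\Q}(U)_\mo$. Combining this with the given Cauchy sequences defining $\overline{D}$ and $\overline{D'}$ in the $\overline{B}$-boundary topology, a diagonal construction produces sequences $\overline{D_n}=(D_n,g_n)\in N_{S,\Q}(U)_\mo$ and $\overline{D'_n}=(D'_n,g'_n)\in N_{S,\Q}(U)_\mo$ converging to $\overline{D}$ and $\overline{D'}$ respectively in the $\overline{B}$-boundary topology. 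Writing $-\varepsilon_n\overline{B}\leq \overline{D}-\overline{D_n}\leq \varepsilon_n\overline{B}$ and similarly for $\overline{D'}$ with $\varepsilon_n\to 0$, I replace $\overline{D_n}$ by $\overline{D_n}+2\varepsilon_n\overline{B}$; since $\overline{B}\in N_{S,\Q}(U)_\mo$, this modification keeps the sequence in $N_{S,\Q}(U)_\mo$, preserves the $\overline{B}$-limit, and arranges $\overline{D_n}\geq\overline{D'_n}$ for every $n$.

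\textbf{The max at finite level.} Set $h_n\coloneq \max\{g_n,g'_n\}$. To see that $h_n$ is an $S$-Green function for $D_n$, I would adapt the local argument from \cite[Lemma~10.3]{cai2024abstract}: working locally at $\omega\in\Omega$ with a local equation $f$ of $D_n$, writing $f=f'\cdot e$ where $f'$ is a local equation of $D'_n$ and $e$ is a local equation of the effective divisor $D_n-D'_n$, one has $g'_{n,\omega}+\log|f|=(g'_{n,\omega}+\log|f'|)+\log|e|$, upper semicontinuous and bounded above, and equal to $-\infty$ exactly on $|D_n-D'_n|$. Where $g'_{n,\omega}+\log|f|=-\infty$ the continuous function $g_{n,\omega}+\log|f|$ dominates, so $h_{n,\omega}+\log|f|=g_{n,\omega}+\log|f|$ locally; everywhere else both terms are continuous, and the max is continuous. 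Moreover, since the max of two continuous semi-positive metrics is again a continuous semi-positive metric (in archimedean case by plurisubharmonicity, in non-archimedean case by the theory of semi-positive metrics in \cref{local nef divisor}), we get $(D_n,h_n)\in N_{S,\Q}(U)$. Passing to the $\overline{B}$-limit, the same analysis applied to $g,g'$ shows that $h=\max\{g,g'\}$ is an $S$-Green function for $D$.

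\textbf{Conclusion of (i) and proof of (ii).} To promote each $(D_n,h_n)$ from $N_{S,\Q}(U)$ into $N_{S,\Q}(U)_\mo^{d_{\overline{B}}}$, I invoke once more the definition of $N_{S,\Q}$ as uniform limits of $N_{S,\Q}(U)_\mo$ at each place: a second diagonal argument produces $(D_n,\tilde h_n)\in N_{S,\Q}(U)_\mo$ approximating $(D_n,h_n)$ in the $\overline{B}$-boundary topology. Combined with the Lipschitz estimate $|h-h_n|\leq |g-g_n|+|g'-g'_n|$ (pointwise at every $\omega$), which gives $\overline{D_n}\coloneq(D_n,h_n)\to (D,h)$ in the $\overline{B}$-boundary topology, we conclude $(D,h)\in N_{S,\Q}(U)_\mo^{d_{\overline{B}}}\subseteq \widehat{\Div}_{S,\Q}(U)^\YZ_\relsnef$, proving \ref{max is strongly relatively nef}. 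For \ref{max is strongly S-nef}, the additional hypothesis lets me take $\overline{D_n}=(D_n,g_n)\in N'_{S,\Q}(U)_\mo$ ($S$-nef model) while keeping $\overline{D'_n}\in N_{S,\Q}(U)_\mo$. Writing $(D_n,h_n)=(D_n,g_n)+(0,h_n-g_n)$ with $h_n-g_n=\max\{0,g'_n-g_n\}\geq 0$, and using that the addition of a non-negative $S$-semi-positive perturbation to an $S$-ample model divisor preserves the $S$-ampleness inequality (by multilinearity of the arithmetic intersection number along the added direction), the model approximants to $(D_n,h_n)$ can be chosen in $N'_{S,\Q}(U)_\mo$; the same diagonal argument then places $(D,h)$ in $N'_{S,\Q}(U)_\mo^{d_{\overline{B}}}\subseteq \widehat{\Div}_{S,\Q}(U)^\YZ_\arsnef$.

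\textbf{Main obstacle.} The hard step is the second approximation: showing that a continuous semi-positive Green function $h_n$ obtained as the max of two model semi-positive Green functions can itself be uniformly approximated by model semi-positive Green functions at every place, uniformly in $n$ enough to run the diagonal, and—in part~\ref{max is strongly S-nef}—by $S$-nef model divisors. At archimedean places this is Demailly regularization; at non-archimedean places with non-trivial valuation it rests on Boucksom--Favre--Jonsson-type density and Gubler--K\"unnemann's model approximation results, which are built into the definition of $N_{S,\Q}$ from \cref{local nef divisor} and \cref{global adelic Green functions for proper varieties}. Tracking $S$-nefness under this approximation is the most delicate point in \ref{max is strongly S-nef}, requiring the intersection-theoretic characterization of $S$-ampleness together with the openness of the $S$-ample cone in the finite subspace topology.
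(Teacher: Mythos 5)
Your high-level structure mirrors the paper's (approximate $\overline{D},\overline{D'}$ by model divisors, form the max at finite level, show each max stays in the $\YZ$-class, pass to the limit), and you correctly flag the crux—landing inside $N_{S,\Q}(U)_\mo$ rather than the larger $N_{S,\Q}(U)$—as the hard part. However, the resolution you sketch does not close the gap. You propose to approximate $h_n=\max\{g_n,g'_n\}$ by model semi-positive Green functions \emph{place by place} (Demailly at $\Omega_\infty$, Boucksom--Favre--Jonsson/Gubler--K\"unnemann type density at $\Omega_{\mathrm{fin}}$). But $N_{S,\Q}(U)_\mo$ is defined by a \emph{global} constraint: the whole family $(g_\omega)_\omega$ must be induced by a single pair $(\mathcal{D},g_\infty)$ on one projective $C$-model of a projective $K$-model, so that the Green functions at all but finitely many non-archimedean places are forced to come from that one model. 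A place-wise approximation scheme does not by itself produce such a single $C$-model; your "second diagonal argument" therefore only lands in $N_{S,\Q}(U)^{d_{\overline B}}$, which gives $(D,h)\in\widehat{\Div}_{S,\Q}(U)_\relsnef$ but not the required $(D,h)\in\widehat{\Div}_{S,\Q}(U)_\relsnef^\YZ$.

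The missing idea is the finiteness observation that drives the paper's Step 1: since $D_n\geq D'_n$ on the generic fiber, and $\mathcal{D}_n,\mathcal{D}'_n$ are $\Q$-Cartier divisors on a common projective $C$-model, the inequality $\mathcal{D}_n\geq\mathcal{D}'_n$ already holds over $C\setminus T$ for a \emph{finite} subset $T\subset C$, so $h_{n,\omega}=g_{n,\omega}$ for every $\omega\notin T$. At the finitely many $\omega\in T\cap\Omega_{\mathrm{fin}}$, the max of two model semi-positive Green functions is again a model semi-positive Green function after a blow-up supported in those special fibers, so a single $\Q$-Cartier divisor $\mathcal{D}''$ on one $C$-model induces $h_{n,\omega}$ at \emph{every} non-archimedean place; only the finitely many archimedean places in $T$ need a decreasing approximating sequence, which moves the metric place-wise and so remains in $N_{S,\Q}(U)_\mo$. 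This is exactly what your proposal lacks. For part \ref{max is strongly S-nef}, the paper also uses a cleaner tool than your multilinearity heuristic: the monotonicity of $S$-nefness (a relatively nef model divisor dominating an $S$-nef divisor is itself $S$-nef, cf.\ \cite[Lemma~10.3]{cai2024abstract}), applied to the decreasing approximants of $(D,h)$, each of which dominates the $S$-nef $(D,g)$.
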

\begin{proof}
	The proof is similar as the one of \cite[Lemma~10.3]{cai2024abstract}. By \cite[Lemma~10.3]{cai2024abstract}, $h$ is a $S$-Green function for $D\in\widetilde{\Div}_\Q(U)_\cpt$.
	
	\vspace{2mm} \noindent
	Step 1: {\it If $U=X$ is projective and $\overline{D}, \overline{D'}\in N_{S,\Q}(X)_\mo$, let $c\in \mathscr{L}^1(\Omega,\mathcal{A},\nu)$ such that $c_\omega=0$  for any $\omega\in\Omega_{\mathrm{fin}}$ and $c_\omega>0$ for any $\omega\in\Omega_{\infty}$, then $(D,h)$ is the limit of a decreasing sequence $(D,h_n)$ in $N_{S,\Q}(X)_\mo$ with respect to $(0,c)$-boundary topology with $h_{n,\omega}=h_\omega$ for any $\omega\in\Omega_{\mathrm{fin}}$.}
	
	\vspace{2mm} \noindent
	Let $\overline{\mathcal{D}}=(\mathcal{D}, g_{\infty}), \overline{\mathcal{D}'}=(\mathcal{D}', g_{\infty}')$ be pairs of Cartier divisors with Green functions on a projective $C$-model $\mathcal{X}$ of $X$ inducing $\overline{D}, \overline{D'}$, respectively. Since $D\geq D'$, there is a finite subset $T$ of $C$ such that $\mathcal{D}|_{C\setminus T}\geq \mathcal{D}'|_{C\setminus T}$, where $\mathcal{D}|_{C\setminus T}$ (resp. $\mathcal{D}|_{C\setminus T})$ is the restriction of $\mathcal{D}$ (resp. $\mathcal{D}'$) on the $\mathcal{X}\times_C(C\setminus T)$. Then when $\omega\not\in T$, we have that $h_\omega=g_\omega$ and when $\omega\in T\cap\Omega_{\mathrm{fin}}$, we have that $(D_\omega, h_\omega)\in N_{\mo,\Q}(X_\omega)$ (see proof of \cite[Lemma~9.3]{cai2024abstract}). Then after replacing $\mathcal{X}$ by its blowing up along a subscheme of $\coprod\limits_{\omega\in T\setminus \Omega_\infty}\mathcal{X}_\omega$ where $\mathcal{X}_\omega$ is the special fiber of $\mathcal{X}$ over $\omega\in T\setminus\Omega_\infty$, we can find a divisor $\mathcal{D}''$ on $\mathcal{X}$ inducing $(D_\omega, h_\omega)$ for any $\omega\in \Omega_{\mathrm{fin}}$. When $\omega\in T\cap \Omega_\infty$, we have that $(D_\omega,g_\omega)\in \widehat{\Div}_\Q(X_\omega)_\nef$ by \cite[Lemma~9.3]{cai2024abstract}. Moreover, it is the limit of a decreasing sequence in $N_{\mo,\Q}(X_\omega)$. 
	Then $(D,h)$ is the limit of some decreasing sequence in $N_{S,\Q}(X)_\mo$ with respect to $(0,c)$-boundary topology. This proves Step 1.
	
	\vspace{2mm} \noindent
	Step 2: {\it Under the assumptions from Step 1 and if $(D,g) \in N_{S,\Q}'(X)_\mo$, then $(D,h)  \in \widehat{\Div}_{S,\Q}(X)_\arnef^\YZ$.}
	
	\vspace{2mm} \noindent
	By \cite[Lemma~10.3, Proposition~7.10~(iv)]{cai2024abstract}, we have that $(D,h)\in N'_{S,\Q}(X)$, i.e. $S$-nef on $X$. From Step~1, $(D,h)$ is the limit of a decreasing sequence $(D,h_j)_{j\in\N_{\geq 1}}$ in $N_{S,\Q}(X)_\mo$. Since $(D,h_j)\geq (D,h)$ and $(D,h)\in N'_{S,\Q}(X)$, we have that $(D,h_j)\in N_{S,\Q}'(X)_\mo$. This proves Step~2. 
	
	\vspace{2mm} \noindent
	Step 3: {\it \ref{max is strongly relatively nef} holds.}
	
	\vspace{2mm} \noindent
	Let $\overline{B}=(B,g_B)\in N_{S,\Q}(U)$ be a weak boundary divisor, and $(\overline{D_j})_{j\in \N_{\geq1}}, (\overline{D_j'})_{j\in \N_{\geq1}}$ sequences in $N_{S,\Q}(U)_\mo$ converging to $\overline{D}$, $\overline{D'}$ with respect to the $\overline{B}$-boundary topology, respectively. We can assume that there is a positive constant $c_0$ such that $g_{B,\omega}>c_0$ for any $\omega\in \Omega_\infty$.  Let $X_j$ be a projective model of $U$ such that $\overline{D}_j=(D_j, g_j), \overline{D_j'}=(D_j', g_j')\in N_{S,\Q}(X_j)_\mo$. Since $D\geq D'$, after adding small multiples of $\overline{B}$ to $\overline{D}_j$, we can assume that $D_j\geq D_j'$. By Step~1, we have that $(D_j, \max\{g_j,g_j'\})\in \widehat{\Div}_{S,\Q}(X_j)_\relnef^\YZ$  and $(D_j, \max\{g_j,g_j'\})$ converges to $(D, h)$ in $\widehat{\Div}_{S,\Q}(U)_\cpt$ with respect to $\overline{B}$-boundary topology. By Step~1, we can choose an element $\overline{D_j'}$ in $N_{S,\Q}(X_j)_\mo$ closed to $(D_j, \max\{g_j,g_j'\})$ with respect to the $\overline{B}$-boundary topology (notice that $g_{B,\omega}>c_0>0$ on $U^\an_\omega$ for any $\omega\in\Omega_\infty$). Then $\overline{D_j'}$ converges to $(D, h)$ in $\widehat{\Div}_{S,\Q}(U)_\cpt$ with respect to $\overline{B}$-boundary topology. This proves Step~3.
	
	\vspace{2mm} \noindent
	Step 4: {\it \ref{max is strongly S-nef} holds .}
	
	\vspace{2mm} \noindent
	We can twist a boundary divisor $\overline{B}\in N_{S,\Q}(U)_\mo$ with a positive constant at some place $\omega\in\Omega$, and assume that $\overline{B}\in N_{S,\Q}'(U)$. Then Step 4 follows from the same proof as in Step 3 relying now on Step 2 instead of Step 1.
\end{proof}

\begin{remark} \label{nef become strongly S-nef after shrinking U}
	We can obtain a weak boundary divisor $\overline{B}\in N_{S,\Q}(U)_\mo$ by shrinking $U$. The shrinking procedure is useful in the following situation: as \cref{nef become strongly nef after shrinking U}, by shrinking $U$  we may assume that finitely many given relatively nef (resp. arithmetically nef)  $\YZ$-divisors become strongly relatively nef (resp. strongly arithmetically nef).
\end{remark}
\begin{lemma} 
	\label{lemma:existence of sequence of intersections to nef intersection}
	Let $\overline{D_0}=(D_0,g_0),\dots,\overline{D_d}=({D}_d,g_d)\in\widehat{\Div}_{S,\Q}({U})_{\relsnef}$ and $\overline{D_0'}=({D}_0,g_0'),\break \dots, \overline{D_d'}=({D}_d,g_d')\in \widehat{\Div}_{S,\Q}({U})_{\arsnef}^\YZ$. Assume that for any $j=0,\dots, d$,
	\begin{itemize}
		\item  there is a boundary divisor $\overline{B}\in N_{S,\Q}(U)_\mo$ and a sequence in $N_{S,\Q}(U)_\mo$ (resp. $N_{S,\Q}'(U)_\mo$) converging to $\overline{D_j}$ (resp. $\overline{D_j'}$);
		\item we have that $g_{j,\omega}= g_{j,\omega}'$ for all but finitely many $\omega\in\Omega$.
	\end{itemize}
	For any $m\in\N_{\geq 1}$, set 
	\[h_{j,m}\coloneq\max\{g_{j},g_{j}'-m\}.\] 
	Then the following statements hold.
	\begin{enumerate1}
		\item \label{nefness for constructed divisors} For each $j=0,\dots, d$, the function $h_{j,m}$ is an $S$-Green function for ${D}_j\in\widetilde{\Div}_\Q({U})_\cpt$ and $\overline{D_{j,m}}\coloneq({D}_j,h_{j,m})\in \widehat{\Div}_{S,\Q}(U)^\YZ_\relsnef$.
		\item \label{convergences for constructed divisors} For each $j=0,\dots, d$, the sequence $(\overline{D_{j,m}})_{m\in\N_{\geq 1}}$ decreasingly converges to $\overline{D_j}$ with respect to the $\overline{B}$-boundary topology for some weak boundary divisor $\overline{B}\in N_{S,\Q}(U)_\mo$.
		\item \label{convergences of intersection number for constructed divisors} We have that
		\[\lim\limits_{m\to\infty}(\overline{D_{0,m}}\cdots\overline{D_{d,m}}\mid U)_S=(\overline{D_0}\cdots\overline{D_d}\mid U)_S.\]
		\item \label{convergences of intersection number with integrable divisors} For any $\overline{E}=(E,f)\in \widehat{\Div}_{S,\Q}(U)_\arint^\YZ$ with underlying compactified (geometric) divisor $E=0\in \widehat{\Div}_{S,\Q}(U)_\cpt$, we have that
			\begin{align}\label{eq:lim of intersection0}
				\lim\limits_{m\to\infty} (\overline{D_{1,m}}\cdots\overline{D_{d,m}}\cdot \overline{E}\mid U)_S = \sum_{\omega\in\Omega}\nu(\omega)\int_{U_\omega} f_\omega\, c_1(\overline{D_{1,\omega}})\wedge\cdots \wedge c_1(\overline{D_{d,\omega}}).
		\end{align}
	\end{enumerate1}
\end{lemma}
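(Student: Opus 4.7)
The plan is to realize each $\overline{D_{j,m}}$ as the maximum of two strongly relatively nef compactified $S$-metrized divisors with common underlying compactified geometric divisor $D_j$, and then invoke \cref{lemma: max is YZ divisor}. Let $T_j \subset \Omega$ be the finite set where $g_{j,\omega} \neq g_{j,\omega}'$, and let $c_m \in \mathscr{L}^1(\Omega,\mathcal{A},\nu)$ be the integrable function taking value $m$ on $T_j$ and $0$ elsewhere. Then $(D_j, g_j' - c_m)$ is a shift of $\overline{D_j'}$ at finitely many places and therefore remains in $\widehat{\Div}_{S,\Q}(U)_\relsnef^\YZ$ (shifting a model Green function by a constant produces a model Green function, and semi-positivity is preserved). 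At $\omega\notin T_j$ we have $g_{j,\omega}'-c_{m,\omega}=g_{j,\omega}$, so $\max\{g_{j,\omega},g_{j,\omega}'-c_{m,\omega}\}=g_{j,\omega}=h_{j,m,\omega}$, while at $\omega\in T_j$ the maximum equals $h_{j,m,\omega}$ by construction. Thus \cref{lemma: max is YZ divisor}~\ref{max is strongly relatively nef} applied to $\overline{D_j}$ and $(D_j,g_j'-c_m)$ (after choosing a common boundary divisor $\overline{B}\in N_{S,\Q}(U)_\mo$ large enough) yields part \ref{nefness for constructed divisors}.

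For part \ref{convergences for constructed divisors}, the decreasing property $h_{j,m+1}\leq h_{j,m}$ is immediate, and pointwise $h_{j,m,\omega}\to g_{j,\omega}$ for each $\omega$: at $\omega\notin T_j$ the sequence is constantly $g_{j,\omega}$, while at $\omega\in T_j$ we have $g_{j,\omega}'-m\to-\infty$. Convergence in a suitable $\overline{B}$-boundary topology is then obtained by exploiting the freedom in the choice of $\overline{B}$: enlarging $\overline{B}$ so that $g_{j,\omega}'-g_{j,\omega}$ at each $\omega\in T_j$ is dominated by $g_{B,\omega}$, one verifies that the positive part $(g_{j,\omega}'-m-g_{j,\omega})^+$ is eventually bounded by arbitrarily small multiples of $g_{B,\omega}$.

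Part \ref{convergences of intersection number for constructed divisors} follows from (i) and (ii) together with the observation that $\overline{D_{j,m}}+(0,c_m)\geq \overline{D_j'}\in \widehat{\Div}_{S,\Q}(U)_\arsnef^\YZ$ places each $\overline{D_{j,m}}$ in $\widehat{\Div}_{S,\Q}(U)_\relnef^\arnef$, so that its arithmetic intersection number is defined by \cref{extension of YZ intersection number}. The $\sigma$-continuity of the multilinear arithmetic intersection number under decreasing limits of relatively nef compactified $S$-metrized divisors, established in \cite[Theorem~10.9~(v), Theorem~11.2]{cai2024abstract} (and already invoked in the proof of \cref{thm:Hilbert-Samuel formula}), then yields the claimed convergence.

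Part \ref{convergences of intersection number with integrable divisors} is the technical heart of the lemma. Since the underlying compactified geometric divisor of $\overline{E}$ vanishes, the energy-type definition of \cref{extension of YZ intersection number} collapses to the purely local expression
\[
(\overline{D_{1,m}}\cdots\overline{D_{d,m}}\cdot\overline{E}\mid U)_S=\sum_{\omega\in\Omega}\nu(\omega)\int_{U_\omega^\an}f_\omega\,c_1(\overline{D_{1,m,\omega}})\wedge\cdots\wedge c_1(\overline{D_{d,m,\omega}}).
\]
At each place the mixed Monge-Amp\`ere currents $c_1(\overline{D_{1,m,\omega}})\wedge\cdots\wedge c_1(\overline{D_{d,m,\omega}})$ converge weakly to $c_1(\overline{D_{1,\omega}})\wedge\cdots\wedge c_1(\overline{D_{d,\omega}})$ along the decreasing limit of relatively nef divisors by \cite[Proposition~4.45]{cai2024abstract}, and pairing against the continuous function $f_\omega$ produces pointwise convergence of the $\omega$-integrands. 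The main obstacle I expect is the interchange of $\lim_{m\to\infty}$ with $\int_\Omega$: this requires constructing a $\nu$-integrable majorant uniform in $m$, which can be achieved by writing $f_\omega$ as a difference of strongly arithmetically nef Green functions and combining the local boundedness with the $\nu$-integrability bounds on local intersection numbers supplied by \cite[Lemma~10.7]{cai2024abstract}.
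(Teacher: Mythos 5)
Your treatment of parts \ref{nefness for constructed divisors}--\ref{convergences of intersection number for constructed divisors} essentially matches the paper's; making the integrable twist $c_m$ (supported on the finite set where $g_j$ and $g_j'$ differ) explicit is a cleaner restatement of what the paper does implicitly, and your appeals to the continuity of arithmetic intersection numbers rely on the same citations.

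Part \ref{convergences of intersection number with integrable divisors} is where you diverge, and there are two unjustified steps. First, you assert that because $\overline{E}=(0,f)$ has trivial underlying geometric divisor, the intersection number ``collapses'' to the purely local expression
\[(\overline{D_{1,m}}\cdots\overline{D_{d,m}}\cdot\overline{E}\mid U)_S=\sum_{\omega\in\Omega}\nu(\omega)\int_{U_\omega^\an}f_\omega\,c_1(\overline{D_{1,m,\omega}})\wedge\cdots\wedge c_1(\overline{D_{d,m,\omega}}).\]
This is not automatic: the $\overline{D_{j,m}}$ lie only in $\widehat{\Div}_{S,\Q}(U)^\YZ_\relsnef$ and $\overline{E}$ in $\widehat{\Div}_{S,\Q}(U)^\YZ_\arint$, so the left-hand side is defined via the energy extension of \cref{extension of YZ intersection number} after writing $\overline{E}=\overline{E_1}-\overline{E_2}$; deriving this local formula from that definition requires exactly the telescoping manipulation of the energy terms $E(\mathbf{g}',\mathbf{g})$, which is where the real work of part \ref{convergences of intersection number with integrable divisors} lies. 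Second, even granting the collapse formula for each finite $m$, passing the limit in $m$ through the sum over $\Omega$ needs a dominated-convergence argument; you correctly flag this as ``the main obstacle'' and sketch a plan but do not carry it out.

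The paper avoids both problems by working purely algebraically: it writes $\overline{E}=\overline{E_1}-\overline{E_2}$ with $\overline{E_i}\in\widehat{\Div}_{S,\Q}(U)_\arnef^\YZ$, arranges $[f_1]\le[f_2]$ (replacing $f_2$ by $\max\{f_1,f_2\}$ via \cref{lemma: max is YZ divisor}~\ref{max is strongly S-nef} if necessary), and then applies part~\ref{convergences of intersection number for constructed divisors} twice with $\overline{E_1}$, resp.\ $\overline{E_2}$, in the $j=0$ slot. Expanding each limit via \cref{extension of YZ intersection number} and using the identities for $E(\cdot,\cdot)$ from \cite[Proposition~10.10]{cai2024abstract}, the difference reduces to $E((g_1,\dots,g_d,f_2),(g_1,\dots,g_d,f_1))$, which unwinds to the right-hand side of \eqref{eq:lim of intersection0} by the very definition of the energy (only the last telescoping term survives, since $g_j-g_j=0$ for $1\le j\le d$). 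This keeps everything inside the finite-energy framework of \cite{cai2024abstract} and never requires interchanging a limit with the $\Omega$-integral, nor any uniform control on the weak convergence of Monge--Amp\`ere measures across places.
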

\begin{proof}
	\ref{nefness for constructed divisors} Let $T\subset\Omega$ be a finite subset such that for any $1\leq j\leq d$ and $\omega\in\Omega$, $g_{j,\omega}\neq g_{j,\omega}$ implies that $\omega\in T$. Then for any $m\in \N_{\geq 1}$ and $\omega\not\in \Omega\setminus T$, we have that $g_{j,\omega}\geq g_{j,\omega}'-m$, and $h_{j,m,\omega} = g_{j,\omega}$. By \cref{lemma: max is YZ divisor}~\ref{max is strongly relatively nef}, we know that $h_{j,m}$ is an $S$-Green function for ${D}_j\in\widetilde{\Div}_\Q({U})_\cpt$, and $\overline{D_{j,m}}\in \widehat{\Div}_{S,\Q}(U)_{\relsnef}^\YZ$. 
	
	\ref{convergences for constructed divisors} Obviously, we have that $(\overline{D_{j,m}})_{m\in \N_{\geq 1}}$ is decreasing, as the proof of \cite[Lemma~10.6]{cai2024abstract}, it is not hard to show \ref{convergences for constructed divisors} holds similarly. 
	
	\ref{convergences of intersection number for constructed divisors}  
	Recall the definition of $E(\mathbf{g}',\mathbf{g})$ in \cref{extension of YZ intersection number}. 
	By \ref{convergences for constructed divisors} and \cite[Theorem~10.9~(v)]{cai2024abstract}, we have that
	\[\lim\limits_{m\to\infty}E(\mathbf{g}', \mathbf{h}_{m})=E(\mathbf{g}',\mathbf{g}).\]
	Hence 
	\[	\lim\limits_{m\to\infty}(\overline{D_{0,m}}\cdots \overline{D_{d,m}}\mid U)_S=(\overline{D_{0}}\cdots \overline{D_d}\mid U)_S.\]
	This proves \ref{convergences of intersection number for constructed divisors}.
	
	\ref{convergences of intersection number with integrable divisors} Write $\overline{E}=\overline{E_1} - \overline{E_2}$ with $\overline{E_1}=(E_1,f_1), \overline{E_2}=(E_2,f_2)\in \widehat{\Div}_{S,\Q}(U)_\arnef^\YZ$. Since $E=0$, we have that $E_1=E_2\in \widetilde{\Div}_\Q(U)_{\nef}$. We may assume that $[f_1]\leq [f_2]$ in the sense of \cref{def:equivalent singularities}. Otherwise, we consider $f_3\coloneq \max\{f_1,f_2\}$ which is still a Green function for $E_1=E_2$ and $(E_1,f_3)\in \widehat{\Div}_{S,\Q}(U)^\YZ_\arnef$ (see \cref{lemma: max is YZ divisor}~\ref{max is strongly S-nef}), $[f_1]\leq [f_3], [f_2]\leq [f_3]$, then \eqref{eq:lim of intersection0} for $\overline{E_1}-\overline{E_3}$ and for $\overline{E_2}-\overline{E_3}$ implies \eqref{eq:lim of intersection0} for $\overline{E}=\overline{E_1}-\overline{E_2}$, i.e. \ref{convergences of intersection number with integrable divisors} holds. After assigning $\overline{D_0} = \overline{E_1}$ and $\overline{D_0}=\overline{E_2}$, notice that $f_1=\max\{f_1, f_1-m\}, f_2=\max\{f_2,f_2-m\}$, by \ref{convergences of intersection number for constructed divisors}, we have that
		\begin{align}\label{eq:lim of intersection1}
			\lim\limits_{m\to\infty}(\overline{D_{1,m}}\cdots \overline{D_{d,m}}\cdot\overline{E_1}\mid U)_S = (\overline{D_1}\cdots \overline{D_d}\cdot\overline{E_1}\mid U)_S.
		\end{align}
		By \cref{extension of YZ intersection number}
		\begin{align}\label{eq:lim of intersection2}
			(\overline{D_1}\cdots \overline{D_d}\cdot\overline{E_1}\mid U)_S = (\overline{D_1'}\cdots \overline{D_d'}\cdot\overline{E_1}\mid U)_S + E((g_1',\cdots, g_d',f_1),(g_1,\cdots, g_d,f_1)).
		\end{align} 
		Similarly, we have that
		\begin{align}\label{eq:lim of intersection3}
			\lim\limits_{m\to\infty}(\overline{D_{1,m}}\cdots \overline{D_{d,m}}\cdot\overline{E_2}\mid U)_S = (\overline{D_1}\cdots \overline{D_d}\cdot\overline{E_2}\mid U)_S.
		\end{align}
		\begin{align}\label{eq:lim of intersection4}
			(\overline{D_1}\cdots \overline{D_d}\cdot\overline{E_2}\mid U)_S = (\overline{D_1'}\cdots \overline{D_d'}\cdot\overline{E_2}\mid U)_S + E((g_1',\cdots, g_d',f_2),(g_1,\cdots, g_d,f_2)).
		\end{align}
		We take \eqref{eq:lim of intersection2}$-$\eqref{eq:lim of intersection4} and substitute it into \eqref{eq:lim of intersection1}$-$\eqref{eq:lim of intersection3}, then
		\begin{align*}
			&\lim\limits_{m\to\infty}(\overline{D_{1,m}}\cdots \overline{D_{d,m}}\cdot\overline{E}\mid U)_S \\=& (\overline{D_1}\cdots \overline{D_d}\cdot\overline{E_1}\mid U)_S-(\overline{D_1}\cdots \overline{D_d}\cdot\overline{E_2}\mid U)_S\\
			=&(\overline{D_1'}\cdots \overline{D_d'}\cdot\overline{E_1}\mid U)_S-(\overline{D_1'}\cdots \overline{D_d'}\cdot\overline{E_2}\mid U)_S+E((g_1',\cdots, g_d',f_1),(g_1,\cdots, g_d,f_1))\\
			&-E((g_1',\cdots, g_d',f_2),(g_1,\cdots, g_d,f_2)).
		\end{align*}
		By \cite[Theorem~11.2]{cai2024abstract} and our assumption that $[f_1]\leq[f_2]$, we have that
		\[(\overline{D_1'}\cdots \overline{D_d'}\cdot\overline{E_1}\mid U)_S-(\overline{D_1'}\cdots \overline{D_d'}\cdot\overline{E_2}\mid U)_S+E((g_1',\cdots, g_d',f_2),(g_1',\cdots, g_d',f_1)).\]
		Then 
		\begin{align*}
			&\lim\limits_{m\to\infty}(\overline{D_{1,m}}\cdots \overline{D_{d,m}}\cdot\overline{E}\mid U)_S \\=& E((g_1',\cdots, g_d',f_2),(g_1',\cdots, g_d',f_1))+E((g_1',\cdots, g_d',f_1),(g_1,\cdots, g_d,f_1))\\
			&-E((g_1',\cdots, g_d',f_2),(g_1,\cdots, g_d,f_2))\\
			=&E((g_1',\cdots, g_d',f_2),(g_1,\cdots, g_d,f_1))-E((g_1',\cdots, g_d',f_2),(g_1,\cdots, g_d,f_2))\\
			=&E((g_1,\cdots, g_d,f_2),(g_1,\cdots, g_d,f_1))\\
			=&\sum_{\omega\in\Omega}\nu(\omega)\int_{U_\omega} f_\omega\, c_1(\overline{D_{1,\omega}})\wedge\cdots \wedge c_1(\overline{D_{d,\omega}}),
		\end{align*}
		the second and third equality are from \cite[Proposition~10.10]{cai2024abstract}. This completed the proof of \ref{convergences of intersection number with integrable divisors}.
\end{proof}

\subsection{Positive intersection product}
We recall the \emph{positive intersection product} defined in \cite[Definition~3.2]{nijerdiff}.
\begin{art}\label{positive intersection number}
	Let $X$ be a projective variety over $K$. We say a model $S$-metrized divisor $\overline{A}\in \widehat{\Div}_{S,\Q}(X)_\mo$ is \emph{free} if $\overline{A}\in N_{S,\Q}(X)_\mo$ (see \cref{model metrics and model Green functions}) and there is $n\in\N_{\geq 1}$ such that $n\overline{A}$ is with $\Z$-coefficients induced by some semiample divisor $\mathcal{A}$ on some projective $C$-model of $X$.
	
	Let $\overline{D}\in\widehat{\Div}_{S,\Q}(U)_\cpt^\YZ$ and $\overline{E}\in\widehat{\Div}_{S,\Q}(U)_\arnef^\YZ$. If $\overline{D}$ is big, the \emph{positive intersection product} of $\overline{D}$ and $\overline{E}$ is defined as
	\[\langle\overline{D}^d\rangle\cdot\overline{E}\coloneq \sup\limits_{(\pi,\overline{A})}\{\overline{A}^{d}\cdot\pi^*\overline{E}\},\]
	where $(\pi,\overline{A})$ runs over all tuples such that $\pi\colon U'\to U$ is a birational morphism, and $\overline{A}\in\widehat{\Div}_{S,\Q}(U')_\mo$ is induced by a free model $S$-metrized divisor on some projective model of $U'$ such that $\pi^*\overline{D}-\overline{A}\geq 0$. By linearity, if $\overline{E}\in \widehat{\Div}_{S,\Q}(U)^\YZ_{\arint}$, $\langle\overline{D}^d\rangle\cdot\overline{E}$ is well-defined, see \cite[Lemma~3.10]{nijerdiff}. Notice that the positive intersection product is stable under birational pull-back, see \cite[Lemma~3.3]{nijerdiff}.
\end{art}

\begin{lemma} \label{lemma:positive intersection product for relnef}
	Let $\overline{D}=(D,g)\in \widehat{\Div}_S(U)^{\arnef,\YZ}_{\relnef}$ with $D$ is big, and $(0,f)\in\widehat{\Div}_{S,\Q}(U)_\arint^\YZ$. 
	If $\inf\limits_{\lambda\in\Delta(D)^\circ}G_{\overline{D}}(\lambda)>0$, then 
	\begin{equation}\label{eq:positive intersection product for relnef and underlying trivial}
		\langle\overline{D}^d\rangle\cdot (0,f)= \sum\limits_{\omega\in\Omega}\nu(\omega)\int_{U^\an_\omega}f_\omega\, c_1(\overline{D}_\omega)^d.
	\end{equation}
\end{lemma}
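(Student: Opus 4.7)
The plan is to approximate $\overline{D}$ from above by a decreasing sequence of compactified $S$-metrized $\YZ$-divisors that are arithmetically nef, reduce the formula to a standard computation for each approximant, and pass to the limit on both sides.

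First I would fix $\overline{D}' = (D, g') \in \widehat{\Div}_{S,\Q}(U)_{\arnef}^\YZ$ with $g' \geq g$, which exists by hypothesis (after replacing $g'$ by $\max\{g, g'\}$ via \cref{lemma: max is YZ divisor}). After the usual reductions---passing to an open subset of $U$ using \cref{nef become strongly S-nef after shrinking U} and birational invariance of both sides via \cref{properties of concave transforms}~\ref{birational}---I may assume that $\overline{D}$ and $\overline{D}'$ are strongly (relatively, resp.\ arithmetically) nef with a common boundary divisor $\overline{B} \in N_{S,\Q}(U)_\mo$. Then \cref{lemma:existence of sequence of intersections to nef intersection}, applied with every $\overline{D_j}$ equal to $\overline{D}$ and every $\overline{D_j'}$ equal to $\overline{D}'$, produces a decreasing sequence $\overline{D_m} = (D, h_m) \in \widehat{\Div}_{S,\Q}(U)_{\relsnef}^\YZ$ with $h_m = \max\{g, g' - m\}$ converging to $\overline{D}$ in the $\overline{B}$-boundary topology.

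The key middle step is to verify that each $\overline{D_m}$ in fact lies in the arithmetically nef class $\widehat{\Div}_{S,\Q}(U)_{\arnef}^\YZ$. Because $h_m$ equals $g$ outside the finite set of places where $g' - m > g$ and at those places agrees with the strongly arithmetically nef Green function $g'$ shifted by a constant, an adaptation of the proof of \cref{lemma: max is YZ divisor}~\ref{max is strongly S-nef}, combined with the monotonicity sandwich $\overline{D} \leq \overline{D_m} \leq \overline{D}'$, should yield the arithmetic nefness of $\overline{D_m}$. Once this is established, the general properties of the positive intersection product recalled in \cref{positive intersection number} (cf.\ \cite{nijerdiff}) give
\[
\langle \overline{D_m}^d \rangle \cdot (0, f) = \overline{D_m}^d \cdot (0, f).
\]

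To conclude, I would pass to the limit $m \to \infty$. By \cref{lemma:existence of sequence of intersections to nef intersection}~\ref{convergences of intersection number with integrable divisors},
\[
\lim_{m \to \infty} \overline{D_m}^d \cdot (0, f) = \sum_{\omega \in \Omega} \nu(\omega) \int_{U_\omega^\an} f_\omega\, c_1(\overline{D}_\omega)^d,
\]
which handles the right-hand side. The left-hand side requires the continuity identity
\[
\lim_{m \to \infty} \langle \overline{D_m}^d \rangle \cdot (0, f) = \langle \overline{D}^d \rangle \cdot (0, f)
\]
along the decreasing sequence $\overline{D_m} \to \overline{D}$; I expect this to follow from the supremum definition of $\langle \cdot \rangle$ combined with the strict positivity hypothesis $\inf_{\lambda \in \Delta(D)^\circ} G_{\overline{D}}(\lambda) > 0$, which via \cref{corol:volsame} and \cref{thm:Hilbert-Samuel formula} guarantees that $\widehat{\vol}_\chi(\overline{D}) = (\overline{D}^{d+1} \mid U)_S$ and provides uniform control over the approximating model divisors. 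The main obstacle will be precisely this continuity: because $(0, f)$ can change sign and $\langle \cdot \rangle \cdot (0, f)$ is defined as a supremum over free model approximations from below, the argument cannot rely on pure monotonicity and must leverage the quantitative positivity coming from the $\inf G_{\overline{D}} > 0$ hypothesis.
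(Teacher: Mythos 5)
Your proposal follows the paper's overall strategy of approximating $\overline{D}$ by a decreasing sequence and passing to the limit, but the middle step contains a genuine gap that the paper goes to some length to avoid.

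The claim that each $\overline{D_m}=(D,\max\{g,g'-m\})$ is arithmetically nef is false, and is in fact where all the difficulty of the lemma lives. Outside a finite set of places $h_m$ coincides with the merely relatively nef Green function $g$, and arithmetic nefness is a global positivity of heights of subvarieties that cannot be checked place by place. The ``monotonicity sandwich'' $\overline{D}\leq\overline{D_m}\leq\overline{D'}$ you cite goes the wrong way: one can transfer arithmetic nefness from $\overline{D'}$ to a divisor \emph{dominating} $\overline{D'}$ (this is how \cite[Lemma~10.3]{cai2024abstract} is used in Step~3 of \cref{thm:Hilbert-Samuel formula}), but not to something dominated by it. Indeed, if $\overline{D_m}$ were arithmetically nef, then $\langle\overline{D_m}^d\rangle\cdot(0,f)=\overline{D_m}^d\cdot(0,f)$ would be immediate from \cite[Corollary~3.8]{nijerdiff}; so, as written, your reduction essentially assumes the content of the lemma at the level of the approximants.

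What the paper actually does is work with the twists $\overline{D_m}(c_m)=(D,\max\{g+c_m,g'\})$, which dominate $\overline{D'}$ and are therefore arithmetically nef, so the identity $\langle\overline{D_m}(c_m)^d\rangle\cdot\overline{E}=(\overline{D_m}(c_m)^d\cdot\overline{E}\mid U)_S$ holds directly. The key extra step, which is where the hypothesis $\inf_{\lambda\in\Delta(D)^\circ}G_{\overline{D}}(\lambda)>0$ is actually used, is to show $\langle\overline{D_m}(c_m)^d\rangle\cdot\overline{E}=\langle\overline{D_m}^d\rangle\cdot\overline{E}$. The positivity, propagated to $G_{\overline{D_m}+t\overline{E}}>0$ for $|t|$ small via the decreasing convergence $\overline{D_m}\geq\overline{D}$, forces $\widehat{\vol}=\widehat{\vol}_\chi$ for these small perturbations (\cref{corol:volsame}); combined with \cref{properties of concave transforms}~\ref{shifting}, one gets $\widehat{\vol}(\overline{D_m}(c_m)+t\overline{E})=\widehat{\vol}(\overline{D_m}+t\overline{E})+\text{const}$, and differentiating at $t=0$ and invoking the differentiability theorem \cite[Theorem~3.13]{nijerdiff} identifies the two positive intersection products. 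The final limit in $m$ then follows from \cite[Lemma~3.6]{nijerdiff} on the left and \cref{lemma:existence of sequence of intersections to nef intersection}~\ref{convergences of intersection number with integrable divisors} on the right, and this passage does not require the $\inf G>0$ hypothesis. So the positivity is consumed in the comparison of positive intersection products for the two twists, not in the continuity of $\langle\cdot\rangle$ along the decreasing sequence as your last paragraph suggests.
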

\begin{proof}
	Write $\overline{E}=(0,f)$. As the proof of Step~3 in the proof of \cref{thm:Hilbert-Samuel formula}, after shrinking $U$ (notice that the both sides of \eqref{eq:positive intersection product for relnef and underlying trivial} are stable under birational base change), there is a sequence $(\overline{D_m})_{m\in\N_{\geq 1}}\subset \widehat{\Div}_{S,\Q}(U)_\relsnef$ and a sequence $(c_m)_{m\in\N_{\geq 1}}\subset \mathscr{L}^1(\Omega,\mathcal{A},\nu)_{\geq 0}$ satisfying the following properties:
	\begin{itemize}
		\item $(\overline{D_m})_{m\in\N_{\geq 1}}$ is decreasing converging to $\overline{D}$ with respect to the $\overline{B}$-boundary topology for some weak boundary divisor $\overline{B}\in N_{S,\Q}(U)_\mo$ and 
		\begin{equation}\label{eq:convergence of intersection numbers}
			\lim\limits_{m\to\infty}(\overline{D_m}^d\cdot \overline{E}\mid U)_S =\sum\limits_{\omega\in\Omega}\nu(\omega)\int_{U_\omega^\an}f_\omega\, c_1(\overline{D})^d
		\end{equation}
		by \cref{lemma:existence of sequence of intersections to nef intersection}; 
		\item for any $m\in\N_{\geq 1}$, $c_m(\omega)=0$ for all but finitely many $\omega\in\Omega$;
		\item $\overline{D_m}(c_m)\in \widehat{\Div}_{S,\Q}(U)^\YZ_\arsnef$, then $$\langle\overline{D_m}(c_m)^d\rangle\cdot\overline{E}=(\overline{D_m}(c_m)^d\cdot\overline{E}\mid U)_S$$ by the linearity and \cite[Corollary~3.8]{nijerdiff}.
	\end{itemize} 
	We claim that
	$$\langle\overline{D_m}(c_m)^d\rangle\cdot\overline{E}=\langle\overline{D_m}^d\rangle\cdot\overline{E}.$$ 
	If this holds, notice that
	\[(\overline{D_m}(c_m)^d\cdot\overline{E}\mid U)_S = (\overline{D_m}^d\cdot\overline{E}\mid U)_S\]
	(see \cite[Theorem~11.3]{cai2024abstract}), then $$\langle\overline{D_m}^d\rangle\cdot\overline{E} = (\overline{D_m}^d\cdot\overline{E}\mid U)_S.$$
	Taking $m\to\infty$ on both sides, by \cite[Lemma~3.6]{nijerdiff} and \eqref{eq:convergence of intersection numbers}, the lemma holds. It remains to show the claim. To show the claim, we consider the functions $\varphi, \psi$ on $\R$ as follows:
	\[\varphi(t)\coloneq \widehat{\vol}(\overline{D_m}(c_m)+t\overline{E}), \ \ \psi(t)\coloneq \widehat{\vol}(\overline{D_m}+t\overline{E}).\]
	Since $\overline{D_m}$ is decreasing converging to $\overline{D}$, then \[G_{\overline{D_m}(c_m)}\geq G_{\overline{D_m}}\geq G_{\overline{D}}\geq \inf\limits_{\lambda\in\Delta(D)^\circ}G_{\overline{D}}(\lambda)>0.\]
	Notice that there is $b\in \mathscr{L}^1(\Omega,\mathcal{A},\nu)_{\geq 0}$ such that $b(\omega)=0$ for all but finitely many $\omega\in\Omega$ and $\sup\limits_{x\in U^\an_\omega}\{|f_\omega(x)|\}\leq b(\omega)$ for any $\omega\in\Omega$.
	Then for any $t\in \R$,
	\[\overline{D_m}-|t|(0,b)\leq\overline{D_m}+t\overline{E}\leq \overline{D_m}+|t|(0,b),\]
	\[G_{\overline{D_m}+t\overline{E}}\geq G_{\overline{D_m}-|t|(0,b)}= G_{\overline{D_m}}-|t|\int_{\Omega}b(\omega)\, \nu(d\omega).\] 
	Then by \cref{theorem:concavemain}, when $|t|$ is small enough, we have that 
	\begin{align*}
		\varphi(t)=&\widehat{\vol}(\overline{D_m}(c_m)+t\overline{E})\\ 
		=& \widehat{\vol}_\chi(\overline{D_m}(c_m)+t\overline{E})\\
		=& \widehat{\vol}_\chi(\overline{D_m}+t\overline{E}) +\sum\limits_{\omega\in\Omega}c_m(\omega)\nu(\omega)\cdot D^{d}_m\\
		=&\psi(t)+\sum\limits_{\omega\in\Omega}c_m(\omega)\nu(\omega)\cdot D^{d}_m.
	\end{align*}
	Taking the derivatives on both sides of $\varphi(t)=\psi(t)+\sum\limits_{\omega\in\Omega}c_m(\omega)\nu(\omega)\cdot D^{d}_m$, then \cite[Theorem~3.13]{nijerdiff} implies our claim. This finishes the proof of the lemma.
\end{proof}
\subsection{Equidistribution}
We quickly recall the setting in which we prove the equidistribution.

\begin{art}
 Let $(x_m)_{m\in I}\subset U(\overline{K})$ a net of geometric points.
\begin{itemize}
         \item We say that $(x_m)_{m\in I}\subset U(\overline{K})$ is \emph{generic} if it is Zariski dense in the Zariski topology of $U$.	
         
	\item 
	We say that $(x_m)_{m\in I}\subset U(\overline{K})$ is \emph{small with respect to $\overline{D}$} if
	\[\lim_{m\in I}h_{\overline{D}}(x_m)=\frac{\widehat{\vol}_\chi^\num(\overline{D})}{(d+1)\vol(D)}.\]  
 
       \item Let $v\in \Omega$, and $\mu$ a measure on $U^\an_v$.
	 We say that the Galois orbit of $(x_m)_{m\in I}$ \emph{equidistributes in $U_v^\an$ with respect to $\mu$} if 
	\[\lim_{m\in I} \frac{1}{\#O(x_m)}\sum_{\sigma\in\Gal{\overline{K}}{K}}f(\sigma(x_m))=\int_{U^\an_v} f\, d\mu\]
	for every compactly supported function $f$ on $U_v^\an$, where $O(x_m)$ denotes the Galois orbit of $x_m$ under the action of $\Gal{\overline{K}}{K}$ and $U^\an_v$ denotes the Berkovich analytification\ of $U\times_K K_v$. See \cite[Section~2]{burgos2019the} for more details.
 \end{itemize}
\end{art}
\begin{remark}
    \label{remark:equidistributioncriterion}
    Let $(x_m)_{m\in I}\subset U(\overline{K})$ a net of geometric points, and $\overline{D}=(D,g)\in\widehat{\Div}_{S,\Q}(U)_{\cpt}$ with $D$ big. By \cref{thm:essential minimum}, we have that
	\[\limsup_{m\in I}h_{\overline{D}}(x_m)\geq \zeta_{\mathrm{ess}}(\overline{D}) \geq \widehat{\mu}_{\max}^{\mathrm{asy}}(\overline{D}) \geq \frac{\widehat{\vol}_\chi^{\num}(\overline{D})}{(d+1)\vol(D)}.\]
	If $(x_m)_{m\in I}$ is small with respect to $\overline{D}$,  then $G_{\overline{D}}(\lambda) = \widehat{\mu}_{\max}^{\mathrm{asy}}(\overline{D})$ on $\Delta(D)^\circ$. In particular, if this happen, then $\inf\limits_{\lambda\in\Delta(D)^\circ}G(\lambda) = \widehat{\mu}_{\max}^{\mathrm{asy}}(\overline{D})>-\infty$ and \[\widehat{\vol}_\chi(\overline{D})=\widehat{\vol}_\chi^\num(\overline{D}) = \widehat{\mu}_{\max}^{\mathrm{asy}}(\overline{D}) = \zeta_{\mathrm{ess}}(\overline{D}).\]
 	Furthermore, let us consider the following statements which often appear as hypothesis in equidistribution theorem:
	\begin{enumerate}[resume,leftmargin=*,label=\it(\alph*),ref=\it{(\alph*)}]
		\item \label{rmk:cpt small points} $(x_m)_{m\in I}$ is small with respect to $\overline{D}$;
		\item \label{rmk:minimal slope small points} $\widehat{\mu}_{\min}^{\mathrm{asy}}(\overline{D})>-\infty$ and 
		\[\lim\limits_{m\in I}h_{\overline{D}}(x_m) = \frac{\widehat{\vol}_\chi(\overline{D})}{(d+1)\vol(D)};\]
		\item \label{rmk:bigness small points} $\overline{D}$ is big and 
		\[\lim\limits_{m\in I}h_{\overline{D}}(x_m) = \frac{\widehat{\vol}(\overline{D})}{(d+1)\vol(D)};\]
		\item \label{rmk:relatively nef small points} $\overline{D}\in\widehat{\Div}_{S,\Q}(U)^{\arnef,\YZ}_{\relnef}$ and
		\[\lim\limits_{m\in I}h_{\overline{D}}(x_m) = \frac{(\overline{D}^{d+1}\mid U)_S^{d+1}}{(d+1)D^d}.\]
	\end{enumerate}
If $\widehat{\mu}_{\min}^{\mathrm{asy}}(\overline{D})>-\infty$, by \cref{theorem:concavemain}, then  \ref{rmk:cpt small points} \ref{rmk:minimal slope small points} are equivalent. If $\overline{D}$ is big, by \cref{cor:bigness}, then $\widehat{\mu}_{\max}^{\mathrm{asy}}(\overline{D})>0$. By \cref{lemma:refokoun}, \cref{properties of concave transforms}~\ref{inf and sup of G} and  \cref{theorem:concavemain}, we have that 
\[\widehat{\mu}_{\max}^{\mathrm{asy}}(\overline{D})= \frac{d!}{\vol(D)}\int_{\Delta(D)^\circ}\widehat{\mu}_{\max}^{\mathrm{asy}}(\overline{D})\, d\lambda \geq \frac{d!}{\vol(D)}\int_{\Delta(D)^\circ}\max\{G_{\overline{D})},0\}\, d\lambda = \frac{\widehat{\vol}(\overline{D})}{(d+1)\vol(D)},\]
and the equality holds if and only if $G_{\overline{D}}(\lambda) = \widehat{\mu}_{\max}^{\mathrm{asy}}(\overline{D})$ on $\Delta(D)^\circ$. Hence in this case, $\widehat{\vol}(\overline{D}) = \widehat{\vol}_\chi^{\mathrm{num}}(\overline{D})$, then \ref{rmk:cpt small points} \ref{rmk:bigness small points}  are equivalent. If $\overline{D}\in\widehat{\Div}_{S,\Q}(U)^{\arnef,\YZ}_{\relnef}$, by \cref{thm:Hilbert-Samuel formula}, then \ref{rmk:cpt small points} \ref{rmk:relatively nef small points} are equivalent.
\end{remark}

For any $v\in\Omega$, and $f_v\in C_c(U_v^\an)$, we simply denote by $(0,f_v)$ the compactified metrized divisor $\overline{D}=(D,g)$ of $U$ such that $D$ is trivial, $g_v=f_v$, and $g_\omega=0$ if $\omega\in\Omega\setminus v$.
\begin{theorem}
		\label{theorem:equidsitributionforfinitenergy}
		Let $\overline{D}=(D,g)\in \widehat{\Div}_{S,\Q}(U)^{\YZ}_{\cpt}$ with $D$ big. Let $(x_m)_{m\in I}$ be a generic net of points which is small with respect to $\overline{D}$. Then for any $c\in\mathscr{L}^1(\Omega,\mathcal{A},\nu)$ such that $\overline{D}(c)$ is big, for any $v\in\Omega$ and for any $f_v\in C_c(U_v^\an),$ we have that
		\begin{align}\label{eq:equidistribution}
\lim_{m\in I} \frac{\nu(v)}{\#O(x_m)}\sum_{\sigma\in\Gal{\overline{K}}{K}}f_v(\sigma(x_m))=\frac{\langle \overline{D}(c)^d\rangle\cdot (0,f_v)}{\vol(D)}.
		\end{align}
		In particular, if $\overline{D}\in \widehat{\Div}_{S,\Q}(U)^{\arnef,\YZ}_{\relnef}$, then
		the Galois orbit of $(x_m)_{m\in I}$ equidistributes in $U^\an_v$ with respect to to $\frac{1}{D^d}\cdot c_1(\overline{D}_v)^{d}$.
\end{theorem}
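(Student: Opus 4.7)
The plan is to carry out a variational argument in the spirit of Yuan's equidistribution theorem, tailored to the singular-metric setup of this paper. First, I would exploit smallness: by \cref{remark:equidistributioncriterion}, the hypothesis that $(x_m)_{m \in I}$ is small with respect to $\overline{D}$ forces $G_{\overline{D}}$ to be constant on $\Delta(D)^\circ$, and \cref{properties of concave transforms}~\ref{shifting} shows that $G_{\overline{D}(c)}$ is likewise constant, equal to $\widehat{\mu}_{\max}^{\mathrm{asy}}(\overline{D}(c))$. Since $\overline{D}(c)$ is big, \cref{cor:bigness} gives $\widehat{\mu}_{\max}^{\mathrm{asy}}(\overline{D}(c)) > 0$; in particular $\inf_{\Delta(D)^\circ} G_{\overline{D}(c)} > 0$, the net $(x_m)$ is small with respect to $\overline{D}(c)$, and \cref{corol:volsame} yields $\widehat{\vol}(\overline{D}(c)) = \widehat{\vol}_\chi^{\mathrm{num}}(\overline{D}(c))$. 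This reduces the equidistribution statement to a one-sided differentiability computation of $\widehat{\vol}$ at $\overline{D}(c)$ in the direction $(0, f_v)$.

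For $t \in \R$ of small modulus, set $\overline{D}_t := \overline{D}(c) + t(0, f_v)$. Sandwiching $-\|f_v\|_\infty (0, \mathbf{1}_{\{v\}}) \leq (0, f_v) \leq \|f_v\|_\infty (0, \mathbf{1}_{\{v\}})$ (with $\mathbf{1}_{\{v\}} \in \mathscr{L}^1(\Omega, \mathcal{A}, \nu)$ since $\nu(v) < \infty$ for the classical adelic curves considered here) and applying the monotonicity and shifting properties of concave transforms, I would check that $G_{\overline{D}_t}$ remains positive on $\Delta(D)^\circ$ for small $|t|$, so $\overline{D}_t$ stays big with $\widehat{\vol}(\overline{D}_t) = \widehat{\vol}_\chi^{\mathrm{num}}(\overline{D}_t)$. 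Combining the essential-minimum inequality \cref{thm:essential minimum} applied to $\overline{D}_t$, the additivity of heights
\[h_{\overline{D}_t}(x_m) = h_{\overline{D}(c)}(x_m) + t \cdot \frac{\nu(v)}{\#O(x_m)} \sum_{\sigma \in \Gal{\overline{K}}{K}} f_v(\sigma(x_m)),\]
and the smallness of $(x_m)$ at $\overline{D}(c)$, one obtains for $t > 0$
\[t \cdot \limsup_m \frac{\nu(v)}{\#O(x_m)} \sum_\sigma f_v(\sigma(x_m)) \ \geq\ \frac{\widehat{\vol}(\overline{D}_t) - \widehat{\vol}(\overline{D}(c))}{(d+1)\vol(D)}.\]
Dividing by $t$, letting $t \to 0^+$, and invoking the differentiability of $\widehat{\vol}$ from \cite[Theorem~3.13]{nijerdiff} to identify the right derivative with $(d+1)\langle \overline{D}(c)^d\rangle \cdot (0, f_v)$ gives the $\limsup$-half of \eqref{eq:equidistribution}. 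The same argument with $-f_v$, together with linearity of the positive intersection product (\cref{positive intersection number}) in its second slot, supplies the matching $\liminf$-bound.

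For the "in particular" statement, observe that when $\overline{D} \in \widehat{\Div}_{S,\Q}(U)_{\relnef}^{\arnef,\YZ}$ the same holds for $\overline{D}(c)$, and the positivity $\inf G_{\overline{D}(c)} > 0$ established above is precisely the hypothesis of \cref{lemma:positive intersection product for relnef}. That lemma then computes
\[\langle \overline{D}(c)^d\rangle \cdot (0, f_v) = \nu(v) \int_{U_v^\an} f_v \, c_1(\overline{D}_v)^d,\]
since adding the constant $c(v)$ to the Green function at $v$ does not alter the Monge-Ampère measure $c_1(\overline{D}_v)^d$. Because $D$ is nef and big one has $\vol(D) = D^d$ by \cite[Theorem~5.2.2]{yuan2021adelic}, and substituting converts \eqref{eq:equidistribution} into the asserted equidistribution with respect to $\tfrac{1}{D^d} c_1(\overline{D}_v)^d$. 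The principal obstacle in this program is ensuring the differentiability result of \cite[Theorem~3.13]{nijerdiff} applies in the direction $(0, f_v)$ for an arbitrary test function $f_v \in C_c(U_v^\an)$---at non-archimedean places this hinges on $(0, f_v)$ fitting into the integrable framework of \emph{loc.\ cit.}, and a uniform-in-$t$ control of the perturbed concave transform is needed so that the one-sided derivative passes correctly through the $\limsup/\liminf$ exchange.
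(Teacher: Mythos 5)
Your overall strategy matches the paper's: reduce to a variational/differentiability computation of the $\chi$-volume in the direction $(0,f_v)$, use the essential-minimum inequality together with smallness at $\overline{D}(c)$, and invoke the differentiability result of \cite[Theorem~3.13]{nijerdiff} via the identification $\widehat{\vol}_\chi^{\num}(\overline{D}_t)=\widehat{\vol}(\overline{D}_t)$ for small $|t|$. The paper formalizes the $\liminf/\limsup$ exchange by citing \cite[Lemma~7.6]{berman2010growth}, then handles the ``in particular'' statement exactly as you do via \cref{lemma:positive intersection product for relnef}.

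There are, however, two genuine gaps. First, the inequality directions are reversed. For $t>0$, the essential-minimum bound $\liminf_m h_{\overline{D}_t}(x_m)\geq \zeta_{\mathrm{ess}}(\overline{D}_t)$ combined with $h_{\overline{D}_t}(x_m)=h_{\overline{D}(c)}(x_m)+t\cdot(\text{Galois avg})_m$ and the existence of $\lim_m h_{\overline{D}(c)}(x_m)$ yields a lower bound on the \emph{liminf} of the Galois average (not on the limsup). The companion estimate at $-f_v$ (equivalently at $t<0$) then gives the \emph{upper} bound on the limsup, and the two sandwich the desired limit. As written, your inequalities give only $\limsup\geq\psi'(0)$ and $\liminf\leq\psi'(0)$, which is consistent with convergence but does not establish it. Second, and more importantly, the extension from model test functions to arbitrary $f_v\in C_c(U_v^{\an})$ is left open; you flag it as ``the principal obstacle'' but do not close it. The paper's Step~3 handles this via a density argument: the set of $f_v\in C(X_v^{\an})$ with $(0,f_v)\in\widehat{\Div}_\Q(X_v)_\mo$ is dense in $C(X_v^{\an})$ for the uniform topology (citing \cite[Theorem~7.12]{gubler1998local} and \cite[Lemma~3.5]{yuan2008big}), and continuity of both sides of \eqref{eq:equidistribution} under uniform approximation of $f_v$ then extends the identity. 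Without this step the proof is incomplete for non-model $f_v$, including in particular non-archimedean places.
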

\begin{proof}
	By \cref{remark:equidistributioncriterion}, we have that $G_{\overline{D}}(\lambda) \equiv \widehat{\mu}_{\max}^{\mathrm{asy}}(\overline{D})>-\infty$ on $\Delta(D)^{\circ}$. Let $c\in\mathscr{L}^1(\Omega,\mathcal{A},\nu)$ such that $\overline{D}(c)$ is big. Then $\inf\limits_{\lambda\in\Delta(D)^\circ}G_{\overline{D}(c)}(\lambda)>0$.
	
	Let $X$ be a projective $K$-model of $U$. Recall the space $\widehat{\Div}_\Q(X_v)_\mo$ in \cref{local model divisors}. 
		Set $\overline{E}\coloneq(0,f_v)\in \widehat{\Div}_{S,\Q}(X)_\mo$. We consider functions on $\R$: for any $m\in I$,
\begin{align}\label{eq:equidistribution varphi}
\varphi_{m}(t)\coloneq h_{\overline{D}(c)+t\overline{E}}(x_m)=h_{\overline{D}(c)}(x_m)+\frac{\nu(v)\cdot t}{\#O(x_m)}\sum_{\sigma\in\Gal{\overline{K}}{K}}f_v(\sigma(x_m)),
\end{align}	
		\[\psi(t)\coloneq \frac{\widehat{\vol}_\chi^{\num}(\overline{D}(c)+t\overline{E})}{(d+1)\cdot \vol(D)}\]
		(the last equality of \eqref{eq:equidistribution varphi} is from \cite[Proposition~2.3]{burgos2019the}).
		We prove the theorem in the following steps.
	
	\vspace{2mm} \noindent
	Step 1: {\it The following statements hold:
		\begin{enumerate}
			\item \label{varphi concave} $\varphi_m(t)$ is concave and differentiable with derivative $\frac{\nu(v)}{\#O(x_m)}\sum\limits_{\sigma\in\Gal{\overline{K}}{K}}f_v(\sigma(x_m))$ at $t=0$;
			\item \label{inf larger} $\liminf\limits_{m\in I}\varphi_m(t)\geq \psi(t)$;
			\item \label{value at 0} $\lim\limits_{m\in I}\varphi_m(0)=\psi(0)$.
	\end{enumerate}}
	
	\vspace{2mm} \noindent
	\ref{varphi concave} is from the definition. For \ref{inf larger}, since  $(x_m)_{m\in I}$ is generic, we have that
	\[\liminf\limits_{m\in I}h_{\overline{D}(c)+t\overline{E}}(x_m)\geq \zeta_{\mathrm{ess}}(\overline{D}(c)+t\overline{E})\geq \frac{\widehat{\vol}_\chi^\num(\overline{D}(c)+t\overline{E})}{(d+1)\cdot \vol(D)^d}\]
	which proves \ref{inf larger}, here the last equality is from \cref{thm:essential minimum}. \ref{value at 0} is from \cref{properties of concave transforms}~\ref{shifting} the assumption that $(x_m)_{m\in I}$ is small.

	\vspace{2mm} \noindent
	Step 2: {\it If $(0,f_v)\in \widehat{\Div}_{S,\Q}(X)_\mo$ (i.e. locally $(0,f_v)\in \widehat{\Div}_{\Q}(X_v)_\mo$), then $\psi(t)$ is differentiable at $t=0$ with  derivative given by \[\frac{d\psi}{dt}(0)=\frac{\langle \overline{D}(c)^d\rangle\cdot (0,f_v)}{\vol(D)}.\]
	Hence, by Step~2 and \cite[Lemma~7.6]{berman2010growth}, we have that
	\begin{equation} \label{eq: equidistribution}
		\lim\limits_{m\in I}\frac{\nu(v)}{\#O(x_m)}\sum_{\sigma\in\Gal{\overline{K}}{K}}f_v(\sigma(x_m))=\frac{\langle \overline{D}(c)^d\rangle\cdot (0,f_v)}{\vol(D)}.
		\end{equation}}
		
		\vspace{2mm} \noindent
		Let $b_v\coloneq \sup\limits_{x\in X_v^\an}\{|f_v(x)|\}<\infty$. 
		For any $\omega\in\Omega\setminus\{v\}$, set $b_\omega=0$
		Write $b=(b_\omega)_{\omega\in\Omega}$.
		Let $t\in\R$. Then
		\[\overline{D}(c)+t\overline{E}\geq \overline{D}(c)-|t|(0,b)\]
		which implies that 
		\[G_{\overline{D}(c)+t\overline{E}}\geq G_{\overline{D}(c)}-|t|b_v\geq \inf_{\lambda\in\Delta(D)^\circ}G_{\overline{D}(c)}(\lambda)-|t|b_v\]
		by \cref{properties of concave transforms}~\ref{monotone}. By \cref{theorem:concavemain}, we have that 
		\[\widehat{\vol}_\chi^{\num}(\overline{D}(c)+t\overline{E})+c_v=\widehat{\vol}(\overline{D}(c)+t\overline{E})\]
		when $|t|$ is small enough. By \cite[Theorem~3.13]{nijerdiff}, we have that
		\[\frac{d\psi}{dt}(0)=\frac{\langle\overline{D}(c)^d\rangle\cdot\overline{E}}{\vol(D)}.\]
		
		\vspace{2mm} \noindent
		Step 3: {\it \eqref{eq:equidistribution} holds for any $f_v\in C_c(U_v^\an)$.}
		
		\vspace{2mm} \noindent
		To show the theorem is equivalent to show \eqref{eq: equidistribution} holds for any $f_v\in C_c(U_v^\an)$. Notice that $C_c(U_v^\an)\subset C(X_v^\an)$. By \cite[Theorem~7.12]{gubler1998local} and \cite[Lemma~3.5]{yuan2008big}, the set
		\[\{f_v\in C(X_v^\an)\mid (0,f_v)\in \widehat{\Div}_\Q(X_v)_\mo\}\]
		is dense in $C(X_v^\an)$ under the topology of uniform convergence. Hence it suffices to show \eqref{eq: equidistribution} for $f_v\in C(X_v^\an)$ such that $(0,f_v)\in \widehat{\Div}_{\Q}(X_v)_\mo$. 
		This is Step~2, so the theorem holds. 
		
		\vspace{2mm} \noindent
		Step 4: {\it If $\overline{D}\in \widehat{\Div}_{S,\Q}(U)^{\arnef,\YZ}_{\relnef}$, then
			the Galois orbit of $(x_m)_{m\in I}$ equidistributes in $U^\an_v$ with respect to to $\frac{1}{D^d}\cdot c_1(\overline{D}_v)^{d}$.}
		
		\vspace{2mm} \noindent
		This is from \eqref{eq:equidistribution} and \cref{lemma:positive intersection product for relnef}. Notice that  $c_1(\overline{D}(c)_v)^{d} =  c_1(\overline{D}_v)^{d}$.
\end{proof}

\end{document}